\newtheorem{theorem}{Theorem}
\newtheorem{Theorem}{Theorem} 
\newtheorem{corollary}[theorem]{Corollary}
\newtheorem{definition}[theorem]{Definition}
\newtheorem{lemma}[theorem]{Lemma}
\newtheorem{proposition}[theorem]{Proposition}
\newtheorem{remark}[theorem]{Remark}
\newtheorem{assumption}[theorem]{Assumption}
\let\oldsqrt\sqrt
\def\sqrt{\mathpalette\DHLhksqrt}
\def\DHLhksqrt#1#2{%
\setbox0=\hbox{$#1\oldsqrt{#2\,}$}\dimen0=\ht0
\advance\dimen0-0.2\ht0
\setbox2=\hbox{\vrule height\ht0 depth -\dimen0}%
{\box0\lower0.4pt\box2}}
\newcommand\IH{\operatorname{IH}}
\newcommand\bm{\overline{m}}
\newcommand\tb{\operatorname{tb}}
\newcommand\tf{\operatorname{tf}}
\newcommand\ff{\mathrm{ff}}
\newcommand\bs{\mathrm{sb}}
\newcommand\sm{\mathrm{sm}}
\newcommand\ms{sm}
\newcommand\bff{\operatorname{bf}}
\newcommand\GL{\operatorname{GL}}
\newcommand\lf{\operatorname{lf}}
\newcommand\ebf{\operatorname{bf}}
\newcommand\mf{\mathrm{mf}}
\newcommand\lb{\operatorname{lb}}
\newcommand\rb{\operatorname{rb}}
\newcommand\bfo{\operatorname{bf}_0}
\newcommand\zf{\operatorname{zf}}
\newcommand\rf{\operatorname{rf}}
\newcommand\ran{\operatorname{ran}}
\newcommand\spann{\operatorname{span}}
\newcommand\sma{\operatorname{small}}
\newcommand\Span{\operatorname{span}}
\newcommand{\extu}{{\overline{\cup}}}
\newcommand{\tP}{\widetilde{P}}
\newcommand{\df}[1]{\mathfrak{#1}}
\newcommand{\bhs}[1]{\mathfrak B_{#1}}
\renewcommand{\tilde}{\widetilde}
\renewcommand{\bar}{\overline}
\renewcommand{\Re}{\operatorname{Re}}
\renewcommand{\hat}[1]{\widehat{#1}}
\newcommand{\wt}[1]{\widetilde{#1}}
\newcommand{\rest}[1]{\big\rvert_{#1}} 
\newcommand\lra{\longrightarrow}
\newcommand\xlra[1]{\xrightarrow{\phantom{x} #1 \phantom{x}}}
\newcommand\pa{\partial}
\newcommand\cf{cf\@. }
\newcommand\eps\varepsilon
\renewcommand\epsilon\varepsilon
\newcommand\eb{\operatorname{\eps, b}}
\newcommand\ew{\operatorname{\w,\eps}}
\newcommand\ee{\operatorname{e,s}}
\newcommand\w{\operatorname{w}}
\newcommand\dR{\operatorname{dR}}
\renewcommand\sc{\operatorname{sc}}
\newcommand\CI{{\mathcal{C}}^{\infty}}
\newcommand\CIc{{\mathcal{C}}^{\infty}_c}
\newcommand{\lrpar}[1]{\left( #1 \right)}
\newcommand{\lrspar}[1]{\left[ #1 \right]}
\newcommand\ang[1]{\langle #1 \rangle}
\renewcommand\det{\operatorname{det}}
\newcommand\diag{\operatorname{diag}}
\newcommand\Diff{\operatorname{Diff}}
\newcommand\dvol{\operatorname{dvol}}
\newcommand\End{\operatorname{End}}
\newcommand\even{\operatorname{ev}}
\DeclareMathOperator*{\FP}{\operatorname{FP}}
\newcommand\Hom{\operatorname{Hom}}
\newcommand\Id{\operatorname{Id}}
\newcommand\odd{\operatorname{odd}}
\newcommand\phg{\operatorname{phg}}
\newcommand\pt{\operatorname{pt}}
\renewcommand\Re{\operatorname{Re}}
\newcommand\Res{\operatorname{Res}}
\newcommand\spec{\operatorname{spec}}
\newcommand\Spec{\operatorname{Spec}}
\newcommand\Tr{\operatorname{Tr}}
\newcommand\tr{\operatorname{tr}}
\newcommand\pr{\operatorname{pr}}
\newcommand\Mand{\text{ and }}
\newcommand\Mat{\text{ at }}
\newcommand\Mst{\text{ s.t. }}
\newcommand\Mwith{\text{ with }}
\newcommand\paperintro%
\newcommand\paperbody%
\newcommand\bbC{\mathbb{C}}
\newcommand\bbN{\mathbb{N}}
\newcommand\bbR{\mathbb{R}}
\newcommand\bbS{\mathbb{S}}
\newcommand\bN{\mathbf{N}}
\newcommand\bR{\mathbf{R}}
\newcommand\cA{\mathcal{A}}
\newcommand\cB{\mathcal{B}}
\newcommand\cC{\mathcal{C}}
\newcommand\cD{\mathcal{D}}
\newcommand\cE{\mathcal{E}}
\newcommand\cF{\mathcal{F}}
\newcommand\cG{\mathcal{G}}
\newcommand\cH{\mathcal{H}}
\newcommand\cJ{\mathcal{J}}
\newcommand\cK{\mathcal{K}}
\newcommand\cQ{\mathcal{Q}}
\newcommand\cR{\mathcal{R}}
\newcommand\cS{\mathcal{S}}
\newcommand\cT{\mathcal{T}}
\newcommand\cU{\mathcal{U}}
\newcommand\cV{\mathcal{V}}
\newcommand\sA{\mathscr{A}}
\newcommand\sB{\mathscr{B}}
\newcommand\sE{\mathscr{E}}
\newcommand\sT{\mathscr{T}}
\DeclareMathAlphabet{\mathpzc}{OT1}{pzc}{m}{it}
\begin{document}

\title[Wedge surgeries]{A Cheeger-M\"uller theorem for manifolds with wedge singularities }
\author{Pierre Albin}
\author{Fr\'ed\'eric Rochon}
\author{David Sher}
\address{Department of Mathematics, University of Illinois at Urbana-Champaign}
\email{palbin@illinois.edu}
\address{D\'epartement de Math\'ematiques, UQ\`AM}
\email{rochon.frederic@uqam.ca }
\address{Department of Mathematical Sciences, DePaul University}
\email{dsher@depaul.edu}

\begin{abstract}
We study the spectrum and heat kernel of the Hodge Laplacian with coefficients in a flat bundle on a closed manifold degenerating to a manifold with wedge singularities.  Provided the Hodge Laplacians in the fibers of the wedge have an appropriate spectral gap, we give uniform constructions of the resolvent and heat kernel on suitable manifolds with corners.  When the wedge manifold and the base of the wedge are odd dimensional, this is used to obtain a Cheeger-M\"ueller theorem  relating analytic torsion with the Reidemeister torsion of the natural compactification by a manifold with boundary.     
\end{abstract}

\maketitle

\tableofcontents

\section*{Introduction}

Analytic torsion was introduced by Ray and Singer \cite{Ray-Singer} as an analytical counterpart to the Reidemeister torsion, a combinatorial invariant introduced by Reidemeister \cite{Reidemeister1935} and Franz \cite{Franz1935} to distinguish lens spaces that are homotopic but not homeomorphic.  Since the Reidemeister torsion can be defined in terms of certain combinatorial Laplacians, the idea of Ray and Singer was to replace these combinatorial Laplacians by the Hodge Laplacian acting on forms and to use the meromorphic continuation of the corresponding zeta functions to define their regularized determinants.  More precisely, if $M$ is a closed oriented odd dimensional manifold, $F\to M$ is a flat Euclidean vector bundle and $g$ is  a Riemannian metric on $M$, then the analytic torsion of $(M,g)$ is given by 
\begin{equation}
  T(M,g,F)= \prod_{q=0}^{\dim M} (\det \Delta_q)^{\frac{q(-1)^{q+1}}2}
\label{int.1}\end{equation}    
where $\Delta_q$ is the Hodge Laplacian acting on forms of degree $q$ taking values in $F$ with regularized determinant given by $\det \Delta_q= e^{-\zeta_q'(0)}$, the function $\zeta_q$ being the zeta function of $\Delta_q$, which for $\Re s\gg 0$ is given by
\begin{equation}
 \zeta_q(s)= \frac{1}{\gamma(s)} \int_0^{\infty} t^s \Tr(e^{-t\Delta_q}- \Pi_{\ker \Delta_q})  \frac{dt}{t}
\label{int.2}\end{equation}
and admits a meromorphic continuation to $\bbC$ with no pole at $s=0$.

In general, analytic torsion does depend on the metric $g$, but if we fix a basis $\{\mu^q_j\}$ of $\mathrm H^q(M;F)$, the cohomology with coefficients in $F$, and if $\{\omega^q\}$ is an orthonormal basis of the space $\ker \Delta_q= \cH^q(M;F)\cong \mathrm H^q(M;F)$ of harmonic forms of degree $q$ taking values in $F$, then it can be shown that the quantity 
\begin{equation}
   \bar T(M,\mu, F):= \frac{T(M,g,F)}{\left(\displaystyle\prod_{q=0}^{\dim M} [\mu^q | \omega^q]^{(-1)^q}\right)}
\label{int.3}\end{equation}
does not depend on the choice of the metric $g$, where $[\mu^q | \omega^q]=|\det W^q|$ with $W^q$ the matrix satisfying
$$
      \mu^q_i = \sum_j W^q_{ij} \omega_j^q.
$$
In particular, if $F$ is acyclic so that $\mathrm H^*(M;F)=\{0\}$, then analytic torsion is independent of the choice of metric.  This suggests that  $\bar T(M,\mu, F)$ is a topological quantity.  
Ray and Singer \cite{Ray-Singer} conjectured that it is in fact equal to the Reidemeister torsion, and this was subsequently established independently by Cheeger \cite{Cheeger1979} and M\"uller \cite{Muller1978}.  One can more generally consider a flat vector bundle $F$ with holonomy representation  \linebreak $\alpha: \pi_1(M) \to \GL(k,\bbR)$ not necessarily orthogonal.  When the holonomy representation is unimodular, M\"uller showed in \cite{Muller1993} that $\bar T(M,g,\mu)$ is still equal to the corresponding Reidemeister torsion.  For arbitrary holonomy representation, Bismut and Zhang \cite{Bismut-Zhang} were able more generally to compute explicitly the ratio between $ \bar T(M,\mu, F)$ and the Reidemeister torsion.  

The Cheeger-M\"uller theorem is a deep result that has important implications in various fields ranging from topology and number theory to mathematical physics.  In particular, it has been used recently by various authors \cite{BV,Calegari-Venkatesh, Muller:AsympRSTHyp3Mfds,Marshall-Muller, Muller-Pfaff:ATL2TorsionCmptLocSymSpaces, Muller-Pfaff:GrowthTorsionCohoArithGps, BMZ, Pfaff:ExpGrowthHomTorTowerCongSubgpsBianchi, Raimbault:Asymp, Raimbault:ARHtorsion} to study the homology of arithmetic groups.   

In some of these settings, one is naturally led to consider non-compact or singular manifolds, notably hyperbolic manifold with cusps.  More generally, since non-compact and singular spaces are ubiquitous in mathematics in physics, it is natural to ask if a Cheeger-M\"uller theorem holds on such a space.  On manifolds having cusp singularities, various Cheeger-M\"uller theorem were recently established \cite{Pfaff:GluingFormATHypMfdsCusps, ARS2, Vertman:CheegerMuller} as well as for fibered cusp metrics \cite{ARS1}.  The corresponding singular spaces are example of stratified spaces, a wide class of singular spaces that includes algebraic variety and quotients of smooth group actions.  More precisely, the singular space geometrically described by a fibered cusp metric is a stratified space of depth one.  Alternatively, one can put on such a space an incomplete edge metric, also called wedge metric.  

More precisely, let $N$ be the interior of $\bar N$, a compact oriented manifold with boundary endowed with a fiber bundle structure
\begin{equation}
\xymatrix{
   Z \ar[r] &  \pa \bar N \ar[d]^{\phi} \\ & Y,
}
\label{int.4}\end{equation}
where the base $Y$ is a compact oriented manifold. The corresponding stratified space, denoted $\widehat{N}$, is then obtained from $\bar N$ by collapsing the fibers of $\phi: \pa \bar N \to Y$ onto the base $Y$.    
Let 
\begin{equation}
c:[0,\delta)\times \pa \bar N\to \bar N
\label{int.4b}\end{equation} 
be a tubular neighborhood of $\pa \bar N$ and let $r\in \CI(\bar N)$ be a compatible boundary defining function in the sense that $r>0$ on $N$ and $c^*r: [0,\delta)\times \pa \bar N \to [0,\delta)$ is the projection on the first factor.  Then a \textbf{product-type} wedge metric on $N$ is a Riemannian metric $g_{\w}$ such that 
\begin{equation}
  c^*g_{\w}= dr^2  + r^2 \kappa + \phi^* g_Y,
\label{int.5}\end{equation}
where $g_Y$ is a Riemannian metric on $Y$ and $\kappa$ is a symmetric 2-tensor on $\pa\bar N$ which restricts to a Riemannian metric on each fiber of $\phi:\pa\bar N\to Y$.   More generally, we can consider  wedge metrics \textbf{even to order $\ell$}, which are wedge metrics asymptotic to $g_{\w}$ in suitable sense, see Definition~\ref{ewm.1} below.

For a choice of even wedge metric $g_{\w}$ on $N$ and a choice of flat vector bundle $F\to \bar N$ with unimodular holonomy representation $\alpha: \pi_1(\bar N)\to \GL(k,\bbR)$, we can define a Hodge Laplacian on $\Delta_{\w}$ acting on forms taking values in $F$ provided we also choose a Euclidean metric $g_F$ on $F$.  Notice that since $\alpha$ is not assumed to be orthogonal, this metric   is in general not compatible with the flat connection of $F$.  We will require $g_F$ to be \textbf{even}, that is, that it has an even expansion in $r$ with respect to a flat trivialization of $F$ in the tubular neighborhood \eqref{int.4b}.  Since the metric $g_{\w}$ is not complete, the operator $\Delta_{\w}$ is usually not essentially self-adjoint, so that there are many choices of self-adjoint extensions.  Among these, a natural choice is the Friedrich extension.  With this choice, the Hodge Laplacian $\Delta_{\w}$ becomes self-adjoint with discrete spectrum, so analytic torsion can be defined as on a compact manifold.  

 In \cite{Mazzeo-Vertman}, Mazzeo and Vertman computed the variation of analytic torsion under a change of even wedge metric for the trivial flat vector bundle, a delicate computation that required a fine understanding of the heat kernel of $\Delta_{\w}$.  There main result is that if $\dim \bar N$ and $\dim Y$ are odd and if $g_{\w}$ is even, asymptotically admissible and such that the indicial roots of the Hodge Laplacian are constant on $Y$, then $\bar T(N,g,\mu)$ does not depend on the choice of metric, suggesting there might be a Cheeger-M\"uller theorem for such metrics.  In fact, when $\phi$ is a bundle of spheres, their result leads to a Cheeger-M\"uller theorem, since then one can deform among wedge metrics to a smooth metric on $\widehat{N}$, which is then a compact smooth manifold, namely the blow-down of $\bar N$, on which the Cheeger-M\"uller theorem is known to hold.  
 
 When $\phi$ is a more general bundle and $\widehat N$ is not smooth,  a natural quantity that could be the topological analogue of  analytic torsion is the Dar $R$-torsion, a generalization of the Reidemeister torsion defined by Dar \cite{Dar} on stratified spaces in terms of the intersection cohomology  of Goresky-MacPherson \cite{GM1980}.  In good cases, one can also hope that analytic torsion is more simply related to the Reidemeister torsion of the manifold with boundary $\bar N$, for instance as in \cite{ARS1}, where the bundle $F$ is assumed to be strongly acyclic at infinity, that is,
 \begin{equation}
   \mathrm H^*(\pa \bar N/Y;F)=\{0\}.
 \label{int.5a}\end{equation}

 In the present paper,  we obtain the following Cheeger-M\"uller theorem for wedge metrics provided the flat vector bundle $F$ is strongly acyclic at infinity and the wedge metric $g_{\w}$ and bundle metrics $g_F$ are even and such that, for the induced de Rham operator $\eth_{\dR,Z_y}$ on each fiber $Z_y=\phi^{-1}(y)$ of the fiber bundle $\phi:H\to Y,$   
\begin{equation}
             \spec(\eth^2_{\dR,Z_y})\cap [0,4]=\emptyset.
\label{int.5b}\end{equation}

\begin{Theorem}
Let $\bar N$ be an odd dimensional oriented manifold with fibered boundary \linebreak $\phi: \pa \bar N\to Y$, where the base $Y$ is a closed odd dimensional oriented compact manifold.  Let $g_{\w}$ be a wedge metric even to order $\dim Y+1$ on the interior $N$ of $\bar N$ with respect to $\phi$ and a choice of boundary defining function $r\in \CI(\bar N)$.  Let $\alpha: \pi_1(\bar N)\to \GL(k,\bbR)$ be a unimodular representation with associated flat vector bundle $F$ such that $\mathrm H^*(\pa\bar N/Y;F)=0$.  Suppose $F$ is equipped with a smooth Euclidean metric $g_F$ on $\bar N$ having an even expansion in $r$ at $\pa \bar N$.  Suppose finally that the metrics $g_{\w}$ and $g_F$ are such that the corresponding de Rham operators on the fibers of $\phi: \pa \bar N\to Y$ satisfy condition \eqref{int.5b}.  Then we have the following equality
$$
   \bar T(N,g_{\w},F,\mu^*)=  \tau(\bar N, \pa \bar N,\alpha, \mu^*) \tau^{\frac12} (\pa \bar N,\alpha)
$$
for any choice of basis $\mu^q$ of $H^q(\bar N,\pa \bar N;F)$ for $q=0,\ldots, \dim \bar N$, where $\tau(\bar N, \pa \bar N,\alpha, \mu^*)$ and $\tau(\pa \bar N,\alpha)$ are the Reidemeister torsions associated to $(\bar N, \pa\bar N,\alpha,\mu^*)$ and $(\pa\bar N,\alpha)$ respectively.  
\label{int.6}\end{Theorem}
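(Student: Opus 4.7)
The plan is to argue by a \emph{surgery degeneration} from a closed smooth Riemannian manifold down to the wedge-singular model, applying the classical Cheeger-M\"uller theorem on the smooth side and passing to the limit.

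\textbf{Surgery setup.} I would take the double $M := \bar N \cup_{\partial \bar N}\bar N^{\op}$, a closed oriented odd-dimensional manifold carrying the involution $\sigma$ that swaps the two halves.  The flat bundle $F$ lifts naturally to $\widetilde F \to M$ with unimodular holonomy $\widetilde\alpha$, and the strong acyclicity hypothesis $\mathrm H^*(\partial\bar N/Y;F)=0$ implies $\mathrm H^*(\partial\bar N;F)=0$ via the Leray-Serre spectral sequence for $\phi:\partial\bar N\to Y$.  On $M$ I would construct a family of metrics $g_\epsilon$ equal to $g_{\w}$ (and its mirror image) outside a bi-collar of $\partial\bar N$, and on a bi-collar $(-\delta,\delta)_r\times \partial\bar N$ of the form
\[
g_\epsilon \;=\; dr^2 + (r^2+\epsilon^2)\kappa + \phi^*g_Y,
\]
so that $g_\epsilon$ is smooth for $\epsilon>0$ and becomes the doubled wedge metric at $\epsilon=0$.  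Similarly $g_F$ is extended to a smooth even Euclidean metric on $\widetilde F$.

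\textbf{Analytic asymptotics.} Using the uniform heat kernel and resolvent constructions developed in the body of the paper---whose validity rests on the spectral gap hypothesis \eqref{int.5b} and the evenness of $g_{\w}$ and $g_F$---I would analyze the $\epsilon\to 0$ asymptotics of $\log T(M,g_\epsilon,\widetilde F)$, splitting the heat trace into bulk and surgery contributions.  The $\bbZ_2$-symmetry coming from $\sigma$, together with the vanishing of the low-eigenvalue contributions enforced by the strong acyclicity, should produce a splitting of the form
\[
\log T(M,g_\epsilon,\widetilde F) \;=\; 2\log T(N,g_{\w},F) \;+\; \cA_\epsilon,
\]
where $\cA_\epsilon$ converges as $\epsilon\to 0$ to a regularized surgery determinant intrinsic to $\partial\bar N$.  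A Br\"uning-Ma style anomaly computation then identifies $\cA_0$ with $-2\log T(\partial\bar N,\alpha)$.

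\textbf{Topological side and matching.} On the Reidemeister side, Milnor's multiplicativity formula applied to the decomposition $M = \bar N \cup_{\partial\bar N}\bar N^{\op}$, combined with the acyclicity $\mathrm H^*(\partial\bar N;F)=0$ which collapses the Mayer-Vietoris torsion, yields (with compatible basis choices)
\[
\tau(M,\widetilde\alpha) \;=\; \frac{\tau(\bar N,\partial\bar N,\alpha,\mu^*)^2}{\tau(\partial\bar N,\alpha)}.
\]
Applying the classical Cheeger-M\"uller theorem to each smooth $(M,g_\epsilon,\widetilde F)$ with $\epsilon>0$, together with its analogue on the smooth closed manifold $\partial\bar N$, and letting $\epsilon\to 0$ reduces the target identity to
\[
\bar T(N,g_{\w},F,\mu^*)^2 \;=\; \tau(\bar N,\partial\bar N,\alpha,\mu^*)^2\cdot \tau(\partial\bar N,\alpha),
\]
from which extracting the positive square root produces the stated formula.

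\textbf{Main obstacle.} The hard part is the analytic asymptotic analysis through the surgery, and in particular the precise identification of $\cA_0$ with $-2\log T(\partial\bar N,\alpha)$.  This requires tracking the small eigenvalues of $\Delta_{\w,\epsilon}$ that emerge from the degeneration, canceling zero-mode contributions between the two halves, and matching an analytic surgery determinant with a genuine zeta-regularized determinant on $\partial\bar N$.  The oddness of $\dim\bar N$ and $\dim Y$, the evenness of the metrics, and the spectral gap hypothesis \eqref{int.5b} each play an essential role in ensuring that the various anomaly contributions combine cleanly into the claimed formula.
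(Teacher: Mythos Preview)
Your overall architecture is exactly the paper's: form the double $M=\bar N\cup_{\partial\bar N}\bar N$, run a wedge surgery family $g_{\w,\eps}$, apply Cheeger--M\"uller on the smooth closed $M$ for $\eps>0$, and pass to the limit. You also correctly observe that strong acyclicity forces $\mathrm H^*(\partial\bar N;F)=0$, so there are no positive small eigenvalues.

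Where you go wrong is in the \emph{source} of the factor $\tau(\partial\bar N,\alpha)^{1/2}$. You posit that the analytic surgery term $\cA_0$ survives and equals $-2\log T(\partial\bar N,\alpha)$ via some Br\"uning--Ma style anomaly. The paper shows the opposite: the entire point of the hypotheses ``$\dim Y$ odd'' and ``metrics even to order $h+1$'' is to make the order-zero coefficient $A_m^{hff}$ at the surgery face vanish (Corollary~\ref{em.8}), so that (equation~\eqref{em.12})
\[
\FP_{\eps=0}\log T(M,g_{\w,\eps},\widetilde F)\;=\;\log T(M\setminus H,g_{\w,0},\widetilde F)\;=\;2\log T(N,g_{\w},F),
\]
with no residual analytic boundary contribution whatsoever. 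There is no anomaly computation here; the surgery term simply disappears. (When $\dim Y$ is even it does \emph{not} disappear, which is why only a relative statement, Corollary~\ref{cm.9}, is available in that case.)

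Correspondingly, your Mayer--Vietoris formula has $\tau(\partial\bar N,\alpha)$ on the wrong side. In the paper's conventions the gluing formula (\cite[Theorem~8.7]{ARS1}) reads
\[
\tau(M,\widetilde\alpha,\mu^*)\;=\;\tau(\bar N,\partial\bar N,\alpha,\mu^*)^2\cdot\tau(\partial\bar N,\alpha),
\]
with $\tau(\partial\bar N,\alpha)$ as a \emph{factor}, not a divisor. Your two errors --- a spurious analytic boundary term and the inverted Reidemeister gluing --- cancel to give the correct final identity, but the intermediate mechanism is wrong. The $\tau(\partial\bar N,\alpha)^{1/2}$ in the theorem comes entirely from the topological side; nothing analytic about $\partial\bar N$ enters the limit.
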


\begin{remark}
If $\alpha: \pi_1(\bar N)\to \operatorname{O}(k)$ is an orthogonal representation and $g_F$ is the induced metric compatible with the flat connection, then the formula of the previous corollary simplifies to
$$
  \bar T(N,g_{\w},F,\mu^*)=  \tau(\bar N, \pa \bar N,\alpha, \mu^*)
$$
since $\tau(\pa \bar N,\alpha)=1$ in this case. 
\label{int.7}\end{remark}

\begin{remark}
The strong acyclicity condition at infinity \eqref{int.5a} imposes some conditions on the bundle $F$.  In particular, $F$ cannot be the trivial bundle (so our result applies to a setting disjoint from the one considered in \cite{Mazzeo-Vertman}).  Nevertheless, the condition is often satisfied.  This is because on a compact manifold $Z$, the set of representations $\alpha:\pi_1(Z)\to \GL(k,\bbC)$ leading to an acyclic flat vector bundle is Zariski open, so as soon as there is one acyclic flat vector bundle, any other flat vector bundle is generically acyclic.   
\label{int.8a}\end{remark}

\begin{remark}
Condition \eqref{int.5b} is a technical condition that ensures, among other things, that a certain model operator is invertible, see in particular Proposition~\ref{ws.8} below.  If the bundle $F$ is strongly acyclic, it is very easy to obtain wedge metrics that satisfy this condition by appropriately  scaling the symmetric $2$-tensor $\kappa$ in \eqref{int.5}.  
\label{int.8}\end{remark}

\begin{remark}
This result is quite similar to the Cheeger-M\"uller theorem of \cite{ARS1}.  The main difference is that here, we assume $\dim Y$ is odd, while in \cite{ARS1}, we assume instead that $\dim Y$ is even. 
\end{remark}

Our strategy, following  Hassell, Mazzeo and Melrose \cite{mame1, hmm, Hassell} and strongly inspired by \cite{ARS1}, is to compute the limit of analytic torsion for a family of smooth metrics on a compact manifold degenerating to a wedge metric.  We will in fact consider the double $M= \bar N\cup_{\pa \bar N} \bar N$ of $\bar N$.  If we denote by $H\cong \pa \bar N$ the hypersurface that separates the two copies of $N$ in $M$ and let $c:(-\delta,\delta)_x\times H\to M$ be a choice of tubular neighborhood, then we consider on $M$ a family $g_{\ew}$ of smooth metrics such that 
\begin{equation}
   c^* g_{\ew}= dx^2 + (x^2+\epsilon^2)\kappa+ \phi^* g_Y, \quad \epsilon>0.
\label{int.9}\end{equation}
In this way, when $\epsilon=0$, the metric $g_{\w,0}$ restricts to give the metric $g_{\w}$ on each copy of $N$ in $M$.  When $\dim Y=0$, this corresponds to the family of metrics considered by McDonald in his thesis \cite{McDonald}, see Figure~\ref{fig_donutcroissant} for an illustration of such a metric.  
\begin{figure}
	\centering
	\scalebox{.85}{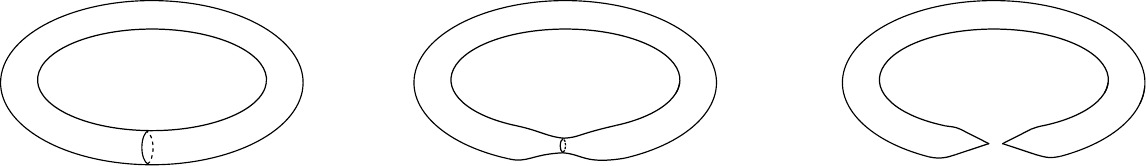}
	\caption{Wedge surgery metric when $\dim Y=0$.}
	\label{fig_donutcroissant}
\end{figure}

To compute the limit of analytic torsion as $\epsilon\to 0$, we construct uniformly the resolvent of the Hodge Laplacian of $g_{\ew}$ as $\epsilon\searrow 0$.  We do this on a suitable double space $X^2_{e, s}$, which is  a manifold with corners on which the behavior of the resolvent as $\epsilon\to 0$ is determined by model operators on two boundary hypersurfaces, one model being the Hodge Laplacian associated to $g_{\w}$ on each copy of $N$ and the other model being a family of suspended scattering Hodge Laplacians.  Condition~\eqref{int.5b} and \cite{GH1} imply that  this family of suspended scattering operators is invertible. This yields a uniform construction of the resolvent, see Theorem~\ref{ur.8} below, which implies in particular that there is a uniform spectral gap around zero for the family of Hodge Laplacian associated to $g_{\ew}$ as $\epsilon\searrow 0$.  
\begin{remark}
When the wedge Laplacian on $N$ has a trivial $L^2$-kernel, this spectral gap can be obtained more directly following the approach of \cite[Proposition~9]{SS16}.  In the general case however, the proof of our main result also requires Theorem~\ref{ur.8} to get good control on the behavior of the $L^2$-kernel as $\eps\searrow 0$, see in particular Corollary~\ref{small.2} below.  
\end{remark}
\begin{remark}
We do not pursue it here, but using \cite{GH2} and \cite{Guillarmou-Sher}, we expect that our methods can be adapted to construct uniformly the resolvent under weaker hypotheses than \eqref{int.5a} and \eqref{int.5b}, for instance, as in \cite{ARS1}, for situations where there are finitely many positive small eigenvalues. 
\end{remark}

We also need a uniform construction of the heat kernel as $\epsilon\searrow 0$.  This is achieved on a suitable heat space, see Theorem~\ref{uhk.1} below.  This allows us to obtain the asymptotics of the trace of heat kernel as $t$ or $\epsilon$ goes to zero.  It is only at that stage that we require the metrics $g_{\w}$ and $g_F$ to be even and the base $Y$ to be odd dimensional.  Indeed, these extra hypotheses crucially rule out the presence of certain undesired terms in the asymptotic of the trace.     Combining with our uniform construction of the resolvent, this allows us to show that the limit of analytic torsion as $\epsilon\searrow 0$ is the analytic torsion of $g_{\w}$ on the two copies of $N$ in $M$.  The main result then follows from a corresponding formula for the Reidemeister torsion, \cf \cite[Theorem~8.7]{ARS1}.

When $\dim Y$ is even, our strategy does not quite work since when we take the limit as $\epsilon\searrow 0$, we obtain an extra term that we are unable to compute explicitly.  This is consistent with the variation formula of \cite{Mazzeo-Vertman}, which suggests in this case that analytic torsion should depend on the geometry of the link of the wedge, not just on its topology.  Nevertheless, in agreement with this prediction, we can give a relative Cheeger-M\"uller theorem, see Corollary~\ref{cm.9} below, essentially a reformulation of the gluing formula of Lesch \cite[Theorem~1.2]{Lesch2013} that can also be seen as a particular case of \cite{Guillarmou-Sher} or of the recent announcement of Ludwig \cite{Ludwig} when $\dim Y=0$.  

Finally we remark that a similarly geometric construction of the heat kernel for some Laplace-type operators associated to wedge metrics has been carried out in \cite{Albin-GellRedman} for stratified spaces of arbitrary depth. It would be interesting to consider a similar degenerating family of metrics, and the resulting behavior of the heat kernels, in this case.

The paper is organized as follows.  In \S~\ref{w.1}, we review the theory of elliptic wedge operators for the de Rham and  Hodge operators. In \S~\ref{wso.0}, we introduce the notion of wedge surgery differential operators and develop a corresponding pseudodifferential calculus.  This is used in \S~\ref{ur.0} to construct uniformly the resolvent of the Hodge Laplacian under a wedge surgery provided \eqref{int.5a} holds.  In \S~\ref{wshs.1}, we introduce a suitable heat space and heat calculus which are used in \S~\ref{hk.0} to provide a uniform construction of the heat kernel of the Hodge Laplacian under a wedge surgery, again provided \eqref{int.5a} holds.  Finally, in \S~\ref{wt.0}, this allows us to describe the asymptotics of the trace of the heat kernel, which combined with the resolvent construction yields our main result in \S~\ref{cm.0}.

{\bf Acknowledgements.}
P. A. was supported by Simons Foundation grant \#317883 and NSF grant DMS-1711325.
F. R. was supported by a Canada Research Chair and NSERC.
The authors are happy to acknowledge useful conversations with Chris Kottke and Rafe Mazzeo.

\numberwithin{equation}{section}
\numberwithin{theorem}{section}

\section{Elliptic wedge operators} \label{w.1}

Let $N$ be the interior of a manifold with boundary $\bar N$ and assume that $\pa \bar N$ participates in a fiber bundle
\begin{equation*}
	Z - \pa \bar N \xlra{\phi} Y.
\end{equation*}
A one-form on $\bar N$ is a {\bf wedge one-form} if its restriction to $\pa \bar N$ is conormal to the fibers of $\phi,$ or equivalently, it is in
\begin{equation*}
	\cV_{\w}^* = \{ \omega \in \CI(\bar N; T^*\bar N) \; | \;
	\omega(\xi)=0 \quad  \forall \xi\in T(\pa\bar N/Y) \}.
\end{equation*}
The Serre-Swan theorem guarantees that there is a vector bundle, which we call the {\bf wedge cotangent bundle}, and denote ${}^{\w}T^*\bar N \lra \bar N,$ together with a bundle map $j: {}^{\w} T^*\bar N \lra T^*\bar N$ such that
\begin{equation*}
	j_*\CI(\bar N;{}^{\w} T^*\bar N) = \cV_{\w}^* \subseteq \CI(\bar N; T^*\bar N).
\end{equation*}
Note that $j$ is a vector bundle isomorphism over $N.$

Let $r$ be a boundary defining function on $\bar N.$ That is, $r$ is a smooth non-negative function on $\bar N$ that vanishes exactly at $\pa \bar N$ and never vanishes to more than first order. Let $\cC(\pa \bar N)$ be a collar neighborhood of $\pa \bar N$ in $\bar N$ of the form $[0,1)_r \times \pa \bar N.$ We extend $\phi$ trivially to $\cC(\pa \bar N)$ without reflecting this in the notation. We will often make use of partial coordinates in $\cC(\pa \bar N)$ of the form $r, y, z$ wherein $y$ denotes coordinates along $Y$ and $z$ denotes coordinates along $Z.$ In these coordinates, the sections of the wedge cotangent bundle are locally generated by
\begin{equation*}
	dr, \quad r \; dz, \quad dy.
\end{equation*}
Crucially, $r\; dz$ does not vanish at $\pa \bar N$ as a section of ${}^{\w}T^*\bar N$ (although it does vanish as a section of $T^*\bar N$).

We refer to the dual bundle to ${}^{\w} T^*\bar N$ as the {\bf wedge tangent bundle} and denote it ${}^{\w} T\bar N.$ In local coordinates, its sections are spanned by
\begin{equation*}
	\pa_r, \quad \tfrac1r \pa_z, \quad \pa_y,
\end{equation*}
and as before a key fact is that $\tfrac1r \pa_z$ is not singular at $\{r=0\}$ as a section of ${}^{\w} T\bar N.$

We point out that these bundles are rescaled versions of the edge tangent bundle and edge cotangent bundle of Mazzeo \cite{maz91}
\begin{equation*}
	{}^{\w} T^*\bar N = r {}^e (T^*\bar N), \quad
	{}^{\w} T\bar N = \tfrac1r ({}^e T\bar N).
\end{equation*}
We will often make use of geometric objects related to $b$-geometry \cite{MelroseAPS}. The $b$-tangent bundle is the edge tangent bundle with respect to the trivial boundary fiber bundle \linebreak $\pa \bar N \lra \pt.$ The $b$-vector fields are the vector fields that are tangent to the boundary.

\begin{definition} A {\bf wedge metric} is a bundle metric on the wedge tangent bundle. A wedge metric is of {\bf product-type} if in some collar neighborhood $\cU$ it takes the form
\begin{equation*}
	dr^2 + r^2 g_Z + \phi^*g_Y,
\end{equation*}
where $g_Z + \phi^*g_Y$ is a Riemannian submersion metric on $\pa \bar N$ (neither $g_Z$ nor $g_Y$ is allowed to depend on $r$).   Such a metric induces  a decomposition
$\left.{}^{\w}T^*\bar N\right|_{\cU} = {}^{\w}T^*_H\bar N\oplus {}^{\w}T^*_V\bar N$ in terms of horizontal and vertical forms with respect to the fibration $\cU\to Y$ induced by $\phi:\pa \bar N\to Y$.
An {\bf even wedge metric} is a wedge metric differing from such a choice of product-type metric by elements of 
$r^2\CI(\cU;{}^{\w}T_H^*\bar N\otimes {}^{\w}T_H^*\bar N)$ and $r^2\CI(\cU;{}^{\w}T_V^*\bar N\otimes {}^{\w}T_V^*\bar N)$  having only even powers of $r$ in their expansion at $\pa \bar N$.  Notice that the notion of eveness  is with respect to the wedge cotangent bundle, so for instance $r^2(dy\otimes dz)=r(dy\otimes rdz)$ is not an even term in the expansion.  More generally, we say that a wedge metric is \textbf{even to order $\ell$} if it differs from an even metric by an element of $r^{\ell}\CI(\bar N; {}^{\w}T^*\bar N\otimes {}^{\w}T^*\bar N)$.  Finally, we say that a wedge metric is \textbf{exact} if it differs from a product-type wedge metric by an element of $r^{2}\CI(\bar N; {}^{\w}T^*\bar N\otimes {}^{\w}T^*\bar N)$.
\label{ewm.1}\end{definition}

Sections of ${}^e T\bar N$ are known as {\bf edge vector fields} and are locally spanned by $r\pa_r,$ $r\pa_y,$ $\pa_z.$ The enveloping algebra of the edge vector fields consists of the {\bf edge differential operators},
\begin{equation*}
	P \in \Diff^m_e(N) \iff
	\text{in local coordinates, }
	P = \sum_{j+|\alpha|+|\beta|\leq m} 
		a_{j,\alpha, \beta}(r,y,z) (r\pa_r)^j (r\pa_y)^{\alpha}\pa_z^\beta,
\end{equation*}
where $\alpha$ and $\beta$ are multi-indices and each $a_{j,\alpha,\beta}$ is a smooth function. Edge differential operators between sections of vector bundles $E,$ $F,$ $\Diff^*_e(N;E,F),$ are defined in the same way but with $a_{j,\alpha,\beta}$ a local section of the homomorphism bundle between $E$ and $F.$

The (principal) symbol of an edge differential operator is the section over the edge cotangent bundle obtained, e.g., by replacing $r\pa_r,$ $r\pa_y,$ and $\pa_z$ by dual variables on ${}^{e}T^*\bar N$, see \cite{maz91}. Ellipticity refers to the invertibility of this symbol off of the zero section.

A differential operator $P$ on $\bar N$ is a {\bf wedge differential operator} of order $m$ if $r^mP$ is an edge differential operator of order $m,$
\begin{equation*}
	\Diff^m_{\w}(\bar N;E,F) = r^{-m}\Diff^m_{e}(\bar N;E,F).
\end{equation*}
There is a corresponding notion of symbol and ellipticity.

Beyond the symbol, edge and wedge differential operators have a second model operator at the boundary. This is known as the normal operator and consists, for each point $p \in Y,$ of the operator $N_y(r^m P)$ on the model space $\bbR^+_r \times T_pY \times Z_p$ obtained by freezing coefficients at $p$ (see for instance \cite{maz91, Krainer-Mendoza}).
$ $\\

Let $g_{\w}$ be an exact wedge metric on $\bar N.$
Let $E \lra N$ be a Euclidean vector bundle on $\bar N$ and let $L^2_{\w}(\bar N;E)$ be the space of sections of $E$ that are square-integrable with respect $g_{\w}$ and the Euclidean metric $g_E$ of $E$.
Suppose that $P$ is an elliptic wedge differential operator of order $m,$ formally self-adjoint with respect to the $L^2$-inner product of $L^2_{\w}(\bar N;E)$.  Finally suppose that, with respect to the warped product metric $h_{\w}=ds^2+ du^2 +s^2 g_{\phi^{-1}(y)}$, where $g_{\phi^{-1}(y)}$ is some choice of metric on the fiber $\phi^{-1}(y)$,  the normal operator 
\begin{equation}
N_y(r^mP)\; \mbox{is injective when acting on} \; s^{a}L^2_{h_{\w}}(\bbR^+_s \times \bbR^h_u\times \phi^{-1}(y))\quad \forall y\in Y, \; \forall a\in [0,m].
\label{hyp.1}\end{equation}
   Before proceeding further in studying the properties of such an operator $P$, let us immediately mention two important examples that the reader should keep in mind.   

\begin{proposition}
If $F\to \bar N$ is a flat vector bundle on $\bar N$ with a Euclidean metric $g_{F}$ not necessarily compatible with the flat connection of $F$, then the Hodge Laplacian $\Delta_{\w}$ in $r^{-2}\Diff^2_e(\bar N;E)$ and the de Rham operator $\eth_{\w}\in r^{-1}\Diff^1_e(\bar N;E)$, where $E= \Lambda^*({}^{\w}T^*\bar N)\otimes F$, are formally self-adjoint elliptic wedge operators.  More importantly, $N_y(r\eth_{\w})$ is injective when acting on $s^{a}L^2_{h_{\w}}(\bbR^+_s \times \bbR^h_u\times \phi^{-1}(y))$ for all $a\ge 0$ provided 
\begin{equation}
H^{\frac{v}2}(\phi^{-1}(y);F)=\{0\} \quad  \mbox{and} \quad \Spec(\eth^2_{\dR,Z_y})\cap(0,1)=\emptyset,
\label{cond.1}\end{equation}
where $v$ is the dimension of the fibers of $\phi: \pa\bar N\to Y$.  
Similarly, $N_y(r^2\Delta_{\w})$ is injective when acting on $s^{a}L^2_{h_{\w}}(\bbR^+_s \times \bbR^h_u\times \phi^{-1}(y))$ for all $a\ge 0$ provided
\begin{equation}
H^{\frac{v+j}2}(\phi^{-1}(y);F)=\{0\} \; \forall j\in\{-2,-1,0,1,2\}\quad  \mbox{and} \quad \Spec(\eth^2_{\dR,Z_y})\cap(0,4)=\emptyset.
\label{cond.2}\end{equation} 
\label{w.1a}\end{proposition}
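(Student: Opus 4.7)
The statement splits naturally into three parts: (i) showing $\eth_{\w}\in r^{-1}\Diff^{1}_{e}$ and $\Delta_{\w}\in r^{-2}\Diff^{2}_{e}$ and that both are elliptic; (ii) formal self-adjointness; and (iii) injectivity of the two normal operators on the weighted $L^{2}$-scales. For (i) and (ii), I would work in a tubular neighborhood of $\pa\bar N$ with a product-type wedge metric, write $\eth_{\w}=d+\delta_{\w}$ in the $g_{\w}$-orthonormal coframe $(dr,\phi^{*}e^{\alpha},rf^{\mu})$, and verify directly that each coefficient has the expected weight in $r$: the horizontal and radial terms contribute to $\Diff^{1}_{e}$, while the contributions coming from the vertical wedge products $f^{\mu}\wedge$ and interior products $\iota_{(f^{\mu})^{\sharp}}$ each carry a single factor of $r^{-1}$. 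Ellipticity then follows by identifying the $e$-principal symbol of $r\eth_{\w}$ (respectively $r^{2}\Delta_{\w}$) with Clifford multiplication (respectively the squared fiber norm function) on ${}^{\w}T^{*}\bar N$, which is invertible off the zero section. Formal self-adjointness on $\CI_{c}(N;E)$ is automatic from the construction of $\delta_{\w}$ as the $L^{2}_{\w}$-adjoint of $d$.

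For (iii), the plan is to compute $N_{y_{0}}(r\eth_{\w})$ at $y_{0}\in Y$ by freezing coefficients to obtain the de Rham operator on $\bbR^{+}_{s}\times\bbR^{h}_{u}\times Z_{y_{0}}$ equipped with the warped product metric $h_{\w}$ and the flat trivialization of $F|_{Z_{y_{0}}}$. A partial Fourier transform in $u$ produces a suspended family $\widetilde{N}_{y_{0}}(\eta)$, $\eta\in\bbR^{h}$, of wedge operators on the exact cone $C(Z_{y_{0}})=\bbR^{+}_{s}\times Z_{y_{0}}$. Decomposing $E$-valued forms on $C(Z_{y_{0}})$ into pieces containing $ds$ and not containing $ds$, and expanding the $Z_{y_{0}}$-dependence in an orthonormal basis of eigenforms of $\eth^{2}_{\dR,Z_{y_{0}}}$, the operator reduces within each eigencomponent to a coupled Bessel-type $2\times 2$ system in $s$ whose coefficients depend on the eigenvalue $\lambda^{2}$, the form degree $j$ on $Z_{y_{0}}$, and $|\eta|^{2}$.

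With this reduction in hand, the injectivity on $s^{a}L^{2}_{h_{\w}}$ reduces to indicial-root arithmetic. For $\eta\neq 0$, the zeroth-order matrix $i\,\gamma(du)\cdot\eta$ makes the operator symbol globally invertible on $\bbR^{+}\times Z_{y_{0}}$, from which injectivity can be obtained by a direct positivity/scaling argument; so it suffices to handle $\eta=0$. For $\eta=0$ and eigencomponents with $\lambda^{2}\neq 0$, the Bessel indicial roots at $s=0$ and at $s=\infty$ have real parts outside the strip associated to weights $a\in[0,m]$ precisely when $\Spec(\eth^{2}_{\dR,Z_{y_{0}}})\cap(0,1)=\emptyset$ for $\eth_{\w}$ (respectively $\cap(0,4)=\emptyset$ for $\Delta_{\w}$), and hence no $L^{2}$ radial solution is produced. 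The $\lambda=0$ component corresponds to radial profiles paired with $\eth_{\dR,Z_{y_{0}}}$-harmonic forms, whose kernel computes $H^{*}(Z_{y_{0}};F)$; a form-degree count shows that $L^{2}$ radial solutions can occur only in the middle degree $v/2$ for $\eth_{\w}$ and in the five-degree window $v/2+j$, $j\in\{-2,-1,0,1,2\}$, for $\Delta_{\w}$, precisely the cohomology groups assumed to vanish. I expect the main technical obstacle to be this $\lambda=0$ step: since $g_{F}$ is not assumed compatible with the flat connection, $\eth_{\dR,Z_{y_{0}}}$ is not self-adjoint in the usual Hodge sense, and one must identify its kernel with $H^{*}(Z_{y_{0}};F)$ through the non-unitary Hodge decomposition associated with the pair $(g_{Z_{y_{0}}},g_{F})$, which requires a careful accounting of the bundle $E=\Lambda^{*}({}^{\w}T^{*}\bar N)\otimes F$ restricted to the cone.
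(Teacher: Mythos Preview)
Your outline for (i) and (ii) is fine and matches what the paper dismisses as ``clear.'' The problem is in (iii), where your $\eta$-dichotomy is backwards. You claim that for $\eta\neq 0$ a positivity/scaling argument gives injectivity for free, so that the hypotheses \eqref{cond.1} and \eqref{cond.2} are only needed at $\eta=0$. In fact it is the other way around. After the Fourier transform in $u$, the slice $\eta=0$ has measure zero in the dual variable and therefore contributes nothing to injectivity on $s^{a}L^{2}_{h_{\w}}(\bbR^{+}_{s}\times\bbR^{h}_{u}\times Z_{y})$; moreover, the $\eta=0$ operator is $\eth_{C(Z_{y})}$ on the infinite exact cone, whose separated solutions are pure powers $s^{\alpha}$, and no pure power lies in $s^{a}L^{2}((0,\infty),s^{v}\,ds)$ for any $a$. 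It is precisely for $\eta\neq 0$ that the analysis has content: the radial ODE becomes a modified Bessel equation, the only candidate $s^{a}L^{2}$ solution is $K_{\nu}(|\eta|s)$ (forced by exponential behavior at $s\to\infty$), and whether $K_{\nu}$ lies in $s^{a}L^{2}$ near $s=0$ is controlled entirely by the Bessel index $\nu$, i.e., by the indicial roots. The conditions \eqref{cond.1} are exactly what push every $\nu$ out of the dangerous range. Your proposed positivity argument, made honest, would require integrating $\langle \Delta_{C(Z_{y})}u,u\rangle$ by parts with no boundary contribution at $s=0$; controlling that boundary term leads you straight back to the same indicial-root constraint, so the shortcut is circular.

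The paper does not carry any of this out: it simply cites \cite[Lemma~5.5]{ALMP:Witt}, whose proof \emph{is} the decomposition you describe (Fourier in $u$, link eigendecomposition, Bessel analysis), but done uniformly in $\eta$ and organized by the link eigenvalue $\lambda$ rather than by $\eta$. For $\Delta_{\w}$ the paper also takes a shorter route than yours: writing $\Delta_{\w}=\eth_{\w}r^{-1}(r\eth_{\w})$, injectivity of $N_{y}(r^{2}\Delta_{\w})$ on $s^{a}L^{2}$ for $a\geq 0$ reduces to injectivity of $N_{y}(r\eth_{\w})$ on the enlarged range $a\geq -1$, which the same ALMP argument delivers once one inputs the stronger hypothesis \eqref{cond.2}. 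This avoids redoing any second-order analysis. Finally, your worry about $g_{F}$ not being compatible with the flat connection is not a genuine obstacle: $\eth_{\dR,Z_{y}}$ is still formally self-adjoint for the chosen inner products, and ordinary Hodge theory on the closed fiber $Z_{y}$ identifies $\ker\eth_{\dR,Z_{y}}^{2}$ with $H^{*}(Z_{y};F)$ regardless.
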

\begin{proof}
Ellipticity and formal self-adjointness are clear.   The injectivity of $N_y(r\eth_{\w})$ is a direct consequence of the proof \cite[Lemma~5.5]{ALMP:Witt}.  Notice that technically speaking, the statement of  \cite[Lemma~5.5]{ALMP:Witt} is only $a\in (0,1)$, but the proof applies without change to any $a\ge 0$.  For the injectivity of $N_y(r^2\Delta_{\w})$,  since $\Delta_{\w}=\eth_{\w}^2= \eth_{\w}r^{-1}(r\eth_{\w})$, it suffices to have that $N_y(r\eth_{\w})$ is injective  on $s^{a}L^2_{h_{\w}}(\bbR^+_s \times \bbR^h_u\times \phi^{-1}(y))$ for all $a\ge -1$, which can be shown by applying the proof of \cite[Lemma~5.5]{ALMP:Witt} using \eqref{cond.2} instead of \eqref{cond.1}.

\end{proof}

The operator $P$ has two canonical closed extensions  from $\CIc(\bar N\setminus \pa \bar N;E)$  in $L^2_{\w}(\bar N;E)$, namely the maximal extension
\begin{equation*}
	\cD_{\max}(P) = \{ u \in L^2_{\w}(\bar N;E) : P u \in L^2_{\w}(\bar N;E) \}
\end{equation*}
and the minimal extension
$$
\cD_{\min}(P) = \{ u \in L^2_{\w}(\bar N;E) : \exists \; (u_n) \subseteq
	\CIc(\bar N\setminus \pa \bar N;E) \\
	\Mst u_n \xrightarrow{L^2_{\w}} u \Mand Pu_n \text{ is $L^2_{\w}$-Cauchy} \}.
$$
Clearly, $\cD_{\min}(P)\subset \cD_{\max}(P)$.  We will be particularly interested in finding conditions that ensure that $\cD_{\min}(P)= \cD_{\max}(P)$, that is, that $P$ is essentially self-adjoint.

Since $L^2_{\w}(\bar N;E)= r^{-\frac{v+1}2}L^2_b(\bar N;E)$, where $v$ is the dimension of the fibers of $\phi:\pa \bar N\to Y$ and $L^2_b(\bar N;E)$ is the space of square integrable sections with respect to the $b$-density \linebreak $\nu_b= r^{-v-1}dg_{\w}$, it is useful to consider the related wedge operator $\tP= r^{\frac{v+1}2}Pr^{-\frac{v+1}2}$. Instead of the Sobolev spaces defined in terms of the metric $g_{\w}$, it is more useful to consider weighted edge Sobolev spaces, more precisely we will work with
$$
           H^{m}_{\w}(\bar N;E):= r^{-\frac{n}2}H^m_e(\bar N;E)
$$
where $n=\dim \bar N$ and $H^m_e(\bar N;E)$ is the standard Sobolev space defined in terms of the edge metric $g_e= \frac{g_{\w}}{r^2}$.

Recall from \cite{maz91} that for each $y\in Y$, the indicial family of $r^m\tP$ at $y$ is the family of operators given by
\begin{equation}
   I_y(r^m\tP,\lambda):=  \left. \left(r^{-\lambda}(r^m\tP) r^{\lambda}\right)\right|_{\phi^{-1}(y)}, \quad \lambda\in \bbC.
\label{indf.1}\end{equation}
Let us denote by $\spec_{b,y}(r^m\tP)$ the sets of $\lambda\in \bbC$ for which $I_y(r^m\tP,\lambda)$ is not invertible.
\begin{lemma}
If \eqref{hyp.1} holds, then for any $a\in [0,m]$ such that $a\notin \Re(\spec_{b,y}(r^m\tP))$ for all $y\in Y$, the map 
\begin{equation}
P: r^{a} H^m_{\w}(\bar N;E)\to r^{a-m}L^2_{\w}(\bar N;E)
\label{w.2}\end{equation} is essentially injective.  Furthermore, if we know also that $m-a\notin \Re(\spec_{b,y}(r^m\tP))$ for all $y\in Y$, then the map is actually Fredholm.   \label{lem:w.1}\end{lemma}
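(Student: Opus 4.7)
The plan is to construct a parametrix for $P$ within the edge pseudodifferential calculus of Mazzeo \cite{maz91}. Since $P=r^{-m}(r^m P)$ with $r^m P\in\Diff^m_e(\bar N;E)$, it is equivalent to analyze the mapping properties of the edge operator $r^mP$ on suitably weighted edge Sobolev spaces. The symbolic ellipticity of $r^mP$ (equivalent to that of $P$ as a wedge operator) yields a small edge parametrix $Q_\sigma$ satisfying
$$Q_\sigma (r^m P) = \Id - R_\sigma, \qquad (r^m P) Q_\sigma = \Id - R_\sigma',$$
with $R_\sigma,R_\sigma'$ smoothing in the small edge calculus. These remainders are smoothing in the interior but their Schwartz kernels extend nontrivially up to $\pa\bar N$, so they are not yet compact on weighted edge Sobolev spaces.

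The next step is to improve the parametrix near the boundary by inverting the normal operator. By hypothesis \eqref{hyp.1}, $N_y(r^m P)$ is injective on $s^aL^2_{h_{\w}}(\bbR^+_s\times\bbR^h_u\times\phi^{-1}(y))$ for every $y\in Y$. The indicial condition $a\notin\Re(\spec_{b,y}(r^m\tP))$ upgrades this injectivity to invertibility of $N_y(r^mP)$ on the weighted $L^2$ space, since the indicial roots are precisely the obstruction to extending the inverse of the Mellin-transformed family to the line $\Re\lambda=a$. Combining these fiberwise inverses into a boundary parametrix $G_b$ in the full edge calculus and adding it to $Q_\sigma$ produces a refined parametrix $Q$ with
$$Q(r^m P) = \Id - R,$$
where $R$ is a residual edge operator carrying enough decay at $\pa\bar N$ to be compact on $r^a H^m_\w(\bar N;E)$. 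Atkinson's theorem then gives that $P$ has closed range and finite-dimensional kernel on $r^a H^m_\w(\bar N;E)$, which is the statement of essential injectivity.

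For the Fredholm property under the additional assumption $m-a\notin\Re(\spec_{b,y}(r^m\tP))$, one constructs a right parametrix by the same procedure applied to the formal adjoint. Since $P$ is formally self-adjoint on $L^2_\w(\bar N;E)$, the $L^2_\w$ pairing identifies the dual of the source space with an $r^{m-a}$-weighted space on which $P$ acts; the second indicial condition then supplies invertibility of the relevant normal operator at the dual weight, and hence a right parametrix modulo compacts. Combining left and right parametrices yields the Fredholm property. The main obstacle is the second step: one must verify that the inverted family of normal operators patches into a genuine boundary parametrix in the edge calculus, and that its composition with $Q_\sigma$ produces a remainder with a polyhomogeneous expansion at $\pa\bar N$ whose exponents are compatible with compactness on $r^a H^m_\w(\bar N;E)$. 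The hypothesis $a\notin\Re(\spec_{b,y}(r^m\tP))$ is precisely what prevents new terms from appearing in this composition at orders that would obstruct compactness, a delicate application of the composition theorem for edge pseudodifferential operators.
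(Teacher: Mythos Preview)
Your approach is correct and matches the paper's, which simply invokes the proof of \cite[Theorem~6.1]{maz91} for essential injectivity and then uses formal self-adjointness of $P$ to identify the adjoint of \eqref{w.2} with $P:r^{m-a}H^m_{\w}\to r^{-a}L^2_{\w}$, applying the same result at weight $m-a$ to conclude Fredholmness. One small overstatement: the indicial condition $a\notin\Re(\spec_{b,y}(r^m\tP))$ does not by itself upgrade injectivity of $N_y(r^mP)$ to two-sided invertibility on the weighted space; Mazzeo's construction extracts only a \emph{left} inverse of the normal operator from its injectivity, and the indicial condition is what controls the exponents in the boundary expansion of the parametrix so that the remainder is compact on the given weighted Sobolev space---but since you only use a left parametrix $Q$ with $Q(r^mP)=\Id-R$, this does not affect the argument.
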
  
\begin{proof}
Since for $a\in [0,m]$,  $\bar N_y(r^mP)$ is injective when acting on $s^{a}L^2(\bbR^+_s \times \bbR^h_u\times \phi^{-1}(y))$ for all $y\in Y$, we know from the proof of \cite[Theorem~6.1]{maz91} that the map \eqref{w.2} is essentially injective if $a\notin \Re(\spec_{b,y}(r^m\tP))$ for all $y\in Y$, that is, its range is closed and its nullspace is finite dimensional.  On the other hand, since $P$ is essentially self-adjoint, the adjoint of the map \eqref{w.2} is
$$
P: r^{m-a} H^m_{\w}(\bar N;E)\to r^{-a}L^2_{\w}(\bar N;E),
$$    
which again by the proof of \cite[Theorem~6.1]{maz91} is essentially injective if $m-a\notin \Re(\spec_{b,y}(r^m\tP))$ for all $y\in Y$.  Thus, if $\{a, m-a\}\cap \Re(\spec_{b,y}(r^m\tP))=\emptyset$ for all $y\in Y$, the map \eqref{w.2} is essential injective with finite dimensional cokernel, that is, it is Fredholm. 
\end{proof}

This result gives some information about the maximal domain of $P$.

\begin{lemma}
If there exists $\epsilon\in (0,m)$ such that $(0,\epsilon]\cap \Re(\spec_{b,y}(r^m\tP))=\emptyset$ for all $y\in Y$ and if \eqref{hyp.1} holds, then $\cD_{\max}(P)\subset r^{\epsilon} H^m_{\w}(\bar N;E).$
\label{w.3}\end{lemma}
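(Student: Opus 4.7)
The plan is to combine interior elliptic regularity for wedge operators with a Mazzeo-style edge parametrix in order to upgrade the boundary decay of elements of $\cD_{\max}(P)$. Let $u \in \cD_{\max}(P)$, so $u,Pu \in L^2_{\w}(\bar N;E)$. By wedge ellipticity of the principal symbol, $u$ lies locally in $H^m_{\w}$ on the open manifold $N$, so only the behavior near $\pa \bar N$ is at issue. Multiplying by a cutoff supported in the collar $\cC(\pa\bar N)$ and equal to $1$ near $\pa \bar N$, it suffices to show that the corresponding boundary piece lies in $r^\epsilon H^m_{\w}(\bar N;E)$.

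The two hypotheses of the lemma are precisely what is needed to build a two-sided Mazzeo edge parametrix $Q$ for $P$ with the mapping property
$$ Q\colon L^2_{\w}(\bar N;E) \longrightarrow r^\epsilon H^m_{\w}(\bar N;E), \qquad PQ-I,\ QP-I\colon L^2_{\w} \longrightarrow r^\epsilon H^m_{\w}. $$
Indeed, the injectivity of the normal operator $N_y(r^m P)$ on $s^a L^2_{h_{\w}}(\bbR^+_s\times\bbR^h_u\times\phi^{-1}(y))$ for every $a\in [0,m]$ and every $y\in Y$ (hypothesis \eqref{hyp.1}) allows the normal family to be inverted uniformly through the entire required weight range, while the absence of indicial roots with real part in $(0,\epsilon]$ lets the parametrix shift the weight from $r^0$ to $r^\epsilon$ without encountering any Mellin residues. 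This is the construction underlying the proof of \cite[Theorem~6.1]{maz91} that was already invoked in the proof of Lemma \ref{lem:w.1}; here one needs only the one-sided avoidance of indicial roots, not the two-sided version that was required for Fredholmness.

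Armed with $Q$, write $u = Q(Pu) + (I-QP)u$. Since $Pu \in L^2_{\w}$, the first term lies in $r^\epsilon H^m_{\w}$ by the mapping property of $Q$, and since $u \in L^2_{\w}$ and $I-QP$ sends $L^2_{\w}$ into $r^\epsilon H^m_{\w}$, so does the second term. Hence $u \in r^\epsilon H^m_{\w}(\bar N;E)$, as claimed. The main obstacle in this plan is the parametrix construction itself: one has to verify that Mazzeo's edge calculus, given the inputs \eqref{hyp.1} and the clean indicial strip $(0,\epsilon]$, indeed produces remainders $PQ-I$ and $QP-I$ that are smoothing enough to map $L^2_{\w}$ into $r^\epsilon H^m_{\w}$. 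Everything else is formal bookkeeping once that construction is in hand.
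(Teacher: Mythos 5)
Your plan — interior ellipticity plus an edge parametrix to upgrade boundary decay, followed by the decomposition $u = Q(Pu) + (I-QP)u$ — is structurally the same argument the paper gives, with $Q$ playing the role of the paper's left parametrix $G$. The part that is formal and works exactly as you say is the first piece: since $Pu = f \in L^2_{\w} \subset r^{\epsilon-m}L^2_{\w}$ and $Q$ maps $r^{\epsilon-m}L^2_{\w}$ into $r^\epsilon H^m_{\w}$, the term $Q(Pu)$ does land in $r^\epsilon H^m_{\w}$.

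The gap is in the treatment of $(I - QP)u$. You assert that $I - QP$ maps $L^2_{\w}$ into $r^\epsilon H^m_{\w}$ and flag this as "the main obstacle," but this is not something the parametrix construction hands you for free — it is, in effect, the entire content of the lemma. In the essentially injective regime produced by Lemma~\ref{lem:w.1}, the best one gets from the edge calculus is $QP = \Id - \Pi$ where $\Pi$ is a finite-rank projection onto the nullspace of the map $P\colon r^\epsilon H^m_{\w}\to r^{\epsilon-m}L^2_{\w}$. The claim that $\Pi u \in r^\epsilon H^m_{\w}$ when $u$ is merely in $L^2_{\w}$ requires one to know that the nullspace elements themselves have the extra decay $r^\epsilon$, and that knowledge is exactly what the indicial-root hypothesis $(0,\epsilon]\cap \Re(\spec_{b,y}(r^m\tP))=\emptyset$ supplies. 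The paper closes this by first invoking \cite[Corollary~3.24]{maz91} to show $\Pi u \in H^\infty_{\w}$, and then \cite[Theorem~7.3]{maz91} to promote this to $\Pi u \in r^\delta H^\infty_{\w}$ for any $\delta$ in the clean indicial strip, in particular $\delta = \epsilon$. Without that step your argument is circular: you are assuming precisely the decay you set out to prove. The fix is short — identify the remainder with the finite-rank kernel projection and cite Mazzeo's regularity and decay theorems for it — but it does need to be said.
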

\begin{proof}
For $\epsilon$ as in the statement of the lemma, we know by Lemma~\ref{lem:w.1} that the map 
\begin{equation}
       P: r^{\epsilon} H^m_{\w}(\bar N;E)\to r^{\epsilon-m}L^2_{\w}(\bar N;E)
\label{w.4}\end{equation}
is essentially injective, so there is a parametrix
$$
      G: r^{\epsilon-m}L^2_{\w}(\bar N;E)\to r^{\epsilon} H^m_{\w}(\bar N;E)
$$
      such that  $GP=\Id-\Pi$ where $\Pi$ is some projection on the nullspace of \eqref{w.4}.  Now let $u$ be any element of $\mathcal{D}_{\max}(P)$, so that $u\in L^2_{\w}(\bar N;E)$ and $P u=f\in L^2_{\w}(\bar N;E)$.  We need to show that $u\in r^{\epsilon}H^m_{\w}(\bar N,E)$.  Applying $G$ to the equation $P u=f$ gives
$$
         (\Id-\Pi)u=Gf  \quad \Longrightarrow u= Gf + \Pi u.
$$  
Since $Gf\in r^{\epsilon}H^m_{\w}(\bar N,E)$, it remains to show that $\Pi u \in r^{\epsilon}H^m_{\w}(\bar N,E)$.   By \cite[Corollary~3.24]{maz91}, we know that $$ \Pi u\in H^{\infty}_{\w}(\bar N;E)= \bigcap_{\ell\ge 0} H^{\ell}_{\w}(\bar N;E).$$
Moreover, by the proof of \cite[Theorem~7.3]{maz91}, we know that in fact $\Pi u\in r^{\delta}H^{\infty}_{\w}(\bar N;E)$ for any $\delta>0$ such that $(0,\delta]\cap \Re(\spec_{b,y}(r^m\tP))=\emptyset$ for all $y\in Y$.  In particular, we can take  $\delta=\epsilon$ so that $\Pi u \in r^{\epsilon}H^m_{\w}(\bar N,E)$.

\end{proof}
  
This gives us a simple condition that ensures that $P$ is essentially self-adjoint.  

\begin{proposition}
The operator $P$ is essentially self-adjoint provided 
$$
(0,m)\cap \Re(\spec_{b,y}(r^m\tP))=\emptyset \quad \forall y\in Y.
$$  
\label{w.5}\end{proposition}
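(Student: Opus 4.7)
The plan is to establish $\cD_{\min}(P) = \cD_{\max}(P)$, which is equivalent to essential self-adjointness of $P$ on $\CIc(\bar N\setminus \pa\bar N;E)$. The hypothesis allows Lemma~\ref{w.3} to be applied with any $\epsilon \in (0,m)$, yielding
\[ \cD_{\max}(P) \ \subseteq \ r^{\epsilon}H^{m}_{\w}(\bar N;E) \Mforevery \epsilon \in (0,m). \]
Thus every $u \in \cD_{\max}(P)$ has boundary decay rate arbitrarily close to the order $m$ of $P$.

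Given such $u$, I would approximate it by cutoffs. Fix $\chi \in \CI(\bbR)$ with $\chi \equiv 0$ on $(-\infty,1]$ and $\chi \equiv 1$ on $[2,\infty)$, and set $u_{\delta} := \chi(r/\delta)\,u$. Each $u_{\delta}$ is compactly supported in $\bar N \setminus \pa \bar N$, where $P$ restricts to a classical elliptic operator, so $u_{\delta}$ can be approximated in the graph norm of $P$ by elements of $\CIc(\bar N \setminus \pa\bar N;E)$ via standard mollification. It therefore suffices to verify that $u_{\delta} \to u$ and $Pu_{\delta} \to Pu$ in $L^2_{\w}$ as $\delta \searrow 0$. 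The first convergence is immediate from dominated convergence, and decomposing $Pu_{\delta} = \chi(r/\delta)\,Pu + [P,\chi(r/\delta)]\,u$ reduces the second to the vanishing of $[P,\chi(r/\delta)]\,u$ in $L^2_{\w}$.

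The commutator estimate is the heart of the argument. Working in the collar coordinates $(r,y,z)$ and writing locally $P = r^{-m}\sum a_{j,\alpha,\beta}(r,y,z)(r\pa_r)^{j}(r\pa_y)^{\alpha}\pa_z^{\beta}$, one observes that only the $r\pa_r$ factors contribute to the commutator, since $\chi(r/\delta)$ depends on $r$ alone. The identity $(r\pa_r)^{k}[\chi(r/\delta)] = g_k(r/\delta)$ with $g_k\in\CI(\bbR)$ bounded and supported in $[1,2]$ then shows that $[P,\chi(r/\delta)]$ is an operator whose coefficients are bounded uniformly in $\delta$ and supported in $\{\delta \le r \le 2\delta\}$. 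Combining this localization with the decay $u \in r^{\epsilon}H^{m}_{\w}$ for $\epsilon$ chosen in $(0,m)$, and applying an appropriate Hardy-type estimate to control the $L^2_{\w}$-mass of $u$ and its wedge derivatives on the shrinking collar, one obtains $\|[P,\chi(r/\delta)]\,u\|_{L^2_{\w}} \to 0$, completing the argument.

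The main technical obstacle is the commutator estimate: the $r^{-m}$ factor in $P$, singular near $\pa\bar N$, must be offset by the boundary regularity of $u$ supplied by Lemma~\ref{w.3} together with the shrinking support of the commutator. The spectral hypothesis enters precisely to provide enough flexibility in the choice of $\epsilon$ for this balance to close up, and indeed a sharper quantification would yield $\cD_{\max}(P) \subseteq \cD_{\min}(P)$ directly in terms of weighted Sobolev inclusions, consistent with the general picture that non-trivial self-adjoint extensions only appear when indicial roots populate the critical strip $(0,m)$.
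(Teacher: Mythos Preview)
Your overall strategy has a genuine gap in the commutator estimate. The problem is that the hypothesis only excludes indicial roots from the \emph{open} interval $(0,m)$; when $m$ itself is an indicial root (which is permitted), an element $u\in\cD_{\max}(P)$ can have leading behaviour $u\sim r^{m-(v+1)/2}\phi(y,z)$ near $r=0$, so that $u\in r^{\epsilon}H^m_{\w}$ for every $\epsilon<m$ but $u\notin r^mH^m_{\w}$. For such $u$, the edge operators $Q_k$ appearing in $[P,\chi(r/\delta)]=r^{-m}\sum_k g_k(r/\delta)Q_k$ satisfy $Q_ku\sim r^{m-(v+1)/2}$ as well, and then
\[
\bigl\|r^{-m}g_k(r/\delta)\,Q_ku\bigr\|_{L^2_{\w}}^2
\;\sim\;
\int_{\delta}^{2\delta} r^{-(v+1)}\,|g_k(r/\delta)|^2\,r^{v}\,dr
\;=\;\int_{1}^{2} s^{-1}|g_k(s)|^2\,ds,
\]
a positive constant independent of $\delta$. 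So the commutator does \emph{not} tend to zero in $L^2_{\w}$, and no Hardy-type inequality rescues this: the relevant Hardy bound needs exactly the borderline decay $r^m$ that is unavailable here. (A logarithmic cutoff $\chi(\log r/\log\delta)$ would supply an extra $|\log\delta|^{-1}$ factor and salvage the estimate, but that is not what you wrote.)

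The paper sidesteps this difficulty by \emph{not} attempting graph-norm approximation. Instead it takes $u_n=r^{1/n}u\in r^mH^m_{\w}$ and observes that $Pu_n\to Pu$ only in the weaker space $r^{-\epsilon}L^2_{\w}$ for every $\epsilon>0$; this is then paired against an arbitrary $v\in\cD_{\max}(P)\subset r^{\epsilon}H^m_{\w}$ to obtain $\langle Pu,v\rangle=\langle u,Pv\rangle$, and the adjoint characterization $\cD_{\min}(P)=(P,\cD_{\max})^*$ finishes the argument. The key conceptual point is that one never needs $Pu_n\to Pu$ in $L^2_{\w}$, because the test functions $v$ themselves carry compensating decay.
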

\begin{proof}
By Lemma~\ref{w.3}, we then know that 
$$
\mathcal{D}_{\max}(P)\subset \bigcap_{a\in (0,m)} r^a H^m_{\w}(\bar N;E).
$$
In particular, for any $u\in \mathcal{D}_{\max}(P)$, $u_n:= r^{\frac{1}{n}}u$ is a sequence in  $r^m H^m_{\w}(\bar N;E)$ such that 
$$
     u_n\to u \;\mbox{in} \; r^{m-\epsilon}H^m_{\w}(\bar N;E), \quad  P u_n\to P u \; \mbox{in} \;r^{-\epsilon}L^2_{\w}(\bar N;E),
$$
for all $\epsilon>0$.  In particular, $r^{\epsilon}P u_n\to r^{\epsilon}P u$ in $L^2_{\w}(\bar N;E)$.  Thus, for any $$v\in \mathcal{D}_{\max}(P)\subset \bigcap_{a\in (0,m)} r^a H^m_{\w}(\bar N;E),$$ we have that for $\epsilon\in (0,m)$, 
$$
      \langle P u_n, v\rangle_{L^2_{\w}}= \langle r^{\epsilon}P u_n, r^{-\epsilon} v\rangle_{L^2_{\w}}\to \langle r^{\epsilon}P u, r^{-\epsilon}v\rangle_{L^2_{\w}}=\langle P u, v\rangle_{L^2_{\w}}.
$$
We also have that 
$$
\langle P u_n, v\rangle_{L^2_{\w}}=\langle  u_n, P v\rangle_{L^2_{\w}}\to\langle u, P v\rangle_{L^2_{\w}},
$$
showing that $\langle P u, v\rangle_{L^2_{\w}}=\langle u, P v\rangle_{L^2_{\w}}$.  Since $v\in \mathcal{D}_{\max}(P)$ was arbitrary and since $P$ is formally self-adjoint on $L^2_{\w}(\bar N; E)$, this shows that $u\in \mathcal{D}_{\min}(P)$, that is, $\mathcal{D}_{\max}(P)\subset \mathcal{D}_{\min}(P)$.        
\end{proof}

There is also a simple condition that ensures that the minimal domain is a weighted Sobolev space.  

\begin{proposition}(\cite[Theorem~4.2]{GKM})
If $m\notin \Re(\spec_{b,y}(r^m\tP))$ for all $y\in Y$, then $\cD_{\min}(P)= r^mH^m_{\w}(\bar N;E)$.  
\label{w.6}\end{proposition}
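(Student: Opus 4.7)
The plan is to establish the two inclusions separately. For the easier direction $r^m H^m_{\w}(\bar N; E) \subset \cD_{\min}(P)$, I would approximate $u \in r^m H^m_{\w}$ by smooth compactly supported sections via a cutoff and mollification argument. Taking $\chi \in \CIc([0,\infty))$ with $\chi \equiv 1$ near $0$ and setting $\chi_\epsilon(r) = \chi(r/\epsilon)$, the sections $(1 - \chi_\epsilon) u$ converge to $u$ in $L^2_{\w}$ and $P((1-\chi_\epsilon)u) \to Pu$ in $L^2_{\w}$ as $\epsilon \to 0$. The crucial balancing is that $P = r^{-m} Q$ with $Q$ edge of order $m$, so the commutator $[P, \chi_\epsilon] u$ involves the singular factor $r^{-m}$ against $u \in r^{m} H^m_{\w}$ on the support of $\chi_\epsilon'$ where $r \sim \epsilon$; these cancel to give a contribution in $L^2_{\w}$ that vanishes with $\epsilon$. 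Mollification then produces genuinely smooth compactly supported approximants.

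For the harder direction $\cD_{\min}(P) \subset r^m H^m_{\w}(\bar N; E)$, I would exploit the characterization that $u \in \cD_{\min}(P)$ iff $u \in \cD_{\max}(P)$ and the Green-type boundary pairing $\langle Pu, v\rangle_{L^2_{\w}} - \langle u, Pv\rangle_{L^2_{\w}}$ vanishes for every $v \in \cD_{\max}(P)$. Given $u \in \cD_{\min}(P) \subset \cD_{\max}(P)$, Lemma~\ref{w.3} places $u$ into $r^\epsilon H^m_{\w}$ for some $\epsilon > 0$ (the hypothesis of Lemma~\ref{w.3} is met by compactness of $Y$ and discreteness of $\spec_{b,y}(r^m\tP)$, since $m \notin \Re(\spec_{b,y}(r^m\tP))$ together with continuous dependence on $y$ guarantees an interval $(0, \epsilon]$ uniformly disjoint from the indicial set). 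Applying the polyhomogeneous regularity theorem \cite[Theorem~7.3]{maz91} to $Pu \in L^2_{\w}$ produces an asymptotic expansion
\[
u = \sum_{\substack{\lambda, k \\ 0 < \Re \lambda < m}} r^\lambda (\log r)^k a_{\lambda,k}(y,z) + \tu, \qquad \tu \in r^m H^m_{\w},
\]
where the sum runs over indicial roots $\lambda \in \spec_{b}(r^m\tP)$ with associated logarithmic powers; here the hypothesis $m \notin \Re(\spec_{b,y}(r^m\tP))$ is exactly what ensures no obstructing $r^m$-term prevents the remainder from landing in $r^m H^m_{\w}$.

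The task then reduces to showing every coefficient $a_{\lambda, k}$ vanishes. For each pair $(\lambda_0, k_0)$ in the expansion, I would construct a test section $v \in \cD_{\max}(P)$ supported near $\pa \bar N$ with a dual leading singularity, arranged so that the boundary pairing $\langle Pu, v\rangle - \langle u, Pv\rangle$, evaluated by integration by parts combined with the asymptotic expansions of $u$ and $v$, isolates $a_{\lambda_0, k_0}$ up to a nonzero multiplicative factor. Since $u \in \cD_{\min}(P)$, this pairing vanishes, forcing $a_{\lambda_0, k_0} = 0$; iterating over all such $(\lambda, k)$ yields $u = \tu \in r^m H^m_{\w}$. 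The main obstacle is the explicit construction of matched dual sections $v$ and the verification of non-degeneracy of the induced fiberwise pairings at each $y \in Y$, which requires analysis of the indicial family $I_y(r^m \tP, \lambda)$ and its transpose together with careful bookkeeping when logarithmic terms are present; this is essentially the content of the Green's formula machinery underlying \cite[Theorem~4.2]{GKM}.
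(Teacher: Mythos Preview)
Your argument for the harder inclusion contains a genuine gap: the appeal to Lemma~\ref{w.3} is not justified by the hypothesis of the proposition. Lemma~\ref{w.3} requires $(0,\epsilon]\cap\Re(\spec_{b,y}(r^m\tP))=\emptyset$ for some $\epsilon>0$ and all $y\in Y$, whereas the proposition only assumes $m\notin\Re(\spec_{b,y}(r^m\tP))$. These are independent conditions: knowing that the single value $m$ avoids the real parts of the indicial roots says nothing about whether a right-neighborhood of $0$ does. Your stated justification (``$m\notin\Re(\spec_{b,y})$ together with continuous dependence on $y$ guarantees an interval $(0,\epsilon]$ uniformly disjoint from the indicial set'') is a non sequitur---there could well be indicial roots with real part arbitrarily close to $0$ in $(0,m)$ without the hypothesis being violated.

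Even setting this aside, the paper's route is very different and far more economical. Rather than producing a polyhomogeneous expansion of $u\in\cD_{\min}(P)$ and then annihilating each coefficient via a Green pairing against tailor-made test sections in $\cD_{\max}(P)$, the paper simply shows that the domain $r^mH^m_{\w}(\bar N;E)$ is \emph{already closed} in the graph norm of $P$. This uses only Lemma~\ref{lem:w.1} at the weight $a=m$, whose hypothesis is precisely $m\notin\Re(\spec_{b,y}(r^m\tP))$: the resulting left parametrix $Q$ with $QP=\Id-\Pi$ on $r^mH^m_{\w}$, together with finite dimensionality of $\ker_{r^mH^m_{\w}}P$, forces any graph-norm Cauchy sequence in $r^mH^m_{\w}$ to converge there. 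Since $\CIc(\bar N\setminus\pa\bar N;E)$ is dense in $r^mH^m_{\w}$ (giving $r^mH^m_{\w}\subset\cD_{\min}(P)$) and $r^mH^m_{\w}$ is a closed extension containing $\CIc$, equality follows immediately. Your expansion-and-pairing scheme is essentially the heavier machinery behind the cited \cite[Theorem~4.2]{GKM}, but in this setting it is unnecessary, and as written it rests on an unjustified invocation of Lemma~\ref{w.3}.
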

\begin{proof}
The strategy is to adapt the proof of the criterion of \cite[Proposition~3.6(2)]{Gil-Mendoza} to the wedge setting.  Clearly, since $\mathcal{C}^{\infty}_c(\bar N\setminus \pa \bar N;E)$ is dense in $r^mH^m_{\w}(\bar N;E)$, we have that  $r^mH^m_{\w}(\bar N;E)\subset \mathcal{D}_{\min}(P)\subset \mathcal{D}_{\max}(P)$, so it suffices to show that 
$$
            r^m H^m_{\w}(\bar N;E)\cap \mathcal{D}_{\max}(P)= r^mH^m_{\w}(\bar N;E)
$$
is a closed extension.  To see this, it suffices to use Lemma~\ref{lem:w.1}, by which we know that  
\begin{equation}
     P:  r^m H^m_{\w}(\bar N;E)\to L^2_{\w}(\bar N;E)
\label{w.7}\end{equation}
is essentially injective.  Indeed, let $\{u_n\}\subset r^mH^m_{\w}(\bar N;E)$ be a sequence such that $u_n\to u$ and 
$P u_n\to f$ in $L^2_{\w}(\bar N;E)$.  Since the map \eqref{w.7} is essentially injective, there exists a parametrix $Q: L^2_{\w}(\bar N;E)\to r^mH^m_{\w}(\bar N;E)$
such that 
$$
     QP= \Id -\Pi: r^mH^m_{\w}(\bar N;E)\to r^mH^m_{\w}(\bar N;E)
$$
where $\Pi: r^mH^m_{\w}(\bar N;E)\to r^mH^m_{\w}(\bar N;E)$ is some projection on the kernel of the map \eqref{w.7}.  In this case, $P u_n\to f$ implies that 
$(\Id-\Pi)u_n\to Qf$ in $r^mH^m_{\w}(\bar N;E)$, so 
$$
    \Pi u_n= -(\Id-\Pi)u_n + u_n\to v= -Qf+u \in L^2_{\w}(\bar N;E).
$$
Since $\Pi u_n\in \ker_{r^mH^m_{\w}} P$ and $\ker_{r^mH^m_{\w}}P\subset r^mH^m_{\w}(\bar N;E)\subset L^2_{\w}(\bar N;E)$ is finite dimensional, the norms of $L^2_{\w}(\bhs{\sm};E_{\sm})$ and $r^mH^m_{\w}(\bar N;E)$ are equivalent on $ \ker_{r^mH^m_{\w}} P$, so that $\Pi u_n\to v$ in $r^mH^m_{\w}(\bar N;E)$.  Hence,
$$
    u_n= \Pi u_n + (\Id-\Pi)u_n\to v+Qf \; \mbox{in} \; r^mH^m_{\w}(\bar N;E),
$$
which shows that $u=v+Qf\in r^mH^m_{\w}(\bar N;E)$, so that $r^mH^m_{\w}(\bar N;E)$ is indeed a closed extension.  \end{proof}

\section{Wedge surgery and the de Rham operator} \label{wso.0}

Let $M$ be an oriented closed manifold and $H\subseteq M$ a co-oriented hypersurface in $M$. We will define a one-parameter family of smooth Riemannian metrics on $M$, which we call \textbf{wedge surgery metrics} or \textbf{$\w,\eps$-metrics}, that degenerate as $\eps\to 0$ to a wedge metric on $M\setminus H$. In particular, if $\bar N$ is a compact manifold with boundary, applying this construction with $M$ equal to the double of $\bar N$ yields a family of metrics degenerating to a wedge metric on each copy of $\bar N$.

Throughout, we assume that $H$ is oriented with trivial normal bundle, and that $H$ is the total space of a fibration:
\begin{equation}
\xymatrix{
       Z \ar[r] & H \ar[d]^{\phi} \\
       &  Y.
}
\label{fb.1}\end{equation}
Fix once and for all a connection for $\phi$ and pick a compatible submersion metric $\phi^*g_Y+g_{H/Y}$, where $g_{H/Y}$ restricts to a metric on each fiber. To define our surgery metrics, let $c: H\times (-\delta,\delta)\to \cT\subset M$ be a tubular neighborhood and $x$ a smooth function defined near $H$  such that $c^*x: H\times (-\delta,\delta)\to (-\delta,\delta)$ is the projection on the second factor,
so that $H=\{x=0\}$ and $dx$ is nonzero along $H$. A \textbf{product type $\w,\eps$-metric} is a family of metrics on $M$, parametrized by $\eps\in(0,1]$, which are smooth down to $\eps=0$ in $M\setminus\cT$ and which, on $\cT$, have the form
\[g_{\w,\eps}=dx^2+(x^2+\eps^2)g_{H/Y}+\phi^*g_Y.\]
We will see below that these are naturally interpreted as bundle metrics on a suitable vector bundle.

\subsection{The single surgery space}

An $\w,\eps$-metric is singular at $(x,\eps)=(0,0)$, that is, at $H\times\{\eps=0\}\subseteq M\times[0,1]$. To resolve this singularity, we define the \textbf{single surgery space}
\[X_s=[M\times [0,1];H\times\{0\}],\]
where the notation denotes radial blow up as in \cite[\S2.2]{mame1}.  
As illustrated in Figure \ref{fig:singlespace}, $X_s$ has two boundary hypersurfaces, which we denote $\bhs{\bs}\cong H\times [-\pi/2,\pi/2]$ and $\bhs{\sm}\cong [M;H]$.
\begin{figure}
	\centering
	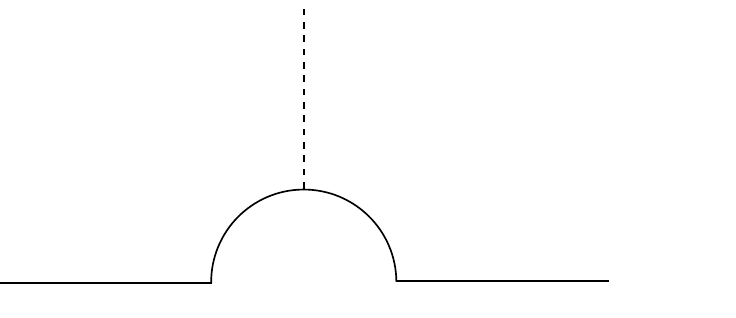
	\caption{The single surgery space $X_s.$}
	\label{fig:singlespace}
\end{figure}
Let $\beta_{s}:X_s\to M\times [0,1]_{\eps}$ be the blow-down map, and let $\pi_{\eps}:X_s\to [0,1]_{\eps}$ be the composition of $\beta_{s}$ with projection onto $[0,1]_{\eps}$.
As observed in \cite[\S~3.2]{mame1},  $\pi_{\eps}$ is a $b$-submersion, so there is a well-defined vector bundle 
\[^{\eps}TX_s:=\ker{}^{b}(\pi_{\epsilon})_*\subseteq {}^{b}TX_s,\]
where ${}^{b}(\pi_{\epsilon})_*$ is the $b$-differential of $\pi_{\epsilon}$.  
The space of \textbf{ edge surgery vector fields} is then defined by
\[\cV_{e,s}(X_s)=\{V\in C^{\infty}(X_s;^{\eps}TX_s)\; | \; V|_{\bhs{\bs}}\textrm{ is tangent to the fibers of }\phi:H\to Y\}.\]
 In local coordinates $(x,y,z,\epsilon)$,
\[\cV_{e,s}=\textrm{span}\{\rho\partial_x,\rho\partial_y,\partial_z\},\]
where $\rho=\sqrt{x^2+\eps^2}$ is a boundary defining function for $\bhs{\bs}$.  By the Serre-Swan theorem, there is a vector bundle $^{e,s}TX_s$, called the $e,s$ \textbf{tangent bundle}, whose sections are elements of $\cV_{e,s}$. The $e,s$ \textbf{cotangent bundle} is the dual $^{e,s}T^*X_s$ of $^{e,s}TX_s$, and it is spanned in local coordinates by
\[\frac{dx}{\rho}, \frac{dy}{\rho}, dz.\]
Note that the restriction of $^{e,s}T^*X_s$ to $\bhs{\sm}$ is the edge cotangent bundle $^{e}T^*\bhs{\sm}$ of Mazzeo \cite{maz91}.

The space $\cV_{e,s}(X_s)$ is closed under Lie the bracket, so is in fact Lie subalgebra of the Lie algebra of vector fields $\CI(X_s; {}^{\eps}TX_s)$.  It can also be characterized geometrically in terms of the $\w,\eps$ metric $g_{\w,\eps}$,
\begin{equation}   
  \cV_{e,s}(X_s)=\{ V\in \CI(X_s; {}^{\eps}TX_s) \; | \; \sup_{p\in X\setminus \bhs{\bs}} \frac{g_{\w,\eps}(V,V)}{\rho^2} <\infty\}.
\end{equation}
In other words, the metric $\frac{g_{\w,\eps}}{\rho^2}$ naturally extends to give a bundle metric for $^{e,s}TX_s$.  We can also associate directly a tangent bundle to $g_{\ew}$, namely the  $\ew$ \textbf{tangent bundle} $^{\ew}TX_s$ obtained by rescaling the $e, s$ tangent bundle,
\[^{\w,\eps}TX_s:=\frac{1}{\rho}\, {}^{e,s}TX_s.\]
Its dual is the $\ew$ \textbf{cotangent bundle}. In local coordinates, sections of these two bundles are spanned by
\[\{\partial_x,\partial_y,\frac{1}{\rho}\partial_z\}\textrm{ and }\{dx,dy,\rho dz\}\textrm{ respectively.}\]

A general \textbf{$\ew$-metric} is a bundle metric on ${}^{\ew}TX_s.$  A $\ew$-metric \textbf{product-type to order $\ell$} is a $\ew$-metric which differs from a product-type $\ew$-metric  by $\rho^\ell$ times a smooth section of ${}^{\ew}T^*X_s \otimes {}^{\ew}T^*X_s.$  An \textbf{exact} $\ew$-metric is a $\ew$-metric product-type to order 2.  \\

The main reason why we are not directly working with $^{\ew}TX_s$ is that, though its sections naturally correspond to vector fields, they are not a Lie subalgebra of $\CI(X_s;{}^{\eps}TX_s)$, and so we cannot directly associate a corresponding algebra of differential operators.  On the other hand, since $\cV_{e,s}(X_s)$ is a Lie algebra, we can define unambiguously its  algebras $\Diff^*_{\ee}(X_s)$  of differential operators as the universal enveloping algebra of $\cV_{e,s}(X_s)$ over $\CI(X_s)$, that is, $P\in \Diff^k_{e,s}(X_s)$ is generated by $\CI(X_s)$ and finite sums of products of at most $k$ vector fields in $\cV_{e,s}(X_s)$.  If $E\to X_s$ is a Euclidean vector bundle, then  we can define the spaces $\Diff^*_{\ee}(X_s;E)$ acting on sections of $E$ in the usual way.  One can then define the space $\Diff^k_{\ew}(X_s;E)$ of $\ew$ differential operators of order $k$ acting on sections of $E$ by
$$
     \Diff^k_{\ew}(X_s;E):= \rho^{-k}\Diff^k_{e,s}(X_s;E).
$$
 As we show below in \eqref{dR.1}, the de Rham operator for a wedge-surgery metric on $X_s$ with coefficients valued in a flat bundle $F\to M$ is an element of Diff$_{\ew}^1(X_s;E)$, where
\[E=\Lambda^*(^{\ew}T^*X_s)\otimes F.\]

There is a natural fiber bundle structure on $\bhs{\bs}$ induced from the fiber bundle structure on $H$, namely
\begin{equation}
\xymatrix{
       [-\frac{\pi}2,\frac{\pi}2]\times Z \ar[r] &  \bhs{\bs}\ar[d]^{\phi_+} \\
       &  Y,
}
\label{se.2a}\end{equation}    
where $\phi_+:= \phi \circ \beta_{s}$.   We can also define the normal bundle
\begin{equation}\label{eq:edgesurgeryconormalbundle}^{\ee}N\bhs{\bs}\to\bhs{\bs}\end{equation}
as the kernel of the natural map of $^{\ee}TX_s$ into $^{\eps}TX_s$ at the face $\bhs{\bs}$. In local coordinates, its sections are spanned over $C^{\infty}(\bhs{\bs})$ by $\rho\partial_y$, as $^{\eps}TX_s$ is spanned by $\{\rho\partial_x,\partial_y,\partial_z\}$.  \begin{lemma}
Multiplication by $\rho$ induces an isomorphism $\phi_+^*(TY)\cong {}^{\ee}\!N\bhs{\bs}$.
\label{se.3}\end{lemma}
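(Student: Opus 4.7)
The plan is to construct the map explicitly using the chosen connection and tubular neighborhood, verify that it is well-defined independent of these choices, and then check the isomorphism property in local coordinates. Given a section $V$ of $TY$ over an open set $U\subseteq Y$, I would use the chosen connection on $\phi:H\to Y$ to lift $V$ to a horizontal vector field $V^h$ on $\phi^{-1}(U)\subseteq H$, then extend $V^h$ to a smooth vector field $\tilde V$ on the tubular neighborhood $\cT\subseteq M$ by pulling back through $c:H\times(-\delta,\delta)\to\cT$ so that $\tilde V$ is independent of $x$. The product $\rho\tilde V$, lifted through $\beta_s$ to $X_s$, is then a section of the edge-surgery tangent bundle ${}^{e,s}TX_s$: in polar coordinates $(\rho,\theta,y,z)$ near $\bhs{\bs}$ with $x=\rho\sin\theta$, $\epsilon=\rho\cos\theta$, its expression in the local frame $\{\rho\partial_x,\rho\partial_y,\partial_z\}$ has smooth coefficients. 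The candidate image of $V$ is the restriction of $\rho\tilde V$ to $\bhs{\bs}$.

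Two points need to be verified. First, the restriction $(\rho\tilde V)|_{\bhs{\bs}}$ depends only on $V\circ\phi_+$, not on the auxiliary choices: a change of connection modifies $V^h$ by a vertical vector field, contributing a $\rho\cdot\partial_z$ term that vanishes at $\rho=0$ in the edge-surgery frame; a change of tubular neighborhood modifies $\tilde V$ by $x$ times a smooth vector field, contributing a $\rho x=\rho^2\sin\theta$ term which also vanishes at $\bhs{\bs}$. Thus the construction descends to a well-defined bundle homomorphism $\Psi\colon\phi_+^*TY\to{}^{e,s}TX_s|_{\bhs{\bs}}$. Second, the image lies in ${}^{e,s}N\bhs{\bs}$: under the natural map ${}^{e,s}TX_s\to{}^{\epsilon}TX_s$, where ${}^{\epsilon}TX_s$ has the local frame $\{\rho\partial_x,\partial_y,\partial_z\}$, the section $\rho\tilde V$ equals $\rho$ times a smooth section of ${}^{\epsilon}TX_s$ and hence vanishes at $\rho=0$.

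For the isomorphism property, in local coordinates $(y^i)$ on $U\subseteq Y$ we have $\Psi(\partial_{y^i})=(\rho\partial_{y^i})|_{\rho=0}$. As recorded in the text immediately before the lemma, $\{(\rho\partial_{y^i})|_{\rho=0}\}$ is a local $C^{\infty}(\bhs{\bs})$-frame for ${}^{e,s}N\bhs{\bs}$, and $\{\partial_{y^i}\}$ is a local frame for $\phi_+^*TY$, so $\Psi$ is a fiberwise linear isomorphism of rank-$\dim Y$ bundles. The only (mild) obstacle is the well-definedness step; its essence is that the rescaling built into ${}^{e,s}TX_s$ turns ambiguities in extending $V^h$ off of $H$ into higher-order corrections in $\rho$ that are invisible on $\bhs{\bs}$.
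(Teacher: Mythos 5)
Your proof is correct and follows essentially the same mechanism as the paper's: multiplication by $\rho$ sends (lifts of) $Y$-directions into ${}^{\ee}N\bhs{\bs}$, the ambiguities in lifting are vertical or $O(\rho)$ and hence die on $\bhs{\bs}$, and $\{\rho\partial_{y^i}\}$ frames the normal bundle. The paper packages your well-definedness check more abstractly, as the isomorphism $T\bhs{\bs}/T(\bhs{\bs}/Y)\cong{}^{\ee}N\bhs{\bs}$ combined with the short exact sequence $0\to T(\bhs{\bs}/Y)\to T\bhs{\bs}\to\phi_+^*TY\to 0$, but the content is the same.
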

\begin{proof}
Notice first that the map $^{\eps}TX_s\to  ^{\ee}\!TX_s$ which to $v\in ^{\eps}TX_s$ associates $\rho v\in {}^{\ee}TX_s$, induces, when restricted to $\bhs{\bs}$, an isomorphism
\begin{equation}
     T\bhs{\bs}/ (T(\bhs{\bs}/Y))\to {}^{\ee}N\bhs{\bs},
\label{se.4}\end{equation}
where $T(\bhs{\bs}/Y)$ is the vertical tangent bundle of the fiber bundle \eqref{se.2a}.  On the other hand, the bundle projection $\phi_+: \bhs{\bs}\to Y$ induces the following short exact sequence of vector bundles
\begin{equation}
0\to T(\bhs{\bs}/Y)\to T\bhs{\bs}\to \phi_+^*TY\to 0,
\label{se.5}\end{equation} 
so that 
\[
        \phi_+^* TY\cong T\bhs{\bs}/T(\bhs{\bs}/Y)\cong {}^{\ee}N\bhs{\bs}.
\]
\end{proof}
\begin{remark}
To make the multiplication by $\rho$ explicit in the isomorphism of Lemma~\ref{se.3}, we use $\rho TY$ to denote the natural vector bundle on $Y$ such that multiplication by $\rho$ induces a natural isomorphism $TY\to \rho TY$.  The dual of $\rho TY$ will be denoted $\rho^{-1}T^*Y$.  Similarly, we will denote by $\eps TY$ the vector bundle on $Y$ such that multiplication by $\eps$ induces a natural isomorphism $TY\to \eps TY$, with dual denoted by $\eps^{-1}T^*Y$. Notice that Lemma~\ref{se.3} can be reformulated as saying that there is a natural identification
\begin{equation}\label{forsuspension}{}^{e,s}N\bhs{\bs}\cong \bhs{\bs}\times_{Y} \rho TY.\end{equation}
\label{nota.1}\end{remark}
\subsection{The de Rham operator}

Let $F\lra M$ be a flat bundle over $M.$
It is convenient to consider the de Rham operator $d+\delta,$ acting on forms with coefficients in $F,$ as an operator on sections of the exterior powers of ${}^{\ew}T^*X_s,$ tensored with $F.$ For any fixed $\eps>0,$ the $\ew$-cotangent bundle is canonically equivalent to $T^*(M \times \{\eps\}),$ and so making this change does not affect the limit. The advantage is that, as an operator on ${\ew}$-differential forms, the de Rham operator of a product-type $\ew$-metric is an $\ew$-differential operator of order one,
\begin{equation*}
	\eth_{\ew} \in \Diff^1_{\ew}(X_s;E), \quad 
	E=\Lambda^*(^{\ew}T^*X_s)\otimes F.
\end{equation*}
Indeed, as in \cite[(4.18)]{Hunsicker-Mazzeo}, in a tubular neighborhood around $H$, the operator  $\eth_{\ew}$ has the form
\begin{equation}
	\eth_{\ew} = 
	\begin{pmatrix}
	\tfrac1\rho \eth_{\dR}^{H/Y} + \hat \eth_{\dR}^Y + \rho \bR & -\pa_x + (\bN_{H/Y} - v) \tfrac x{\rho^2}  \\
	\pa_x + \bN_{H/Y} \tfrac x{\rho^2} & -(\tfrac1\rho \eth_{\dR}^{H/Y} + \hat \eth_{\dR}^Y + \rho \bR) 
	\end{pmatrix},
\label{dR.1}\end{equation}
where $\eth_{\dR}^{H/Y}$ is the de Rham operator in the fibers of \eqref{fb.1},  $\hat \eth_{\dR}^Y$ is a first order horizontal operator involving the second fundamental form of \eqref{fb.1}, $\bN_{H/Y}$ is the number operator in the fibers of \eqref{fb.1} multiplying a form by its vertical degree and $\bR$ is a curvature term.

In this section we will describe the model operators of $\eth_{\ew}.$ The de Rham operator of an exact $\ew$ metric has the same models as that of a product-type $\ew$ metric so we will assume in this section that the metric is of product-type.\\

We are interested in $\eth_{\ew}$ as an unbounded operator on the space $L^2_{\ew}(X_s;E)$ of $L^2$-sections with respect to the density induced by $g_{\w,\eps}$. Equivalently we will work with 
\begin{equation*}
	D_{\ew} = \rho^{(v+1)/2}\eth_{\ew}\rho^{-(v+1)/2}
\end{equation*}
as an unbounded operator on $L^2_{\eb}(X_s;E),$ the space of $L^2$-sections with respect to the $b$-surgery density $\frac{1}{\rho^{v+1}}dg_{\w,\eps}$.  

The restriction of $\eth_{\ew}$ to $\bhs{\sm}(X_s)$ is a differential wedge operator of order one which we denote $\eth_{\w,0}.$ Near $\pa\bhs{\sm}(X_s)$ it has the form
\begin{equation}\label{eq:WedgeNormalOp}
	\eth_{\w,0} = 
	\begin{pmatrix}
	\tfrac1{|x|} \eth_{\dR}^{H/Y} + \hat \eth_{\dR}^Y + |x| \bR 
		& -\pa_x + (\bN_{H/Y} - v) \tfrac 1{|x|}  \\
	\pa_x + \bN_{H/Y} \tfrac 1{|x|} 
		& -(\tfrac1{|x|} \eth_{\dR}^{H/Y} + \hat \eth_{\dR}^Y + |x| \bR) 
	\end{pmatrix}.
\end{equation}
Note that on $\bhs{sm}$,  the subsets where $x$ takes small positive values and where $x$ takes small negative values are separated, so that the function $|x|$ is smooth.
 
On the other hand, $\eth_{\ew}$ does not restrict to $\bhs{\bs}(X_s),$ so we first multiply by $\rho$ before restricting. We define
\begin{equation}\label{eq:VerticalModelOp}
	\eth_v = \eth_{\ew}\rho\rest{\rho=0}
	=
	\begin{pmatrix}
	\eth_{\dR}^{H/Y} & (\bN_{H/Y} - v-1) \tfrac x{\rho}  \\
	(\bN_{H/Y}+1) \tfrac x{\rho} & -\eth_{\dR}^{H/Y} 
	\end{pmatrix}.
\end{equation}

We define $D_{\w}$ and $D_v$ similarly, starting with $D_{\ew}$ instead of $\eth_{\ew}.$ We will now take a closer look at these model operators.\\

\subsection{The model operator at $\bhs{\sm}$}

Let us consider the edge differential operator $\rho D_{\ew}\rest{\bhs{\sm}}$ given, near $\pa \bhs{\sm}(X_s),$ by
\begin{equation*}
	\rho D_{\dR}\rest{\bhs{\sm}} = |x|D_{\w} = 
	\begin{pmatrix}
	\eth_{\dR}^{H/Y}  + |x|\hat \eth_{\dR}^Y  + |x|^2 \bR 
		& -|x|\pa_{|x|} + (\bN_{H/Y} - \tfrac v2+\tfrac12)  \\
	|x|\pa_{|x|} + (\bN_{H/Y}-\tfrac v2-\tfrac12)
		& -(\eth_{\dR}^{H/Y}  + |x|\hat \eth_{\dR}^Y + |x|^2 \bR) 
	\end{pmatrix}.
\end{equation*}
From \cite{maz91}, this has two model operators: an indicial family and a normal operator.

Its indicial family at $y_0 \in Y$ is
\begin{equation*}
	I_b(|x|D_{\w})(y_0;\zeta) = 
	\begin{pmatrix}
	\eth_{\dR}^{H/Y}  
		& -\zeta + (\bN_{H/Y} - \tfrac v2+\tfrac12)  \\
	\zeta + (\bN_{H/Y}-\tfrac v2-\tfrac12) 
		& -\eth_{\dR}^{H/Y} 
	\end{pmatrix},
\end{equation*}
and indicial roots are values of $\zeta$ for which $I_b(|x|D_{\w})(y_0;\zeta)$ is not invertible. The next proposition is a slight improvement over the computation in \cite[\S3.1]{ALMP:Hodge}.

\begin{proposition}
The indicial roots of $|x|D_{\w}$ at $y_0 \in Y$ are
\begin{equation*}
\begin{gathered}
	\{ q-\tfrac v2+\tfrac12, -(q-\tfrac v2 -\tfrac12) : \mathrm H^q(Z_{y_0})\neq 0\} \\
	\bigcup
	\{ \ell \pm \sqrt{\lambda + (q-\tfrac v2 + \tfrac12)^2}: \ell \in \{ 0,1\}, \lambda \in \Spec((\delta d)_q(Z_{y_0}))\setminus \{ 0 \} \} \\
	\bigcup
	\{ \ell \pm \sqrt{\lambda + (q-\tfrac v2 - \tfrac12)^2}: \ell \in \{ 0,1\}, \lambda \in \Spec((d\delta)_q(Z_{y_0}))\setminus \{ 0 \} \}.
\end{gathered}
\end{equation*}
In particular, $\alpha$ is an indicial root if and only if $1-\alpha$ is.
\label{specb.1}\end{proposition}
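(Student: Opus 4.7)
My plan is to reduce the determination of indicial roots to a finite-dimensional linear algebra problem on subspaces of the space on which $I_b(|x|D_{\w})(y_0;\zeta)$ acts, which I view as $\Omega^\bullet(Z_{y_0})\otimes \bbC^2$, by using Hodge theory on the fiber $Z_{y_0}$. For each fixed horizontal form slot the indicial family acts as a $2\times 2$ matrix of operators on $\Omega^\bullet(Z_{y_0})$, and I will decompose this space into invariant subspaces of two kinds, harmonic and non-harmonic.

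First, for each $q$ with $\mathrm{H}^q(Z_{y_0})\neq 0$ and each harmonic $h\in\cH^q(Z_{y_0})$, the pair $(h,0),(0,h)$ spans a two-dimensional invariant subspace on which $\eth^{H/Y}_{\dR}$ vanishes. The indicial family restricts there to an antidiagonal $2\times 2$ matrix with entries $\zeta+q-\tfrac v2-\tfrac12$ and $-\zeta+q-\tfrac v2+\tfrac12$, and its non-invertibility yields precisely the first family of indicial roots.

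Next, for each $\lambda>0$ in the spectrum of the fiber Hodge Laplacian and each coclosed $\omega\in\Omega^q(Z_{y_0})$ with $(\delta d)_q\omega=\lambda\omega$, the form $\psi:=d\omega/\sqrt{\lambda}$ is a closed $(q+1)$-form with $(d\delta)_{q+1}\psi=\lambda\psi$, and $\eth^{H/Y}_{\dR}$ interchanges $\omega$ and $\psi$ up to a factor of $\sqrt{\lambda}$. The subspace spanned by $(\omega,0),(0,\omega),(\psi,0),(0,\psi)$ is then invariant under the indicial family, and---tracking carefully that $\bN_{H/Y}$ acts as $q$ on $\omega$ and as $q+1$ on $\psi$---a direct computation of the resulting $4\times 4$ determinant produces the factorization
\[\bigl[\lambda+(q-\tfrac v2+\tfrac12)^2-(\zeta-1)^2\bigr]\bigl[\lambda+(q-\tfrac v2+\tfrac12)^2-\zeta^2\bigr],\]
whose vanishing yields $\zeta=\ell\pm\sqrt{\lambda+(q-\tfrac v2+\tfrac12)^2}$ with $\ell\in\{0,1\}$, accounting for the second family. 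The third family is obtained identically by starting instead from a closed eigenform of $(d\delta)_q$ with preimage $d^*\psi/\sqrt{\lambda}$ of degree $q-1$; the identity $\Spec((d\delta)_q)\setminus\{0\}=\Spec((\delta d)_{q-1})\setminus\{0\}$ ensures the enumeration is complete, because $\Omega^\bullet(Z_{y_0})$ is exhausted by harmonic forms together with such pairs.

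The symmetry $\alpha\leftrightarrow 1-\alpha$ is then read off directly from the list: the two factors in the non-harmonic determinant are exchanged by $\zeta\mapsto 1-\zeta$, and the two harmonic roots $q-\tfrac v2+\tfrac12$ and $-(q-\tfrac v2-\tfrac12)$ satisfy exactly this relation. The only delicate point is the bookkeeping when writing down the $4\times 4$ matrix, specifically the interplay between $\eth^{H/Y}_{\dR}$, which shifts form degree, and $\bN_{H/Y}$, which reads off form degree; once the matrix is correctly written the factorization of its determinant is elementary algebra.
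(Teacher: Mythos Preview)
Your proposal is correct and follows essentially the same approach as the paper: both use the Hodge decomposition on the fiber $Z_{y_0}$ to reduce the problem to harmonic and non-harmonic pieces. The organization differs slightly: you package the non-harmonic contribution into explicit four-dimensional invariant subspaces and compute a $4\times4$ determinant (which factors as stated), whereas the paper decomposes $(a,b)$ into Kodaira components $a_d,a_\delta,a_{\cH}$, $b_d,b_\delta,b_{\cH}$ and derives scalar eigenvalue equations for each component separately, afterwards constructing explicit kernel elements to confirm that all candidate roots actually occur. Your block approach has the minor advantage that invertibility of the indicial family is equivalent to invertibility on each block, so necessity and sufficiency are handled at once; the paper's component-by-component analysis requires the separate verification step at the end.
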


\begin{proof}
For simplicity we write $\delta = \delta^{H/Y}, d=d^{H/Y}, \delta = (d^{H/Y})^*, \bN = \bN_{H/Y}.$
It suffices, since $I_b(|x|^{1/2}D_{\w} |x|^{1/2})(\zeta)$ is self-adjoint, to find those $\zeta$ for which $I_b(|x|D_{\w})(\zeta)$ is not injective.
Suppose $(a,b)$ is in the null space of $I_b(|x|D_{\w})(\zeta).$ Since $[\eth_{\dR},\bN] = (d^{H/Y})^*-d^{H/Y},$ it will be convenient to use the Kodaira decomposition of $a$ and $b$ with respect to $d^{H/Y},$ which we write $a = a_d + a_{\delta}+a_{\cH}$ and similarly for $b.$
Thus we find,
\begin{equation*}
\begin{gathered}
	\begin{pmatrix}
	\eth_{\dR}  
		& -\zeta + (\bN - \tfrac v2+\tfrac12)  \\
	\zeta + (\bN-\tfrac v2-\tfrac12) 
		& -\eth_{\dR} 
	\end{pmatrix}
	\begin{pmatrix} a \\ b \end{pmatrix} = 
	\begin{pmatrix} 0 \\ 0 \end{pmatrix} 
	\iff
	\begin{cases}
	\eth_{\dR}a = (\zeta +\tfrac v2-\tfrac12 -\bN)b \\
	\eth_{\dR}b = (\zeta -\tfrac v2-\tfrac12 + \bN)a
	\end{cases}
	\\
	\iff
	\begin{cases}
	(\zeta +\tfrac v2-\tfrac12 -\bN)b_{\cH}=0, \quad
	da_\delta = (\zeta +\tfrac v2-\tfrac12 -\bN)b_{d}\\
	\delta a_d = (\zeta +\tfrac v2-\tfrac12 -\bN)b_{\delta}, \quad
	(\zeta -\tfrac v2-\tfrac12 + \bN)a_{\cH}=0 \\
	d b_{\delta} = (\zeta -\tfrac v2-\tfrac12 + \bN)a_d, \quad 
	\delta b_d = (\zeta -\tfrac v2-\tfrac12 + \bN)a_{\delta}.
	\end{cases}
\end{gathered}
\end{equation*}
Thus the harmonic terms contribute
\begin{equation*}
	\{ 1/2 \pm |v/2-q| : \mathrm H^q(Z)\neq 0 \}.
\end{equation*}
For $a_{\delta}$ we have
\begin{multline*}
	\Delta a_{\delta}
	= \delta d a_{\delta} 
	= (\zeta +\tfrac v2-\tfrac32 -\bN)\delta b_{d}
	= (\zeta +\tfrac v2-\tfrac32 -\bN)(\zeta -\tfrac v2-\tfrac12 + \bN)a_{\delta} \\
	= \lrspar{ (\zeta-1)^2 - (\bN-\tfrac v2+\tfrac12)^2 }a_{\delta} 
	\implies 
	\{ \zeta = 1 \pm \sqrt{ \lambda + (q-\tfrac v2+\tfrac12)^2 } : \lambda \in \Spec((\delta d)_q^Z) \}.
\end{multline*}
For $a_d$ we have
\begin{multline*}
	\Delta a_d
	=d\delta a_d
	=(\zeta +\tfrac v2+\tfrac12 -\bN)db_{\delta}
	=(\zeta +\tfrac v2+\tfrac12 -\bN)(\zeta -\tfrac v2-\tfrac12 + \bN)a_d \\
	=\lrspar{ \zeta^2 - (\bN-\tfrac v2-\tfrac12)^2}a_d 
	\implies
	\{ \zeta = \pm \sqrt{\lambda + (q-\tfrac v2 - \tfrac12)^2} : \lambda \in \Spec((d\delta)_q^Z) \}.
\end{multline*}
For $b_\delta$ we have
\begin{multline*}
	\Delta b_{\delta} = \delta d b_{\delta} 
	= (\zeta -\tfrac v2 + \tfrac12 + \bN)\delta a_d
	= (\zeta -\tfrac v2 + \tfrac12 + \bN)(\zeta +\tfrac v2-\tfrac12 -\bN)b_{\delta} \\
	= \lrspar{ \zeta^2 - (\bN-\tfrac v2 + \tfrac12)^2 }b_{\delta} 
	\implies
	\{ \zeta = \pm \sqrt{\lambda + (q-\tfrac v2 + \tfrac12)^2}: \lambda \in \Spec((\delta d)_q^Z) \}.
\end{multline*}
Finally for $b_d$ we have
\begin{multline*}
	\Delta b_d = d \delta b_d
	= (\zeta -\tfrac v2-\tfrac32 + \bN)d a_{\delta}
	= (\zeta -\tfrac v2-\tfrac32 + \bN)(\zeta +\tfrac v2-\tfrac12 -\bN)b_{d} \\
	= \lrspar{ (\zeta-1)^2-(\bN-\tfrac v2 -\tfrac12)^2 }b_d 
	\implies
	\{ \zeta = 1 \pm \sqrt{\lambda +(q-\tfrac v2 -\tfrac12)^2} : \lambda \in \Spec((d\delta)_q^Z) \}.
\end{multline*}

These are necessary conditions, but it is easy to see that they all occur. Indeed,
if $\lambda \in \Spec(\Delta_q^Z)$ and $\Delta u = \lambda u$ with $u \neq 0$ then 
\begin{equation*}
	\Delta u = \Delta(u_d+u_\delta + u _\cH) = d\delta u_d + \delta d u_\delta = \lambda u 
	\implies u_{\cH} =0, \quad \Delta u_d = \lambda u_d, \quad \Delta u_{\delta} = \lambda u_{\delta}
\end{equation*}
and we must have either $u_d$ or $u_{\delta}$ non-zero (so $\Spec(\Delta_q) = \Spec((d\delta)_q)\cup\Spec((\delta d)_q)$).
If $u_d \neq 0,$ let 
\begin{equation*}
	\zeta_1 = \pm \sqrt{\lambda + (q-\tfrac v2 - \tfrac12)^2}, \quad
	\zeta_2 = 1 \pm \sqrt{\lambda +(q-\tfrac v2 -\tfrac12)^2}
\end{equation*}
and set
\begin{equation*}
	\gamma_1 = \frac{\delta u_d}{\zeta_1 +\tfrac v2+\tfrac12-q}, \quad
	\gamma_2 = \frac{\delta u_d}{\zeta_2 -\tfrac v2-\tfrac32+q}
\end{equation*}
then $(u_d, \gamma_1)$ is in the kernel of $I_b(|x|D_{\w})(\zeta_1)$ and $(\gamma_2, u_d)$ is in the kernel of $I_b(|x|D_{\w})(\zeta_2).$
If $u_{\delta}\neq 0,$ let
\begin{equation*}
	\zeta_3 = 1 \pm \sqrt{ \lambda + (q-\tfrac v2+\tfrac12)^2 }, \quad
	\zeta_4 = \pm \sqrt{\lambda + (q-\tfrac v2 + \tfrac12)^2}
\end{equation*}
and set
\begin{equation*}
	\gamma_3 = \frac{du_{\delta}}{\zeta_3+\tfrac v2 -\tfrac32 -q}, \quad
	\gamma_4 = \frac{du_{\delta}}{\zeta_4-\tfrac v2 +\tfrac12 +q}
\end{equation*}
then $(u_{\delta}, \gamma_3)$ is in the kernel of $I_b(|x|D_{\w})(\zeta_3)$ and $(\gamma_4, u_\delta)$ is in the kernel of $I_b(|x|D_{\w})(\zeta_4).$

Note that $d$ maps the $\lambda $ eigenspace of $(\delta d)_q$ isomorphically onto the $\lambda$-eigenspace of $(\delta d)_{q+1}$ and $\delta$ maps the $\lambda$ eigenspace of $(\delta d)_q$ isomorphically onto the $\lambda$ eigenspace of $(d\delta)_{q-1}.$

\end{proof}

Combining this computation with \cite[Theorem~4.7]{Hunsicker-Mazzeo} leads to the following criterion.

\begin{corollary}
The operator $\eth_{\w}:= \left.\eth_{\dR}\right|_{\bhs{\sm}}$ is essentially self-adjoint on $L^2(\bhs{\sm};E_{\sm})$ if
\eqref{cond.1} holds for each $y\in Y$.
\label{wss.1}\end{corollary}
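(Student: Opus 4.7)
The plan is to deduce Corollary~\ref{wss.1} directly from Proposition~\ref{w.5} applied to $P = \eth_{\w}$ with $m = 1$, after checking that both running hypotheses of Section~\ref{w.1} are implied by \eqref{cond.1}. The injectivity assumption \eqref{hyp.1} on the normal operator is exactly what Proposition~\ref{w.1a} supplies: under \eqref{cond.1}, $N_y(r\eth_\w)$ is injective on $s^a L^2_{h_\w}(\mathbb{R}^+_s \times \mathbb{R}^h_u \times \phi^{-1}(y))$ for every $a \geq 0$, and in particular for the range $a \in [0,1]$ relevant to Proposition~\ref{w.5}.

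The remaining hypothesis of Proposition~\ref{w.5} is that $(0,1) \cap \Re(\spec_{b,y}(r\tilde\eth_\w)) = \emptyset$ for every $y \in Y$, where $\tilde\eth_\w = r^{(v+1)/2}\eth_\w r^{-(v+1)/2}$ so that $r\tilde\eth_\w$ coincides with $|x|D_\w$ on $\bhs{\sm}$. I would verify this by reading off the three families of indicial roots listed in Proposition~\ref{specb.1} and checking that \eqref{cond.1} keeps each one out of $(0,1)$. For the harmonic family $\{q - v/2 + 1/2,\ -(q - v/2 - 1/2)\}$, membership in $(0,1)$ forces $q - v/2 \in (-1/2, 1/2)$, i.e.\ $q = v/2$ (which in particular requires $v$ even); this possibility is precisely what $H^{v/2}(\phi^{-1}(y); F) = 0$ excludes. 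For the families of the form $\ell \pm \sqrt{\lambda + (q - v/2 \pm 1/2)^2}$ with $\ell \in \{0,1\}$ and $\lambda$ a nonzero eigenvalue of $(\delta d)_q$ or $(d\delta)_q$ on $Z_y$, a positive value in $(0,1)$ would require $\sqrt{\lambda + (\,\cdot\,)^2} < 1$ and hence $\lambda < 1$, contradicting $\Spec(\eth^2_{\dR, Z_y}) \cap (0,1) = \emptyset$, since $\Spec(\eth^2_{\dR, Z_y})$ contains both $\Spec((\delta d)_q)$ and $\Spec((d\delta)_q)$.

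With the indicial strip $(0,1)$ cleared, Proposition~\ref{w.5} yields $\cD_{\max}(\eth_\w) = \cD_{\min}(\eth_\w)$, which is the claimed essential self-adjointness on $L^2(\bhs{\sm}; E_{\sm})$. I do not anticipate any substantive obstacle: the only actual work is the finite case analysis against the gap hypothesis carried out above, and I read the reference to \cite[Theorem~4.7]{Hunsicker-Mazzeo} as a convenient packaging of precisely this bookkeeping of indicial roots rather than as an independent analytic ingredient.
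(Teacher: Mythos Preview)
Your proposal is correct and follows essentially the same approach as the paper: invoke Proposition~\ref{w.1a} for the normal-operator injectivity \eqref{hyp.1}, then Proposition~\ref{w.5} to reduce essential self-adjointness to the indicial-root gap $(0,1)\cap\Re(\spec_{b,y}(|x|D_\w))=\emptyset$, and finally verify that gap from Proposition~\ref{specb.1} under \eqref{cond.1}. The paper compresses the final case analysis into a single sentence, whereas you spell it out, but the logical route is identical.
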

\begin{proof}
By Proposition~\ref{w.1a} and Proposition~\ref{w.5}, the operator $\eth_{\w}$ is essentially self-adjoint if $\Re(\spec_{b,y}(xD_{\w}))\cap (0,1)=\emptyset$.  From Proposition~\ref{specb.1}, we see that this is the case if \eqref{cond.1} holds. 
\end{proof}

Using Proposition~\ref{specb.1}, we can also obtain a sufficient condition for the minimal extension to be given by a weighted Sobolev space. 

\begin{corollary}
The minimal extension of $\eth_{\w}$ is given by $|x|H^1_{\w}(\bhs{\sm};E_{\sm})$ provided for each $y\in Y$,  \eqref{cond.1} holds, $H^{\frac{v\pm 1}2}(\phi^{-1}(y);F)=\{0\}$   and   
$$
1\notin \spec(\left.(\eth_y^{H/Y})^2\right|_{\Omega^{\frac{v\pm 1}2}(\phi^{-1}(y);F)}).
$$

\label{wss.5}\end{corollary}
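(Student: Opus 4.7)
The plan is to invoke Proposition~\ref{w.6} applied to $P = \eth_{\w}$ with $m=1$ and boundary defining function $r=|x|$. Since $D_{\w} = |x|^{(v+1)/2} \eth_{\w} |x|^{-(v+1)/2}$ is the relevant conjugate, Proposition~\ref{w.6} asserts that $\cD_{\min}(\eth_{\w}) = |x| H^1_{\w}(\bhs{\sm};E_{\sm})$ as soon as $1 \notin \Re(\spec_{b,y}(|x|D_{\w}))$ for every $y \in Y$. The entire work therefore reduces to showing that the stated hypotheses exclude the value $1$ from the real parts of the indicial roots listed in Proposition~\ref{specb.1}.

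The key step is then to go through each of the three families of indicial roots. For the harmonic family $\{q - \tfrac v2 + \tfrac12, -(q - \tfrac v2 - \tfrac12) : \mathrm H^q(Z_y;F) \neq 0\}$, equality to $1$ forces $q = (v\pm 1)/2$, which is ruled out by the vanishing assumption $\mathrm H^{(v\pm1)/2}(\phi^{-1}(y);F) = 0$. For the family $\ell \pm \sqrt{\lambda + (q - \tfrac v2 + \tfrac12)^2}$ with $\lambda \in \Spec((\delta d)_q(Z_y)) \setminus \{0\}$, note that $\lambda > 0$ makes each root real. The case $\ell = 1$ would require $\lambda + (q - \tfrac v2 + \tfrac12)^2 = 0$, which is impossible; the case $\ell = 0$ would require $\lambda = 1 - (q - \tfrac v2 + \tfrac12)^2 > 0$. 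If $(q - \tfrac v2 + \tfrac12)^2 > 0$ this places $\lambda$ in $(0,1)$, forbidden by \eqref{cond.1}; otherwise $q = (v-1)/2$ and $\lambda = 1$, forbidden by $1 \notin \spec(\eth^2_{y,H/Y}|_{\Omega^{(v-1)/2}})$ since $\Spec((\delta d)_q) \subset \Spec(\eth^2_{y,H/Y}|_{\Omega^q})$. The third family, involving $(d\delta)_q$ and $(q - \tfrac v2 - \tfrac12)^2$, is handled symmetrically: the only surviving case is $q = (v+1)/2$ with $\lambda = 1$, excluded by the corresponding condition at $\Omega^{(v+1)/2}$.

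I do not foresee a significant obstacle. The argument is essentially bookkeeping in Proposition~\ref{specb.1}, and the two spectral assumptions in the statement are precisely tailored to close off the two residual cases (one from each of the $(\delta d)$ and $(d\delta)$ families) that survive the generic spectral gap \eqref{cond.1}. A minor point to watch is parity: when $v$ is even, the quantities $(q - \tfrac v2 \pm \tfrac12)^2$ lie in $\{1/4, 9/4, \ldots\}$, so the only candidate $\lambda$ sits in $(0,1)$ and is already excluded by \eqref{cond.1}, making the extra $\spec = \{1\}$-avoidance and cohomological hypotheses vacuous; when $v$ is odd these hypotheses do the actual work. Combining all three exclusions yields $1 \notin \Re(\spec_{b,y}(|x|D_{\w}))$ for every $y \in Y$, and Proposition~\ref{w.6} then delivers the identification $\cD_{\min}(\eth_{\w}) = |x| H^1_{\w}(\bhs{\sm};E_{\sm})$.
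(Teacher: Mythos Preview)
Your proposal is correct and follows the same approach as the paper: reduce to checking $1 \notin \Re(\spec_{b,y}(|x|D_{\w}))$ via Proposition~\ref{w.6}, and then read this off from the explicit list in Proposition~\ref{specb.1}. The paper's own proof is terser and simply says the condition ``can be reformulated'' using Proposition~\ref{specb.1}, whereas you carry out the case analysis explicitly; the only small omission is that you should also cite Proposition~\ref{w.1a} (via \eqref{cond.1}) to verify the normal-operator injectivity hypothesis \eqref{hyp.1} that Proposition~\ref{w.6} tacitly requires.
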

\begin{proof}
By Proposition~\ref{w.1a} and Proposition~\ref{w.6}, we know that the minimal extension will be given by $|x|H^1_{\w}(\bhs{\sm};E_{\sm})$ provided $1\notin \Re(\spec_{b,y}(|x|D_{\w}))$ for all $y\in Y$.  This condition can be reformulated as in the statement of the proposition using Proposition~\ref{specb.1}.

\end{proof}

We perform a similar analysis of the Hodge Laplacian.
First, let us write it in terms of 
$A = \tfrac{x}{\rho^2}$ and $A' = \pa_x(\tfrac{x}{\rho^2})$ as
\begin{multline*}
	\eth_{\w}^2 = 
	\lrpar{ (\tfrac1\rho \eth_{\dR}^{H/Y} + \hat \eth_{\dR}^Y + \rho \bR)^2  -\pa_x^2 - v A \pa_x}
	\begin{pmatrix} \Id & 0 \\ 0 & \Id \end{pmatrix} \\
	+
	\begin{pmatrix}
	 - \bN_{H/Y} A' + \bN_{H/Y}(\bN_{H/Y}-v)A^2
	& 
	A(-2\tfrac1\rho d^Z + \rho \bR)\\
	A(-2\tfrac1\rho \delta^Z + \rho \bR)
	&
	 (\bN_{H/Y}-v) A' + \bN_{H/Y}(\bN_{H/Y}-v)A^2
	\end{pmatrix}
\end{multline*}
or better
\begin{multline*}
	\rho^{v/2} \eth_{\w}^2 \rho^{-v/2} =
	\lrpar{ (\tfrac1\rho \eth_{\dR}^{H/Y} + \hat \eth_{\dR}^Y + \rho \bR)^2  -\pa_x^2 }
	\begin{pmatrix} \Id & 0 \\ 0 & \Id \end{pmatrix} \\
	+
	\begin{pmatrix}
	 - (\bN_{H/Y}-\tfrac v2) A' + (\bN_{H/Y}-\tfrac v2)^2A^2
	& 
	A(-2\tfrac1\rho d^Z + \rho \bR)\\
	A(-2\tfrac1\rho \delta^Z + \rho \bR)
	&
	(\bN_{H/Y}-\tfrac v2) A' + (\bN_{H/Y}-\tfrac v2)A^2
	\end{pmatrix}
\end{multline*}

\begin{proposition}
The indicial roots of $|x|^2D_{\w}^2$ at $y_0 \in Y$ are
\begin{equation*}
\begin{gathered}
	\{ q-\tfrac v2+\tfrac12, -(q-\tfrac v2 -\tfrac12), q-\tfrac v2+\tfrac32, -(q-\tfrac v2 - \tfrac32) : \mathrm H^q(Z_{y_0})\neq 0\} \\
	\bigcup
	\{ \ell \pm \sqrt{\lambda + (q-\tfrac v2 + \tfrac12)^2}: \ell \in \{ 0,1,2 \}, \lambda \in \Spec((\delta d)_q(Z_{y_0}))\setminus \{ 0 \} \} \\
	\bigcup
	\{ \ell \pm \sqrt{\lambda + (q-\tfrac v2 - \tfrac12)^2}: \ell \in \{ 0,1, 2 \}, \lambda \in \Spec((d\delta)_q(Z_{y_0}))\setminus \{ 0 \} \}.
\end{gathered}
\end{equation*}
In particular, notice that $\delta$ is an indicial root if and only if $2-\delta$ is.


\label{specb.2}\end{proposition}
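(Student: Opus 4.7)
The plan is to derive the indicial roots of $|x|^2D_{\w}^2$ by factoring its indicial family through that of $|x|D_{\w}$, already computed in Proposition~\ref{specb.1}. Working on a fixed side of $H$ so that $|x|=x$, I first extract the commutator $[D_{\w},x]$ from the explicit matrix formula \eqref{dR.1}: the only entries of $D_{\w}$ failing to commute with $x$ are the $\mp\partial_x$ pieces in the off-diagonal positions, which yields
\[
[D_{\w},x] = \begin{pmatrix} 0 & -1 \\ 1 & 0 \end{pmatrix},
\]
a constant matrix that commutes with $x$. Expanding $(xD_{\w})^2 = xD_{\w}\cdot xD_{\w} = x(xD_{\w}+[D_{\w},x])D_{\w}$ then gives the algebraic identity
\[
x^2 D_{\w}^2 = (xD_{\w})^2 - [D_{\w},x]\,(xD_{\w}).
\]

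Applying the indicial family $I_b(\cdot)(y_0;\zeta)$ at $y_0\in Y$, which is multiplicative on compositions of edge operators, produces
\[
I_b(x^2 D_{\w}^2)(y_0;\zeta) = I_b(xD_{\w})(y_0;\zeta)^2 - \begin{pmatrix} 0 & -1 \\ 1 & 0 \end{pmatrix} I_b(xD_{\w})(y_0;\zeta).
\]
Inspecting the explicit formula for $I_b(xD_{\w})(y_0;\zeta)$ displayed just before Proposition~\ref{specb.1}, a direct check gives
\[
I_b(xD_{\w})(y_0;\zeta-1) - I_b(xD_{\w})(y_0;\zeta) = \begin{pmatrix} 0 & 1 \\ -1 & 0 \end{pmatrix},
\]
so the right-hand side factors as
\[
I_b(x^2 D_{\w}^2)(y_0;\zeta) = I_b(xD_{\w})(y_0;\zeta-1)\cdot I_b(xD_{\w})(y_0;\zeta).
\]

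Each factor is an elliptic operator on the closed fiber $Z_{y_0}$, hence Fredholm of index zero, so the product is non-invertible exactly when at least one of its two factors fails to be invertible. Consequently the set of indicial roots of $x^2 D_{\w}^2$ at $y_0$ equals the union of the set of indicial roots of $xD_{\w}$ at $y_0$ with its translate by $+1$. Substituting the list from Proposition~\ref{specb.1}, shifting by $+1$, and forming the union reproduces precisely the set stated in the proposition; in particular, the symmetry $\alpha\leftrightarrow 1-\alpha$ from the order-one case upgrades directly to $\delta\leftrightarrow 2-\delta$.

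The main point requiring care, rather than a substantive obstacle, is the sign bookkeeping in the identification of $[D_{\w},x]$ with the shift $I_b(xD_{\w})(\zeta)-I_b(xD_{\w})(\zeta-1)$, which must also be checked on the opposite side of $H$ where $|x|=-x$; the two computations are symmetric. An alternative but substantially more laborious path would be to compute $I_b(x^2 D_{\w}^2)(y_0;\zeta)$ directly from the explicit expression for $\rho^{v/2}\eth_{\w}^2\rho^{-v/2}$ recorded before the proposition and reprise the Kodaira-decomposition analysis of Proposition~\ref{specb.1} on the resulting quadratic-in-$\zeta$ indicial operator.
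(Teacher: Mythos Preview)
Your proof is correct and follows essentially the same approach as the paper: both arguments factor the indicial family as $I_b(|x|D_{\w})(\zeta-1)\cdot I_b(|x|D_{\w})(\zeta)$ and then read off the union $\spec_{b,y}(|x|D_{\w})\cup(\spec_{b,y}(|x|D_{\w})+1)$. The paper reaches this factorization in one line via the trivial identity $|x|^2D_{\w}^2=|x|(|x|D_{\w})|x|^{-1}(|x|D_{\w})$ together with the standard fact that conjugation by $|x|$ shifts the indicial parameter by $1$, whereas you arrive at the same product by explicitly computing $[D_{\w},x]$ and recognizing it as $I_b(|x|D_{\w})(\zeta)-I_b(|x|D_{\w})(\zeta-1)$; the two computations are equivalent.
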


\begin{proof}
From Proposition~\ref{specb.1}, it suffices to notice that $|x|^2D_{\w}^2= |x|(|x|D_{\w})|x|^{-1}(|x|D_{\w})$, so that $\spec_{b,y}(|x|^2D_{\w})= \spec_{b,y}(|x|D_{\w})\bigcup (\spec_{b,y}(|x|D_{\w})+1)$.

\end{proof}

\begin{corollary}
The wedge Hodge Laplacian $\Delta_{\w}:= \eth_{\w}^2$ 
as an operator on $L^2_{\w}(\bhs{\sm};E_{\sm})$ 
is essentially self-adjoint 
if \eqref{cond.2} holds for each $y\in Y$. 
\label{wss.6}\end{corollary}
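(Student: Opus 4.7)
The plan is to mimic the proof of Corollary~\ref{wss.1}, combining the abstract essential self-adjointness criterion of Proposition~\ref{w.5} with the explicit description of the indicial roots computed in Proposition~\ref{specb.2}. Specifically, since $\Delta_{\w}$ is an elliptic formally self-adjoint wedge operator of order $m=2$, Proposition~\ref{w.1a} tells us that under condition \eqref{cond.2} the normal operator $N_y(r^2\Delta_{\w})$ is injective on the weighted spaces $s^a L^2_{h_\w}$ for all $a\ge 0$, and in particular for $a\in[0,2]$. Thus hypothesis \eqref{hyp.1} holds, and Proposition~\ref{w.5} reduces the corollary to checking that
\[
(0,2)\cap \Re\bigl(\spec_{b,y}(|x|^2\tD_{\w})\bigr)=\emptyset \quad \Mforall y\in Y.
\]

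To verify this, I would go through the list of indicial roots produced by Proposition~\ref{specb.2} one family at a time. The harmonic contributions are the four values $q-\tfrac v2+\tfrac12$, $-(q-\tfrac v2-\tfrac12)$, $q-\tfrac v2+\tfrac32$, $-(q-\tfrac v2-\tfrac32)$ associated to degrees $q$ with $\mathrm H^q(Z_{y_0};F)\ne 0$. An elementary inspection shows that each of these lies in $(0,2)$ only when $q\in\{\tfrac{v-2}{2},\tfrac{v-1}{2},\tfrac{v}{2},\tfrac{v+1}{2},\tfrac{v+2}{2}\}$, precisely the degrees where \eqref{cond.2} forces the cohomology to vanish. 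So none of the harmonic indicial roots fall inside $(0,2)$.

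For the non-harmonic indicial roots $\ell\pm\sqrt{\lambda+(q-\tfrac v2\pm\tfrac12)^2}$ with $\ell\in\{0,1,2\}$ and $\lambda$ a nonzero eigenvalue of $(\delta d)_q$ or $(d\delta)_q$ on $Z_{y_0}$, the spectral gap hypothesis $\Spec(\eth^2_{\dR,Z_y})\cap(0,4)=\emptyset$ in \eqref{cond.2} guarantees $\lambda\ge 4$, hence $\sqrt{\lambda+(q-\tfrac v2\pm\tfrac12)^2}\ge 2$. Consequently $\ell+\sqrt{\cdots}\ge 2$ for every admissible $\ell$, while $\ell-\sqrt{\cdots}\le 0$ for $\ell\in\{0,1,2\}$. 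In either case the root lies outside $(0,2)$.

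Combining these two checks, the real parts of $\spec_{b,y}(|x|^2\tD_{\w})$ avoid the open interval $(0,2)$ for every $y\in Y$, so Proposition~\ref{w.5} applies and yields $\cD_{\max}(\Delta_{\w})=\cD_{\min}(\Delta_{\w})$, which is essential self-adjointness. The only step requiring any thought is the bookkeeping with the degrees $q$ and the parity of $v$ for the harmonic roots; the gap condition handles the continuous spectrum cleanly once $\lambda\ge 4$ is in force.
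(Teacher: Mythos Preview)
Your proposal is correct and follows exactly the same route as the paper: invoke Proposition~\ref{w.1a} to verify the injectivity hypothesis \eqref{hyp.1}, then apply Proposition~\ref{w.5} to reduce to the indicial root condition $(0,2)\cap \Re(\spec_{b,y}(|x|^2D_{\w}^2))=\emptyset$, and finally check this using Proposition~\ref{specb.2}. The paper's proof is terser (it does not spell out the case analysis for the harmonic and non-harmonic roots that you carry out), and in fact its printed reference to Proposition~\ref{w.6} appears to be a typo for Proposition~\ref{w.5}; your citation is the correct one.
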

\begin{proof}
By Proposition~\ref{w.1a} and Proposition~\ref{w.6}, the wedge Hodge Laplacian will be essentially self-adjoint on $L^2_{\w}(\bhs{\sm};E_{\sm})$ provided $\Re(\spec_{b,y}(|x|^2D_{\w}^2))\cap (0,2)=\emptyset$ for all $y\in Y$, so  the result follows from Proposition~\ref{specb.2}.

\end{proof}

As for the operator $\eth_{\w}$, there is a simple sufficient condition that ensures that the minimal extension of the wedge Hodge Laplacian is a weighted edge Sobolev space.
\begin{corollary}
If   for each $y \in Y$, \eqref{cond.2} holds, $\mathrm H^{\frac{v\pm 3}2}(\phi^{-1}(y);F)=\{0\}$ 
and  
$$4\notin \spec(\left.(\eth_y^{H/Y})^2\right|_{\Omega^{\frac{v\pm1}2}\phi^{-1}(y);F)}),
$$  
then 
$$
      \mathcal{D}_{\min}(\Delta_{\w})= |x|^2 H^2_{\w}(\bhs{\sm};E_{\sm}).
$$
\label{wss.7}\end{corollary}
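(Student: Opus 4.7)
The plan is to verify the hypotheses of Proposition~\ref{w.6} applied to $P=\Delta_{\w}$ with $m=2$ on $\bhs{\sm}$, using $|x|$ as boundary defining function. Then $\tP = |x|^{(v+1)/2}\Delta_{\w}|x|^{-(v+1)/2}$ and $|x|^{2}\tP = |x|^{2}D_{\w}^{2}$, whose indicial roots are precisely those enumerated in Proposition~\ref{specb.2}. Proposition~\ref{w.6} has two hypotheses: first, the normal-operator injectivity condition \eqref{hyp.1}, which here is supplied by Proposition~\ref{w.1a} combined with \eqref{cond.2}; second, the indicial condition $2\notin \Re(\spec_{b,y}(|x|^{2}D_{\w}^{2}))$ for all $y\in Y$. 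Thus the entire argument reduces to showing that $2$ is not the real part of any of the indicial roots listed in Proposition~\ref{specb.2}.

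I would then run through the three families of roots one at a time. For the harmonic family, the four expressions $q-\tfrac{v}{2}+\tfrac{1}{2}$, $-(q-\tfrac{v}{2}-\tfrac{1}{2})$, $q-\tfrac{v}{2}+\tfrac{3}{2}$, $-(q-\tfrac{v}{2}-\tfrac{3}{2})$ equal $2$ exactly when $q\in\{(v+3)/2,\,(v-3)/2,\,(v+1)/2,\,(v-1)/2\}$ respectively; the first two are killed by the extra hypothesis $\mathrm H^{(v\pm 3)/2}(\phi^{-1}(y);F)=0$, and the latter two by the vanishing $\mathrm H^{(v\pm 1)/2}(\phi^{-1}(y);F)=0$ built into \eqref{cond.2}.

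For either of the eigenvalue families we must solve $\ell\pm\sqrt{\lambda+(q-\tfrac{v}{2}\pm\tfrac{1}{2})^{2}}=2$ with $\ell\in\{0,1,2\}$ and $\lambda\in\Spec((\delta d)_{q})\setminus\{0\}$ or $\Spec((d\delta)_{q})\setminus\{0\}$. The case $\ell=2$ forces $\lambda=0$ and is therefore excluded. The case $\ell=1$ requires $\lambda+(q-\tfrac{v}{2}\pm\tfrac{1}{2})^{2}=1$, impossible because \eqref{cond.2} forces every nonzero $\lambda$ to satisfy $\lambda\geq 4$. The case $\ell=0$ requires $\lambda+(q-\tfrac{v}{2}\pm\tfrac{1}{2})^{2}=4$; combined with $\lambda\geq 4$, this forces $\lambda=4$ together with $q=(v-1)/2$ (second family) or $q=(v+1)/2$ (third family). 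Since $\Spec(\Delta_{q})\setminus\{0\}=\bigl(\Spec((\delta d)_{q})\cup\Spec((d\delta)_{q})\bigr)\setminus\{0\}$, the hypothesis $4\notin\spec((\eth_{y}^{H/Y})^{2}\vert_{\Omega^{(v\pm 1)/2}(\phi^{-1}(y);F)})$ rules out exactly these two remaining possibilities.

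With $2$ removed from $\Re(\spec_{b,y}(|x|^{2}D_{\w}^{2}))$ for every $y\in Y$, Proposition~\ref{w.6} delivers $\mathcal{D}_{\min}(\Delta_{\w})=|x|^{2}H^{2}_{\w}(\bhs{\sm};E_{\sm})$. No step is particularly deep; the only bookkeeping subtlety is matching each of the four half-integer cohomology degrees to the appropriate indicial root, and observing that the ``$\pm$'' in the hypothesis on the fiber Laplacian spectrum simultaneously handles the second and third indicial families.
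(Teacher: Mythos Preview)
Your proof is correct and follows exactly the same approach as the paper: apply Proposition~\ref{w.1a} for the normal-operator hypothesis \eqref{hyp.1} and Proposition~\ref{w.6} for the conclusion, reducing everything to checking $2\notin\Re(\spec_{b,y}(|x|^{2}D_{\w}^{2}))$ via Proposition~\ref{specb.2}. The paper's own proof is a single sentence that asserts this indicial-root exclusion without spelling out the case analysis; your version carefully carries out that analysis and matches each hypothesis to the indicial roots it eliminates, which is precisely the bookkeeping the paper leaves implicit.
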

\begin{proof}
By Proposition~\ref{specb.2}, we have in this case that $2\notin \Re(\spec_{b,y}(|x|^2D_{\w}^2))$ for all $y\in Y$,  so the result follows by applying Proposition~\ref{w.1a} and Proposition~\ref{w.6}.
\end{proof}

For a different point of view on Corollaries~\ref{wss.1}, \ref{wss.5}, \ref{wss.6} and \ref{wss.7} as well as a generalization to wedge metrics on stratified spaces of higher depth, we refer the reader to \cite{HLV}.

\subsection{The model operator at $\bhs{\bs}$}
Analogously to \eqref{eq:VerticalModelOp}, the vertical operator associated to $D_{\ew}$ is
\begin{equation}\label{eq:VerticalModelOp2}
	D_v = D_{\ew}\rho\rest{\rho=0}
	=
	\begin{pmatrix}
	\eth_{\dR}^{H/Y} & -\rho\pa_x +(\bN_{H/Y} - \tfrac v2-\tfrac12) \tfrac x{\rho}  \\
	\rho\pa_x+ (\bN_{H/Y}-\tfrac v2+\tfrac12) \tfrac x{\rho} & -\eth_{\dR}^{H/Y} 
	\end{pmatrix}.
\end{equation}

If we consider this operator in polar coordinates near $\bhs{\bs},$
\begin{equation*}
	\rho = \sqrt{x^2+\eps^2}, \quad \theta = \tan^{-1}\frac x\eps ,
\end{equation*}
in which we have $\beta^*(\pa_x) = \sin\theta \; \pa_\rho + \tfrac1\rho\cos\theta\; \pa_\theta,$ we get
\begin{equation*}
	D_v
	=
	\begin{pmatrix}
	\eth_{\dR}^{H/Y}
		& -\cos\theta\; \pa_{\theta} + (\bN_{H/Y} - \tfrac v2-\tfrac{1}2) \sin\theta  \\
	-\cos\theta\; \pa_{\theta} + (\bN_{H/Y}-\tfrac v2+\tfrac{1}2) \sin\theta 
		& -\eth_{\dR}^{H/Y}
	\end{pmatrix}.
\end{equation*}
In particular we note that 
\begin{equation*}
	D_v \in \Diff^1_b(([-\frac{\pi}2,\frac{\pi}2]_{\theta} \times H)/Y;E|_{\bhs{\bs}}).
\end{equation*}

It is also convenient to consider projective coordinates
\begin{equation*}
	X = \frac x\eps, \quad \eps
\end{equation*}
valid away from $\bhs{\sm}$ and in which $\eps$ is a bdf for $\bhs{\bs}.$
In these coordinates, $\beta^*(\pa_x) = \tfrac1\eps\pa_X,$ $\beta^*(\rho) = \eps\sqrt{1+X^2} = \eps \ang X,$ and hence
\begin{equation*}
	D_v
	=
	\begin{pmatrix}
	\eth_{\dR}^{H/Y}
		& -\ang{X}\pa_X + (\bN_{H/Y} - \tfrac v2-\tfrac{1}2) \tfrac{X}{\ang{X}}  \\
	\ang{X} \pa_X + (\bN_{H/Y}-\tfrac v2+\tfrac{1}2)  \tfrac{X}{\ang{X}} 
		& -\eth_{\dR}^{H/Y}
	\end{pmatrix}.
\end{equation*}
For the Hodge Laplacian, the corresponding model operator is 
\begin{equation}
 \Delta_v:= \left. \rho D^2_{\w,\eps}\rho\right|_{\bhs{\bs}}= \ang{X}D_v\ang{X}^{-1}D_v.
\label{ws.3}\end{equation}
Using the notation
\begin{equation}
   P(a):= \ang{X}\pa_X +a\frac{X}{\ang X} = \ang{X}^{-a}(\ang X \pa_X) \ang{X}^a,  \quad \Delta(a)= -P(-a)P(a),
\label{ws.4}\end{equation}
one computes that 
\begin{equation}
\Delta_v= \begin{pmatrix}
	(\eth_{\dR}^{H/Y})^2+\Delta(N_{H/Y}-\frac{v-1}2)
		&   -\frac{2X}{\ang X} d^{H/Y} \\
	-\frac{2X}{\ang X} \delta^{H/Y}		& (\eth_{\dR}^{H/Y})^2 + \Delta(-(N_{H/Y}-\frac{v+1}2))
	\end{pmatrix}.
\label{ws.5}\end{equation}
For each $y\in Y$, this is an operator on the cylinder  $\bbR_X\times \phi^{-1}(y)$ which is an elliptic $b$-operator of order $2$ in the sense of Melrose \cite{MelroseAPS}.  We can therefore easily characterized when such an operator is Fredholm.  

\begin{lemma}
When restricted to $\bbR\times \phi^{-1}(y)$ for $y\in Y$, the $b$-operator $\Delta_v$ is Fredholm provided 
$$
 \spec(\left.(\eth^{H/Y}_y)^2\right|_{\Omega^{\frac{v\pm1}2}(\phi^{-1}(y),F)})\cap [0,1]=\emptyset, \quad \mbox{if $v$ is odd}
$$
and 
$$
 \spec(\left.(\eth^{H/Y}_y)^2\right|_{\Omega^{\frac{v}2+j}(\phi^{-1}(y),F)})\cap (0,\frac34]=\emptyset \quad \mbox{for} \; j\in \{-1,0,1\}, \quad \mbox{if $v$ is even.}
 $$
\label{ws.6}\end{lemma}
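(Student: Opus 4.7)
The plan is to apply Melrose's Fredholm criterion for elliptic $b$-operators: since $\Delta_v$ is an elliptic $b$-operator of order two on the cylinder $\bbR_X\times\phi^{-1}(y)$ compactified at $X=\pm\infty$, it is Fredholm on the natural $L^2_b$-space if and only if its indicial family at each boundary is invertible on the critical line $\zeta\in\bbR$. The real work is therefore to (i) compute this indicial family and (ii) translate its non-invertibility into a spectral condition on $(\eth^{H/Y}_y)^2$.

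First I would compute the indicial family from \eqref{ws.5}. Setting $\tau=\log|X|$ near each end, we have $\ang{X}\pa_X=\pa_\tau+O(X^{-2})$ and $X/\ang{X}\to\pm 1$, so $\Delta(a)\to -\pa_\tau^2+a^2$. Because $\Delta_v$ preserves total form degree, it acts block-diagonally on pairs $(\alpha,\beta)\in\Omega^q(\phi^{-1}(y),F)\oplus\Omega^{q-1}(\phi^{-1}(y),F)$, and the indicial family reads
\[
I_\pm(\Delta_v,\zeta)=\begin{pmatrix} (\eth^{H/Y}_y)^2+\zeta^2+(q-\tfrac{v-1}{2})^2 & \mp 2\,d^{H/Y} \\ \mp 2\,\delta^{H/Y} & (\eth^{H/Y}_y)^2+\zeta^2+(q-\tfrac{v+3}{2})^2 \end{pmatrix}.
\]

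Next I would apply the fiberwise Hodge decomposition $\alpha=\alpha_d+\alpha_\delta+\alpha_\cH$, $\beta=\beta_d+\beta_\delta+\beta_\cH$. Because $d^{H/Y}$ kills exact and harmonic parts and $\delta^{H/Y}$ kills coexact and harmonic parts, $I_\pm(\Delta_v,\zeta)$ splits into a harmonic diagonal block, a diagonal block on $\alpha_\delta\oplus\beta_d$, and a genuinely coupled block on $\alpha_d\oplus\beta_\delta$. On a shared nonzero $\lambda$-eigenspace of $d\delta|_{\Omega^q}$ and $\delta d|_{\Omega^{q-1}}$, the coupled block reduces to a $2\times 2$ matrix; setting $C=q-\tfrac{v+1}{2}$ and completing the square yields its eigenvalues as $\zeta^2+(\sqrt{C^2+\lambda}\pm 1)^2\ge 0$.

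Finally I would read off the condition for invertibility on $\zeta\in\bbR$. The $\alpha_\delta\oplus\beta_d$ block is always invertible for $\lambda>0$. The harmonic block vanishes only at $\zeta=0$ when $q\in\{(v-1)/2,(v+3)/2\}$ and the corresponding $H^{(v\pm 1)/2}(\phi^{-1}(y),F)\ne 0$. The coupled block vanishes only at $\zeta=0$ with $\lambda=1-C^2$. For $v$ odd, $C\in\bbZ$ and $|C|\le 1$ force $\lambda\in\{0,1\}$ on $\Omega^{(v\pm 1)/2}$, all precluded by $\spec((\eth^{H/Y}_y)^2|_{\Omega^{(v\pm 1)/2}})\cap[0,1]=\emptyset$; for $v$ even, $C\in\bbZ+\tfrac12$ with $|C|<1$ forces $\lambda=\tfrac34$ on $\Omega^{v/2+j}$ for $j\in\{-1,0,1\}$, precluded by the stated hypothesis, while the harmonic sector is vacuous since $(v\pm 1)/2\notin\bbZ$. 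Hence the indicial family is invertible on $\bbR$ and $\Delta_v$ is Fredholm. The main obstacle is the $2\times 2$ eigenvalue computation above: the factorization $\zeta^2+(\sqrt{C^2+\lambda}\pm 1)^2$, arising from completing the square in the centered variable $C$, is what pinpoints the exact half-integer-vs-integer critical values that drive the parity distinction in the hypotheses.
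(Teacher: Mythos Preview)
Your proof is correct and follows essentially the same route as the paper: invoke Melrose's Fredholm criterion for elliptic $b$-operators, compute the indicial family of $\Delta_v$ at $X\to\pm\infty$, and reduce non-invertibility on the critical line to finite-dimensional blocks via the fiberwise Hodge decomposition. The only organizational difference is that the paper restricts to a two-dimensional $\nu^2$-eigenspace of $(\eth^{H/Y}_y)^2$ spanned by $\alpha\in\Omega^k$ and $\beta\in\Omega^{k+1}$, obtains a $4\times 4$ block, and analyzes its determinant by solving a quadratic $\nu^4+b\nu^2+c=0$, whereas you slice by fiber-degree pair $(q,q-1)$ and diagonalize the coupled $2\times2$ block directly via completing the square; your closed-form eigenvalues $\zeta^2+(\sqrt{C^2+\lambda}\pm 1)^2$ recover the paper's middle $2\times2$ block (with $C=k-\tfrac{v-1}{2}$) and make the parity dichotomy particularly transparent, in fact pinning down the only problematic values as $\lambda=1$ (odd $v$) or $\lambda=\tfrac34$ (even $v$) rather than the full intervals stated in the lemma.
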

\begin{proof}
By \cite[Theorem~5.40]{MelroseAPS}, the $b$-operator $\Delta_v$ will be Fredholm if and only if its indicial family $I(\Delta_v,\lambda)$ as defined in \eqref{indf.1} is invertible for all $\lambda\in i\bbR$.  Now, by \cite[(2.12)]{ARS2}, we know that 
$$
     I(P(a),\lambda)= \pm(a-\lambda) \quad \mbox{as} \; X\to \pm\infty,
$$
so that 
$$
    I(\Delta(a),\lambda)= -\lambda^2+a^2 \quad \mbox{as} \; X\to \pm \infty.
$$
Hence, as $X\to \pm\infty$, the indicial family of $\Delta_v$ is given by
\begin{equation}
I(\Delta_v,\lambda)= \begin{pmatrix}
	(\eth_{\dR}^{H/Y})^2-\lambda^2+(N_{H/Y}-\frac{v-1}2)^2
		&   \mp2 d^{H/Y} \\
	\mp2 \delta^{H/Y}		& (\eth_{\dR}^{H/Y})^2 -\lambda^2+(N_{H/Y}-\frac{v+1}2)^2
	\end{pmatrix}.
\label{ws.7}\end{equation} 
When we restrict it to the kernel of $(\eth_{\dR}^{H/Y})^2$, this gives
$$
 \begin{pmatrix}
	-\lambda^2+(N_{H/Y}-\frac{v-1}2)^2
		&  0 \\
	 0		&-\lambda^2+ (N_{H/Y}-\frac{v+1}2)^2
	\end{pmatrix},
$$
which is invertible for all $\lambda\in i\bbR$ if and only if $\eth^{H/Y}_{\dR}$ has no non-trivial harmonic forms in degrees $\frac{v\pm1}{2}$.  

Hence, the result will follows if we can show that the indicial family $I(\Delta_v,\lambda)$ is invertible for all $\lambda\in i\bbR$ when we restrict it to the orthogonal complement of the harmonic forms of $\eth^{H/Y}_{\dR}$, in fact to an eigenspace of $(\eth^{H/Y}_{\dR})^2$ associated to an eigenvalue $\nu^2$ with $\nu>0$.  Such a space is always even dimensional with building blocks of dimension 2 given by forms $\alpha$ and $\beta$ of degrees $k$ and $k+1$ such that
\begin{equation}
\begin{gathered}
      (\eth^{H/Y}_{\dR})^2\alpha=\nu^2\alpha, \quad (\eth^{H/Y}_{\dR})^2\beta=\nu^2\beta,\quad \delta^{H/Y} \alpha=0, \\
      d^{H/Y}\alpha=\nu\beta, \quad \delta^{H/Y}\beta=\nu\alpha, \quad d^{H/Y}\beta=0.
\end{gathered}      
\label{ws.6b}\end{equation}
When we restrict to this subspace using $\{\alpha,\beta\}$ as a basis, the indicial family $I(\Delta_v,\lambda)$ becomes
$$
 \begin{pmatrix}
	\nu^2- \lambda^2+(k-\frac{v-1}2)^2 &0 &0 &0 \\
	 0		& \nu^2- \lambda^2+ (k-\frac{v-3}2)^2 & \mp 2\nu & 0 \\
	 0 & \mp 2\nu & \nu^2-\lambda^2+(k-\frac{v+1}2)^2 & 0 \\
	 0 & 0& 0 & \nu^2-\lambda^2+ (k-\frac{v-1}2)^2
	\end{pmatrix}
$$
with determinant given by 
$$
  (\nu^2-\lambda^2+ (k-\frac{v-1}2)^2)^2 [(\nu^2-\lambda^2+ (k+1-\frac{v-1}2)^2)(\nu^2-\lambda^2+ (k-\frac{v+1}2)^2)-4\nu^2].
$$
Given that $\nu^2>0$ and $\lambda\in i\bbR$, this determinant will be zero if and only if $ \nu^4+ b\nu^2 +c=0$ with
$$
b:= -2\lambda^2+ (k-\frac{v-3}2)^2+ (k-\frac{v+1}2)^2-4, \quad c:= (-\lambda^2+ (k-\frac{v-3}2)^2)(-\lambda^2+ (k-\frac{v+1}2)^2)\ge 0,
$$
that is, if and only if 
$$
   \nu^2= -\frac{b}2 \pm \frac{\sqrt{b^2-4c}}2.
$$
Since $\nu^2>0$, a necessary condition for this equality to be true is that $b<0$,  which is the case if and only if $|k-\frac{v-1}2|<1$ and $-\lambda^2$ is sufficiently small.  Thus, if $v$ is odd, this means $k=\frac{v-1}2$, so that for all $\lambda\in i\bbR$, $b\ge -2$, $c\ge 1$ and 
$$
       \nu^2\le \frac{2+ \sqrt{4-4}}{2}=1.
$$
If instead $v$ is even, this means $k=\frac{v}2$ or $k=\frac{v}2-1$, and in either case, $b\ge -\frac32$, $c\ge\frac{9}{16}$, so that we must have in that case
$$
    \nu^2 \le \frac34 + \frac12\sqrt{\frac94-\frac94}=\frac34.
$$
Keeping in mind that $\beta$ is of degree $k+1$, we see that the conditions of the lemma are sufficient to ensure that $\Delta_v$ is Fredholm.
\end{proof}

The $L^2$-kernel of $\Delta_v$ is also easy to describe modulo some further assumption on \linebreak $\spec((\eth^{H/Y}_{\dR})^2)$.  
\begin{proposition}
For $y\in Y$, suppose that $(\phi^{-1}(y),F)$ has no harmonic forms in degree $\frac{v\pm1}2$ and that 
\begin{equation}
    \spec((\eth_y^{H/Y})^2)\cap (0,4]=\emptyset.
\label{ws.8b}\end{equation}
Then the $L^2$-kernel of the restriction $\Delta_{v,y}$ of $\Delta_v$ to $\bbR\times\phi^{-1}(y)$ is given by
\begin{equation}
     \ker_{L^2}\Delta_{v,y}= \Span \left\{ \begin{pmatrix} u\ang{X}^{\frac{v-1}2-k} \\ v\ang{X}^{\ell-\frac{v+1}2}\end{pmatrix} \; |  \; \begin{array}{ll} u\in \rho^k\cH^k(\phi^{-1}(y);F), & k>\frac{v-1}2,  \\ v\in\rho^\ell\cH^{\ell}(\phi^{-1}(y);F), &\ell<\frac{v+1}2 \end{array} \right\}.
\label{ws.8a}\end{equation}
\label{ws.8}\end{proposition}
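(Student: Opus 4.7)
The plan is to exploit the commutation of $\Delta_v$ with $(\eth^{H/Y}_y)^2$ (since the Hodge Laplacian commutes with $d$, $\delta$, and the form-degree operator $N_{H/Y}$) to decompose the $L^2$-kernel as a direct sum indexed by the discrete eigenvalues of $(\eth^{H/Y}_y)^2$ on $Z = \phi^{-1}(y)$.  Throughout I write $a = k - \tfrac{v-1}{2}$ for a form of degree $k$, and work in $L^2_b$ with the natural measure $\tfrac{dX}{\langle X\rangle}d\mathrm{vol}_Z$ on $\bhs{bs}$.  A key identity is that in $L^2_b$ one has $P(b)^* = -P(-b)$, hence $\Delta(b) = P(b)^*P(b) \geq 0$ with $\ker_{L^2_b}\Delta(b) = \ker_{L^2_b} P(b) = \mathbb{R}\langle X\rangle^{-b}\cap L^2_b$, which is non-trivial exactly when $b > 0$.

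On the harmonic sector the off-diagonal entries of $\Delta_v$ vanish (as $d\alpha = \delta\alpha = 0$), so $\Delta_v$ restricts in each fiber degree $k$ to $\Delta(a)$ on the top and $\Delta(1-a)$ on the bottom.  The preceding observation then yields the top contributions $\langle X\rangle^{\frac{v-1}{2}-k} u$ for $u \in \mathcal{H}^k$ with $k > \frac{v-1}{2}$, and the bottom contributions $\langle X\rangle^{\ell - \frac{v+1}{2}} v$ for $v \in \mathcal{H}^\ell$ with $\ell < \frac{v+1}{2}$---exactly the elements listed in \eqref{ws.8a}.  The hypothesis $\mathcal H^{\frac{v\pm 1}{2}}(\phi^{-1}(y);F) = 0$ rules out the borderline harmonic contributions at the edge of $L^2$-integrability.

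On a non-harmonic eigenspace of $(\eth^{H/Y}_y)^2$ with eigenvalue $\nu^2>0$, decompose further into pairs $(\alpha,\beta)$ with $\alpha \in \Omega^k$ coclosed, $\beta \in \Omega^{k+1}$ closed, $d\alpha=\nu\beta$ and $\delta\beta=\nu\alpha$.  Writing a candidate kernel element as $\begin{pmatrix} f\alpha + g\beta \\ h\alpha + j\beta \end{pmatrix}$ and applying $\Delta_v$ via \eqref{ws.5} produces two decoupled equations $(\Delta(a)+\nu^2)f = 0$ and $(\Delta(-a)+\nu^2)j = 0$, together with the coupled pair
\begin{equation*}
(\Delta(a+1)+\nu^2)g = \tfrac{2\nu X}{\langle X\rangle}h, \qquad (\Delta(1-a)+\nu^2)h = \tfrac{2\nu X}{\langle X\rangle}g.
\end{equation*}
The decoupled equations force $f = j = 0$ immediately from the non-negativity of $\Delta(\pm a)$ on $L^2_b$.

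The main obstacle is the coupled $(g,h)$ system.  Taking $L^2_b$-inner products of the two coupled equations with $g$ and $h$ respectively, adding, and using $\Delta(b) = P(b)^* P(b)$ yields
\begin{equation*}
\|P(a+1)g\|^2 + \|P(1-a)h\|^2 + \nu^2(\|g\|^2 + \|h\|^2) = 4\nu\left\langle \tfrac{X}{\langle X\rangle}g, h\right\rangle.
\end{equation*}
Since $|X/\langle X\rangle| \le 1$, Cauchy--Schwarz gives $|\langle \tfrac{X}{\langle X\rangle}g, h\rangle| \le \|g\|\|h\| \le \tfrac12 (\|g\|^2 + \|h\|^2)$, whence $\nu(\nu - 2)(\|g\|^2 + \|h\|^2) \le 0$.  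Hypothesis \eqref{ws.8b} ensures $\nu^2 > 4$ and hence $\nu > 2$ for every non-harmonic eigenvalue, forcing $g = h = 0$.  The threshold $\nu^2 > 4$ is precisely what is needed for this elementary energy estimate to close, and this is the heart of the argument.
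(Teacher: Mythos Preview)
Your proof is correct and follows essentially the same approach as the paper's: decompose along the spectrum of $(\eth^{H/Y}_y)^2$, identify the harmonic-sector kernel via $\ker_{L^2}\Delta(b)$, and on each non-harmonic $\nu^2$-eigenspace reduce to a $4\times 4$ system where two components vanish by positivity and the coupled pair is killed by the energy estimate $\nu^2(\|g\|^2+\|h\|^2)\le 2\nu(\|g\|^2+\|h\|^2)$. The only cosmetic difference is that the paper drops the nonnegative $\langle\Delta(\cdot)u,u\rangle$ terms before summing whereas you add first and then drop $\|P(\cdot)\cdot\|^2$; the conclusion is identical.
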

\begin{proof}
First, if we restrict $\Delta_{v,y}$ to the harmonic forms on the factor $(\phi^{-1}(y),F)$, we obtain
$$
\begin{pmatrix}
	\Delta(N_{H/Y}-\frac{v-1}2)
		&   0  \\
	0	& \Delta(-(N_{H/Y}-\frac{v+1}2))
	\end{pmatrix}.
$$
On the other hand, by \cite[\S~2.1]{ARS2}, we know that 
$$
    \ker_{L^2}\Delta(a)= \left\{  \begin{array}{ll}  c\ang{X}^{-a}, \; c\in \bbC, &  a>0, \\ \{0\}, & a\le 0. \end{array}  \right.
$$
Thus, we see that the $L^2$-kernel of the restriction of $\Delta_{v,y}$ to harmonic forms of $\eth^{H/Y}_y$ gives all of \eqref{ws.8a} already.  To complete the proof, it suffices then to show that $\Delta_{v,y}$ is invertible when restricted to the orthogonal complement of harmonic forms on the factor $(\phi^{-1}(y),F)$.  As in the proof of Lemma~\ref{ws.6}, it suffices in fact to check this on any 2-dimensional eigenspace of $(\eth^{H/Y}_y)^2$ of eigenvalue $\nu^2$ with $\nu>0$ spanned by orthonormal eigenforms $\alpha$ and $\beta$ of degree $k$ and $k+1$ such that \eqref{ws.6b} holds. When we restrict $\Delta_{v,y}$ to such an eigenspace  using the basis $\{\alpha,\beta\}$, we obtain
\begin{equation}
 \begin{pmatrix}
	\nu^2+ \Delta(k-\frac{v-1}2) &0 &0 &0 \\
	 0		& \nu^2 + \Delta(k-\frac{v-3}2) &  -\frac{2X\nu}{\ang X } & 0 \\
	 0 & -\frac{2 X\nu}{\ang X} & \nu^2+\Delta(\frac{v+1}2-k) & 0 \\
	 0 & 0& 0 & \nu^2+ \Delta(\frac{v-1}2-k)
	\end{pmatrix}.
\label{ws.9}\end{equation}
Suppose then that $(u_1,u_2,u_3,u_4)$ is in the $L^2$ kernel of this operator on $\bbR$, which means that 
\begin{equation}
\begin{gathered}
     \Delta(k-\frac{v-1}2)u_1=-\nu^2u_1, \\
     \Delta(k-\frac{v-3}2)u_2=-\nu^2 u_2 + \frac{2X\nu}{\ang X}u_3, \\
     \Delta(\frac{v+1}2-k)u_3= -\nu^2u_3 + \frac{2X\nu}{\ang X}u_2, \\
     \Delta(\frac{v-1}2-k)u_4= -\nu^2u_4.
     \end{gathered}     
\end{equation}
Since the operator $\Delta(a)$ has positive spectrum for all $a\in \bbR$, we see from the first and last equation that $u_1=u_4=0$, while the two equations in the middle implies that 
\begin{equation}
\begin{gathered}
         \nu^2\| u_2\|^2_{L^2}\le 2\nu \ang{ u_2, \frac{X}{\ang X} u_3}_{L^2}\le \nu(\|u_2\|^2_{L^2}+\|u_3\|^2_{L^2} ), \\
          \nu^2\| u_3\|^2_{L^2}\le 2\nu \ang{ u_3, \frac{X}{\ang X} u_2}_{L^2} \le \nu(\|u_2\|^2_{L^2}+\|u_3\|^2_{L^2} ).
\end{gathered}
\end{equation}
Summing these two inequality yields
$$
  \nu (\|u_2\|^2_{L^2}+\|u_3\|^2_{L^2} )\le 2(\|u_2\|^2_{L^2}+\|u_3\|^2_{L^2} )  \quad \Longrightarrow \quad \nu\le 2 \quad \mbox{or} \quad \|u_2\|^2_{L^2}+\|u_3\|^2_{L^2} =0.
$$
Since we must have $\nu>2$ by assumption \eqref{ws.8b}, this means that $u_2=u_3=0$ and the $L^2$-kernel of the operator \eqref{ws.9} is trivial.  Since this operator is clearly essentially self-adjoint and Fredholm by Lemma~\ref{ws.6}, it must therefore be invertible as desired.  
\end{proof}

\section{The edge surgery calculus}

\subsection{The surgery double space}

To define our surgery double space, we start with the $b$-surgery double space $X^2_{b,s}$ of Mazzeo-Melrose \cite{mame1}, 
\begin{equation*}
	X^2_{b,s}=
	[M\times M\times [0,1]_{\epsilon}; H\times H\times\{0\}; H\times M\times\{0\};
	M\times H\times\{0\}], 
\end{equation*}
with blow-down map 
$$
\beta^2_{b,s}:X^2_{b,s} \lra M^2\times[0,1]_{\eps}.
$$
As in \cite{ARS1}, we denote by $\bhs{\mf}$, $\bhs{\lf}$, $\bhs{\rf}$, and $\bhs{\bff}$ the boundary hypersurfaces  given by the interior lifts under $(\beta^2_{b,s})^{-1}$ of $M\times M\times\{0\}$, $H\times M\times\{0\}$, $M\times H\times\{0\}$, and $H\times H\times\{0\}$ respectively. We also let $D_{b,s}\subseteq X^2_{b,s}$ be the interior lift of $\diag_M\times [0,1]_{\eps}$, where $\diag_M$ is the diagonal in $M^2$.  The face $\bhs{\bff}$ has a canonical decomposition
\begin{equation}\label{eq:BhsBf}
	\bhs{\bff}= H\times H\times [-\tfrac{\pi}2,\tfrac{\pi}2]^2_{ob},
\end{equation}
where $[-\frac{\pi}2,\frac{\pi}2]^2_{ob}$ is the overblown $b$-double space of the interval $[-\frac{\pi}2,\frac{\pi}2]$ defined in \cite[p.41]{mame1} by blowing up the four corners of  $ [-\frac{\pi}2,\frac{\pi}2]^2$.  Using this decomposition, the fibration $\phi: H\to Y$ induces a natural fibration $\phi_b: \bhs{\bff}\to Y\times Y$, where $\phi_b= (\phi\times \phi)\circ \pr$ with $\pr: H\times H\times [-\frac{\pi}2,\frac{\pi}2]^2_{ob}\to H\times H$ the projection on $H\times H$.  To obtain the edge surgery double space, we need to blow up $\phi_b^{-1}(D_Y)$, where $D_Y\subset Y\times Y$ is the diagonal,
\begin{equation*}
	X^2_{e,s} := [X^2_{b,s}; \phi_b^{-1}(D_Y)] \quad \mbox{with blow-down map} \quad \beta^2_{e,s}: X^2_{e,s}\to M^3\times [0,1]_{\eps},
\end{equation*}
see Figure~\ref{fig:wedgedoublespace} below.  
In other words, we need to blow up $\phi_b^{-1}(D_Y)= H\times_{\phi} H\times [-\frac{\pi}2,\frac{\pi}2]^2_{ob}$ in $\bhs{\bff}$,  where $H\times_{\phi} H\subset H\times H$ is the fibered diagonal.  In particular, this face comes naturally with a fibration structure
\begin{equation}
\xymatrix{
       Z^2\times [-\frac{\pi}2,\frac{\pi}2]^2_{ob} \ar[r] & \phi_b^{-1}(D_Y) \ar[d]^{\phi_b} \\
       &  D_Y.
}
\label{se.2}\end{equation}  

\begin{figure}
	\centering
	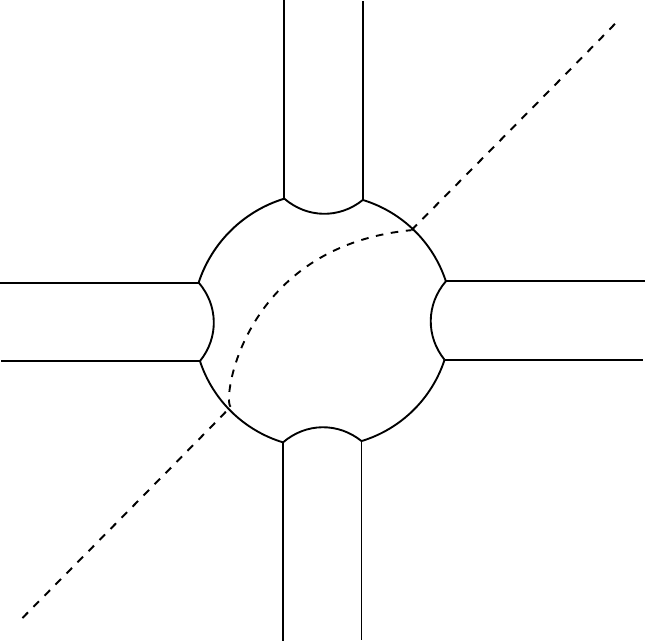
	\caption{The double surgery space $X^2_{e,s}$, obtained by a series of blow-ups from $M\times M\times [0,1]_{\eps}$. Before the blow-ups, coordinates on $M\times M\times [0,1]_{\eps}$ are $(x,x',\eps)$. In this picture, the $x$-axis points upward, the $x'$-axis points to the right, and the $\eps$-axis points up and out of the page.}
	\label{fig:wedgedoublespace}
\end{figure}

Note that to obtain the $\phi$-surgery double space used in \cite{ARS1}, we would instead have blown up the smaller $p$-submanifold $H\times_{\phi}H\times (D_{b,s}\cap\pa  [-\frac{\pi}2,\frac{\pi}2]^2_{ob})$. Denote by $\bhs{\ff}$ the new boundary hypersurface created by blowing up $\phi_b^{-1}(D_Y)$.  Abusing notation slightly, we keep the names of the other boundary hypersurfaces of $X^2_{e,s}$ unchanged under the blow-up from $X^2_{b,s}$. Let $D_{e,s}\subseteq X^2_{e,s}$ be the interior lift of $D_{b,s}\subset X^2_{b,s}$.

We now examine the geometry of $X^2_{e,s}$ more closely. One might expect $\bhs{\mf}$ to be identified with the edge double space $[M;H]_e^2$ of Mazzeo \cite{maz91}, but in fact it is identified with the `overblown' or `extended'  edge double space of Lauter \cite{Lauter} 
\[[M;H]_{e,ob}^2=[[M;H]^2; \{x=x'=0\},\{x=x'=0,y=y'\}].\]
As nested blow-ups commute, $[M;H]_{e,ob}^2$ is a blow-up of $[M;H]_e^2$.

Now consider $\bhs{\ff}$.  Clearly, the normal bundle of $\phi_b^{-1}(D_Y)$ in $\bhs{\bff}$ is identified with $\phi_b^*(N D_Y)$, where $ND_Y$ is the normal bundle of $D_Y$ in $Y\times Y$.  Now, using the projection on the left or on the right factor to identify $D_Y$ with $Y$ gives at the same time a natural identification of $ND_Y$ with $TY$.  Thus, using the fibration
$$
     \phi_b: \phi_b^{-1}(D_Y)\to D_Y\cong Y,
$$
we can identify the normal bundle of $\phi_b^{-1}(D_Y)$ in $\bhs{\bff}$ with $\phi_{b}^*(TY)$.  Thus, since $\bhs{\ff}$ is obtained by blowing up $\phi_b^{-1}(D_Y)$ in $\bhs{\bff}$, we see, making multiplication by $\rho$ explicit, that there is a canonical identification 
\begin{equation}
\bhs{\ff}\cong \overline{\phi_b^* (\rho TY)},
\label{se.6}\end{equation}
where $\overline{\phi_b^* (\rho TY)}$ is the radial compactification of the vector bundle $\phi_b^*( \rho TY)\to \phi_b^{-1}(D_Y)$.    Alternatively, 
\begin{equation}
   \bhs{\ff}\cong (\overline{\rho TY}\times_{Y} (H\times_{\phi} H))\times [-\frac{\pi}2,\frac{\pi}2]^2_{ob}.
\label{se.7}\end{equation}
As a result, observe that $\bhs{\ff}$ is a fiber bundle over $Y$, the fiber of which is given at $y\in Y$ by
\begin{equation}\label{se.7a}
\bhs{\ff,y}:=\overline{\rho T_yY}\times Z_y\times Z'_{y}\times [-\frac{\pi}2,\frac{\pi}2]^2_{ob}.
\end{equation}
\begin{remark}
This could be seen as the double space for $\rho T_yY$-suspended $b$-operators on $\phi^{-1}(y)\times [-\pi/2,\pi/2]$.  However, within the double space $X^2_{e,s}$ this is not quite the type of operators that will appear, since the commutator $[\rho\pa_x,\rho\pa_y]=x\pa_y$ does not vanish on $\bhs{\bs}$ as an $e,s$ vector field. In fact, looking at the restriction of $\cV_{e,s}(X_s)$ to $\bhs{\bs}$, we see that the operators described by $\bhs{\ff,y}$ consist of the subclass of edge operators on the non-compact manifold with boundary $\rho T_yY\times Z_y\times [-\frac{\pi}2,\frac{\pi}2]$, with respect to the boundary fibration $\rho T_yY\times Z_y\times \pa[-\frac{\pi}2,\frac{\pi}2]\to \rho T_yY$, which are invariant by translation in the factor $\rho T_yY$.  
\label{rem.1}\end{remark}

It is straightforward to find local coordinates on $X^2_{b,s}$ and $X^2_{e,s}$. For example, coordinates valid near the interior of $\bhs{\bff}\subseteq X^2_{b,s}$, away from the other boundary hypersurfaces, are
\[\big(\rho=\sqrt{x^2+\eps^2},\theta=\arctan\big(\frac{x}{\eps}\big),\theta'=\arctan\big(\frac{x'}{\eps}\big),y,y',z,z')\big).\]
To obtain $X^2_{e,s}$, we blow up the submanifold $\{\rho=0,y=y'\}$. After this blow-up, coordinates in the interior of $\bhs{\ff}$ are
\begin{equation}\label{coordsff}\big(\rho,\theta,\theta',\hat y=\frac{y-y'}{\rho},y',z,z'\big).\end{equation}
In fact, these coordinates are also valid near the interior of $\bhs{\ff}\cap\bhs{\rf}$ (where \linebreak $\bhs{\rf}=\{\theta'=\pm\pi/2\}$ and $\rho$ is still a defining function for $\bhs{\ff}$), though not near $\bhs{\mf}$ or $\bhs{\lf}$.  The coordinates \eqref{coordsff} in particular nicely illustrate \eqref{se.7}: $y'$ is the coordinate on the base of the fiber product, $\hat y$ is the coordinate on the fibers of $TY$, $(z,z')$ are the coordinates on the fibers of $H\times_{\phi} H$, and $(\theta,\theta')$ are coordinates on $[-\frac{\pi}2,\frac{\pi}2]^2_{ob}$. Note also that the roles of $y$ and $y'$ may be switched.

Near $\bhs{\ff}\cap\bhs{\mf}\cap D_{e,s}$, we can use the projective coordinates
\begin{equation}\label{coordsmf}\big(x',s=\frac{x}{x'},u=\frac{\eps}{x'},\tilde y=\frac{y-y'}{x'},y',z,z'\big).
\end{equation}

As we now check, there are natural projections from $X^2_{e,s}$ onto the single surgery space $X_s$.  
\begin{lemma}
The projections $\pi_R: M\times M \times [0,1]_{\eps}\to M\times [0,1]_{\eps}$ and $\pi_L:M\times M\times [0,1]_{\eps}\to M\times [0,1]_{\eps}$ off the first and second factors lift to b-fibrations $\pi_{e,R}: X^2_{e,s}\to X_s$ and $\pi_{e,L}: X^2_{e,s}\to X_s$.  \label{ws.13}\end{lemma}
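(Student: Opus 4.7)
The plan is to build $\pi_{e,R}$ starting from the obvious b-fibration $\pi_R: M^2 \times [0,1]_\eps \to M \times [0,1]_\eps$ (a product projection) and successively lifting through the four blow-ups that define $X^2_{e,s}$ out of $M^2\times[0,1]_\eps$. The key tool is the standard commutation lemma for blow-ups and b-fibrations (see \cite[\S2]{mame1} or \cite{MelroseAPS}): if $f: X \to Y$ is a b-fibration, $Z \subset X$ is a p-submanifold, and $f|_Z: Z \to f(Z)$ is a b-fibration onto a p-submanifold $f(Z) \subset Y$, then the lift $[X;Z] \to [Y;f(Z)]$ is a b-fibration; if moreover $f(Z)$ is already a boundary face of $Y$, then $[X;Z] \to Y$ remains a b-fibration without any blow-up downstairs.

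For each of the three centers whose blow-up defines $X^2_{b,s}$, one identifies the image under $\pi_R$ and verifies that the restriction is a fibration. The centers $H \times H \times \{0\}$ and $M \times H \times \{0\}$ both map to $H \times \{0\}$ via the right-factor projection, with fibers $H$ and $M$ respectively; since $H \times \{0\}$ is the single center defining $X_s$, the first blow-up lifts through the simultaneous blow-up of $H\times\{0\}$ in $X_s$, and the second is then absorbed into the face $\bhs{\bs}$ already created in $X_s$. The center $H \times M \times \{0\}$ projects onto the boundary face $M \times \{0\} \subset X_s$ with fiber $H$, so no downstairs blow-up is required. Iterating the commutation lemma in the order dictated by the construction of $X^2_{b,s}$ reproduces the b-fibration $\pi_{b,R}: X^2_{b,s} \to X_s$ of Mazzeo-Melrose.

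The remaining step is to lift through the blow-up of $\phi_b^{-1}(D_Y) \subset \bhs{\bff}$. Using \eqref{eq:BhsBf}, this submanifold equals $(H \times_\phi H) \times [-\tfrac\pi2,\tfrac\pi2]^2_{ob}$, and under $\pi_{b,R}$ it maps to $H \times [-\tfrac\pi2,\tfrac\pi2]\cong \bhs{\bs}$ via the right-factor projections. The restricted map is the product of the fibration $H \times_\phi H \to H$ (with fiber $Z_y$) and the right-factor projection $[-\tfrac\pi2,\tfrac\pi2]^2_{ob} \to [-\tfrac\pi2,\tfrac\pi2]$; the latter is a b-fibration, since the overblown $b$-double space of the interval is constructed in \cite[p.41]{mame1} precisely so that the projections onto its factors are b-fibrations. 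As $\bhs{\bs}$ is already a boundary face of $X_s$, the variant of the commutation lemma yields the desired b-fibration $\pi_{e,R}: X^2_{e,s} \to X_s$. The assertion for $\pi_{e,L}$ is obtained by the symmetric argument under the involution of $X^2_{e,s}$ swapping the two factors.

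I expect the main obstacle to be verifying the b-fibration property at this final step, which can be checked directly in the coordinates \eqref{coordsff}: one computes $\eps=\rho\cos\theta$ and $x'=\eps\tan\theta'$, so the bdf $\rho_s=\sqrt{(x')^2+\eps^2}$ for $\bhs{\bs}$ in $X_s$ pulls back to $\rho\cos\theta/|\cos\theta'|$, which equals $\rho$ times a smooth non-vanishing function on a neighborhood of the interior of $\bhs{\ff}$; this confirms that $\bhs{\ff}$ is carried to $\bhs{\bs}$ with multiplicity one. Analogous checks in the projective coordinates \eqref{coordsmf} handle the intersection $\bhs{\ff}\cap\bhs{\mf}$, and the behavior at $\bhs{\lf}, \bhs{\rf}, \bhs{\mf}$ is inherited directly from $\pi_{b,R}$.
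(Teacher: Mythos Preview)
Your proposal is correct and follows essentially the same route as the paper: the paper simply cites \cite{mame1} for the existence of the b-fibrations $\pi_{b,R},\pi_{b,L}:X^2_{b,s}\to X_s$ and then invokes \cite[Lemma~2.7]{hmm} for the final blow-up of $\phi_b^{-1}(D_Y)$, which is precisely the ``variant of the commutation lemma'' you use (center mapping onto a boundary hypersurface, so no downstairs blow-up is needed). Your write-up just unpacks these citations in more detail and adds a coordinate verification.
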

\begin{proof}
From \cite{mame1}, we know that we have lifts to $b$-fibrations $\pi_{b,R}: X^2_{b,s}\to X_s$ and $\pi_{b,L}: X^2_{b,s}\to X_s$.  The result then follows easily  by applying \cite[Lemma~2.7]{hmm}.
\end{proof}
The exponent matrices of these $b$-fibrations can be readily computed and contain only $0$ and $1$ entries.  Indeed, for $\pi_{e,R}$, the non-zero entries are for
\begin{equation}
\{(\bhs{\rf},\bhs{\bs}),(\bhs{\ebf},\bhs{\bs}),(\bhs{\ff},\bhs{\bs}),(\bhs{\lf},\bhs{\ms}),(\bhs{\mf},\bhs{\ms})\},
\label{ws.15}\end{equation}
while for $\pi_{e,L}$, the non-zero entries are for
\begin{equation}
\{(\bhs{\lf},\bhs{\bs}),(\bhs{\ebf},\bhs{\bs}),(\bhs{\ff},\bhs{\bs}),(\bhs{\rf},\bhs{\ms}),(\bhs{\mf},\bhs{\ms})\}.
\label{ws.16}\end{equation}

These $b$-fibrations can be used to lift vector fields from the left or from the right.

\begin{proposition}\label{prop:liftedvectorfields} The lift of $\cV_{\ee}(X_s)$ from the left or from the right to $X^2_{e,s}$ is transversal to the lifted diagonal $D_{e,s}$.  \end{proposition}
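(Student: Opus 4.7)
My strategy is to verify transversality by explicit computation in projective coordinates, working locally near each stratum of the boundary of $X^2_{e,s}$ that the lifted diagonal meets. Transversality being a local property, and since $\pi_{e,L},\pi_{e,R}$ are honest fibrations in the interior of $X^2_{e,s}$, the only issue is near the boundary. The original diagonal $\mathrm{diag}_M\times[0,1]_{\eps}$ lies in $\{x=x',\,y=y',\,z=z'\}$, so its lift $D_{e,s}$ meets only $\bhs{\mf}$ and $\bhs{\ff}$ among the boundary hypersurfaces of $X^2_{e,s}$, and is disjoint from $\bhs{\lf}$, $\bhs{\rf}$, $\bhs{\ebf}$ and from the interior of $\bhs{\bff}$. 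By the symmetric roles of the two factors of $M$ it is enough to treat the left lift.

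Away from $\bhs{\ff}$, the face $\bhs{\mf}$ is the extended edge double space $[M;H]^2_{e,ob}$, and elements of $\cV_{\ee}(X_s)$ restrict there to edge vector fields on $(M,H)$. Transversality of the left lift of edge vector fields to the lifted diagonal in the edge double space is a standard ingredient of Mazzeo's edge calculus \cite{maz91}; the extended blow-up only adds tangential factors and does not affect the transversal directions, so this case is done.

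The new regime is therefore a neighborhood of $D_{e,s}\cap\bhs{\ff}$, including the corner $\bhs{\ff}\cap\bhs{\mf}$. Here I would work in the projective coordinates \eqref{coordsmf}
\[
  \big(x',\ s=\tfrac{x}{x'},\ u=\tfrac{\eps}{x'},\ \tilde y=\tfrac{y-y'}{x'},\ y',\ z,\ z'\big),
\]
in which $D_{e,s}$ is cut out by $\{s=1,\ \tilde y=0,\ z=z'\}$ and $\rho=\sqrt{x^2+\eps^2}=x'\sqrt{s^2+u^2}$. From $\pi_{e,L}^*x=sx'$, $\pi_{e,L}^*y=y'+x'\tilde y$, $\pi_{e,L}^*\eps=ux'$ and $\pi_{e,L}^*z=z$, a short computation (solve $\tilde V(\pi_{e,L}^*f)=\pi_{e,L}^*(Vf)$ on $f=x,y,z,\eps$) represents the left lifts of the generators of $\cV_{\ee}(X_s)$ by
\[
   \rho\,\partial_x\;\longmapsto\;\sqrt{s^2+u^2}\,\partial_s,\qquad
   \rho\,\partial_y\;\longmapsto\;\sqrt{s^2+u^2}\,\partial_{\tilde y},\qquad
   \partial_z\;\longmapsto\;\partial_z,
\]
modulo vector fields tangent to the fibres of $\pi_{e,L}$. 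Since $\sqrt{s^2+u^2}\big|_{s=1}>0$, these vector fields evaluated along $D_{e,s}$ surject onto the normal directions $\partial_s,\ \partial_{\tilde y},\ \partial_z-\partial_{z'}$, yielding the required transversality. The right lift is handled symmetrically by interchanging the unprimed and primed coordinates.

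The substantive point to flag, rather than a true obstacle, is that this works only because the extra blow-up of $\phi_b^{-1}(D_Y)$ performed in passing from $X^2_{b,s}$ to $X^2_{e,s}$ promotes $\tilde y=(y-y')/x'$ to a smooth coordinate up to $\bhs{\ff}$. Without that blow-up, the left lift of $\rho\partial_y$ would vanish to first order along $D_{b,s}\cap\bhs{\bff}$ and transversality would fail; the blow-up is precisely what is needed to make $\pi_{e,L}^*y-y'$ smoothly divisible by $x'$, which is what turns the coefficient of $\partial_{\tilde y}$ into the non-vanishing $\sqrt{s^2+u^2}$. This is also why $X^2_{e,s}$, rather than $X^2_{b,s}$, is the correct ambient double space for the $\ew$-calculus.
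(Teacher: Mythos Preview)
Your approach is the same as the paper's---local computation in projective coordinates---and your treatment of the corner $\bhs{\ff}\cap\bhs{\mf}$ using \eqref{coordsmf} matches the paper's second computation exactly.

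There is, however, a coverage gap. The chart \eqref{coordsmf} is only valid where $s=x/x'$, $u=\eps/x'$ and $\tilde y=(y-y')/x'$ stay bounded. On $D_{e,s}\cap\bhs{\ff}$ one has $s=1$ and $\tilde y=0$, but $u=\eps/x'=\eps/x$ ranges over all of $[0,\infty)$ as the diagonal crosses the interior of $\bhs{\ff}$: at the ``center'' of that face, where $x=x'$ vanishes faster than~$\eps$, one has $u\to\infty$ and the coordinates break down. So \eqref{coordsmf} does \emph{not} give a full neighborhood of $D_{e,s}\cap\bhs{\ff}$, only the part near $\bhs{\mf}$. The paper covers the remaining region---interior of $\bhs{\ff}$, away from $\bhs{\mf}$---with the chart \eqref{coordsff}, computing
\[
\pi_{e,L}^*(\rho\partial_x)=\rho\sin\theta\,\partial_{\rho}+\cos\theta\,\partial_{\theta},\quad
\pi_{e,L}^*(\rho\partial_y)=\partial_{\hat y},\quad
\pi_{e,L}^*(\partial_z)=\partial_z,
\]
and noting that on $D_{e,s}=\{\theta=\theta',\hat y=0,z=z'\}$ in this region $\theta\ne\pm\pi/2$, so $\cos\theta\ne 0$ and transversality holds. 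Your appeal to the edge calculus for $\bhs{\mf}$ away from $\bhs{\ff}$ is fine, but it does not substitute for this interior-of-$\bhs{\ff}$ check. Adding this one computation (or an equivalent chart with $\eps$ as the projective radial variable) closes the gap; your concluding remark about why the blow-up of $\phi_b^{-1}(D_Y)$ is needed is correct and to the point.
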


\begin{proof} By symmetry, it suffices to prove the proposition for lifts from the left.  We calculate in local coordinates using \eqref{coordsff} and \eqref{coordsmf}. First consider a region near $\bhs{\ff}$, away from $\bhs{\lf}$, $\bhs{\mf}$ and $\bhs{\ebf}$. Here the coordinates \eqref{coordsff} are valid. Computing the lifts of each generator, we obtain
\[\pi_{e,L}^*(\rho\partial_x)=\rho\sin\theta\partial_{\rho}+\cos\theta\partial_{\theta};\ \pi_{e,L}^*(\rho\partial_y)=\partial_{\hat y};\ \pi_{e,L}^*(\partial_z)=\partial_z.\]
The transversality for $\cV_{\ee}(X_s)$ follows there from the fact that $D_{e,s}=\{\theta=\theta',\hat y=0,z=z'\}$.  It also holds when $\rho=0$, since on the diagonal in this region, $\theta\neq\pm\pi/2$. 

Now consider a region near $\bhs{\mf}\cap\bhs{\ff}\cap D_{e,s}$, where we have \eqref{coordsmf}. As before,
\[\pi_{e,L}^*(\rho\partial_x)=\sqrt{s^2+u^2}\partial_s;\ \pi_{e,L}^*(\rho\partial_y)=\sqrt{s^2+u^2}\partial_{\tilde y};\ \pi_{e,L}^*(\frac{\eps}{\rho}\partial_z)=\partial_z.\]
In these coordinates, $D_{e,s}=\{s=1,\tilde y=0,z=z'\}$. Since $s=1$ on the diagonal, the result follows.  
\end{proof}

\begin{corollary}\label{prop:canoniso} The normal bundle $N D_{e,s}$ is  canonically isomorphic to $^{e,s}TX_s$.
\end{corollary}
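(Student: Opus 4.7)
The plan is to build the isomorphism directly from the lifted vector fields of Proposition~\ref{prop:liftedvectorfields}.  Via the $b$-fibration $\pi_{e,L}$ of Lemma~\ref{ws.13}, the diagonal $D_{e,s}$ is identified with $X_s$, so it suffices to produce a canonical bundle isomorphism
\[
\Phi : {}^{e,s}TX_s \longrightarrow ND_{e,s}
\]
covering this identification.  First I would define $\Phi$ on the level of sections: for $V\in\cV_{e,s}(X_s)$, set
\[
\Phi(V) \;:=\; \pi_{e,L}^*V\bigr|_{D_{e,s}} \bmod TD_{e,s}.
\]
Because $\pi_{e,L}^*(fV)=(\pi_{e,L}^*f)\,\pi_{e,L}^*V$ and $\pi_{e,L}^*f|_{D_{e,s}}$ agrees with $f$ under the identification $D_{e,s}\cong X_s$, this prescription is $\CI(X_s)$-linear and hence descends to a bundle map.

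Next I would invoke Proposition~\ref{prop:liftedvectorfields}: transversality of $\pi_{e,L}^*\cV_{e,s}(X_s)$ to $D_{e,s}$ means precisely that the values of these lifts span the normal fibre at every point, i.e.\ that $\Phi$ is surjective.  A dimension count closes the argument:  $\dim X_s=\dim M+1$ and the rank of ${}^{e,s}TX_s$ equals $\dim M$, while
\[
\rank ND_{e,s}= \dim X^2_{e,s}-\dim D_{e,s}=(2\dim M+1)-(\dim M+1)=\dim M,
\]
so surjectivity of $\Phi$ forces it to be an isomorphism.

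Finally, for canonicity I would verify that the same isomorphism is produced by lifting from the right.  The swap involution on $M\times M\times[0,1]_{\eps}$ lifts to an involution of $X^2_{e,s}$ which fixes $D_{e,s}$ pointwise and interchanges $\pi_{e,L}$ with $\pi_{e,R}$; since it is an automorphism of the pair $(X^2_{e,s},D_{e,s})$, its action on $ND_{e,s}$ along $D_{e,s}$ is the identity, so the right-lift prescription yields the same $\Phi$.  The only mildly subtle point is the routine verification of $\CI(X_s)$-linearity of $V\mapsto \pi_{e,L}^*V\bmod TD_{e,s}$, but this reduces to the observation above that $\pi_{e,L}^*f$ and $\pi_{e,R}^*f$ coincide on the diagonal; all other steps are immediate from the preceding propositions.
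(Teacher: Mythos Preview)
Your construction of the map $\Phi$ via left lift, the $\CI(X_s)$-linearity argument, the appeal to Proposition~\ref{prop:liftedvectorfields} for surjectivity, and the dimension count are all correct, and this is exactly the standard argument the paper has in mind (the corollary is stated without proof as an immediate consequence).

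There is, however, a genuine error in your final paragraph. The swap involution does fix $D_{e,s}$ pointwise, but it does \emph{not} act as the identity on $ND_{e,s}$: it acts as $-\Id$. Concretely, in the interior of $X^2_{e,s}\cong M\times M\times(0,1)_\eps$, the tangent space to the diagonal at $(\zeta,\zeta,\eps)$ is spanned by $\pa_{\zeta_i}+\pa_{\zeta'_i}$ and $\pa_\eps$, and the normal directions by $\pa_{\zeta_i}-\pa_{\zeta'_i}$; the swap sends the latter to its negative. Correspondingly, the left lift of $V=V^i\pa_{\zeta_i}$ is $V^i\pa_{\zeta_i}$, while the right lift is $V^i\pa_{\zeta'_i}$, and modulo $TD_{e,s}$ these differ by a sign. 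So your argument that ``an automorphism of the pair must act trivially on the normal bundle'' is simply false, and the left and right prescriptions give isomorphisms that differ by $-1$.

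This does not affect the corollary: either lift gives an isomorphism, and for the intended application (identifying $N^*D_{e,s}$ with ${}^{e,s}T^*X_s$ to transport the principal symbol) the sign discrepancy is harmless. But you should drop the claim that the two prescriptions agree, or replace it by the observation that they differ by $-\Id$ and that this is immaterial for the symbol map.
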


\subsection{Densities}
We follow \cite[\S3.2]{ARS1}. Let $W$ be a manifold with corners. We write $\nu(W)$ for a nonvanishing smooth density on $W$, and $\nu_b(W)$ for a nonvanishing smooth $b$-density on $W$. Note that $\nu_b(W)=f^{-1}\nu(W)$ with $f$ a product of boundary defining functions for each boundary hypersurface of $W$. The sections $\nu(W)$ and $\nu_b(W)$ span the density and $b$-density bundles $\Omega(W)$ and $\Omega_b(W)$ respectively over $C^{\infty}(W)$.

Next we define density bundles on $X_s$. Let $g_{\w,\eps}$ be our wedge surgery metric, then let $\nu_{\ew}=|dg_{\w,\eps}|$. Let $\Omega_{\ew}(X_s)$ be the $\ew$-density bundle, defined so that $\nu_{\ew}$ is a nonvanishing section. Note that in local coordinates,
\[\nu_{\ew}=\rho^v dx\, dy\, dz,\]
where $v$ is the dimension of the fibers of $\phi: H\to Y$.  
The corresponding edge object is $\nu_{\ee}=\rho^{-n}\nu_{\ew}$ and we have
\[\nu_{\ee}=\rho^{-h-1} dx\, dy\, dz,\]
where $h=\dim Y$ and $n=\dim M$.  
Similarly, we let $\Omega_{\ee}(X_s)=\rho^{-n}\Omega_{\ew}(X_s)$. 

Finally, in many situations, it will actually be more convenient to work with the $b$-surgery density bundle $\Omega_{b,s}(X_s)=\rho^{-v-1}\Omega_{\ew}(X_s)$, which has a smooth nonvanishing section $\nu_{b,s}$ given in local coordinates by
\[\nu_{b,s}=\rho^{-1} dx\, dy\, dz.\]

\subsection{Operator calculi}
We follow \cite{ARS1} and let our operators have Schwartz kernels with densities lifted from the right. Let $E\to X_s$ be a vector bundle and let $E^*\to X_s$ be the dual bundle. We define both $b$-surgery and edge-surgery calculi. 

Begin with the small calculi. The small $b$-surgery calculus of \cite{mame1} is the union over $m\in\mathbb R$ of the spaces
\[\Psi^m_{b,s}(X_s;E)=\dot C^{\infty}_{\bff,\mf}I^m(X^2_{b,s},D_{b,s};( (\pi_{b,L})^*E\otimes (\pi_{b,R})^*(E^*\otimes\Omega_{b,s}(X_s)))).\]
The kernels of such operators are polyhomogeneous on $X_{b,s}$, smooth down to $\bhs{\bff}$ and $\bhs{\mf}$, and vanish to infinite order at other faces, with a (one-step polyhomogeneous) conormal singularity of order $m$ at $D_{b,s}$. Similarly, the small edge surgery calculus is defined to be the union over $m\in \bbR$ of
\[\Psi^m_{e,s}(X_s;E)=\dot C^{\infty}_{\ff,\mf}I^m(X^2_{e,s},D_{e,s};( (\pi_{e,L})^*E\otimes (\pi_{e,R})^*(E^*\otimes\Omega_{\ee}(X_s)))).\]

To define wider classes of pseudodifferential operators, we need to recall the notation from \cite{me1,mame1}.
Let $L$ be a manifold with corners, $R$ a total boundary defining function, $\mu$ a positive section of the density bundle $\Omega(L),$ and $\df s$ a multiweight. We denote the conormal functions with multiweight $\df s-$ by
\begin{equation*}
	\sA^{\df s}_-(L;\mu) = \bigcap_{\df s' < \df s} R^{\df s'}H_b^{\infty}(L;\mu).
\end{equation*}

Given an index family $\cE$ and two multiweights $\df s \leq \df w,$ the conormal functions with multiweight $\df s-$ that have partial asymptotic expansion at each boundary hypersurface $H$ with exponents in $\cE(H)$ and remainder term a conormal function with multiweight $\df w-$ are denoted
\begin{equation*}
	\sB^{\cE/\df w}_{\phg}\sA^{\df s}_-(L).
\end{equation*}
Thus, for example, 
\begin{equation*}
\begin{gathered}
	\sA^{\cE}_{\phg}(L) 
	= \sB^{\cE/\infty}_{\phg}\sA^{\df s}_-(L)
	\quad \Mand \quad
	\sA^{\df w}_-(L)  = \sB^{\emptyset/\df w}_{\phg}\sA^{\df s}_-(L) 
\end{gathered}
\end{equation*}
as long as $\df s< \inf \cE$ in the former and $\df s \leq \df w$ in the latter.
We point out that this set of distributions is a $\CI(L)$-module and so extends trivially to sections of a vector bundle, $E \lra L,$
\begin{equation*}
	\sB^{\cE/\df w}_{\phg}\sA^{\df s}_-(L;E)
	= 
	\sB^{\cE/\df w}_{\phg}\sA^{\df s}_-(L) \otimes_{\CI(L)} \CI(L;E).
\end{equation*}

Now, given  index families $\cE$, $\cF$  for $X_{b,s}^2$ and $X_{e,s}^2$, we define 
\[\Psi^{m,\cE}_{b,s}(X_s;E)= \Psi^{m}_{b,s}(X_s;E) + \sA_{\phg}^{\cE}(X^2_{b,s};( (\pi_{b,L})^*E\otimes (\pi_{b,R})^*(E^*\otimes\Omega_{b,s}(X_s))))   ;\]
\[ \Psi^{m,\cF}_{e,s}(X_{s};E)=\Psi^m_{e,s}(X_s;E)+ \sA_{\phg}^{\cF}(X^2_{e,s};( (\pi_{e,L})^*E\otimes (\pi_{e,R})^*(E^*\otimes\Omega_{\ee}(X_s)))) .\]
The large $b$-surgery and  edge-surgery calculi are defined as the unions over all $m$ and all index families $\cE$ and $\cF$ of the corresponding spaces above. 

We also need families of pseudodifferential operators which are conormal but which are polyhomogeneous only up to a certain order. These definitions are modelled on \cite[(78),(79)]{mame1}, with some modifications for our setting. We first set
\[\Psi^{-\infty,\tau,res}_{e,s}(X_s;E)=(\rho_{\ebf}\rho_{\rf})^h\sA^{\tau}_-(X_{e,s}^2;( (\pi_{e,L})^*E\otimes (\pi_{e,R})^*(E^*\otimes\Omega_{\ee}(X_s))).\]
These residual operators are conormal, with order $\tau$ at all boundary hypersurfaces except at $\bhs{\ebf}$ and $\bhs{\rf}$, where they have order $h+\tau$.
 We then set 
\[\Psi^{-\infty,\tau}_{e,s}(X_{s};E):= \sB^{\cE/\df s}_{\phg}\rho_{\ebf}^h\rho_{\rf}^h\sA^{0}_-(X_{e,s}^2;( (\pi_{e,L})^*E\otimes (\pi_{e,R})^*(E^*\otimes\Omega_{\ee}(X_s))))\]
with $\cE$ given by $\bbN_0$ at $\bhs{\ff}$, $\bhs{\mf}$ and by $\emptyset$ elsewhere, and with multi-weight $\df s$ given by $h+\tau$ at $\bhs{\ebf}$, $\bhs{\rf}$ and by $\tau$ elsewhere. These operators have kernels which are
\begin{itemize}
\item partially smooth up to $\bhs{\mf}$ and $\bhs{\ff}$  to order $\tau$ (with conormal remainder);
\item conormal of order $\tau$ at $\bhs{\lf}$; and
\item conormal of order $h+\tau$ at $\bhs{\rf}$ and $\bhs{\ebf}$.
\end{itemize}
Finally, we  define
\[\Psi^{m,\tau}_{e,s}(X_s;E):=\Psi^{m}_{e,s}(X_s;E)+\Psi^{-\infty,\tau}_{e,s}(X_s,E).\]

A symbol map for the $b$-surgery calculus is defined in \cite{mame1}, and there is also a symbol map for the edge surgery calculus.  Indeed for $A\in\Psi^{m,\cF}_{e,s}(X_s;E)$, let its Schwartz kernel be $K_A$. The kernel $K_A$ has a conormal singularity at the lifted diagonal $D_{e,s}$, and as such it has a symbol $\sigma(K_A)$ which is an element of $S^m(N^*D_{e,s};\textrm{End}(E))$. Let the edge surgery symbol of $A$, denoted $^{e,s}\sigma(A)$, be $\sigma(K_A)$. Using Corollary~\ref{prop:canoniso}, $^{e,s}\sigma(A)$ may be viewed as an element of $S^m(^{e,s}T^*X_s;\textrm{End}(E))$. Within the small calculus, this yields the exact sequence
\[0\xrightarrow{}\Psi^{m-1}_{e,s}(X_s;E) \xrightarrow{}\Psi^m_{e,s}(X_s;E) \xrightarrow{^{e,s}\sigma} S^m(^{e,s}T^*X_s;\textrm{End}(E))\xrightarrow{} 0.\]

\subsection{Mapping properties and composition}

The triple edge surgery space $X_{e,s}^3$  is constructed as follows.

We start with $M^3\times [0,1]_{\eps}$ and denote its boundary hypersurface $M^3\times \{0\}$ by $\bhs{Z}$.  The other boundary hypersurface at $\eps=1$ will play no role in our construction and can safely be ignored.  Let us denote by $x,x'$ and $x''$ the functions on each factor of $M$ corresponding to the coordinate normal to $H$ for a choice of tubular neighborhood of $H$.  We will now perform a series of blow-ups.  Each of them will introduce new boundary hypersurfaces and possibly modify the boundary hypersurfaces present before the blow-up.  However, to lighten the notation, for the boundary hypersurfaces already present, we will use the same notation for the boundary hypersurface before and after the blow-up. Let us first recall the construction of the surgery triple space of \cite{mame1}:
\begin{enumerate}
\item Blow up $H\times H\times H\times \{0\}$ and call the new face $\bhs{T}$;
\item Blow up $H\times H\times M\times \{0\}$, $H\times M\times H\times \{0\}$ and $M\times H\times H\times \{0\}$, denoting respectively by $\bhs{F}$, $\bhs{C}$ and $\bhs{S}$ the new boundary hypersurfaces;
\item Blow up  $M\times M\times H\times \{0\}$, $M\times H\times M\times \{0\}$ and  $H\times M\times M\times \{0\}$, calling the new faces $\bhs{N_1}$, $\bhs{N_2}$ and $\bhs{N_3}$, which yields the  surgery triple space $X^3_{b,s}$ of \cite{mame1}.
\end{enumerate}
Recall from \cite{mame1} that $X^3_{b,s}$ is such that the maps $M^3\times [0,1]\to M^2\times [0,1]$ obtained by projecting off one of the copies of $M$ all lift to $b$-fibrations
\begin{equation}
 X^3_{b,s}\to X^2_{b,s}.
\label{bf.1}\end{equation}
To construct the edge surgery triple space, on which the maps \eqref{bf.1} shall lift to $b$-fibrations onto $X^2_{e,s}$, it suffices then by \cite[Lemma~2.5]{hmm} to blow up the lifts of $\phi_b^{-1}(D_Y)$ with respect to the three $b$-fibrations of \eqref{bf.1}.  Since these lifts intersect on the face $\bhs{T}$, we can first blow up their intersection to keep the triple space symmetric with respect to the three $b$-fibrations.  This suggests to construct the edge surgery triple space by performing the following  three other rounds of blow-ups:
\begin{enumerate}\setcounter{enumi}{3}
\item Blow up $\bhs{T}\cap \{y=y'=y''\}$, creating a new boundary hypersurfaces $\bhs{TT}$ analogous to $\bhs{\phi_{TT}}$ in \cite{ARS1}, but with the important difference that $\bhs{TT}$ intersects all other boundary hypersurfaces while $\bhs{\phi_{TT}}$ only intersects $\bhs{T}$ and $\bhs{Z}$;

\item Blow up the interior lifts of $\bhs{T}\cap \{y=y'\}$, $\bhs{T}\cap \{y=y''\}$, and $\bhs{T}\cap \{y'=y''\}$, which are disjoint $p$-submanifolds by the previous blow-up, to get three new faces $\bhs{FT}$, $\bhs{CT}$, and $\bhs{ST}$ respectively;

\item Blow up the interior lifts of $\bhs{F}\cap\{y=y'\}$, $\bhs{C}\cap\{y=y''\}$, and $\bhs{S}\cap\{y'=y''\}$, denoting respectively the new boundary hypersurfaces $\bhs{FD}$, $\bhs{CD}$, and $\bhs{SD}$, to obtain the edge surgery triple space $X^3_{e,s}$.

\end{enumerate}
Let us denote by $\beta^3_{e,s}: X^3_{e,s}\to M^3\times [0,1]_{\eps}$  the corresponding blow-down map.  As the next lemma shows, it is then straightforward to check that on $X^3_{e,s}$, a projection off a factor lifts to a $b$-fibration.  

\begin{lemma}\label{lem:bfib}  The projections off the first, second, and third factor in $M\times M\times M\times [0,1]_{\eps}$ extend by continuity to well-defined $b$-fibrations $\pi_{e,S}$, $\pi_{e,C}$, and $\pi_{e,F}$ respectively from $X_{e,s}^3$ to $X_{e,s}^2$.  
\end{lemma}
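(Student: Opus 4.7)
The plan is to work one projection at a time; by the evident $\mathfrak{S}_3$-symmetry in the construction of $X^3_{e,s}$ under permutation of the three $M$-factors, it suffices to treat (say) $\pi_{e,F}$, as the arguments for $\pi_{e,S}$ and $\pi_{e,C}$ are identical after relabeling. The fundamental tool is the general lifting principle for $b$-fibrations under blow-ups, in the form used already in Lemma~\ref{ws.13} (and originally due to Melrose): if $f:X\to Y$ is a $b$-fibration and $Z\subset X$ is a $p$-submanifold such that $f(Z)$ is a $p$-submanifold of $Y$ which has either already been blown up or whose blow-up does not alter $f$ as a $b$-fibration, then $f$ lifts to a $b$-fibration on $[X;Z]$.

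First I would observe that the projection $M^3\times[0,1]_{\epsilon}\to M^2\times[0,1]_{\epsilon}$ off the third factor is manifestly a $b$-fibration before any blow-up. Then I would proceed through the eight blow-ups used to build $X^3_{e,s}$ and, at each stage, verify that the center being blown up projects into (the lift of) a $p$-submanifold of the target which has already been blown up in the construction of $X^2_{e,s}$, or else projects onto a full factor so that the blow-up is invisible downstairs. For steps (1)--(3), forming the $b$-triple space $X^3_{b,s}$, this is exactly the content of Mazzeo--Melrose \cite{mame1}, producing a $b$-fibration $\pi_{b,F}:X^3_{b,s}\to X^2_{b,s}$; for instance $\bhs{T}$ and $\bhs{N_i}$ project to $\bhs{\bff}$ or to $\bhs{\lf}$ or $\bhs{\rf}$, while $\bhs{F}$ projects onto $\bhs{\bff}$ in $X^2_{b,s}$.

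The new content lies in steps (4)--(6). For step (4), the blow-up of $\bhs{T}\cap\{y=y'=y''\}$ is sent by $\pi_{b,F}$ into $\phi_b^{-1}(D_Y)\subset\bhs{\bff}\subset X^2_{b,s}$, which is precisely the center blown up to pass from $X^2_{b,s}$ to $X^2_{e,s}$; thus the lifted map becomes a $b$-fibration into $X^2_{e,s}$. For step (5) the interior lifts of $\bhs{T}\cap\{y=y'\}$, $\bhs{T}\cap\{y=y''\}$, $\bhs{T}\cap\{y'=y''\}$ each either map onto $\phi_b^{-1}(D_Y)$ (already blown up) or, since $y''$ is unconstrained in the target, project onto an entire boundary hypersurface of $X^2_{e,s}$, a situation under which a blow-up automatically lifts trivially. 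Step (6) is then handled in the same way, with $\bhs{FD}$ mapping onto $\bhs{\ff}\subset X^2_{e,s}$ and $\bhs{CD}$, $\bhs{SD}$ projecting onto full faces.

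The main obstacle, and the step that must be treated with the most care, is step (4). Because $\bhs{TT}$ meets \emph{every} other boundary hypersurface of $X^3_{b,s}$—in contrast with the face $\bhs{\phi_{TT}}$ of \cite{ARS1}, which intersects only $\bhs{T}$ and $\bhs{Z}$—one must verify that $\bhs{T}\cap\{y=y'=y''\}$ meets the pre-existing faces $\bhs{T},\bhs{F},\bhs{C},\bhs{S},\bhs{N_1},\bhs{N_2},\bhs{N_3}$ transversally as $p$-submanifolds, so that the iterated blow-up yields a bona fide manifold with corners and the $b$-fibration property survives. Once this transversality is recorded (a short local-coordinate check using coordinates analogous to \eqref{coordsff} and \eqref{coordsmf} on each factor), the rest of the argument is routine bookkeeping of centers and images, and Lemma~\ref{lem:bfib} follows.
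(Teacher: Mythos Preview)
Your step-by-step approach has a genuine gap at step~(4). The ``lifting principle'' you invoke is not quite what \cite[Lemma~2.5]{hmm} says: to lift a $b$-fibration $f:X\to Y$ through the blow-up of a $p$-submanifold $W\subset Y$, one must blow up the \emph{full preimage} $f^{-1}(W)$ in $X$ (decomposed into its $p$-submanifold components), not merely some $p$-submanifold mapping \emph{into} $W$. Now $\pi_{b,F}^{-1}(\phi_b^{-1}(D_Y))$ is the union of $\bhs{T}\cap\{y=y'\}$ and $\bhs{F}\cap\{y=y'\}$, with $y''$ unconstrained; the center $\bhs{T}\cap\{y=y'=y''\}$ blown up in step~(4) is a \emph{proper} $p$-submanifold of the first component. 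Blowing it up alone therefore does \emph{not} produce a lift to $X^2_{e,s}$, contrary to what you claim. The difficulty is not the transversality check you flag, but that the blow-ups in the construction of $X^3_{e,s}$ are performed in an order ill-suited to lifting $\pi_{b,F}$.

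The paper's proof resolves this by reordering via the commutativity of nested blow-ups \cite[Lemma~2.1]{hmm}: since $\bhs{T}\cap\{y=y'=y''\}$ is a $p$-submanifold of $\bhs{T}\cap\{y=y'\}$, one may first blow up $\bhs{T}\cap\{y=y'\}$ and $\bhs{F}\cap\{y=y'\}$ to obtain an intermediate space $\widetilde X^3_{e,s}$ carrying a $b$-fibration $\tilde\pi_{e,F}:\widetilde X^3_{e,s}\to X^2_{e,s}$ (this is the legitimate application of \cite[Lemma~2.5]{hmm}), and then recognise $X^3_{e,s}$ as a further blow-up of $\widetilde X^3_{e,s}$ by the remaining centers $\bhs{T}\cap\{y=y'=y''\}$, $\bhs{T}\cap\{y=y''\}$, $\bhs{T}\cap\{y'=y''\}$, $\bhs{C}\cap\{y=y''\}$, $\bhs{S}\cap\{y'=y''\}$. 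The composition $\pi_{e,F}=\tilde\pi_{e,F}\circ\beta_F$ is then a $b$-fibration by \cite[Lemma~2.7]{hmm}. Your outline can be salvaged, but only after inserting this reordering argument; without it the proof does not go through.
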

\begin{proof}
This is quite similar to the proof of the corresponding statement for the triple surgery space of \cite{ARS1} and amounts to using \cite[Lemma~2.5 and Lemma~2.7]{hmm} as well as the commutativity of nested blow-ups.  Moreover, by symmetry, it suffices to establish our results for $\pi_{e,F}$.  By \cite{mame1}, we can also use the fact that the projections off the third factor of $M^3\times [0,1]_{\eps}$ extend by continuity to a $b$-fibration $\pi_{b,F}:X^3_{b,s} \to X^2_{b,s}$.  

Now, to lift $\pi_{b,F}$ to a $b$-fibration onto $X^2_{e,s}$, we know according to \cite[Lemma~2.5]{hmm} that it suffices to blow-up the pre-images of $\phi_b^{-1}(D_Y)\subset X^2_{b,s}$ with respect to $\pi_{b,F}$, namely $\bhs{T}\cap\{y=y'\}$ and $\bhs{F}\cap \{y=y'\}$.  In other words, there is a lift $\tilde{\pi}_{e,F}: \widetilde{X}^3_{e,s}\to X^2_{e,s}$ of $\pi_{b,s}$ with 
$$
    \widetilde{X}^3_{e,s}:= [X^2_{b,s}; \bhs{T}\cap\{y=y'\}; \bhs{F}\cap\{y=y'\}].
$$  
Since $\bhs{T}\cap\{y=y'=y''\}$ is a $p$-submanifold of $\bhs{T}\cap\{y=y'\}$, one can use the commutativity of nested blow-ups \cite[Lemma~2.1]{hmm} to see that $\widetilde{X}^3_{e,s}$ is a blow-down of $X^3_{e,s}$,
$$
     X^3_{e,s}= [\widetilde{X}^3_{e,s}; \bhs{T}\cap\{y=y'=y''\}, \bhs{T}\cap\{y=y''\}, \bhs{T}\cap\{y'=y''\}, \bhs{C}\cap\{y=y''\}, \bhs{S}\cap\{y'=y''\}].
$$
If $\beta_F: X^3_{e,s}\to \widetilde{X}^3_{e,s}$ is the blow-down map, we see therefore that the desired extension is $\pi_{e,F}:= \tilde{\pi}_{e,F}\circ \beta_F$, which by \cite[Lemma~2.7]{hmm} is a $b$-fibration.
\end{proof}

The exponent matrices of the $b$-fibrations of Lemma~\ref{lem:bfib} only have entries equal to 1 or zero.  
Using \cite[(101)]{mame1} for the corresponding $b$-fibrations of the $b$-surgery triple space, we compute that for $\pi_{e,F}$, the non-zero entries are given by
\begin{multline}\label{expmatrixf}\left\{(\bhs{TT},\bhs{\ff}), (\bhs{FT},\bhs{\ff}), (\bhs{FD},\bhs{\ff}), (\bhs{CT},\bhs{\ebf}), (\bhs{ST},\bhs{\ebf}), \right.\\ \left.(\bhs T,\bhs{\ebf}), (\bhs F,\bhs{\ebf}),(\bhs{SD},\bhs{\rf}),(\bhs S,\bhs{\rf}),(\bhs{N_2},\bhs{\rf}),\right.\\ \left.(\bhs{CD},\bhs{\lf}),(\bhs C,\bhs{\lf}),(\bhs{N_3},\bhs{\lf}),(\bhs{N_1},\bhs{\mf}),(\bhs Z,\bhs{\mf})\right\},\end{multline}
while for $\pi_{e,C}$ they are given by
\begin{multline}\label{expmatrixc}\left\{(\bhs{TT},\bhs{\ff}), (\bhs{CT},\bhs{\ff}), (\bhs{CD},\bhs{\ff}), (\bhs{FT},\bhs{\ebf}), (\bhs{ST},\bhs{\ebf}), \right.\\ \left.(\bhs T,\bhs{\ebf}), (\bhs C,\bhs{\ebf}),(\bhs{SD},\bhs{\rf}),(\bhs S,\bhs{\rf}),(\bhs{N_1},\bhs{\rf}),\right.\\ \left.(\bhs{FD},\bhs{\lf}),(\bhs F,\bhs{\lf}),(\bhs{N_3},\bhs{\lf}),(\bhs{N_2},\bhs{\mf}),(\bhs Z,\bhs{\mf})\right\},\end{multline}
and for $\pi_{e,S}$ they are given by
\begin{multline}\label{expmatrixs}\left\{(\bhs{TT},\bhs{\ff}), (\bhs{ST},\bhs{\ff}), (\bhs{SD},\bhs{\ff}), (\bhs{FT},\bhs{\ebf}), (\bhs{CT},\bhs{\ebf}), \right.\\ \left.(\bhs T,\bhs{\ebf}), (\bhs S,\bhs{\ebf}),(\bhs{CD},\bhs{\rf}),(\bhs C,\bhs{\rf}),(\bhs{N_1},\bhs{\rf}),\right.\\ \left.(\bhs{FD},\bhs{\lf}),(\bhs F,\bhs{\lf}),(\bhs{N_2},\bhs{\lf}),(\bhs{N_3},\bhs{\mf}),(\bhs Z,\bhs{\mf})\right\}.\end{multline}

Now we examine densities and their behavior under blow-up. The following is the analogue of Corollary C.6 in \cite{ARS1}.
\begin{proposition}\label{prop:densityblowup} The canonical density bundles are transformed by blow-ups as follows:
\begin{equation}\label{singlespacedenstrans}
(\beta_s)^*\nu(M\times[0,1]_{\eps})=\rho_{\bs}\nu(X_s);
\end{equation}
\begin{equation}\label{doublespacedenstrans}
(\beta_{e,s}^2)^*\nu(M^2\times[0,1]_{\eps})=\rho_{\lf}\rho_{\rf}\rho^2_{\ebf}\rho_{\ff}^{h+2}\nu(X_{e,s}^2);
\end{equation}
\begin{multline}\label{triplespacedenstrans}
(\beta_{e,s}^3)^*\nu(M\times[0,1]_{\eps})=(\rho_{N_1}\rho_{N_2}\rho_{N_3})(\rho_F\rho_C\rho_S)^2(\rho_T)^{3}(\rho_{FD}\rho_{CD}\rho_{SD})^{h+2}\\
(\rho_{FT}\rho_{CT}\rho_{ST})^{h+3}\rho_{TT}^{2h+3}\nu(X_{e,s}^3).
\end{multline}
\end{proposition}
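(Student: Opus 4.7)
The plan is to derive all three identities from the single fundamental blow-up formula for densities: if $\beta:[W;S]\to W$ is the radial blow-up of a closed $p$-submanifold $S$ of codimension $k$, then
\[\beta^*\nu(W)=\rho_{\mathrm{new}}^{\,k-1}\,\nu([W;S]),\]
where $\rho_{\mathrm{new}}$ is a defining function for the newly created front face. This is a standard local computation in polar coordinates (cf.\ \cite[Corollary~C.6]{ARS1}). Iterating this identity across the sequence of blow-ups that builds each of $X_s$, $X^2_{e,s}$, $X^3_{e,s}$ yields the desired formulas. The only bookkeeping device I need is the pullback rule: when a subsequent blow-up occurs along $S'\subset \bhs{\mathrm{new}}$, the defining function of the older face transforms as $\beta'^{*}\rho_{\mathrm{new}}=\rho_{\mathrm{new}}'\cdot\rho_{S'}$, so the exponent of $\rho_{S'}$ in the cumulative formula picks up the exponent already carried by $\rho_{\mathrm{new}}$.

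For \eqref{singlespacedenstrans}, the single blow-up of $H\times\{0\}\subset M\times[0,1]_\eps$ has codimension $2$, so the formula immediately gives the factor $\rho_{\bs}$.

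For \eqref{doublespacedenstrans}, I apply the formula to the four blow-ups defining $X^2_{e,s}$ in turn. The codimensions in $M^2\times[0,1]_{\eps}$ of $H\times H\times\{0\}$, $H\times M\times\{0\}$, $M\times H\times\{0\}$ are $3,2,2$, giving factors $\rho_{\bff}^{2}$, $\rho_{\lf}$, $\rho_{\rf}$ after the $b$-surgery blow-ups; since the last two submanifolds are not contained in the first front face, no cross-contributions occur. Finally, the blow-up of $\phi_b^{-1}(D_Y)\subset\bhs{\bff}$ has codimension $h+1$ in $X^2_{b,s}$, so it contributes $\rho_{\ff}^{h}$; moreover $\rho_{\bff}$ pulls back to $\rho_{\ebf}\rho_{\ff}$, turning $\rho_{\bff}^{2}$ into $\rho_{\ebf}^{2}\rho_{\ff}^{2}$. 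Combining gives $\rho_{\ebf}^{2}\rho_{\ff}^{h+2}\rho_{\lf}\rho_{\rf}$, as claimed.

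For \eqref{triplespacedenstrans}, the same method works but the bookkeeping is the main obstacle. Rather than track each step manually, I will use the following consequence of the iteration above, valid for the specific sequence of blow-ups at hand (in which each newly blown-up $p$-submanifold is either transverse to the existing boundary structure or is a full-fiber subbundle over a submanifold of an earlier front face): the exponent of $\rho_{\bhs{B}}$ in $(\beta^3_{e,s})^*\nu(M^3\times[0,1]_{\eps})$ equals $\mathrm{codim}_{M^3\times[0,1]_\eps}(\pi(\bhs{B}))-1$, where $\pi(\bhs{B})$ is the image in $M^3\times[0,1]_{\eps}$ of $\bhs{B}$ under the total blow-down. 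It then remains to compute these codimensions. With $\dim M=n$, $\dim Y=h$, $\dim Z=v=n-1-h$, the images and codimensions are:
\begin{align*}
\pi(\bhs{N_i})&\colon\ \text{e.g.\ }M\!\times\!M\!\times\!H\!\times\!\{0\},\ \text{codim }2,\ \text{exponent }1;\\
\pi(\bhs{F}),\pi(\bhs{C}),\pi(\bhs{S})&\colon\ \text{e.g.\ }H\!\times\!H\!\times\!M\!\times\!\{0\},\ \text{codim }3,\ \text{exponent }2;\\
\pi(\bhs{T})&\colon\ H^{3}\!\times\!\{0\},\ \text{codim }4,\ \text{exponent }3;\\
\pi(\bhs{FD}),\pi(\bhs{CD}),\pi(\bhs{SD})&\colon\ \text{e.g.\ }(H\!\times_{\phi}\!H)\!\times\!M\!\times\!\{0\},\ \text{codim }h+3,\ \text{exponent }h+2;\\
\pi(\bhs{FT}),\pi(\bhs{CT}),\pi(\bhs{ST})&\colon\ \text{e.g.\ }(H\!\times_{\phi}\!H)\!\times\!H\!\times\!\{0\},\ \text{codim }h+4,\ \text{exponent }h+3;\\
\pi(\bhs{TT})&\colon\ H\!\times_{Y}\!H\!\times_{Y}\!H\!\times\!\{0\},\ \text{codim }2h+4,\ \text{exponent }2h+3.
\end{align*}
Assembling these yields the stated formula. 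The trickiest point, and the place where the image-codimension shortcut needs to be justified, is the interaction of the blow-ups of $\bhs{T}\cap\{y=y'=y''\}$ with the subsequent $\bhs{T}\cap\{y=y'\}$ etc.; here one verifies that after blowing up the deepest diagonal first, the remaining pairwise diagonals lift to disjoint $p$-submanifolds on which the general principle applies cleanly, exactly as in the analogous triple-space computation of \cite{ARS1}.
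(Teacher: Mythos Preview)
Your proof is correct and follows the same approach as the paper, which simply states that each identity follows from repeated application of the basic blow-up density formula (Proposition~C.5 in \cite{ARS1}, originally due to Melrose). You supply considerably more detail than the paper's one-line proof, including the useful ``codimension of the image minus one'' shortcut for reading off exponents directly; this is exactly what iterated application of the basic formula amounts to in the present situation, and your verification of the codimensions for each face of the triple space is accurate.
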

\begin{proof} Each statement is proved by repeated applications of Proposition C.5 in \cite{ARS1}, which is a proposition originally due to Melrose. 
\end{proof}

\begin{theorem}\label{mappingprops} Let $f\in\mathcal A^{\mathcal F}(X_s;E)$ and let $A\in\Psi^{m,\mathcal E}_{e,s}(X_s;E).$ Then $g=Af\in\mathcal A^{\mathcal G}(X_s;E),$ with
\[G_{\ms}=(E_{\mf}+F_{\ms})\extu(E_{\rf}+F_{\bs}-h);\ \ G_{\bs}=(E_{\lf}+F_{\ms})\extu(E_{\ff}+F_{\bs})\extu(E_{\ebf}+F_{\bs}-h).\]
\end{theorem}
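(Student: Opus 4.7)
The strategy is the standard pull-back/push-forward argument for polyhomogeneous conormal distributions under $b$-fibrations, as developed in \cite{me1, mame1} and applied in analogous settings in \cite{maz91, Lauter, hmm, ARS1}. Writing $K_A \in \Psi^{m,\mathcal E}_{e,s}(X_s;E)$ for the Schwartz kernel of $A$ and $\pi_{e,L}, \pi_{e,R}: X^2_{e,s}\to X_s$ for the $b$-fibrations of Lemma~\ref{ws.13}, I would represent the action of $A$ as
\[g = Af = (\pi_{e,L})_*\bigl(K_A \cdot (\pi_{e,R})^* f\bigr)\]
and compute index sets accordingly.

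First, by the pull-back theorem applied to the $b$-fibration $\pi_{e,R}$ using the exponent matrix \eqref{ws.15}, the pull-back $(\pi_{e,R})^* f$ is polyhomogeneous on $X^2_{e,s}$ with index set $F_{\ms}$ at $\bhs{\lf}, \bhs{\mf}$ and $F_{\bs}$ at $\bhs{\rf}, \bhs{\ebf}, \bhs{\ff}$, since all non-zero entries of \eqref{ws.15} equal $1$. Combining with the index family $\mathcal E$ of $K_A$ by adding index sets face by face produces an integrand on $X^2_{e,s}$ with boundary index sets $E_{\mf}+F_{\ms}$, $E_{\lf}+F_{\ms}$, $E_{\rf}+F_{\bs}$, $E_{\ebf}+F_{\bs}$, $E_{\ff}+F_{\bs}$ at $\bhs{\mf}, \bhs{\lf}, \bhs{\rf}, \bhs{\ebf}, \bhs{\ff}$ respectively. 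The conormal singularity of order $m$ at $D_{e,s}$ is transverse to the fibers of $\pi_{e,L}$ by Proposition~\ref{prop:liftedvectorfields}, so it is absorbed under the push-forward.

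Second, to apply Melrose's push-forward theorem via $\pi_{e,L}$, I would convert the density factor implicit in $K_A$, namely $(\pi_{e,R})^*\Omega_{\ee}(X_s)$, and the target density $\Omega_{\ee}(X_s)$ on $X_s$, into $b$-densities using the relation $\Omega_{\ee}(X_s)=\rho_{\bs}^{-h}\Omega_{b,s}(X_s)$ together with the density transformation formulas \eqref{singlespacedenstrans}--\eqref{doublespacedenstrans} of Proposition~\ref{prop:densityblowup}. A face-by-face bookkeeping of these conversions contributes the weight $-h$ at precisely the faces $\bhs{\rf}$ and $\bhs{\ebf}$; the would-be $-h$ contribution at $\bhs{\ff}$ is cancelled by the $h+2$ exponent of $\rho_{\ff}$ in \eqref{doublespacedenstrans}, which encodes the $Y$-direction resolution coming from the blow-up of $\phi_b^{-1}(D_Y)$ that creates $\bhs{\ff}$. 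Applying Melrose's push-forward theorem with exponent matrix \eqref{ws.16}, taking $\extu$-unions over the pre-image faces of each boundary hypersurface of $X_s$, and converting the $b$-density output back to $\Omega_{\ee}(X_s)$ then yields the asserted formulas for $G_{\bs}$ and $G_{\ms}$.

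The main technical obstacle is the precise tracking of density conversions, so as to ensure that the $-h$ shifts land at $\bhs{\rf}$ and $\bhs{\ebf}$ but not at $\bhs{\ff}$; this requires the explicit weight structure provided by \eqref{doublespacedenstrans}. A secondary, standard step is verifying that Melrose's push-forward theorem applies with $\extu$-unions rather than a more general combination of index sets; this is automatic because the exponent matrix \eqref{ws.16} of $\pi_{e,L}$ has entries in $\{0,1\}$, and $\pi_{e,L}$ is a $b$-fibration by Lemma~\ref{ws.13}.
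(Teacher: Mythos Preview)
Your approach is correct and essentially the same as the paper's: represent $g$ via $(\pi_{e,L})_*\bigl(K_A\cdot(\pi_{e,R})^*f\bigr)$, convert densities using Proposition~\ref{prop:densityblowup}, then apply Melrose's pull-back and push-forward theorems with the exponent matrices \eqref{ws.15}--\eqref{ws.16}.

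One point of precision in your density bookkeeping: the paper's explicit computation shows that after converting to smooth densities on both sides, the integrand picks up the factor $(\rho_{\lf}\rho_{\rf}\rho_{\ebf}^2\rho_{\ff})^{-h}$ while the output picks up $\rho_{\bs}^{-h}$; so in the intermediate step there \emph{is} a $-h$ shift at $\bhs{\ff}$ (and at $\bhs{\lf}$, and $-2h$ at $\bhs{\ebf}$), not only at $\bhs{\rf}$ and $\bhs{\ebf}$. The cancellation you describe at $\bhs{\ff}$ is real in the final answer, but it occurs because $\bhs{\ff}$ and $\bhs{\lf}$ both map to $\bhs{\bs}$ under $\pi_{e,L}$, so the $+h$ restored on the output side cancels their shifts; it is not a direct consequence of the $h+2$ exponent in \eqref{doublespacedenstrans} alone.
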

Note that since all our operators act on compact manifolds for positive $\eps$, no condition on the index set is needed for this action to be defined.
\begin{proof}
We prove the theorem when $m=-\infty$; the extension to arbitrary $m$ is standard, as in \cite{ARS1}. The kernel $K(A)$ of $A$ is an element of $\mathcal A(X_s)^2$ with index family $\cE$. We have
\begin{equation}\label{mapping}g=(\pi_{e,L})_*(K(A)\kappa_{e}(\pi_{e,R})^*f),\end{equation}
where $\kappa_{e}$ is a smooth nonvanishing section of $(\pi_{e,R})^*\Omega_{e,s}(X_s)$. As in \cite{ARS1}, for convenience in using the pull-back and push-forward theorems of Melrose, we transform (\ref{mapping}) into an equation where the left-hand side is a multiple of $\nu(X_s)$ and the interior of the push-forward on the right is a multiple of $\nu(X_s^2).$ First multiply each side by $\nu_{e,s}|d\epsilon|,$ where $\nu_{e,s}\in \Omega_{e,s}(X_s)$ is a non-vanishing section, and rewrite
\[g\nu_{e,s}|d\epsilon|=(\pi_{e,L})_*(K(A)(\pi_{e,R})^*f((\pi_{e,L})^*\nu_{e,s}(\pi_{e,R})^*\nu_{e,s}|d\epsilon|)).\]
Now, the left-hand side is $g\rho^{-(h+1)}\beta_s^*\nu(M\times[0,1]_{\epsilon})$, while the right-hand side is
\[(\pi_{e,L})_*((\rho\rho')^{-(h+1)}K(A)(\pi_{e,R})^*f((\pi_{e,L})^*\beta_s^*\nu(M\times[0,1]_{\epsilon})(\pi_{e,R})^*\beta_s^*\nu(M\times[0,1]_{\epsilon})|d\epsilon|^{-1}).\]
Because $\nu(M\times[0,1]_{\epsilon})=|dx\ dy\ dz\ d\epsilon|$,
\[(\pi_{e,L})^*\beta_s^*\nu(M\times[0,1]_{\epsilon})(\pi_{e,R})^*\beta_s^*\nu(M\times[0,1]_{\epsilon})=(\beta_{e,s}^2)^*\nu(M^2\times[0,1]_{\epsilon})|d\epsilon|.\]
Combining this with the definitions of the boundary defining functions, as in \cite{ARS1}, gives
\[g\rho_{\bs}^{-(h+1)}\beta_s^*\nu(M\times[0,1]_{\epsilon})=(\pi_{e,L})_*((\rho_{\lf}\rho_{\rf}\rho^2_{\ebf}
\rho^2_{\ff})^{-(h+1)}K(A)(\pi_{e,R})^*f((\beta_{e,s}^2)^*\nu(M^2\times[0,1]_{\epsilon})).\]
Applying Proposition \ref{prop:densityblowup} yields
\begin{equation}\label{mappingtrans} g\rho_{\bs}^{-h}\nu(X_s)=(\pi_{e,L})_*(K(A)(\pi_{e,R})^*f(\rho_{\lf}\rho_{\rf}\rho_{\ebf}^2\rho_{\ff})^{-h}\nu(X_{e,s}^2)).\end{equation}

Melrose's pullback theorem shows that $K(A)(\pi_{e,R})^*f(\rho_{\lf}\rho_{\rf}\rho_{\ebf}^2\rho_{\ff})^{-h}$ is polyhomogeneous on $X_s^2$ with index family
\[E_{\ff}+F_{\bs}-h\textrm{ at }\bhs{\ff},\ E_{\ebf}+F_{\bs}-2h\textrm{ at }\bhs{\ebf},\ E_{\lf}+E_{\ms}-h\textrm{ at }\bhs{\lf},\]
\[E_{\rf}+F_{\bs}-h\textrm{ at }\bhs{\rf},\ E_{\mf}+F_{\ms}\textrm{ at }\bhs{\mf}.\]
Then the pushforward theorem shows that $g\rho^{-h}_{\bs}$ is polyhomogeneous on $X_s$ with index family 
\[(E_{\ff}+F_{\bs}-h)\extu (E_{\ebf}+F_{\bs}-2h)\extu(E_{\lf}+F_{\ms}-h)\textrm{ at }\bhs{\bs},\]
\[(E_{\mf}+F_{\ms})\extu (E_{\rf}+F_{\bs}-h)\textrm{ at }\bhs{\ms}.\]
Adding $h$ to the index set at $\bhs{\bs}$ completes the proof.  
\end{proof}

We also have a composition formula.
\begin{theorem}\label{composition}[Composition] Let $A\in\Psi^{m,\mathcal E}_{e,s}(X_s;E)$ and $B\in\Psi^{m',\mathcal F}_{e,s}(X_s;E).$ Then  \linebreak $C=A\circ B\in\Psi^{m+m',\mathcal G}_{e,s}(X_s;E),$ where \[G_{\ff}=(E_{\ff}+F_{\ff})\extu(E_{\ebf}+F_{\ebf}-h)\extu(E_{\lf}+F_{\rf});\] 
\[G_{\ebf}=(E_{\ebf}+F_{\ff})\extu(E_{\ff}+F_{\ebf})\extu(E_{\ebf}+F_{\ebf}-h)\extu (E_{\lf}+F_{\rf});\] 
\[G_{\lf}=(E_{\ff}+F_{\lf})\extu(E_{\ebf}+F_{\lf}-h)\extu(E_{\lf}+F_{\mf});\]
\[G_{\rf}=(E_{\rf}+F_{\ff})\extu(E_{\rf}+F_{\ebf}-h)\extu(E_{\mf}+F_{\rf});\]
\[G_{\mf}=(E_{\mf}+F_{\mf})\extu(E_{\rf}+F_{\lf}-h).\]
\end{theorem}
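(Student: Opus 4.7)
The approach mirrors the proof of Theorem \ref{mappingprops}, now working on the triple edge surgery space $X^3_{e,s}$ via the three $b$-fibrations $\pi_{e,F},\pi_{e,C},\pi_{e,S}$ from Lemma \ref{lem:bfib}. Following the convention that kernels act by integration on the right, we realize the composition as
\[
K(A\circ B) \;=\; (\pi_{e,C})_*\bigl( (\pi_{e,S})^*K(A)\cdot (\pi_{e,F})^*K(B)\bigr),
\]
where $\pi_{e,S}$ forgets the third factor (so gives the $(p_1,p_2)$ variables on which $K(A)$ depends), $\pi_{e,F}$ forgets the first factor (giving the $(p_2,p_3)$ variables on which $K(B)$ depends), and $\pi_{e,C}$ forgets the middle factor, integrating it out.

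The first step is to rewrite this identity in terms of ordinary density bundles, in the spirit of the calculation leading to \eqref{mappingtrans}, by using \eqref{singlespacedenstrans}, \eqref{doublespacedenstrans}, and the triple-space density identity \eqref{triplespacedenstrans} from Proposition \ref{prop:densityblowup}. This converts the problem into one where we pull back two polyhomogeneous functions on $X^2_{e,s}$, multiply them, and push forward to $X^2_{e,s}$, with explicit powers of boundary defining functions absorbed into the kernels; some of these powers will produce the $-h$ shifts that appear in the statement.

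The second step is to apply Melrose's pull-back theorem using the exponent matrices \eqref{expmatrixf} and \eqref{expmatrixs}. Since all non-zero entries are $1$, the index set of $(\pi_{e,S})^*K(A)$ at each boundary hypersurface of $X^3_{e,s}$ is simply the index set of $K(A)$ at the image face under $\pi_{e,S}$, and similarly for $(\pi_{e,F})^*K(B)$. Multiplying adds these index sets. Then the push-forward theorem, applied to $\pi_{e,C}$ using \eqref{expmatrixc}, produces at each boundary hypersurface of $X^2_{e,s}$ the extended union (denoted $\extu$) of the contributions from all hypersurfaces of $X^3_{e,s}$ that map into it. Bookkeeping then yields, for instance at $\bhs{\ff}$, the contributions from $\bhs{TT}$, $\bhs{FD}$ (whose image under $\pi_{e,F}$ is $\bhs{\ff}$, while $\pi_{e,S}$ sends it to $\bhs{\lf}$), and $\bhs{CD}$: these produce exactly the three terms $E_{\ff}+F_{\ff}$, $E_{\lf}+F_{\rf}$, and $E_{\ebf}+F_{\ebf}-h$ appearing in the formula, after incorporating the $-h$ shifts from the density rewriting. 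A completely analogous accounting at $\bhs{\ebf}$, $\bhs{\lf}$, $\bhs{\rf}$, $\bhs{\mf}$ using \eqref{expmatrixf}, \eqref{expmatrixc}, \eqref{expmatrixs} yields the remaining four formulas.

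Finally, the conormal singularity is handled separately by the standard transversality argument: the three $b$-fibrations are, away from boundary faces meeting the triple diagonal, transverse in the sense required to invoke the usual composition of conormal distributions (as follows from Proposition \ref{prop:liftedvectorfields} and Corollary \ref{prop:canoniso}), producing a conormal singularity at $D_{e,s}$ of order $m+m'$ whose symbol is the pointwise composition of the edge surgery symbols. The main obstacle is the careful bookkeeping of the density factors and the $-h$ shifts, particularly near the face $\bhs{TT}$, which — unlike the analogous face in \cite{ARS1} — intersects every other boundary hypersurface of $X^3_{e,s}$; verifying that no further shift appears at $\bhs{\ff}$ from $\bhs{TT}$ itself, and that the $-h$ corrections from the $\bhs{\ebf}$ and $\bhs{SD}$, $\bhs{CD}$, $\bhs{FD}$ contributions land where they should, is the one calculation that requires genuine care.
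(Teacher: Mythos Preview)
Your approach is exactly the paper's: pull back along two of the three $b$-fibrations of Lemma~\ref{lem:bfib}, multiply, push forward along the third, and bookkeep using Proposition~\ref{prop:densityblowup} and the exponent matrices \eqref{expmatrixf}--\eqref{expmatrixs}. However, your execution contains a genuine error that would give the wrong formulas.

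You have swapped the roles of $\pi_{e,F}$ and $\pi_{e,S}$. By Lemma~\ref{lem:bfib}, $\pi_{e,S}$ is the projection off the \emph{first} factor (so lands on $(p_2,p_3)$) and $\pi_{e,F}$ is the projection off the \emph{third} (so lands on $(p_1,p_2)$). Hence the correct composition formula is
\[
K(C)\kappa_e \;=\; (\pi_{e,C})_*\bigl((\pi_{e,F})^*(K(A)\kappa_e)\cdot(\pi_{e,S})^*(K(B)\kappa_e)\bigr),
\]
the opposite of what you wrote. With your assignment, you would be computing $B\circ A$, which at $\bhs{\ff}$ produces $E_{\rf}+F_{\lf}$ rather than $E_{\lf}+F_{\rf}$.

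Your worked example at $\bhs{\ff}$ is also off: the faces mapped to $\bhs{\ff}$ under $\pi_{e,C}$ are $\bhs{TT}$, $\bhs{CT}$, $\bhs{CD}$ (see \eqref{expmatrixc}), \emph{not} $\bhs{TT}$, $\bhs{FD}$, $\bhs{CD}$. The face $\bhs{FD}$ goes to $\bhs{\lf}$ under $\pi_{e,C}$ and contributes $E_{\ff}+F_{\lf}$ to $G_{\lf}$; it is $\bhs{CD}$ that gives $E_{\lf}+F_{\rf}$ (since $\pi_{e,F}(\bhs{CD})=\bhs{\lf}$, $\pi_{e,S}(\bhs{CD})=\bhs{\rf}$), and $\bhs{CT}$ that gives $E_{\ebf}+F_{\ebf}-h$. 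Once you correct the pull-back assignments and redo the table as in \eqref{comp.1}, the push-forward along $\pi_{e,C}$ followed by multiplication by $(\rho_{\lf}\rho_{\rf}\rho_{\ebf}^2\rho_{\ff})^{h}$ gives exactly the stated index sets.
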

\begin{proof} Assume for the moment that $m=m'=-\infty$. The extension to arbitrary $m$ and $m'$ is standard and follows the arguments in \cite{ARS1}; we therefore omit the details. As in \cite{mame1} and \cite{ARS1}, we have
\begin{equation}K(C)\kappa_{e}=(\pi_{e,C})_*((\pi_{e,F})^*(K(A)\kappa_{e})(\pi_{e,S})^*(K(B)\kappa_{e})),\label{ws.21}\end{equation}
where $\kappa_e= \pi_{e,R}^*\nu_{e,s}$ with $\nu_{e,s}$ a non-vanishing section of $\Omega_{e,s}(X_s)$.  
We follow the same approach as in the previous proof, namely rewrite everything in terms of $\nu(X_{e,s}^i)$, $i=2,3$, and apply the pullback and pushforward theorems.  Denoting coordinates on the second factor with a prime and on the third factor with a double-prime, we see multiplying both sides of \eqref{ws.21} by $(\pi^*_{e,L}\nu_{e,s})|d\eps|$ that 
\begin{multline}\label{thiseq}K(C)(\rho\rho'')^{-(h+1)}(\beta_{e,s}^2)^*\nu(M^2\times[0,1]_{\epsilon})=(\pi_{e,C})_*((\rho\rho'\rho'')^{-(h+1)}(\pi_{e,F})^*(K(A))(\pi_{e,S})^*(K(B))\\ \cdot (\beta_{e,s}^3)^*\nu(M^3\times[0,1]_{\epsilon})).\end{multline}
The left-hand side of (\ref{thiseq}), using Proposition \ref{prop:densityblowup}, is
\[K(C)(\rho_{\lf}\rho_{\rf}\rho_{\ebf}^2\rho_{\ff})^{-h}\nu(X_{e,s}^2).\]
Now let
\[\nu_{RHS}=(\rho\rho'\rho'')^{-(h+1)}(\beta_{e,s}^3)^*\nu(M^3\times[0,1]_{\epsilon}).\]
As in \cite{ARS1}, we use the pullback theorem to compute that 
\begin{multline}\nu_{RHS}=(\rho_{TT}\rho_{FT}\rho_{CT}\rho_{ST}\rho_{T})^{-3(h+1)}(\rho_{FD}\rho_{CD}\rho_{SD}\rho_F\rho_C\rho_S)^{-2(h+1)}\\ \cdot(\rho_{N_1}\rho_{N_2}\rho_{N_3})^{-(h+1)}(\beta^3_{e,s})^*\nu(M^3\times[0,1]_{\epsilon}).\end{multline}
Using Proposition \ref{prop:densityblowup}, we compute that 
\[\nu_{RHS}=(\rho_T)^{-3h}(\rho_{FT}\rho_{CT}\rho_{ST}\rho_F\rho_C\rho_S)^{-2h}(\rho_{TT}\rho_{FD}\rho_{CD}\rho_{SD}\rho_{N_1}\rho_{N_2}\rho_{N_3})^{-h}\nu(X_{e,s}^3).\]

Now use the pullback and pushforward theorems. From the pullback theorem, $$K(C)(\rho_{\lf}\rho_{\rf}\rho_{\ebf}^2\rho_{\ff})^{-h}\nu(X_{e,s}^2)$$ is the pushforward by $\pi_{e,C}$ of a section of $\Omega(X_{e,s}^3)$ with index family
\begin{equation}
\begin{gathered}E_{\ff}+F_{\ff}-h \textrm{ at $\bhs{TT},$ } E_{\ff}+F_{\ebf}-2h\textrm{ at $\bhs{FT},$ } E_{\ebf}+F_{\ebf}-2h\textrm{ at $\bhs{CT},$ }\\
E_{\ebf}+F_{\ff}-2h\textrm{ at $\bhs{ST},$ } E_{\ff}+F_{\lf}-h\textrm{ at $\bhs{FD},$ } E_{\lf}+F_{\rf}-h\textrm{ at $\bhs{CD},$ }\\
E_{\rf}+F_{\ff}-h\textrm{ at $\bhs{SD},$ } E_{\ebf}+F_{\ebf}-3h\textrm{ at $\bhs T,$ } E_{\ebf}+F_{\lf}-2h\textrm{ at $\bhs F,$ }\\
E_{\lf}+F_{\rf}-2h\textrm{ at $\bhs C,$ } E_{\rf}+F_{\ebf}-2h\textrm{ at $\bhs S,$ } E_{\mf}+F_{\rf}-h\textrm{ at $\bhs{N_1},$ }\\
E_{\rf}+F_{\lf}-h\textrm{ at $\bhs{N_2},$ } E_{\lf}+F_{\mf}-h\textrm{ at $\bhs{N_3},$ } E_{\mf}+F_{\mf}\textrm{ at $\bhs Z.$}
\end{gathered}
\label{comp.1}\end{equation}
Applying the pushforward theorem, the full density $K(C)(\rho_{\lf}\rho_{\rf}\rho_{\ebf}^2\rho_{\ff})^{-h}\nu(X_{e,s}^2)$ has index sets
\[\tilde G_{\ff}=(E_{\ff}+F_{\ff}-h)\extu(E_{\ebf}+F_{\ebf}-2h)\extu(E_{\lf}+F_{\rf}-h);\] 
\[\tilde G_{\ebf}=(E_{\ff}+F_{\ebf}-2h)\extu(E_{\ebf}+F_{\ff}-2h)\extu(E_{\ebf}+F_{\ebf}-3h)\]\[\extu (E_{\lf}+F_{\rf}-2h);\] 
\[\tilde G_{\lf}=(E_{\ff}+F_{\lf}-h)\extu(E_{\ebf}+F_{\lf}-2h)\extu(E_{\lf}+F_{\mf}-h);\]
\[\tilde G_{\rf}=(E_{\rf}+F_{\ff}-h)\extu(E_{\rf}+F_{\ebf}-2h)\extu(E_{\mf}+F_{\rf}-h);\]
\[\tilde G_{\mf}=(E_{\mf}+F_{\mf})\extu(E_{\rf}+F_{\lf}-h).\]
Multiplying by $(\rho_{\lf}\rho_{\rf}\rho_{\ebf}^2\rho_{\ff})^{h}$ completes the proof.\end{proof}

In the resolvent construction, we need a couple of composition results for edge surgery operators in the calculus with bounds. These are proved precisely as with the composition formula above, \cf \cite[Section 4]{mame1}.  
\begin{theorem}\label{cwb.1}
If $A, B\in\Psi^{-\infty,\tau,res}_{e,s}(X_s;E)$ and $C,D\in \Psi^{m,\tau}_{e,s}(X_s;E)$  then
\[A\circ B, B\circ A\in\Psi^{-\infty,2\tau,res}_{e,s}(X_s;E);\] 
\[ A\circ C, C\circ A\in\Psi^{-\infty,\tau,res}_{e,s}(X_s;E);\]
\[ C\circ D, D\circ C\in\Psi^{-\infty,\tau}_{e,s}(X_s;E).\]
\end{theorem}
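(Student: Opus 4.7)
The plan is to mimic the triple-space argument used in Theorem~\ref{composition}, namely, to write
\begin{equation*}
K(A\circ B)\kappa_e=(\pi_{e,C})_*\bigl((\pi_{e,F})^*(K(A)\kappa_e)\cdot(\pi_{e,S})^*(K(B)\kappa_e)\bigr)
\end{equation*}
(and analogously for the other compositions), but to keep track of conormal multiweights via Melrose's pullback and pushforward theorems for conormal distributions rather than the polyhomogeneous versions. The exponent matrices \eqref{expmatrixf}, \eqref{expmatrixc}, \eqref{expmatrixs} and the density transformations in Proposition~\ref{prop:densityblowup} play exactly the same role as before; only the bookkeeping at each face of $X^3_{e,s}$ and $X^2_{e,s}$ changes.

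For the first statement, the multiweight $\tau$ of $A$ (with an additional $h$-shift at $\bhs{\ebf}$ and $\bhs{\rf}$) pulls back via $\pi_{e,F}$ to a multiweight on $X^3_{e,s}$ obtained by summing entries of \eqref{expmatrixf} against this weight; similarly for $B$ via \eqref{expmatrixs}. The product is conormal on $X^3_{e,s}$ with multiweight equal to the sum of these two pulled-back weights, and the pushforward theorem then bounds the multiweight on $X^2_{e,s}$ by the minimum, over the relevant entries of the exponent matrix of $\pi_{e,C}$, of these sums, adjusted by $-h$ at the faces where the density transformation \eqref{triplespacedenstrans} versus \eqref{doublespacedenstrans} requires it. A face-by-face check at $\bhs{\ff},\bhs{\ebf},\bhs{\lf},\bhs{\rf},\bhs{\mf}$ produces a multiweight of at least $2\tau$, with the extra $h$ surviving at $\bhs{\ebf}$ and $\bhs{\rf}$, so $A\circ B\in\Psi^{-\infty,2\tau,\res}_{e,s}(X_s;E)$, as claimed.

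The second statement is handled by writing $C=C_0+C_1$ with $C_0\in\Psi^m_{e,s}(X_s;E)$ supported near the lifted diagonal and $C_1\in\Psi^{-\infty,\tau}_{e,s}(X_s;E)$. For $A\circ C_0$, the conormal singularity of $C_0$ along $D_{e,s}$ is smoothed out by the rapid decay of $A$ (with the $h$-shift at $\bhs{\rf}$) in the pushforward via $\pi_{e,C}$, as in the standard argument for residual-times-small compositions in \cite[\S4]{mame1}. For $A\circ C_1$ the multiweight bookkeeping of the first statement applies directly, with the partial polyhomogeneous expansions of $C_1$ at $\bhs{\ff}$ and $\bhs{\mf}$ only contributing extra terms whose weights dominate $\tau$; the total weight of $A\circ C_1$ therefore satisfies the residual bound.

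The third statement is the most subtle, and I expect it to be the main obstacle: one must verify that the partial polyhomogeneous expansions of $C$ and $D$ at $\bhs{\ff}$ and $\bhs{\mf}$ survive the composition intact to order $\tau$, while the residual weights at $\bhs{\lf}$, $\bhs{\rf}$, $\bhs{\ebf}$ are preserved. I again split $C=C_0+C_1$, $D=D_0+D_1$, so that $C\circ D$ decomposes into four pieces. The term $C_0\circ D_0$ is controlled by the small-calculus composition contained in Theorem~\ref{composition}. The cross terms $C_0\circ D_1$, $C_1\circ D_0$ and the residual-residual term $C_1\circ D_1$ are analyzed by Taylor expansion at $\bhs{\ff}$ and $\bhs{\mf}$ on the triple space, combined with the multiweight pushforward estimate. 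The essential point, exactly as in \cite[\S4]{mame1}, is that the entries of the exponent matrices of $\pi_{e,F},\pi_{e,C},\pi_{e,S}$ at the faces of $X^3_{e,s}$ lying over $\bhs{\ff}$ and $\bhs{\mf}$ lie in $\{0,1\}$, so Taylor expansions in a defining function for $\bhs{\ff}$ or $\bhs{\mf}$ lift and push forward term by term, producing the claimed partial polyhomogeneous expansions of $C\circ D$ at these faces and the required conormal bound $\tau$ on the remainder.
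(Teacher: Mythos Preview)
Your proposal is correct and follows essentially the same approach as the paper: the paper's own proof consists of a single sentence deferring to the triple-space argument of Theorem~\ref{composition} together with the conormal pushforward/pullback techniques of \cite[\S4]{mame1}, which is precisely what you have unpacked in detail. Your decomposition $C=C_0+C_1$ and the Taylor-expansion bookkeeping at $\bhs{\ff}$ and $\bhs{\mf}$ are exactly the ingredients alluded to by the paper's reference to \cite{mame1}.
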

This is proved precisely as with Theorem \ref{composition}, using the techniques in \cf \cite[Section 4]{mame1}. The formulas for the index sets are the same, with the caveat that the outputs are no longer polyhomogeneous, just conormal to (at least) the given order.

\subsection{Normal operators}

Given $A\in \Psi^{m,\cE}_{e,s}(X_s;E)$ with $\inf E_{\ff}\ge 0$ and $\inf E_{\mf}\ge 0$, we can define two normal operators by restricting the Schwartz kernel $K_A$ of $A$ to $\bhs{\mf}$ and $\bhs{\ff}$,
\begin{equation}
 N_{\mf}(A):= \left.  K_A\right|_{\bhs{\mf}}, \quad  N_{\ff}(A):= \left. K_A\right|_{\bhs{\ff}}.
\label{no.1}\end{equation}
Since $\bhs{\mf}$ is naturally identified with the overblown edge double space, $N_{\mf}(A)$ is naturally an edge operator of \cite{maz91}.  In fact, the operation of evaluating at $\bhs{\mf}$ behaves well with respect to composition.

\begin{proposition}
For $A\in \Psi^{m,\cE}_{e,s}(X_s;E)$, $B\in \Psi^{m',\cF}_{e,s}(X_s,E)$ with index families $\cE$ and $\cF$ such that 
$\inf E_{\mf}\ge 0$, $\inf F_{\mf}\ge 0$ and $\inf(E_{\rf}+F_{\lf})>h$, we have that 
\begin{equation}
 N_{\mf}(A\circ B)= N_{\mf}(A)\circ N_{\mf}(B)
\label{no.2b}\end{equation}
where the composition on the right is as edge operators.  
\label{no.2}\end{proposition}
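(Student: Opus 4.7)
The plan is to read the identity off the triple-space composition formula used in the proof of Theorem~\ref{composition}, namely
\begin{equation*}
  K(A\circ B)\,\kappa_{e}=(\pi_{e,C})_{*}\!\left(\pi_{e,F}^{*}(K(A)\kappa_{e})\cdot \pi_{e,S}^{*}(K(B)\kappa_{e})\right),
\end{equation*}
and then to restrict both sides to $\bhs{\mf}$. The first observation is that the restriction to $\bhs{\mf}$ commutes with the push-forward $(\pi_{e,C})_{*}$, once we have verified that the integral defining the push-forward converges uniformly up to $\bhs{\mf}$. Inspecting the exponent matrix \eqref{expmatrixc}, the only faces of $X^{3}_{e,s}$ mapping with positive exponent to $\bhs{\mf}$ under $\pi_{e,C}$ are $\bhs{N_{2}}$ and $\bhs{Z}$. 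The integrand is smooth and compactly supported away from $\bhs{Z}$ in the fibers of $\pi_{e,C}$, so the only potential obstruction to exchanging restriction and push-forward comes from the behavior at $\bhs{N_{2}}$.

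The next step is to check this integrability at $\bhs{N_{2}}$. From \eqref{expmatrixf} and \eqref{expmatrixs}, $\bhs{N_{2}}\subset \pi_{e,F}^{-1}(\bhs{\rf})\cap \pi_{e,S}^{-1}(\bhs{\lf})$, so at $\bhs{N_{2}}$ the factor $\pi_{e,F}^{*}K(A)\cdot\pi_{e,S}^{*}K(B)$ has leading order at least $E_{\rf}+F_{\lf}$. The fibers of $\pi_{e,C}$ at this face have dimension $h$ in the relevant normal direction, as can be read off from coordinates analogous to \eqref{coordsmf}, so the hypothesis $\inf(E_{\rf}+F_{\lf})>h$ is exactly what is needed to make the integral absolutely convergent uniformly up to $\bhs{\mf}$. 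Under $\inf E_{\mf},\inf F_{\mf}\geq 0$ the two kernels also restrict to $\bhs{\mf}$ without blow-up, so we may write
\begin{equation*}
  N_{\mf}(A\circ B)=(\pi_{e,C}|_{\pi_{e,C}^{-1}(\bhs{\mf})})_{*}\!\left.\left(\pi_{e,F}^{*}K(A)\cdot \pi_{e,S}^{*}K(B)\right)\right|_{\pi_{e,C}^{-1}(\bhs{\mf})}.
\end{equation*}

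The final step is to identify this push-forward with the edge composition $N_{\mf}(A)\circ N_{\mf}(B)$. As noted in the paragraph following the definition of $X^{2}_{e,s}$, the face $\bhs{\mf}\cong [M;H]^{2}_{e,ob}$ is Lauter's overblown edge double space. I would check, by the same nested-blow-up commutativity argument used in Lemma~\ref{lem:bfib}, that the corresponding face of the \emph{triple} edge-surgery space — the union of the interior lifts of $\bhs{N_{2}}$ and $\bhs{Z}$ over $\bhs{\mf}$ — is canonically identified with the overblown edge triple space of $[M;H]$, and that the three projections $\pi_{e,F},\pi_{e,S},\pi_{e,C}$ restrict to the three corresponding edge projections on this triple space. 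Under this identification, the restricted integrand is precisely $\pi_{\mathrm{edge},F}^{*}N_{\mf}(A)\cdot \pi_{\mathrm{edge},S}^{*}N_{\mf}(B)$, and the fiber push-forward gives the Schwartz kernel of the edge composition $N_{\mf}(A)\circ N_{\mf}(B)$.

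The main obstacle is the geometric identification step: one must verify that the combinatorics of the blow-ups of $X^{3}_{e,s}$ encoding $\bhs{N_{2}}$ and its intersections with $\bhs{FD},\bhs{CD},\bhs{SD}$ and the various $\bhs{\cdot T}$ faces really do reconstruct the (overblown) edge triple space together with the edge $b$-fibrations. Once this structural compatibility is in place, the conclusion reduces to the push-forward theorem applied to the fiberwise integral, precisely as in the edge composition formula of \cite{maz91}.
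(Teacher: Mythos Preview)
Your strategy --- restrict the triple-space composition formula to $\bhs{\mf}$ and identify the result with the edge composition --- is exactly the paper's. There is, however, one geometric misidentification to correct: the overblown edge triple space of $[M;H]$ is the single face $\bhs{Z}$, not the union $\bhs{N_2}\cup\bhs{Z}$. The face $\bhs{N_2}$ is a genuinely separate boundary hypersurface of $X^3_{e,s}$ (it fibres over $M\times H\times M$ with an extra interval coming from the polar angle in the $(x',\eps)$-blow-up), and your proposed verification that $\bhs{N_2}\cup\bhs{Z}$ carries the edge triple-space structure would not go through.

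The paper's argument is accordingly more direct than your integrability framing. From the index-set table \eqref{comp.1}, the pushforward by $\pi_{e,C}$ gives $G_{\mf}=(E_{\mf}+F_{\mf})\bar\cup(E_{\rf}+F_{\lf}-h)$ at $\bhs{\mf}$, with the two pieces coming from $\bhs{Z}$ and $\bhs{N_2}$ respectively. The hypothesis $\inf(E_{\rf}+F_{\lf})>h$ is not really an integrability condition on the fibre integral; rather, it forces the $\bhs{N_2}$-contribution to sit at strictly positive order, so the order-zero restriction $N_{\mf}(A\circ B)$ comes \emph{exclusively} from $\bhs{Z}$. Since $\bhs{Z}$ is the overblown edge triple space and the maps $\pi_{e,F},\pi_{e,C},\pi_{e,S}$ restrict to the three edge projections there, the identity follows immediately, with no further analysis of $\bhs{N_2}$ required.
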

\begin{proof}
The conditions that $\inf E_{\mf}\ge 0$ and $\inf F_{\mf}\ge 0$ are there to ensure that $N_{\mf}(A)$ and $N_{\mf}(B)$ are well-defined.  Similarly, in light of Theorem~\ref{composition}, the condition $\inf(E_{\rf}+E_{\lf})>h$ ensures that $N_{\mf}(A\circ B)$ is well-defined.  At the same time, this condition is necessary to define the composition $N_{\mf}(A)\circ N_{\mf}(B)$ as edge operators.  Now, as is clear from \eqref{comp.1}, under the pushforward by $\pi_{e,C}$, the term $N_{\mf}(A\circ B)$ comes exclusively from the face $\bhs{Z}$.  Since this face  is in fact canonically identified with the triple space of the overblown edge double space, the identity \eqref{no.2b} follows.  

\end{proof}

There is a corresponding result for the restriction at $\bhs{\ff}$.

\begin{proposition}
For $A\in \Psi^{m,\cE}_{e,s}(X_s;E)$ and $B\in \Psi^{m',\cF}_{e,s}(X_s;E)$ with index families $\cE$ and $\cF$ such that 
$\inf E_{\ff}\ge 0$, $\inf F_{\ff}\ge 0$, $\inf(E_{\ebf}+F_{\ebf})>h$ and $\inf(E_{\lf}+F_{\rf})>0$, we have that
\begin{equation}
 N_{\ff}(A\circ B)= N_{\ff}(A)\circ N_{\ff}(B),
\label{no.3b}\end{equation}
where the composition on the right is performed using $\bhs{TT}$ as a triple space.  
\label{no.3}\end{proposition}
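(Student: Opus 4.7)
The plan is to proceed in close analogy with the proof of the preceding Proposition~\ref{no.2}. First I would verify that all three normal operators appearing in the statement are well-defined: the conditions $\inf E_{\ff}\ge 0$ and $\inf F_{\ff}\ge 0$ make the restrictions $N_{\ff}(A)$ and $N_{\ff}(B)$ meaningful, and combined with $\inf(E_{\ebf}+F_{\ebf})>h$ and $\inf(E_{\lf}+F_{\rf})>0$, Theorem~\ref{composition} yields that the index set
$G_{\ff}=(E_{\ff}+F_{\ff})\extu(E_{\ebf}+F_{\ebf}-h)\extu(E_{\lf}+F_{\rf})$
satisfies $\inf G_{\ff}\ge 0$, so $N_{\ff}(A\circ B)$ is likewise defined.

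Next, I would revisit the index-set computation \eqref{comp.1} from the proof of Theorem~\ref{composition} to identify which boundary hypersurfaces of $X^3_{e,s}$ contribute to $\bhs{\ff}$ under push-forward by $\pi_{e,C}$. From the exponent matrix \eqref{expmatrixc}, the only such faces are $\bhs{TT}$, $\bhs{CT}$, and $\bhs{CD}$, with respective contributions $E_{\ff}+F_{\ff}$, $E_{\ebf}+F_{\ebf}-h$, and $E_{\lf}+F_{\rf}$ to $G_{\ff}$ (after incorporating the $\rho_{\ff}^{h}$ correction at the end of that argument). Under our hypotheses, the latter two contributions are strictly positive, so they vanish upon restriction to $\bhs{\ff}$. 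Consequently $N_{\ff}(A\circ B)$ is obtained purely by restricting $(\pi_{e,F})^{*}K(A)\cdot (\pi_{e,S})^{*}K(B)$ to $\bhs{TT}$ and then applying the induced map $\pi_{e,C}\colon \bhs{TT}\to\bhs{\ff}$.

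The final step, which I expect to be the main obstacle, is the geometric identification of $\bhs{TT}$ with the triple space for the operator class living on $\bhs{\ff}$ described in the remark following \eqref{se.7a}. Working in projective coordinates near $\bhs{\ff}$ analogous to \eqref{coordsff} together with analogous triple-space coordinates near $\bhs{TT}$ in $X^3_{e,s}$, I would check two things. First, the restrictions $(\pi_{e,F})^{*}K(A)\rest{\bhs{TT}}$ and $(\pi_{e,S})^{*}K(B)\rest{\bhs{TT}}$ coincide with the left and right lifts of $N_{\ff}(A)$ and $N_{\ff}(B)$ under the two natural projections $\bhs{TT}\to\bhs{\ff}$ induced by $\pi_{e,F}$ and $\pi_{e,S}$. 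Second, the restricted push-forward $\bhs{TT}\to\bhs{\ff}$ via $\pi_{e,C}$, together with the corresponding density identifications, realizes the composition in the calculus on $\bhs{\ff}$.

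Both points reduce to combinatorial bookkeeping about the sequence of nested blow-ups defining $\bhs{TT}$ in $X^3_{e,s}$: the blow-up of $\bhs T\cap\{y=y'=y''\}$ followed by the interior lifts of $\bhs T\cap\{y=y'\}$, $\bhs T\cap\{y=y''\}$ and $\bhs T\cap\{y'=y''\}$ produces exactly the iterated model required for threefold composition of the translation-invariant edge operators on the fibers $\rho T_{y}Y\times Z_{y}\times[-\tfrac{\pi}{2},\tfrac{\pi}{2}]$ recorded in \eqref{se.7a}. The attendant density calculation parallels the one already performed in Theorem~\ref{composition}, so the identification $N_{\ff}(A\circ B)=N_{\ff}(A)\circ N_{\ff}(B)$ follows.
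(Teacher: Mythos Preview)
Your proposal is correct and follows essentially the same approach as the paper's proof: both verify that the hypotheses make all three normal operators well-defined, then use the index computation \eqref{comp.1} to see that under push-forward by $\pi_{e,C}$ only the face $\bhs{TT}$ contributes to $N_{\ff}(A\circ B)$, from which the identity follows. The paper's proof is considerably terser and simply asserts that the contribution ``comes exclusively from the face $\bhs{TT}$'', whereas you have spelled out explicitly why the $\bhs{CT}$ and $\bhs{CD}$ contributions vanish and sketched the geometric identification of $\bhs{TT}$ as the relevant triple space.
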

\begin{proof}
The conditions that $\inf E_{\ff}\ge 0$ and $\inf F_{\ff}\ge 0$ are there to ensure that $N_{\ff}(A)$ and $N_{\ff}(B)$ are well-defined.  The other conditions ensure that $N_{\ff}(A\circ B)$ is well-defined and that under the pushforward by $\pi_{e,C}$, it comes exclusively from the face $\bhs{TT}$ in \eqref{comp.1}, from which the identity \eqref{no.3b} follows.  
\end{proof}

One important step in the uniform construction of the resolvent will consist in inverting the model operator at $\bhs{\ff}$.  As already explained, $\bhs{\ff}$ is naturally a fiber bundle with fiber $\bhs{\ff,y}$ above $y\in Y$ given by \eqref{se.7a}.  To invert the model operator at $\bhs{\ff}$, it thus suffices to invert it on $\bhs{\ff,y_0}$ for each $y_0\in Y$.  Now, on the interior of $\bhs{\ff,y_0}$, using the coordinates $X=\frac{x}\eps$, $X'=\frac{x'}\eps$, $\check{y}= \frac{y_0-y'}\eps$, we will see that the model operator to invert is of the form
\begin{equation}
   \ang{X} \left(  \widetilde{\Delta}_{\sc}+ \Delta_h\right)\ang{X}, \quad \mbox{with} \quad \widetilde{\Delta}_{\sc}= \ang{X}^{\frac{v+1}2}\Delta_{\sc} \ang{X}^{-\frac{v+1}2},
\label{no.4}\end{equation}
where $\Delta_{\sc}$ is the Hodge Laplacian on $(Z_{y_0}\times \bbR_{X};F)$ associated to the scattering metric 
\begin{equation}
     g_{\sc,y_0}:= dX^2 + \ang{X}^2 g_{Z_{y_0}}
\label{no.5}\end{equation}
with $g_{Z_{y_0}}$ the restriction of $g_{H/Y}$ to $Z_{y_0}$, and where, using the notation of Remark~\ref{nota.1}, $\Delta_h$ is the Hodge Laplacian on $(\eps T_{y_0}Y,F)$ with Euclidean coordinate $\check{y}$ and Euclidean metric $\frac{g_Y}{\eps^2}$.  Here, the power of $\ang{X}^{\frac{v+1}2}$ is there to work directly with $b$-densities and unweighted $b$-Sobolev spaces.  

In terms of the decomposition $\bhs{\ff,y_0}=\overline{\rho T_{y_0}Y}\times Z_{y_0}\times [-\frac{\pi}2,\frac{\pi}2]$ of Remark~\ref{rem.1}, notice that $\ang{X}^{-1}$ is a boundary defining function for $[-\frac{\pi}2,\frac{\pi}2]$.  Hence, recalling the boundary fibration $\rho T_{y_0}Y \times Z_{y_0}\times \pa [-\frac{\pi}2,\frac{\pi}2]\to \rho T_{y_0}Y$ of Remark~\ref{rem.1},we see, keeping in mind that $\rho=\ang{X}\epsilon$ and that $\Delta_h$ is defined on $\epsilon T_{y_0}Y= \frac{\rho}{\ang{X}}T_{y_0}Y$, that \eqref{no.4} is indeed an edge operator in the sense of Remark~\ref{rem.1}.

We can try, at least partially, to invert directly this operators using the edge calculus.  For this, we need to consider  the space of operators
\begin{multline}
  \Psi^{m,\cE}_{\ff,y_0}(W\times \rho T_{y_0}Y;E):=  \dot{\cC}^{\infty}_{\mf\cap\ff}I^m(\bhs{\ff,y_0}, \bhs{\ff,y_0}\cap D_{e,s}; (\pi_{,b,L})^*E\otimes (\pi_{,b,R})^*(E^*\otimes\Omega_{b,s}(X_s)))) \\
  +\sA_{\phg}^{\cE}(\bhs{\ff,y_0}; (\pi_{,b,L})^*E\otimes (\pi_{,b,R})^*(E^*\otimes\Omega_{b,s}(X_s))))\label{no.13}\end{multline}   
for $\cE$ a family index for $\bhs{\ff,y_0}$, where $W$ is the manifold with boundary $Z_{y_0}\times [-\frac{\pi}2,\frac{\pi}2]$. This space is such that the normal operator $N_{\ff}(A)$ of an operator $A\in \Psi^{m,\cE}_{e,s}(X_s;E)$ takes value in such a space with index family naturally induced by the restriction of the  index family $\cE$ on $X^2_{e,s}$ to $\bhs{\ff}$.  Replacing $E_{\ff}$ and $F_{\ff}$ by zero in Theorem~\ref{composition} yields the following composition result. 
\begin{corollary} if $A\in \Psi^{m,\cE}_{\ff,y_0}(W\times \rho T_{y_0}Y;E)$ and $B\in \Psi^{m',\cF}_{\ff,y_0}(W\times \rho T_{y_0}Y;E)$ with index families $\cE$ and $\cF$ such that 
$\inf(E_{\lf}+E_{\rf})>0$ and $\inf (E_{\ebf}+F_{\ebf})>h$, then $A\circ B\in \Psi^{m+m',\cG}_{\ff,y_0}(W\times \rho T_{y_0}Y;E)$ with index family $\cG$ such that
\[G_{\ebf}=E_{\ebf}\extu F_{\ebf}\extu(E_{\ebf}+F_{\ebf}-h)\extu (E_{\lf}+F_{\rf});\] 
\[G_{\lf}=F_{\lf}\extu(E_{\ebf}+F_{\lf}-h)\extu(E_{\lf}+F_{\mf});\]
\[G_{\rf}=E_{\rf}\extu (E_{\rf}+F_{\ebf}-h)\extu(E_{\mf}+F_{\rf});\]
\[G_{\mf}=(E_{\mf}+F_{\mf})\extu (E_{\rf}+F_{\lf}-h).\]
\label{eo.1}\end{corollary}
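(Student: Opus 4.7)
The plan is to deduce this composition result directly from Theorem~\ref{composition} via the normal operator at $\bhs{\ff}$, exploiting Proposition~\ref{no.3}.

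First, I would show that any $A \in \Psi^{m,\cE}_{\ff,y_0}(W\times \rho T_{y_0}Y;E)$ can be realized, in a neighborhood of $\bhs{\ff,y_0}$ in $X^2_{e,s}$, as the restriction to $\bhs{\ff,y_0}$ of a kernel $\tilde{A}$ defining an operator in $\Psi^{m,\tilde{\cE}}_{e,s}(X_s;E)$ with index set $\tilde{E}_{\ff} = \{(0,0)\}$ and with the other components of $\tilde{\cE}$ agreeing with $\cE$. This is straightforward: use a cutoff supported near $\bhs{\ff,y_0}$ and a smooth extension off the fiber over $y_0 \in Y$ using the fibered structure \eqref{se.7}. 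Similarly extend $B$ to $\tilde{B} \in \Psi^{m',\tilde{\cF}}_{e,s}(X_s;E)$ with $\tilde{F}_{\ff}=\{(0,0)\}$.

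Next, the hypotheses $\inf(E_{\lf}+F_{\rf}) > 0$ and $\inf(E_{\ebf}+F_{\ebf}) > h$ are precisely what is needed to ensure that $\tilde{A}$ and $\tilde{B}$ satisfy the hypotheses of Proposition~\ref{no.3} (with $\tilde{E}_{\ff}=\tilde{F}_{\ff}=\{(0,0)\}$, the conditions $\inf \tilde{E}_{\ff}\ge 0$ and $\inf \tilde{F}_{\ff}\ge 0$ are automatic). Therefore $N_{\ff}(\tilde{A} \circ \tilde{B}) = N_{\ff}(\tilde{A}) \circ N_{\ff}(\tilde{B})$, where on the right the composition is on $\bhs{TT}$, which is exactly the triple space for $\bhs{\ff,y_0}$ up to translation-invariance in the $\rho T_{y_0}Y$ factor. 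In particular, the restriction of $\tilde{A}\circ\tilde{B}$ to $\bhs{\ff,y_0}$ equals $A \circ B$, so $A \circ B$ is a well-defined operator whose kernel is the restriction of a kernel in $\Psi^{m+m', \tilde{\cG}}_{e,s}(X_s;E)$, where the index family $\tilde{\cG}$ is computed from Theorem~\ref{composition}.

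Finally, I would read off the index sets of $A \circ B$ at $\bhs{\ff,y_0}$'s faces by substituting $E_{\ff} = F_{\ff} = \{(0,0)\}$ into the formulas of Theorem~\ref{composition}. Since $X\extu\{(0,0)\} = X$ for any index set $X$ with $\inf X \geq 0$ (which holds for all the terms appearing), the stated formulas for $G_{\ebf}$, $G_{\lf}$, $G_{\rf}$, $G_{\mf}$ follow immediately from those of Theorem~\ref{composition}. The main step requiring care is verifying that the resulting kernel on $\bhs{\ff,y_0}$ is independent of the choice of extensions $\tilde{A}$, $\tilde{B}$; this independence is guaranteed by Proposition~\ref{no.3}, which characterizes the restriction in terms of the triple space $\bhs{TT}$ alone. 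The only real obstacle is bookkeeping of the index sets, but this is automatic from Theorem~\ref{composition} once the reduction is in place.
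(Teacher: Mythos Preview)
Your approach is correct and is essentially the same as the paper's: the paper simply states that the corollary follows by replacing $E_{\ff}$ and $F_{\ff}$ by zero in Theorem~\ref{composition}, and your extension/restriction argument via Proposition~\ref{no.3} is one natural way to justify this substitution rigorously. One small notational slip: in the final step you write ``$X\extu\{(0,0)\}=X$'', but the simplification actually used is $X+\{(0,0)\}=X$ (addition of index sets, not extended union), which is what turns, e.g., $E_{\ebf}+F_{\ff}$ into $E_{\ebf}$.
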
 
Using these properties and the edge calculus, we can partially invert the operator \eqref{no.4} as follows.
\begin{lemma}
There exists $Q\in \Psi^{-2,\cQ}_{\ff,y_0}(W\times \rho T_{y_0}Y;E)$ and $R\in\Psi^{-\infty,\cR}_{\ff,y_0}(W\times \rho T_{y_0}Y;E)$ such that 
$$
      \ang{X} (\widetilde{\Delta}_{\sc} +\Delta_{h})\ang{X} Q= \Id -R,
$$
where the index families $\cQ$ and $\cR$ are such that 
$$
   \inf \cQ_{\mf}\ge 0, \quad \inf \cQ_{\rf}>h, \quad \inf\cQ_{\lf}>0, \quad \inf \cQ_{\ebf}> h;
$$
$$
 \cR_{\mf}=\emptyset, \quad \inf \cR_{\rf}>h, \quad \inf \cR_{\lf}>0, \quad \inf R_{\ebf} > h.
$$

\label{eo.2}\end{lemma}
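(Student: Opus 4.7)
The approach is a standard symbolic parametrix construction in the edge calculus $\Psi^{*,*}_{\ff,y_0}$ on the noncompact double space $\bhs{\ff,y_0}$. The first step is to verify that $L:=\ang{X}(\widetilde{\Delta}_{\sc}+\Delta_h)\ang{X}$ is edge-elliptic of order $2$ as a differential operator on $W\times \rho T_{y_0}Y$. The operator $\widetilde{\Delta}_{\sc}$ is the conjugated Hodge Laplacian of the scattering metric $g_{\sc,y_0}$ and so has positive edge principal symbol in the $(X,z)$ directions; $\Delta_h$ is the flat Euclidean Laplacian on $\eps T_{y_0}Y$ and contributes positive edge symbol in the $\check{y}$ direction; and conjugation by the strictly positive smooth function $\ang{X}$ preserves ellipticity.

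Given edge-ellipticity, the plan is to invert the principal edge symbol to all orders via the standard Neumann-series construction of \cite{maz91}, adapted to the translation-invariant setting on $\rho T_{y_0}Y$. This yields a small-calculus parametrix $Q\in \Psi^{-2}_{\ff,y_0}(W\times \rho T_{y_0}Y;E)$ satisfying $LQ=\Id-R$ with $R\in \Psi^{-\infty}_{\ff,y_0}(W\times \rho T_{y_0}Y;E)$ residual. By construction, the Schwartz kernel of $Q$ has a conormal singularity of order $-2$ at the lifted diagonal, giving $\inf\cQ_{\mf}\geq 0$, while the symbolic remainder $R$ vanishes to infinite order there, giving $\cR_{\mf}=\emptyset$.

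The required lower bounds at the remaining boundary hypersurfaces come from the density normalization used to define $\Psi^{*,*}_{\ff,y_0}$. Using the identification \eqref{forsuspension} of the normal bundle with $\rho TY$ and $h=\dim Y$, kernels in the small calculus acquire weight shifts at $\bhs{\rf}$ and $\bhs{\ebf}$ matching the base dimension $h$, and have a positive-order vanishing at $\bhs{\lf}$. Read off from these normalizations, the conditions $\inf\cQ_{\rf}>h$, $\inf\cQ_{\ebf}>h$, $\inf\cQ_{\lf}>0$ follow, and the same bounds are inherited by $\cR$.

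The main obstacle I expect is the careful bookkeeping at $\bhs{\ebf}$, where the noncompactness in the $\check{y}$-direction and the $\ang{X}$-conjugation interact, and at the interface of $\bhs{\ff}$ with the scattering infinity $\theta=\pm\tfrac{\pi}2$. To handle this, I would exploit the translation-invariance of $L$ along $\rho T_{y_0}Y$ and Fourier transform in $\check{y}$, reducing the weight verification to a family (parametrized by the dual variable $\eta$) of scattering-$b$ elliptic operators on $W$; for each such operator the exponent tracking is standard. The spectral gap hypothesis on the fibers enters later when the normal operator at $\bhs{\ff}$ must actually be inverted, but is not needed here for the purely symbolic parametrix.
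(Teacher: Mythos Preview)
There is a genuine gap. In the space $\bhs{\ff,y_0}$, the face you are calling $\bhs{\mf}$ is the intersection $\bhs{\mf}\cap\bhs{\ff}$, and for the edge structure on $W\times\rho T_{y_0}Y$ (with boundary fibration $\rho T_{y_0}Y\times Z_{y_0}\times\pa[-\tfrac\pi2,\tfrac\pi2]\to\rho T_{y_0}Y$) this is precisely the \emph{edge front face}, not a face where the small-calculus kernels automatically vanish. A purely symbolic parametrix for an elliptic edge operator produces an error $R$ that is smoothing but has index set $\bbN_0$ at the edge front face; it does \emph{not} give $\cR_{\mf}=\emptyset$. Your assertion that ``the symbolic remainder $R$ vanishes to infinite order there'' confuses the lifted diagonal with this boundary hypersurface.

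To force $\cR_{\mf}=\emptyset$ the paper must invert the normal operator of $L$ at that face, which is identified with $N_{y_0}(r\widetilde{\Delta}_{\w}r)$ acting on $\bbR^+_s\times\bbR^h_u\times Z_{y_0}$. Its invertibility on $L^2_b$ is supplied by Proposition~\ref{w.1a}, and that proposition \emph{requires} the spectral condition \eqref{cond.2}, i.e.\ the fiber spectral gap. So your claim that ``the spectral gap hypothesis \ldots\ is not needed here for the purely symbolic parametrix'' is incorrect: it is exactly what makes $\cR_{\mf}=\emptyset$ possible. There is a further subtlety the paper flags explicitly: once one has $\inf\cR_{\mf}\ge 1$, iterating via the composition rule of Corollary~\ref{eo.1} to push $\cR_{\mf}$ to $\emptyset$ would degrade the index set at $\bhs{\ebf}$; instead one corrects $Q$ term by term using the edge calculus $\Psi^{*,*}_e(\bbR^+\times\bbR^h\times Z_{y_0};E)$ at $\bhs{\mf}\cap\bhs{\ff}$ and Borel-sums, which keeps $\inf\cQ_{\ebf}>h$ and $\inf\cR_{\ebf}>h$ intact. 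Your proposed Fourier-transform bookkeeping does not address this interaction.
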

\begin{proof}
Since the operator $\ang{X}(\widetilde{\Delta}_{\sc}+ \Delta_h)\ang{X}$ is elliptic as an edge operator, we can perform this construction to have an error term of order $-\infty$ vanishing rapidly at all boundary faces except $\bhs{\mf}\cap\bhs{\ff}$.  On the other hand, at this face, by compatibility of $N_{\ff}(\rho D^2_{e,s}\rho)$ and $N_{\mf}(\rho D^2_{e,s}\rho)$ at $\bhs{\mf}\cap\bhs{\ff}$, the restriction of $\ang{X}(\widetilde{\Delta}_{\sc}+\Delta_h)\ang{X}$ to $\bhs{\mf}$ corresponds to the normal operator $N_{y_0}(r\widetilde{\Delta}_{\w}r)$ where $\widetilde{\Delta}_{\w}= r^{\frac{v+1}2}\Delta_{\w}r^{-\frac{v+1}2}$ with $\Delta_{\w}$ the wedge Hodge-Laplacian on $\bhs{\sm}$. By Proposition~\ref{w.1a}, $N_{y_0}(r\widetilde{\Delta}_{\w}r)$ is invertible  on $L^2_{g_b}(\bbR^+_s\times \bbR^h_u\times Z_y;E)$ for the metric $g_b= \frac{ds^2}{s^2}+ du^2+ g_{Z_{y_0}}$.  Hence, we know by \cite[Proposition~5.19]{maz91} that 
$$
   N_y(r\widetilde{\Delta}_{\w}r)^{-1}\in \Psi^{-2,\cH}_e(\bbR^+\times \bbR^h\times Z_{y_0};E)
$$
with index family $\cH$ such that $\cH_{\bff}=\bbN_0$, $\inf\cH_{\lf}>0$ and $\inf\cH_{\rf}>h$ when the Schwartz kernel is written in terms of right edge densities.  Using this inverse, we can construct $Q$ and $R$ as claimed, with the exception that the index set $\cR_{\mf}$ is only such that 
$$
     \cR_{\mf}=\bbN \;  \Longrightarrow \; \inf \cR_{\mf}\ge 1.  
$$
Using Corollary~\ref{eo.1}, the error term $R$ does iterate away at $\bhs{\mf}$, but this is at the cost of possibly deteriorating the properties of the index set of $R$ and $\bhs{\ebf}$.  Instead, using composition of the edge calculus $\Psi^{*,*}_e(\bbR^+\times \bbR^h\times Z_{y_0};E)$ at $\bhs{\mf}\cap\bhs{\ff}$, we can add terms to the parametrix $Q$ to recursively annihilate the terms in the expansion of $R$ at $\bhs{\mf}$ while keeping the index set at $\bhs{\ebf}$ under control.  Hence, taking a Borel sum of these corrections gives the result.  
\end{proof}

Unfortunately, Lemma~\ref{eo.2} does not tell us if the operator \eqref{no.4} is invertible.  Moreover, even if we knew that the inverse existed, because of the composition rules at $\bhs{\ebf}$ in Corollary~\ref{eo.1}, one cannot proceed as in \cite[(4.24) and (4.25)]{maz91} and use the parametrix of Lemma~\ref{eo.2} to get control of the inverse at $\bhs{\ebf}$.       To continue our discussion, we need to  take a different point of view in trying to invert \eqref{no.4}.  Namely, forgetting about the factors of $\ang{X}$ on both sides, what we need to invert is the $\eps T_{y_0}Y$-suspended elliptic scattering operator
\begin{equation}
  \widetilde{\Delta}_{\sc}+ \Delta_h.
\label{no.6}\end{equation}
To invert it, it is therefore natural to first take the Fourier transform in $\eps T_{y_0}Y$.  If $\check\xi \in \eps^{-1}T^*_{y_0}Y$ is the dual variable, then it takes the form
\begin{equation}
    \widetilde{\Delta}_{\sc}+ |\check\xi|^2_{y_0},
 \label{no.7}\end{equation} 
where $|\cdot|_{y_0}$ is the norm induced by the Euclidean metric $\frac{g_Y}{\eps^2}$.  In fact, assuming without loss of generality that we have chosen normal coordinates $y$ at $y_0$, we will assume that this is the usual norm,  $|\check\xi|^2_{y_0}=\check\xi^2$.  Now, for $\check\xi\ne 0$, the operator \eqref{no.7} is fully elliptic and is well-known to be invertible with inverse in the space $\Psi^{-2}_{\sc}(Z_{y_0}\times \overline{\bbR};E_{y_0})$ of scattering operators of order $-2$ for an appropriate vector bundle $E_{y_0}$ above $Z_{y_0}\times \overline{\bbR}$.  However, for $\check\xi=0$, it is not invertible as a scattering operator, in fact it is not even Fredholm.  Nevertheless, it is possible to invert it as a weighted b-operators provided the $b$-operator 
\begin{equation}
   \Delta_{v,y_0}:=  \ang{X} (\ang{X}^{\frac{v+1}2}\Delta_{\sc}\ang{X}^{-\frac{v+1}2})\ang{X},
\label{no.8}\end{equation}
which is the operator in Proposition~\ref{ws.8},  is invertible as a $b$-operator acting on (unweighted) $b$-Sobolev spaces.   
Needless to say, this way of inverting at $\check\xi=0$ is quite different from the way of inverting at $\check\xi\ne 0$.  In particular, the inverse is defined with respect to actions on different Sobolev spaces, namely $b$-Sobolev spaces instead of scattering Sobolev spaces.  Nevertheless, thanks to the work of Guillarmou and Hassell \cite{GH1}, see also \cite{Kottke} for a related work, these two ways of inverting can be pieced together on a suitable manifold with corners.  If we set $k:=|\xi|\in [0,\infty)$, recall that this manifold with corners is constructed as follows.  We start with $[0,\infty)\times W^2$ where $W:=Z_{y_0}\times \overline{\bbR}$, and we blow up the corner $C_3:=\{0\}\times (\pa W)^2$, and then the lifts of codimension 2 corners 
$$
C_{2,L}:=\{0\}\times (\pa W)\times W, \quad C_{2,R}:=\{0\}\times W\times \pa W \quad \mbox{and} \quad C_{2,C}:= [0,\infty)\times \pa W\times \pa W,
$$
yielding the space 
\begin{equation}
  W^2_{k,b}:= [[0,\infty)\times W^2; C_3, C_{2,L}, C_{2,R}, C_{2,C}]
\label{no.9}\end{equation}
with blow-down map $\beta_{b,k}: W^2_{k,b}\to [0,\infty)\times W^2$.  Let us denote by $\zf$, $\lb$ and $\rb$ the lifts to $W^2_{k,b}$ of the boundary hypersurfaces 
$\{0\}\times W^2$, $[0,\infty)\times \pa W\times W$ and $[0,\infty)\times W\times \pa W$ of $[0,\infty)\times W^2$.  Let us also denote by $\bfo$, $\lb_0$, $\rb_0$ and $\bff$ the new boundary hypersurfaces in $W^2_{k,b}$ created by the blow-ups of $C_3, C_{2,L}, C_{2,R}$ and $C_{2,C}$.  Denote also by $D_{k,b}$ the lift of the diagonal 
$$
    \{ (k,w,w)\in [0,\infty)\times W^2 \; | \; w\in W\} \subset [0,\infty)\times W^2
$$
to $W^2_{k,b}$.  The manifolds with corners of Guillarmou-Hassell \cite{GH1} is then obtained by blowing up the $p$-submanifold $D_{k,b}\cap \bff$,
\begin{equation}
  W_{k,\sc}:= [W_{k,b}; D_{k,b}\cap \bff] \quad \mbox{with blow-down map} \; \beta_{k,\sc}: W^2_{k,\sc}\to [0,\infty)\times W^2.
\label{no.10}\end{equation}
Let us denote by $\sc$ the new boundary hypersurface created by this blow-up, and use the same notation as on $W_{k,b}$ to denote the hypersurfaces of $W_{k,\sc}$ coming from the lift of boundary hypersurfaces on $W_{k,b}$.  Let us also denote by $D_{k,\sc}$ the lift of $D_{k,b}$ to $W_{k,\sc}$.  

For $\cE$ an index family of $W^2_{k,\sc}$, we can consider the space of operators
\begin{multline}
\Psi^{m,\cE}_{k}(W;E):= \dot{\cC}^{\infty}_{\sc,\bfo,\zf}I^m(W^2_{k,\sc}, D_{k,\sc}; \pi_{k,\sc,L}^*E \otimes \pi^*_{k,\sc,R}(E^*\otimes \Omega_b(W)) \\
+  \sA_{\phg}^{\cE}(W^2_{k,\sc}; \pi_{k,\sc,L}^*E \otimes \pi^*_{k,\sc,R}(E^*\otimes \Omega_b(W))\label{no.11}\end{multline}
where $\Omega_b(W)$ is the bundle of $b$-densities on $W$, while  $\pi_{k,\sc,L}= \pi_L\circ \beta_{k,\sc}$ and $\pi_{k,\sc,R}= \pi_R\circ \beta_{k,\sc}$ with $\pi_L: [0,\infty)\times W^2\to W$ and $\pi_R: [0,\infty)\times W^2\to W$ the projections on the second and last factors respectively.  With this convention, the composition rules of \cite[(2.15)]{GH1} becomes the following.
\begin{lemma}[\cite{GH1}]
If $A\in \Psi^{m,\cE}_{k}(W;E)$ and $B\in \Psi^{m',\cF}_{k}(W;E)$ with index families $\cE$ and $\cF$ such that there index sets are empty at $\bff$, $\lb$ and $\rb$ and such that $\inf\cE_{\sc}\ge -v-1$ and $\inf\cF_{\sc}\ge -v-1$, then  $A\circ B\in \Psi^{m+m',\cG}_{k}(W;E)$ with
\begin{equation}
\begin{aligned} 
\cG_{\sc}&= \cE_{\sc}+ \cF_{\sc}+v+1; \\
\cG_{\zf}&= (\cE_{\zf}+ \cF_{\zf})\extu (\cE_{\rb_0}+ \cF_{\lb_0}); \\
 \cG_{\bfo}&= (\cE_{\lb_0}+ \cF_{\rb_0})\extu (\cE_{\bfo}+ \cF_{\bfo}); \\
 \cG_{\lb_0}&= (\cE_{\lb_0}+\cF_{\zf}) \extu (\cE_{\bfo}+\cF_{\lb_0}); \\
 \cG_{\rb_0}&= (\cE_{\zf}+ \cF_{\rb_0})\extu (\cE_{\rb_0}+ \cF_{\bfo}); \\
  \cG_{\bff}&=\cG_{\lb}=\cG_{\rb}=\emptyset. 
\end{aligned}
\label{no.11c}\end{equation}
\label{no.11b}\end{lemma}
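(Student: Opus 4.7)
The plan is to prove this composition formula by the standard pushforward/pullback strategy on a suitable triple $k$-space, following the template of Theorem~\ref{composition} and mirroring the construction in \cite{GH1}. Since the lemma is essentially \cite[(2.15)]{GH1} translated into the density conventions of \eqref{no.11}, the proof amounts to setting up the triple space, verifying the $b$-fibration property of the three projections, and applying Melrose's pushforward and pullback theorems with careful density bookkeeping.

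The first step is to construct the triple $k$-space $W^3_{k,\sc}$. Starting from $[0,\infty)_k\times W^3$, I would iteratively blow up: first the triple zero-corner $\{0\}\times(\partial W)^3$; then the three codimension-two corners $\{0\}\times(\partial W)^2\times W$ (and permutations); then the three $k>0$ double corners $[0,\infty)\times(\partial W)^2\times W$ (and permutations); then the three codimension-two zero-corners $\{0\}\times\partial W\times W^2$ (and permutations). This produces the triple $b$-$k$ space $W^3_{k,b}$. Finally, to accommodate the $\sc$-face, one blows up the triple lifted diagonal inside the face produced by the first blow-up, together with the three partial lifted diagonals in the $\bff$-type faces. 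By iterated application of the commutativity of nested blow-ups and the $b$-fibration criteria of \cite[Lemmas 2.5 and 2.7]{hmm}, the three projections off each factor of $W^3$ lift to $b$-fibrations $\pi_F,\pi_C,\pi_S:W^3_{k,\sc}\to W^2_{k,\sc}$ whose exponent matrices one reads off from the nesting of blow-ups.

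The second step is to rewrite the composition $K(A\circ B)\kappa=(\pi_C)_*\bigl(\pi_F^*(K(A)\kappa)\cdot\pi_S^*(K(B)\kappa)\bigr)$, where $\kappa$ is a right $b$-density, as an identity of full densities on $W^2_{k,\sc}$ and $W^3_{k,\sc}$. Multiplying through by a nonvanishing section of the left density bundle and tracking the density factors under the blow-ups (as in the proof of Theorem~\ref{composition}, using the analogue of Proposition~\ref{prop:densityblowup} for the $k$-setting) converts the formula into a pushforward of a product of pullbacks of full densities, to which Melrose's pullback and pushforward theorems apply directly. The extended unions in \eqref{no.11c} are then produced by the pushforward formula from the exponent matrices of $\pi_F,\pi_S,\pi_C$, while the hypotheses that $\cE$ and $\cF$ are empty at $\bff,\lb,\rb$ ensure that only the six listed faces receive contributions.

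The main subtlety, and what I expect to be the principal obstacle, lies at the $\sc$-face. The blow-up of the diagonal on $\bff$ introduces a density factor of $\rho_{\sc}^{v+1}$ (reflecting the rescaling from a $b$-density on $W$ to a scattering density along the fibers of the diagonal blow-down), which is precisely what produces the $+v+1$ shift in $\cG_{\sc}=\cE_{\sc}+\cF_{\sc}+v+1$. The hypothesis $\inf\cE_{\sc},\inf\cF_{\sc}\ge -v-1$ is the integrability condition ensuring that the fiber integration in the pushforward at $\sc$ converges, i.e.\ that the leading-order behavior of the product kernel at $\sc$ is at worst $\rho_\sc^{-v-1}$ against a transverse $b$-density. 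Verifying this convergence, and matching the standard conventions of \cite{GH1} to the right-$b$-density convention \eqref{no.11}, is the only delicate point; all remaining index set computations follow automatically from the pushforward theorem.
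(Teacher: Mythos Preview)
The paper does not supply its own proof of this lemma: it is stated with an explicit attribution to \cite{GH1} and is simply quoted as \cite[(2.15)]{GH1} rewritten in the right-$b$-density convention of \eqref{no.11}. Your outline is the standard triple-space/pushforward argument that underlies such composition results, and it is essentially the argument one finds in \cite{GH1}; there is nothing to compare it against in the present paper beyond the citation itself.
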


In terms of this pseudodifferential calculus, the inverse of $(\widetilde{\Delta}_{\sc}+k^2)$ admits the following description.

\begin{theorem}[\cite{GH1} and \cite{Guillarmou-Sher}]
Suppose that the $b$-operator $\Delta_{v,y_0}$ in \eqref{no.8} is invertible as an operator
\begin{equation}
     \Delta_v: H^2_b(W;E)\to L^2_b(W;E).  
\label{no.12c}\end{equation}
Then $(\widetilde{\Delta}_{\sc}+k^2)^{-1}\in \Psi^{-2,\cE}_k(W;E)$ with $\cE$ an index family such that $E_{\bff}=E_{\lb}=E_{\rb}=\emptyset$ and with
\begin{equation}
\inf E_{\zf}\ge 0, \quad \inf E_{\bfo}\ge -2, \quad \inf E_{\lb_0}\ge \nu_0-1, \quad \inf E_{\rb_0}\ge\nu_0-1, \quad \inf E_{\sc}\ge -v-1,
\label{no.12b}\end{equation}
where $\displaystyle \nu_0:= \min_{y\in Y}\min_{\nu} \{ \nu\ge 0  \; | \;  \nu\in \Re\Spec_b(\Delta_{v,y})\}>0$.
\label{no.12}\end{theorem}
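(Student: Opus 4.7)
The strategy is the one developed by Guillarmou and Hassell in \cite{GH1} (and extended by Guillarmou and Sher in \cite{Guillarmou-Sher}) for the low-energy resolvent on asymptotically conic manifolds, adapted to our situation where $W=Z_{y_0}\times\overline{\bbR}$ has two scattering ends. I would build a parametrix $G_0\in\Psi^{-2,\cE}_{k}(W;E)$ for $\widetilde{\Delta}_{\sc}+k^2$ face by face on $W^2_{k,\sc}$, then correct it using the assumed invertibility \eqref{no.12c} of $\Delta_{v,y_0}$ to produce the genuine inverse.

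The first step is to handle the diagonal and interior scattering behavior. For $k$ bounded away from zero, $\widetilde{\Delta}_{\sc}+k^2$ is a fully elliptic scattering operator, so there is a standard scattering parametrix $G_{\sc}\in\Psi^{-2}_{\sc}(W;E)$ inverting both the principal symbol (giving the conormal singularity along $D_{k,\sc}$) and the scattering normal operator at $\sc$. This yields all the claimed behavior at $\sc$ with the order $-v-1$ coming from converting the scattering density to the $b$-density used in \eqref{no.11}. The next step is to solve the model at the zero face $\zf=\{k=0\}$. Here the restriction of $\ang{X}(\widetilde{\Delta}_{\sc}+k^2)\ang{X}$ is exactly $\Delta_{v,y_0}$, and by hypothesis this operator is invertible on $L^2_b(W;E)$. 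Hence $N_{\zf}(G_0):=\ang{X}\Delta_{v,y_0}^{-1}\ang{X}$ is the desired model inverse, and by \cite[Theorem~5.40]{MelroseAPS} combined with the location of $\Re\Spec_b(\Delta_{v,y_0})$, its Schwartz kernel on $W^2$ is polyhomogeneous conormal with leading orders $\nu_0-1$ at the two side faces (the extra $-1$ reflecting the two factors of $\ang X$) and smoothness on the interior; this accounts for the bounds $\inf\cE_{\lb_0}\ge\nu_0-1$, $\inf\cE_{\rb_0}\ge\nu_0-1$ and $\inf\cE_{\zf}\ge 0$.

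The third and most delicate step is to solve the model at $\bfo$, which is the face where $k\to 0$ together with approach to the scattering ends; the relevant model is a Helmholtz-type operator of the form $\widetilde{\Delta}_{\sc,\infty}+k^2$ on the product of a cone by $[0,\infty)_k$, where the tangential $k$ scales appropriately. The inversion proceeds by Fourier transform in the asymptotic Euclidean factor and produces a kernel of order $-2$ at $\bfo$ (this is the source of $\inf\cE_{\bfo}\ge -2$). The matching at the corners $\zf\cap\bfo$, $\bfo\cap\sc$, $\zf\cap\lb_0$ and $\zf\cap\rb_0$ is the step where one must check that the individual models fit together on $W^2_{k,\sc}$; the compatibility of $\ang{X}\Delta_{v,y_0}^{-1}\ang{X}$ at $\zf$ with the Helmholtz model at $\bfo$ follows exactly as in \cite{GH1} from the fact that both arise as different rescalings of $(\widetilde{\Delta}_{\sc}+k^2)^{-1}$ near the same corner.

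Finally, with the parametrix $G_0$ so constructed, one has $(\widetilde{\Delta}_{\sc}+k^2)G_0=\Id-R$ with $R\in\Psi^{-\infty,\cR}_{k}(W;E)$ whose index sets are strictly positive at every boundary face where the inverse is required to be well-behaved. By the composition rules \eqref{no.11c} in Lemma~\ref{no.11b}, the Neumann series $\sum_{j\ge 0} R^j$ converges in the calculus with bounds, and the corrected parametrix $G=G_0\sum_{j\ge 0}R^j$ is a right inverse in $\Psi^{-2,\cE}_k(W;E)$ with the stated index set bounds. A symmetric argument on the left (or invoking the Fredholm-type inverse coming from the hypothesis on $\Delta_{v,y_0}$ together with standard scattering theory for $k>0$) identifies $G$ with the true resolvent. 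The main technical obstacle is the bookkeeping at $\zf\cap\lb_0$ and $\zf\cap\rb_0$: one must verify that the conjugation by $\ang X$ and the passage from $b$-Sobolev to scattering Sobolev spaces reproduce exactly the shift by $\nu_0-1$ dictated by the indicial spectrum of $\Delta_{v,y_0}$, and that iteration of the error does not deteriorate these bounds, which is guaranteed by the composition rules provided $\nu_0>0$.
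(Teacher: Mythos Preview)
Your proposal is correct and follows essentially the same route as the paper: construct a parametrix on $W^2_{k,\sc}$ by solving the model problems at $\sc$, $\zf$ (where the model inverse is $\ang X\Delta_{v,y_0}^{-1}\ang{X'}$, using the hypothesis \eqref{no.12c}), and $\bfo$, then remove the remaining error via a Neumann-type argument. The paper phrases the last step as invertibility of $\Id+R(k)$ for small $k$ (deferring to \cite[p.~879--880]{GH1}) and handles large $k$ separately by noting that $\widetilde{\Delta}_{\sc}$ has positive spectrum so $\widetilde{\Delta}_{\sc}+k^2$ is genuinely invertible as a scattering operator for all $k>0$; you should make this split between small and large $k$ explicit, since the Neumann series only sums inside the calculus near $k=0$.
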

\begin{proof}
Notice first that in \cite{GH1}, the result is stated for the scalar Laplacian with a potential, but what is important for their construction to work is that $\widetilde{\Delta}_{\sc}+k^2$ is invertible as a scattering operator for $k>0$ and that \eqref{no.12c} is invertible\footnote{By the discussions near \cite[(3.1)]{GH1} and \cite[(2)]{Guillarmou-Sher}, the invertibility of \eqref{no.12c} corresponds to $\Delta_{\sc}$ having neither zero modes nor a zero-resonance.}.  Indeed, these assumptions imply that the model operators for $\widetilde{\Delta}_{\sc}+k^2$ at $\sc$, $\zf$ and $\bfo$ are invertible.  We can thus first construct an approximate inverse $G(k)\in \Psi^{-2,\cF}_k(W;E)$ for an index set with $F_{\bff}=F_{\lb}=F_{\rb}=\emptyset$ and 
$$
\inf F_{\zf}\ge 0, \quad \inf F_{\bfo}\ge -2, \quad \inf F_{\lb_0}\ge \nu_0-1, \inf F_{\rb_0}\ge \nu_0-1, \quad \inf E_{\sc}\ge -v-1.
$$
Furthermore, $\left. G(k)\right|_{\zf}$ is given by $\ang X\Delta_{v,y_0}^{-1} \ang{X'}$.  This approximate inverse inverts $\widetilde{\Delta}_{\sc}+k^2$ at $\sc$, $\zf$ and $\bfo$, so that using the composition formula of \cite{GH1} and taking into account the different convention used in \eqref{no.11} and \cite{GH1}, we have that
$(\widetilde{\Delta}_{\sc}+k^2)G(k)= \Id +R(k)$ with $R(k)\in \Psi^{-\infty,\cR}_{k}(W;E)$ where $\cR$ is an index family such that $R_{\bff}=R_{\lb}=R_{\rb}=\emptyset$ and 
$$
      \inf R_{\zf}>0, \quad \inf R_{\bfo}>0, \quad \inf R_{\lb_0}\ge \nu_0+1, \quad \inf R_{\rb_0}\ge \nu_0-1, \quad \inf R_{\sc}>-v-1.  
$$
As explained in \cite[p.879-880]{GH1}, it follows that $\Id +R(k)$ is invertible for small $k$ with inverse of the form $\Id+S(k)$ with $S(k)\in \Psi^{-\infty,\cS}_k(W;E)$ and $\cS$ an index family satisfying the same properties as $\cR$.  Hence, using again the composition formula \eqref{no.11c}, for small $k$,
$$
 (\widetilde{\Delta}_{\sc}+k^2)^{-1}= G(k)(\Id+S(k))
$$
has the desired properties.  For large $k$, it also has the desired properties, since $\Delta_{\sc}$ has positive spectrum, and therefore $\widetilde{\Delta}_{\sc}+k^2$ is invertible as a scattering operator for all $k>0$.  
\end{proof}
\begin{remark}
Using Lemma~\ref{eo.2}, We can initially choose the approximate inverse $G(k)$ in the previous proof in such a way that the problem is solved to all orders at the faces $\sc$ and $\bfo$.  In this case, using the composition rules \eqref{no.11c} to iterate away the error term $R$, we see that $(\widetilde{\Delta}_{\sc}+k^2)^{-1}$ and the approximate inverse $G(k)$ have the same expansion at $\sc$, and thus at the corner $\sc\cap \bfo$.   
\label{elr.1}\end{remark}

To obtain a nice description of the inverse of \eqref{no.4}, it suffices then to substitute $k=|\check\xi|$ in $(\widetilde{\Delta}_{\sc}+k^2)^{-1}$, to take the inverse Fourier transform in $\check \xi$ and to compose on the left by multiplication by $\ang{X}^{-2}$.  
\begin{theorem}
The inverse of \eqref{no.4} is an element of $\Psi^{-2,\cE}_{\ff,y_0}(W\times \rho T_{y_0}Y;E)$ with $\cE$ an index  family  such that 
$$
   \inf E_{\mf}\ge 0, \quad   \inf E_{\ebf}\ge h+1, \quad \inf E_{\lf}>0, \quad \inf E_{\rf}>h.  
$$
\label{no.14}\end{theorem}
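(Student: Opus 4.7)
The plan is to reduce the inversion of \eqref{no.4} to Theorem~\ref{no.12} by diagonalizing the Euclidean Laplacian $\Delta_h$ via Fourier transform, and then translate the Guillarmou--Hassell index structure back onto $\bhs{\ff,y_0}$.

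First I would peel off the two flanking $\ang{X}$ factors by writing
$$
[\ang{X}(\widetilde{\Delta}_{\sc}+\Delta_h)\ang{X}]^{-1}=\ang{X}^{-1}(\widetilde{\Delta}_{\sc}+\Delta_h)^{-1}\ang{X}^{-1},
$$
so the problem reduces to inverting the suspended scattering operator $\widetilde{\Delta}_{\sc}+\Delta_h$ on $W\times\eps T_{y_0}Y$ and then pre- and post-multiplying by $\ang{X}^{-1}$. Because $\Delta_h$ is translation invariant, I would Fourier transform in the fiber variable $\check y$, with dual variable $\check\xi\in\eps^{-1}T^*_{y_0}Y$, to produce the one-parameter family $\widetilde{\Delta}_{\sc}+k^2$ with $k=|\check\xi|$. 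Under the hypothesis \eqref{ws.8b} (cf.\ Proposition~\ref{ws.8}), the $b$-operator $\Delta_{v,y_0}$ is invertible on $L^2_b(W;E)$, so Theorem~\ref{no.12} applies and yields $(\widetilde{\Delta}_{\sc}+k^2)^{-1}\in\Psi^{-2,\cF}_k(W;E)$ with $\inf \cF_{\zf}\ge 0$, $\inf \cF_{\bfo}\ge -2$, $\inf \cF_{\lb_0},\inf \cF_{\rb_0}\ge\nu_0-1$, $\inf \cF_{\sc}\ge -v-1$, and empty index sets at $\bff,\lb,\rb$.

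Next I would inverse-Fourier-transform in $\check\xi$ and conjugate with $\ang{X}^{-1}\ang{X'}^{-1}$, and then argue that the resulting Schwartz kernel indeed lies in $\Psi^{-2,\cE}_{\ff,y_0}(W\times\rho T_{y_0}Y;E)$ with the claimed indices. The face correspondence under Fourier transform is: the scattering face $\sc$, with order $-v-1$ over $k>0$ at the diagonal of $W^2$, contributes, after integration in $\check\xi\in\bbR^h$, the conormal singularity of order $-2$ at $D_{e,s}\cap\bhs{\ff,y_0}$; the faces $\bfo$, $\lb_0$, $\rb_0$ of $W^2_{k,\sc}$ are mapped to $\bhs{\mf}$, $\bhs{\lf}$, $\bhs{\rf}$ respectively; and the zero face $\zf$, because the regime $k\to 0$ is Fourier-dual to $|\check y-\check y'|\to\infty$, is mapped to the boundary-boundary face $\bhs{\ebf}$. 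A standard rescaling $\check\xi=\omega/|\check y-\check y'|$ together with Paley--Wiener estimates converts $\inf \cF_{\zf}\ge 0$ into decay of order $h$ in $|\check y-\check y'|$; combined with the density conversion from right $b$-densities on $W$ to right edge-surgery densities, this provides the extra unit giving $\inf \cE_{\ebf}\ge h+1$. The flanking $\ang{X}^{-1}$ factors then improve the bounds by one unit in each spatial variable, producing $\inf \cE_{\mf}\ge 0$, $\inf \cE_{\lf}>0$, $\inf \cE_{\rf}>h$.

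The main obstacle is the face-by-face bookkeeping under the Fourier transform, especially polyhomogeneity at the corners (in particular $\zf\cap\bfo\leftrightarrow\bhs{\ebf}\cap\bhs{\mf}$) and the density twists needed to match the $k$-calculus convention (kernels acting on right $b$-densities over $W$) with the fiber edge-surgery convention (acting on right edge-surgery densities). A convenient way to organize this is to invoke Remark~\ref{elr.1}: the parametrix from Lemma~\ref{eo.2} already solves the problem to all orders at $\sc$ and $\bfo$, so the nontrivial translation of indices is only needed at $\zf$, $\lb_0$, and $\rb_0$, and these can be read off directly from the index data produced by Theorem~\ref{no.12}.
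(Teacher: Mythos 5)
Your overall strategy coincides with the paper's: peel off the $\ang{X}$ factors, Fourier transform in $\check y$, invoke Theorem~\ref{no.12}, and translate the Guillarmou--Hassell faces back to $\bhs{\ff,y_0}$ (with the dictionary $\sc\to$ diagonal singularity, $\bfo\to\bhs{\mf}$, $\lb_0\to\bhs{\lf}$, $\rb_0\to\bhs{\rf}$, $\zf\to\bhs{\ebf}$, and Remark~\ref{elr.1} to control the corner $\sc\cap\bfo$). However, the index bookkeeping at the two faces where the numerology is most delicate is not right, and as stated your argument does not reach the claimed bounds.

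At $\bhs{\ebf}$: a term of order $\ell$ in the expansion of $(\widetilde\Delta_{\sc}+k^2)^{-1}$ at $\zf$ contributes, under the inverse Fourier transform in $\check\xi\in\bbR^h$, a term of order $|Y|^{-h-\ell}$ at $\bhs{\ebf}$; your rescaling/Paley--Wiener estimate applied to the order-zero term therefore only yields decay $|Y|^{-h}$, i.e.\ $\inf E_{\ebf}\ge h$. The density conversion does not supply the missing unit: the Jacobian $dy'=(\eps|X'|)^h\,dY$ is a power of a boundary defining function for $\bhs{\rf}$, not for $\bhs{\ebf}$, so it contributes nothing at $\bhs{\ebf}$. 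The extra unit comes instead from the observation that the order-zero coefficient at $\zf$ depends on $\check\xi$ only through $|\check\xi|^0$, hence is a \emph{smooth} function of $\check\xi$ near $\check\xi=0$; its inverse Fourier transform is therefore rapidly decreasing, and the leading power-law contribution as $|Y|\to\infty$ comes from the order-one term at $\zf$, giving $|Y|^{-h-1}$ and hence $\inf E_{\ebf}\ge h+1$. Without this point you lose exactly the unit that the later composition arguments (which track $\inf(E_{\ebf}+F_{\ebf})>h$) rely on. Relatedly, at $\bhs{\rf}$ the flanking $\ang{X'}^{-1}$ only upgrades $\inf\cF_{\rb_0}\ge\nu_0-1$ to $\nu_0>0$, which is not $>h$; the bound $\inf E_{\rf}>h$ is produced precisely by the Jacobian factor $(\eps|X'|)^h$ just mentioned, i.e.\ by rewriting the kernel with respect to the edge-surgery density. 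So the ``density conversion'' you invoke is indeed needed, but at $\bhs{\rf}$ rather than at $\bhs{\ebf}$, and the $+1$ at $\bhs{\ebf}$ requires the smoothness argument.
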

\begin{proof}
The inverse is given by $\ang{X}^{-1}(\widetilde{\Delta}_{\sc}+ \Delta_h)^{-1}\ang{X}^{-1}$, and by Theorem~\ref{no.12}, 
\begin{equation}
 (\widetilde{\Delta}_{\sc}+\Delta_h)^{-1}= \left(\frac{1}{(2\pi)^h} \int e^{i\check{y}\cdot \check\xi} (\widetilde{\Delta}_{\sc}+ |\check\xi|^2)^{-1} d\check\xi\right) \frac{dy'}{\eps^h}.
\label{no.15}\end{equation}
Let us first compute this integral near $\sc\subset W^2_{k,\sc}$ with $k= |\check\xi|$.  There, suitable coordinates that can be used all the way down to $\bfo$ are 
\begin{equation}
k=|\check{\xi}|, \quad \chi'= \frac{1}{k|X'|}, \quad \check{S}= \frac{\log\left(\frac{|X|}{|X'|}\right)}{\chi'}= k|X'|\log\left(\frac{|X|}{|X'|}\right), \quad \check{Z}= \frac{z-z'}{\chi'}, \quad z'.
\label{no.16}\end{equation}
In terms of these variables, $(\widetilde{\Delta}_{\sc}+k^2)^{-1}$ is given near $\sc$ by  
\begin{equation}
\left( \frac{1}{(2\pi)^{v+1}} \int e^{i\check{S}\cdot \check{\sigma}} e^{i\check{Z}\cdot \check{\zeta}}  \check{q}(\chi', z', k, \check{\sigma},\check{\zeta}) d\check{\sigma} d\check{\zeta}\right) (k|X'|)^{v+1}\frac{d|X'|dz'}{|X'|}
\label{no.17}\end{equation}
with $\check{q}$ such that $k^2\check{q}\in \cS^{-2}(\cU\times [0,\infty); \bbR_{\check{\sigma}}\times \bbR^v_{\check{\sigma}})$ is a smooth symbol of order $-2$ with respect to  the variables $(\check{\sigma},\check{\zeta})$, where $\cU$ is an open set where the coordinates $(\chi',z')$ are valid.  In fact, on $\sc$, that is, when $\chi'=0$, we have explicitly that 
\begin{equation}
   \check{q}(0,z',k,\check{\sigma},\check{\zeta})=\frac{1}{k^2(1+\check{\sigma}^2+ p_2(z',\check{\zeta}))},
\label{no.18}\end{equation}
where $p_2(z',\zeta)= a_{ij}(z')\zeta^i \zeta^j$ is the principal symbol of the Laplacian on $(Z_{y_0}, g_{Z_{y_0}})$.  In other words, this is just the inverse of the symbol of $\widetilde{\Delta}_{\sc}+k^2$, which near $\sc\cap \bfo$ takes the   form 
\begin{equation}
       k^2\left(\check{\sigma}^2+  p_2(z',\check{\zeta}) +1 + \chi'p_1(\chi',z',k,\check{\sigma},\check{\zeta})+ (\chi')^2 p_0(\chi',z',k,\check{\sigma},\check{\zeta})\right),
\label{no.18b}\end{equation}
where $p_0$ and $p_1$ are smooth symbols homogeneous of degree $0$ and $1$ in $(\check{\sigma},\check{\zeta})$. 
This suggests to make the change of variable 
\begin{multline}
  |X'|^{-1}= k\chi',  \quad   S=\frac{\check S}{k|X'|}= \log\left(\frac{|X|}{|X'|} \right), \quad Y=\frac{\check y}{|X'|}=\frac{y_0-y'}{\eps|X'|},  \\ z-z'= \frac{\check Z}{k|X'|},  \quad \xi=\check\xi|X'|, \quad
   \sigma= k|X'|\check\sigma \quad \zeta= k|X'|\check\zeta.
\label{no.18c}\end{multline}
This coordinates are in fact well-adapted to $\bhs{\ff,y_0}$ and can be used to describe the corresponding edge operators invariant by translation in  the $T_{y_0}Y$ factor.  In terms of these coordinates, the symbol \eqref{no.18b} becomes
\begin{equation}
|X'|^{-2}\left({\sigma}^2+ p_2(z',\zeta)  +|\xi|^2 + p_1(\chi',z',k,\sigma,\zeta)+  p_0(\chi',z',k,\sigma,\zeta) \right).\label{no.18d}
\end{equation} 

so that \eqref{no.15} becomes 
\begin{equation}
\left( \frac{1}{(2\pi)^{n}} \int e^{iS\cdot \sigma} e^{i (z-z')\cdot \zeta}e^{iY\cdot\xi} q(|X'|^{-1},z',|\xi|,\sigma,\zeta) d\sigma d\zeta d\xi \right)  \frac{dy'}{(\eps |X'|)^h} \frac{dx'dz'}{x'}
\label{no.19}\end{equation}
where now
\begin{equation}
     q(|X'|^{-1},z',\xi,\sigma,\zeta)= \check{q}\left(\frac{1}{k|X'|},z',\frac{|{\xi}|}{|X'|},\frac{\sigma}{k|X'|},\frac{\zeta}{k|X'|}\right)
\label{no.20}\end{equation}
is such that $|X'|^{-2}q$ is  a  symbol of order $-2$ in $\sigma, \zeta$ and $ \xi$, depending smoothly on $|X'|^{-1}$ and $z'$.  Smoothness in $|X'|^{-1}$ is more delicate and relies on Remark~\ref{elr.1}.  Indeed, thanks to this remark, the symbol $|X'|^{-2}\check{q}$ has the same expansion at $\sc\cap \bfo$  as the one induced by the symbol of the approximate inverse of Lemma~\ref{eo.2}.  In terms of the coordinates \eqref{no.18c}, this means that $|X'|^{-2}q$ has a smooth expansion in $|X'|^{-1}$ as claimed.  

Notice that this discussion takes place  only near $\sc$, that is, for $|\xi|> \delta^{-1}>0$ for some $\delta>0$ small.  Thus, in this region, multiplying \eqref{no.19} by $\ang{X}^{-2}$ gives an operator of the desired form on $\bhs{\ff}$, but with rapid decay at $\bhs{\ebf}$, $\bhs{\lf}$ and $\bhs{\rf}$.  

Elsewhere, but near the lifted diagonal $D_{k,\sc}$, we can proceed essentially in the same way, that is, by taking the Fourier transform of the Schwartz kernel in the direction normal to $D_{k,\sc}$,  to get an operator of the desired form, again in this case decaying rapidly at $\ebf$, $\lf$ and $\rf$.  

With this understood and after multiplying by $\ang{X}^{-1}\ang{X'}^{-1}$, what is left to understand is the contribution near $\zf$ coming from an operator in $\hat{a}(k)\in\Psi^{-\infty,\cF}_{k,\sc}(W;E)$ with
$F_{\bff}=F_{\rb}=F_{\lb}=F_{\sc}=\emptyset$ and 
$$
   \inf F_{\zf}\ge 0, \quad \inf F_{\bfo}\ge 0, \quad \inf F_{\rb_0}>0, \quad \inf F_{\lb_0}>0, 
$$
when we take the inverse Fourier transform
\begin{equation}
a(\check y)= \left(\frac{1}{(2\pi)^h}\int e^{i\check y\cdot \check{\xi}} \hat{a}(|\check{\xi}|)d\check\xi\right) \frac{dy'}{\eps^h}.
\label{no.21}\end{equation}
Now, away from $\lb_0$ and possibly near $\bfo$ and $\rb_0$, we can use the coordinates 
$$
\xi= \ang{X'}\check{\xi}, \quad Y=\frac{\check{y}}{\ang{X'}}, \quad \ang{X}^{-1}, \quad s= \frac{\ang{X}}{\ang{X'}}, \quad z,z', 
$$
so that 
\begin{equation}
     a(Y)= \left(\frac{1}{(2\pi)^h} \int e^{iY\cdot \xi} \hat{a} \; d\xi \right) \frac{dy'}{(\rho')^h}.
\label{no.22}\end{equation}

Since $\hat{a}$ depends smoothly on $|\xi|$ for $|\xi|>0$, does not depend on $\frac{\xi}{|\xi|}$, has a polyhomogeneous expansion as $|\xi|\searrow 0$ and vanishes rapidly when $|\xi|\to\infty$, we see that $a$ will be smooth in $Y$, even at $Y=0$, and independent of $\frac{Y}{|Y|}$.  Moreover, the polyhomogeneous expansion of $\hat{a}$ in $|\xi|$ at $|\xi|=0$ will correspond to a polyhomogenous expansion in $\frac{1}{|Y|}$ as $|Y|\to \infty$ of $|Y|^{h+1} a$, so that in particular $a$ decays like $|Y|^{-h-1}$ as $|Y|\to \infty$ and gives the claimed behavior at $\bhs{\ebf}$.  Indeed, a term of order $\ell$ at $\zf$ will correspond  under the inverse Fourier transform to a term of order $|Y|^{-h-\ell}$ in the expansion as $|Y|\to \infty$.  However, since the term of order zero at $\zf$ is automatically smooth in $\xi$, its inverse Fourier transform will decay rapidly at infinity, so the dominant term in the expansion at $|Y|\to \infty$ decays at least like $|Y|^{-h-1}$ as claimed.   
      Then writing $a$ in terms of an edge surgery density gives the claimed behavior at the faces $\bhs{\mf}$ and $\bhs{\rf}$.  Away from $\rb_0$, but possibly near $\bfo$ or $\lb_0$, we use instead the coordinates 
 $$
\xi= \ang{X}\check{\xi}, \quad Y=\frac{\check{y}}{\ang{X}}, \quad \ang{X'}^{-1}, \quad s= \frac{\ang{X'}}{\ang{X}},  \quad z,z',
$$
and apply a similar argument to see that $a$ gives an operator of the desired type on $\bhs{\ff}$.  
\end{proof}

\section{Uniform construction of the resolvent under a wedge surgery}   \label{ur.0}

In this section, under suitable hypotheses, we will provide a uniform construction of the resolvent of the Hodge Laplacian under a wedge surgery.   Thus, let $H\subset M$ be a two-sided hypersurface in $M$ and $c: H\times(-\delta,\delta)\to X$ a tubular neighborhood,  $\phi:H\to Y$ a fiber bundle with base $Y$ a compact manifold.  Let $g_{\ew}$ be a choice of exact $\ew$-metric with respect these choices of $H$, $\phi$ and $c$.  Let also $F\to M$ be a flat vector bundle equipped with a bundle metric not necessarily compatible with the flat connection. For $E= \Lambda^*({}^{\ew}T^*X_s)\otimes F$, let $\eth_{\ew}\in \Diff^1_{\ew}(X_s;E)$ be the corresponding $\ew$-de Rham operator.  This operator is formally self-adjoint when acting on $\CI_c(X_s\setminus \bhs{\bs};E)\subset L^2_{\ew}(X_s;E).$  To work with $b$-densities, we will consider the related operator 
$$
    D_{\ew}= \rho^{\frac{v+1}2}\eth_{\ew}\rho^{-\frac{v+1}2},
$$
which is formally self-adjoint when acting on $\CI_c(X_s\setminus \bhs{\bs};E)\subset L^2_{b,s}(X_s;E).$

Let $r$ be a boundary defining function for $\bhs{\sm}$ which is equal to $|\rho|$ near $\pa \bhs{\sm}$.  Notice that $\rho^2\eth^2_{\ew}\in \Diff^2_{e,s}(X_s;E)$.  We will  make the following assumption on $\eth_{\ew}$.

\begin{assumption}
In the terms of the decomposition \eqref{dR.1}, we will assume that 
$$
\spec((\eth_{y}^{H/Y})^2)\cap [0,4]=\emptyset \quad \forall  \; y\in Y.
$$
\label{ur.1}\end{assumption}  

With this assumption, we know by Corollary~\ref{wss.6} and Corollary~\ref{wss.7} that $\eth_{\w}^2$ is essentially self-adjoint with unique self-adjoint extension  given by $\cD_{\min}(\eth^2_{\w})=r^2 H^2_{\w}(\bhs{\sm};E)$.  Furthermore, by Lemma~\ref{lem:w.1} and Propostion~\ref{specb.2}, the map
\begin{equation}
  \eth^2_{\w}: r H^2_{\w}(\bhs{\sm}; E_{\sm})\to r^{-1}L^2_{\w}(\bhs{\sm};E_{\sm})
\label{ur.3}\end{equation}
is Fredholm.

On the face $\bhs{\bs}$, we can define another model operator in terms of the fiber bundle \eqref{se.2a}.
\begin{definition}
The \textbf{vertical family} of $D^2_{\ew}$ is the family of $b$-operators \linebreak $\Delta_v\in \Diff^2_b(\bhs{\bs}/Y;E)$ acting fiberwise on the fiber bundle \eqref{se.2a} obtained by restricting the action of $\rho D^2_{\ew}\rho$ to the boundary face $\bhs{\bs}$.
\label{ur.5}\end{definition}
By Lemma~\ref{ws.6} and Proposition~\ref{ws.8}, we see that Assumption~\ref{ur.1} implies that for each $y\in Y$, the restriction $\Delta_{v,y}$ of $\Delta_v$ to the fiber $ [-\frac{\pi}2,\frac{\pi}2]\times Z_y:= (\phi_+)^{-1}(y)$ induces an isomorphism
\begin{equation}
        \Delta_{v,y}: H^2_{b}([-\frac{\pi}2,\frac{\pi}2]\times Z_y;E)\to L^2_b([-\frac{\pi}2,\frac{\pi}2]\times Z_y;E).
\label{inv.1}\end{equation}

We are now ready to state the main theorem of this section.

\begin{theorem}  Let $\eth_{\ew}$ be the wedge surgery de Rham operator associated to a choice of exact $\ew$-metric and a choice of flat vector bundle $F\to M$ with bundle metric.  Suppose that $\eth_{\ew}$ satisfy Assumption~\ref{ur.1}. 
 Then for any bounded open set $V\subset \bbC$ such that 
$$
     \overline{V}\cap \spec(N_{\mf}(\eth_{\ew}))\subset \{0\},
$$
there exists $\tau>0$ and a holomorphic family $V\ni \lambda\mapsto f(\cdot,\lambda) \in\CI([0,1]_{\epsilon})+\cA^\tau([0,1])$  such that the resolvent $(D_{\ew}^2-\lambda)^{-1}$ extends from $V\cap(\bbC\setminus\bbR)$ to a meromorphic family 
$$
        (D_{\ew}^2-\lambda)^{-1}= \Res_H(\lambda)+ \Res_M(\lambda)
$$
on $V$, with only simple poles,  where $\lambda\to \rho^{-1}\Res_H(\lambda)\rho^{-1}\in  \Psi^{-2,\tau}_{\ee}(X_s;E)$  is a holomorphic family, while $\Res_M(\lambda)$ is a meromorphic family uniformly of finite rank such that $$
\lambda\to f(\epsilon,\lambda)\rho^{-1}\Res_M(\lambda)\rho^{-1}\in \Psi^{-\infty,\tau}_{\ee}(X_s;E)
$$ 
is a holomorphic family uniformly  and  such that 
$$
N_{\mf}(\Res_M(\lambda))= -\frac{\Pi_{\ker_{L^2} N_{\mf}(D^2_{\ew})}}{\lambda},  \quad N_{\ff}(\rho^{-1}\Res_M(\lambda)\rho^{-1})=0.
$$  
\label{ur.8}\end{theorem}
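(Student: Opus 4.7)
The plan follows the Mazzeo–Melrose parametrix method of \cite{mame1}, adapted to the edge surgery calculus as in \cite{ARS1}, and proceeds in four stages. First, I would build a symbolic parametrix: since $\rho^2(D_{\ew}^2-\lambda)\in \Diff^2_{e,s}(X_s;E)$ is elliptic as an edge–surgery operator (uniformly for $\lambda$ on compact sets, since $\lambda$ contributes a lower-order term), standard symbolic inversion together with asymptotic summation yields $G_0 \in \Psi^{-2}_{e,s}(X_s;E)$ with $(D_{\ew}^2-\lambda)G_0 = \Id - R_0$ and $R_0\in \Psi^{-\infty,\mathcal E_0}_{e,s}(X_s;E)$ supported up to $\bhs{\ff}\cup\bhs{\mf}$ (and, from the composition formula of Theorem~\ref{composition}, at the adjacent faces $\bhs{\lf},\bhs{\rf},\bhs{\ebf}$).

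Second, I would solve the model problem at $\bhs{\ff}$. The normal operator $N_{\ff}(\rho^2(D_{\ew}^2-\lambda))$ on the fiber $\bhs{\ff,y_0}$ is exactly the operator $\ang X(\widetilde{\Delta}_{\sc}+\Delta_h)\ang X$ of \eqref{no.4}; the spectral parameter is absent here because the weight $\rho^{2}$ makes it lower order at this face. Under Assumption~\ref{ur.1}, Proposition~\ref{ws.8} gives the invertibility \eqref{inv.1} of $\Delta_{v,y_0}$, so Theorem~\ref{no.14} produces a fiberwise inverse of this model in $\Psi^{-2,\cE}_{\ff,y_0}$ with the explicit index bounds stated there. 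Using Proposition~\ref{no.3}, I lift this inverse to an element $G_1$ so that $(D_{\ew}^2-\lambda)(G_0+G_1) = \Id - R_1$, where $R_1$ now vanishes at $\bhs{\ff}$.

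Third, I would solve at $\bhs{\mf}$. The face $\bhs{\mf}$ is the overblown edge double space of \cite{Lauter}, and by Proposition~\ref{no.2} the restriction of the equation to $\bhs{\mf}$ becomes an edge equation for $\Delta_{\w}-\lambda$ on each copy of $\bhs{\sm}$. By Corollary~\ref{wss.6}, $\Delta_\w$ is essentially self-adjoint with discrete spectrum, and for $\lambda\in V\setminus(\spec\Delta_\w\setminus\{0\})$ the operator $\Delta_\w-\lambda$ is invertible modulo the finite-rank projector $\Pi_{\ker\Delta_\w}$. Using Mazzeo's edge parametrix \cite{maz91}, together with Proposition~\ref{specb.2} to control indicial roots, I obtain this modified inverse in the edge calculus and lift it through $\bhs{\mf}$ compatibly with the prior correction at $\bhs{\ff}$, yielding $G_2$ such that $(D_{\ew}^2-\lambda)(G_0+G_1+G_2) = \Id - R_2 - \lambda^{-1}\widetilde{\Pi}$, where $R_2 \in \Psi^{-\infty,\tau,res}_{e,s}(X_s;E)$ for some $\tau>0$, and $\widetilde{\Pi}$ is a lift of $\Pi_{\ker\Delta_\w}$ to the surgery double space whose normal operator at $\bhs{\ff}$ vanishes (since $\ker\Delta_\w$-elements vanish at $\pa \bhs{\sm}$ by the indicial analysis).

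Fourth, I would remove $R_2$ by a Neumann series. By Theorem~\ref{cwb.1} (composition in the calculus with bounds), $(\Id-R_2)^{-1} = \Id + S$ with $S\in \Psi^{-\infty,\tau,res}_{e,s}(X_s;E)$ for all sufficiently small $\tau>0$ and $\lambda$ in $V$ away from a discrete set. Composing defines the resolvent; splitting off the explicit $-\lambda^{-1}\widetilde{\Pi}$ singularity and the poles coming from the finite-dimensional perturbation of $\ker\Delta_\w$ by small eigenvalues of $D_\ew^2$ as $\eps\searrow 0$ yields the decomposition $\Res_H(\lambda)+\Res_M(\lambda)$. The function $f(\eps,\lambda)$ is then chosen to be a finite polynomial in $\lambda$ whose roots track the positions of the small eigenvalues (producing the factor that clears the poles of $\Res_M$), built from the characteristic polynomial of the finite-dimensional reduced problem on the approximate kernel. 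The identity $N_{\ff}(\rho^{-1}\Res_M(\lambda)\rho^{-1})=0$ follows from the construction, since the small-eigenvalue projectors lift from $\bhs{\mf}$ and are invisible at $\bhs{\ff}$ by the index-set bookkeeping.

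The main obstacle is the compatibility of the two model inverses at the corner $\bhs{\mf}\cap\bhs{\ff}$: the fiberwise $\bhs{\ff}$-inverse from Theorem~\ref{no.14} must agree, in its expansion at this corner, with the edge-calculus inverse of $\Delta_\w-\lambda$ at $\pa\bhs{\sm}$. This is precisely what is ensured by Remark~\ref{elr.1} and the indicial computations of Propositions~\ref{specb.1} and~\ref{specb.2}. Equally delicate is keeping the expansion at $\bhs{\ebf}$ under control throughout the iteration; here the key input is the lower bound $\inf E_{\ebf}\ge h+1$ in Theorem~\ref{no.14}, which is exactly what the composition rules of Theorem~\ref{composition} and Theorem~\ref{cwb.1} require to preserve conormality at $\bhs{\ebf}$ at each step of the Neumann iteration.
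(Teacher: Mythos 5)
Your proposal follows the same four-stage parametrix construction used in the paper's proof: symbolic inversion (Proposition~\ref{ur.9}), inversion of the model at $\bhs{\ff}$ via Theorem~\ref{no.14} (Proposition~\ref{ur.10}), correction at $\bhs{\mf}$ using the wedge resolvent and an extension of the $L^2$-kernel projector (Proposition~\ref{ur.15}), and removal of the residual error by a Neumann series plus an analytic Fredholm/determinant argument to produce $f(\epsilon,\lambda)$. The technical points you flag (matching the $\bhs{\ff}$ and $\bhs{\mf}$ inverses at the corner via Remark~\ref{elr.1}, and the $\inf E_{\ebf}\geq h+1$ bound needed to close the Neumann iteration) are precisely the ones the paper relies on, so this is essentially the same argument.
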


Before proving this theorem, notice that it allows us to define the small eigenvalues of the family $D_{\ew}$ as the eigenvalues that approach zero as $\epsilon$ tends to zero.  In fact, taking a contour integral $\Gamma$  going anti-clockwise around the origin  and sufficiently small so that its interior contains no element of the spectrum $D_{\w}$ beside zero, we can for $\epsilon\ge 0$ sufficiently small define the projection onto the eigenspace associated to small eigenvalues by 
\begin{equation}
     \Pi_{\sma}=  \frac{i}{2\pi} \int_{\Gamma} (D_{\ew}^2-\lambda)^{-1}d\lambda= \frac{i}{2\pi}\int_{\Gamma}\Res_M(\lambda)d\lambda.
\label{small.1}\end{equation}
Since the family  $\Gamma\ni\lambda\mapsto \rho^{-1}\Res_M(\lambda)\rho^{-1}\in \Psi^{-\infty,\tau}_{\ee}(X_s;E)$ is smooth, we automatically obtain the following.
\begin{corollary}
The projection $\Pi_{\sma}$ is an element of $ \rho(\Psi^{-\infty,\tau}_{\ee}(X_s;E))\rho$.
\label{small.2}\end{corollary}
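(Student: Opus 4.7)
My plan is to realise $\Pi_{\sma}$ as a contour integral of $\Res_M(\lambda)$ and then exploit the factorisation $\rho^{-1}\Res_M(\lambda)\rho^{-1}=\Phi(\lambda,\epsilon)/f(\epsilon,\lambda)$ implicit in Theorem~\ref{ur.8}, where $\Phi(\lambda,\epsilon):=f(\epsilon,\lambda)\,\rho^{-1}\Res_M(\lambda)\rho^{-1}$ is a holomorphic $\lambda$-family valued in $\Psi^{-\infty,\tau}_{\ee}(X_s;E)$, uniformly in $\epsilon\in[0,\epsilon_0]$ for some small $\epsilon_0>0$.

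First I would observe that, because $\rho^{-1}\Res_H(\lambda)\rho^{-1}\in\Psi^{-2,\tau}_{\ee}(X_s;E)$ is holomorphic throughout $V\supset\Gamma$, its contour integral vanishes by Cauchy's theorem. The formula \eqref{small.1} therefore reduces to
\[
\rho^{-1}\Pi_{\sma}\rho^{-1}
=\frac{i}{2\pi}\int_\Gamma \rho^{-1}\Res_M(\lambda)\rho^{-1}\,d\lambda
=\frac{i}{2\pi}\int_\Gamma \frac{\Phi(\lambda,\epsilon)}{f(\epsilon,\lambda)}\,d\lambda.
\]

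Next I would verify that the scalar weight $1/f(\epsilon,\lambda)$ does not destroy the polyhomogeneous structure carried by $\Phi$. The zeros of $f(\epsilon,\cdot)$ in $V$ coincide with the poles of $\Res_M(\cdot)$, i.e.\ with the small eigenvalues of $D_{\ew}^2$ for $\epsilon>0$ and with $\lambda=0$ for $\epsilon=0$. By construction, $\Gamma$ encircles $0$ anti-clockwise and lies strictly inside $V$; since the small eigenvalues converge to $0$ as $\epsilon\searrow 0$, they remain trapped inside $\Gamma$ for every $\epsilon\in[0,\epsilon_0]$ once $\epsilon_0$ is sufficiently small. Consequently $f(\epsilon,\lambda)$ does not vanish on the compact set $[0,\epsilon_0]\times\Gamma$, and since $f(\cdot,\lambda)\in\CI([0,1]_{\epsilon})+\cA^\tau([0,1])$ depends continuously on its arguments, $|f(\epsilon,\lambda)|$ admits a uniform positive lower bound there. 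Hence $1/f(\epsilon,\lambda)$ is bounded and polyhomogeneous in $\epsilon$, holomorphic in $\lambda\in\Gamma$, and the quotient $\Phi/f$ remains a smooth (in fact holomorphic) $\lambda$-family in $\Psi^{-\infty,\tau}_{\ee}(X_s;E)$, uniformly in $\epsilon\in[0,\epsilon_0]$.

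Finally I would conclude by noting that the integral over the compact contour $\Gamma$ of such a smooth family lies in the same space $\Psi^{-\infty,\tau}_{\ee}(X_s;E)$, its Schwartz kernel being obtained by integrating a uniformly polyhomogeneous family of kernels on $X^2_{e,s}$ against a bounded smooth scalar weight. This yields $\rho^{-1}\Pi_{\sma}\rho^{-1}\in\Psi^{-\infty,\tau}_{\ee}(X_s;E)$, which is equivalent to the claim $\Pi_{\sma}\in\rho\,\Psi^{-\infty,\tau}_{\ee}(X_s;E)\,\rho$. The only genuine content in the argument is the uniform non-vanishing of $f$ on $\Gamma$ as $\epsilon\searrow 0$, which is the step I would justify most carefully; it follows directly from the hypothesis $\overline{V}\cap\spec(N_{\mf}(\eth_{\ew}))\subset\{0\}$ in Theorem~\ref{ur.8} together with the fact that the small eigenvalues of $D^2_{\ew}$ stay localised near $0$ as $\epsilon\searrow 0$.
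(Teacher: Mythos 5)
Your proposal is correct and follows the same route as the paper: express $\Pi_{\sma}$ as the contour integral in \eqref{small.1} (where the $\Res_H$ part drops out by holomorphy), and use the description of $\Res_M$ from Theorem~\ref{ur.8} to conclude that $\rho^{-1}\Res_M(\lambda)\rho^{-1}$ is a smooth family over $\Gamma$ with values in $\Psi^{-\infty,\tau}_{\ee}(X_s;E)$, so that the integral stays in that space. The paper states this smoothness as an immediate observation; your proposal usefully supplies the verification it implicitly relies on, namely that the scalar factor $f(\eps,\lambda)$ is uniformly bounded away from zero on $[0,\eps_0]\times\Gamma$ because the zeros of $f(\eps,\cdot)$ are exactly the small eigenvalues, all of which lie strictly inside $\Gamma$ for $\eps_0$ small.
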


Coming back to Theorem~\ref{ur.8}, its proof will involve few steps.

\subsection*{Step 0: Symbolic inversion}

\begin{proposition}
There exist holomorphic families $\bbC\ni \lambda\mapsto Q_0(\lambda)\in \Psi^{-2}_{e,s}(X_s;E)$ and  $\bbC\ni \lambda\mapsto R_0(\lambda)\in \Psi^{-\infty}_{e,s}(X_s;E)$ such that 
$$
   \rho(D^2_{\ew}-\lambda)\rho Q_0(\lambda)= \Id -R_0(\lambda).
$$
\label{ur.9}\end{proposition}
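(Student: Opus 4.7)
The strategy is completely standard: we are only asking for a symbolic (i.e., interior) parametrix, so the model operators at $\bhs{\mf}$, $\bhs{\ff}$ and $\bhs{\ebf}$ do not yet need to be inverted. The key preliminary observation is that $\rho(D^2_{\ew}-\lambda)\rho$ belongs to $\Diff^2_{\ee}(X_s;E)$ and is elliptic as an $e,s$ operator. Indeed, writing $D^2_{\ew}=\rho^{-2}P$ with $P\in \Diff^2_{\ee}(X_s;E)$, and using that $[\rho\pa_x,\rho]=\rho$, $[\rho\pa_y,\rho]=0$, $[\pa_z,\rho]=0$, one checks that $[P,\rho]\in \rho\Diff^1_{\ee}(X_s;E)$, so
\[
  \rho D^2_{\ew}\rho = P + \rho^{-1}[P,\rho] \in \Diff^2_{\ee}(X_s;E),
\]
while $\lambda\rho^2$ is a smooth multiplier. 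The $e,s$-principal symbol coincides, up to the smooth nonvanishing factor $\rho^{2}$ interpreted through the isomorphism ${}^{\ew}T^*X_s = \rho^{-1}\,{}^{\ee}T^*X_s$, with the wedge symbol $|\xi|^2_{g_{\ew}}$ of the Hodge Laplacian; in particular it is invertible on ${}^{\ee}T^*X_s\setminus 0$ uniformly up to the boundary, and of course is a polynomial in $\lambda$.

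Given ellipticity, one proceeds with the usual iterative symbol inversion inside the small $e,s$-calculus. Choose a parametrix $Q_0^{(0)}(\lambda)\in \Psi^{-2}_{\ee}(X_s;E)$ whose $e,s$-symbol is the formal inverse of the principal symbol of $\rho(D^2_{\ew}-\lambda)\rho$; this is legitimate by the short exact sequence
\[
  0\to \Psi^{m-1}_{\ee}(X_s;E)\to \Psi^m_{\ee}(X_s;E)\xrightarrow{\,{}^{\ee}\sigma\,} S^m({}^{\ee}T^*X_s;\End E)\to 0
\]
established earlier. Then
\[
   \rho(D^2_{\ew}-\lambda)\rho\, Q_0^{(0)}(\lambda) = \Id - R_0^{(1)}(\lambda), \qquad R_0^{(1)}(\lambda)\in \Psi^{-1}_{\ee}(X_s;E),
\]
and one improves the parametrix in the usual way by setting $Q_0^{(N)}(\lambda)= Q_0^{(0)}(\lambda)\bigl(\Id+R_0^{(1)}(\lambda)+\ldots+R_0^{(1)}(\lambda)^{N}\bigr)$, which gives an error $R_0^{(N+1)}(\lambda)\in \Psi^{-N-1}_{\ee}(X_s;E)$. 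Asymptotically summing via a Borel construction in the small $e,s$-calculus, one obtains $Q_0(\lambda)\in \Psi^{-2}_{\ee}(X_s;E)$ with
\[
   \rho(D^2_{\ew}-\lambda)\rho\, Q_0(\lambda) = \Id - R_0(\lambda), \qquad R_0(\lambda)\in \Psi^{-\infty}_{\ee}(X_s;E).
\]

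Holomorphic dependence on $\lambda\in \bbC$ is automatic: the symbol of $\rho(D^2_{\ew}-\lambda)\rho$ is a polynomial in $\lambda$, so the formal inverse of the principal symbol and all the correction symbols are holomorphic in $\lambda$; the asymptotic summation and composition in the small calculus preserve holomorphy. There is no real obstacle at this stage—the model operators at the boundary hypersurfaces $\bhs{\mf}$, $\bhs{\ff}$, $\bhs{\ebf}$ are untouched, so the error $R_0(\lambda)$ is only smoothing in the interior variables and will be dealt with in the subsequent steps of the resolvent construction by inverting the normal operator at $\bhs{\mf}$ (using Assumption~\ref{ur.1} through \eqref{ur.3}) and the model operator at $\bhs{\ff}$ (via Theorem~\ref{no.14}, whose applicability is guaranteed by \eqref{inv.1}).
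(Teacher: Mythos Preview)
Your proof is correct and follows essentially the same symbolic parametrix construction as the paper: invert the $e,s$-principal symbol, iterate via a Neumann-type series, and asymptotically sum, noting holomorphy throughout. One tiny slip: $[\rho\pa_x,\rho]=x$ rather than $\rho$, but since $x=\rho\cdot(x/\rho)$ with $x/\rho$ smooth on $X_s$, your conclusion $[P,\rho]\in\rho\,\Diff^1_{\ee}(X_s;E)$ still holds and the rest is unaffected.
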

\begin{proof}
Since $\rho (D^2_{\ew}-\lambda)\rho$ is an elliptic edge surgery operator, there exists $Q_0'\in \Psi^{-2}_{e,s}(X_s;E)$ with principal symbol ${}^{e,s}\sigma_{-2}(Q_0')= ({}^{e,s}\sigma_2(\rho D^2_{\ew}\rho-\lambda\rho^2))^{-1}= ({}^{e,s}\sigma_2(\rho D^2_{\ew}\rho))^{-1}$ such that 
$$
       \rho (D^2_{\ew}-\lambda)\rho Q_0'= \Id-R_0'(\lambda),
$$
where $R_0'(\lambda)\in \Psi^{-1}_{e,s}(X_s;E)$ is a holomorphic in $\lambda$.  Adding $Q_0''(\lambda):= Q_0'R_0'(\lambda)$, we obtain
$$
\rho (D^2_{\ew}-\lambda)\rho(Q_0'(\lambda)+ Q_0''(\lambda))= \Id-R_0''(\lambda)
$$  
with $R_0''(\lambda)\in \Psi^{-2}_{e,s}(X_s;E)$ holomorphic in $\lambda$.  Proceeding by induction, we find more generally holomorphic families $Q_0^{(k)}(\lambda):= Q_0'R_0^{(k-1)}(\lambda)\in \Psi^{-k}_{e,s}(X_s;E)$ and $R_0^{(k)}\in \Psi^{-k}_{e,s}(X_s;E)$ such that 
$$
(D^2_{\ew}-\lambda) \rho^2\left(\sum_{j=1}^k Q_0^{(j)}(\lambda)\right)= \Id-R_0^{(k)}(\lambda).
$$
Taking an asymptotic sum over the $Q_0^{(k)}(\lambda)$ gives the desired $Q_0(\lambda)$.  This can be done in such a way that $Q_0(\lambda)$ is holomorphic in $\lambda$.  
\end{proof}

\subsection*{Step 1: Removing the error term at $\bhs{\ff}$}

Since the operator \eqref{inv.1} is invertible, we know by Theorem~\ref{no.14} that $N_{\ff}(\rho(D^2_{\ew}-\lambda)\rho)= N_{\ff}(\rho D^2_{\ew}\rho)$ is invertible, which can be used to improve the error term.  
\begin{proposition}
There exists $\tau>0$ and holomorphic families $\bbC\ni \lambda\mapsto Q_1(\lambda) \in \Psi^{-2,\tau}_{\ee}(X_s;E)$,  $\bbC\ni \lambda\mapsto R_1(\lambda) \in \Psi^{-\infty,\tau}_{\ee}(X_s;E)$ such that 
\begin{equation}
                      \rho (D_{\ew}^2-\lambda)\rho Q_1(\lambda)= \Id-R_1(\lambda) \quad \mbox{and}  \quad N_{\ff}(R_1(\lambda))=0.
\label{ur.11}\end{equation}
\label{ur.10}\end{proposition}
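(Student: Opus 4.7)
The plan is to correct $Q_0(\lambda)$ from Proposition~\ref{ur.9} by adding a term that removes the $\bhs{\ff}$-normal operator of the error $R_0(\lambda)$, using the invertibility of the $\bhs{\ff}$-model provided by Theorem~\ref{no.14}. The starting observation is that $N_{\ff}(\rho(D^2_{\ew}-\lambda)\rho)=N_{\ff}(\rho D^2_{\ew}\rho)$, since $\lambda\rho^2$ vanishes at $\bhs{\ff}$; in particular, the $\bhs{\ff}$-model is independent of $\lambda$. By Assumption~\ref{ur.1}, combined with Lemma~\ref{ws.6} and Proposition~\ref{ws.8}, the vertical family $\Delta_v$ is fiberwise invertible, so Theorem~\ref{no.14} produces a (fiberwise) inverse $G$ of $N_{\ff}(\rho D^2_{\ew}\rho)$ lying in $\Psi^{-2,\cG}_{\ff,y_0}$ with
\[
\inf \cG_{\mf}\geq 0,\qquad \inf \cG_{\ebf}\geq h+1,\qquad \inf \cG_{\lf}>0,\qquad \inf \cG_{\rf}>h.
\]

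Next, I would pick an extension $\tilde{Q}_1^{(0)}(\lambda)\in \Psi^{-\infty,\cE}_{e,s}(X_s;E)$ whose $\bhs{\ff}$-normal operator equals $G\circ N_{\ff}(R_0(\lambda))$; this is straightforward using a cutoff near $\bhs{\ff}$, and $G$ being $\lambda$-independent together with holomorphy of $R_0$ ensures holomorphy of $\tilde{Q}_1^{(0)}(\lambda)$ in $\lambda$. Since $R_0(\lambda)\in\Psi^{-\infty}_{e,s}$ vanishes to infinite order at $\bhs{\lf},\bhs{\rf},\bhs{\ebf}$ and has index set $\mathbb{N}_0$ at $\bhs{\mf}$, Corollary~\ref{eo.1} guarantees that $G\circ N_{\ff}(R_0(\lambda))$ sits in the right space. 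Applying Proposition~\ref{no.3} (composition of $\bhs{\ff}$-normal operators) yields
\[
N_{\ff}\bigl(\rho(D^2_{\ew}-\lambda)\rho\,\tilde{Q}_1^{(0)}(\lambda)\bigr)=N_{\ff}(\rho D^2_{\ew}\rho)\circ G\circ N_{\ff}(R_0(\lambda))=N_{\ff}(R_0(\lambda)),
\]
so that the residual error $R_1^{(0)}(\lambda):=R_0(\lambda)-\rho(D^2_{\ew}-\lambda)\rho\,\tilde{Q}_1^{(0)}(\lambda)$ vanishes at $\bhs{\ff}$ to some positive order.

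Then I would iterate: applying the same construction to the leading term of $R_1^{(0)}$ at $\bhs{\ff}$ (after factoring out the appropriate power of a defining function for $\bhs{\ff}$) produces corrections $\tilde{Q}_1^{(k)}(\lambda)$ vanishing to higher and higher orders at $\bhs{\ff}$, with $\lambda$-dependence inherited holomorphically from the corresponding residuals. A standard Borel sum of the $\tilde{Q}_1^{(k)}(\lambda)$, performed uniformly in $\lambda$, gives the desired correction $\tilde{Q}_1(\lambda)$. Setting $Q_1(\lambda):=Q_0(\lambda)+\tilde{Q}_1(\lambda)$ yields \eqref{ur.11} with $R_1(\lambda)$ vanishing to infinite order at $\bhs{\ff}$, i.e.\ $N_{\ff}(R_1(\lambda))=0$. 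The index sets of $Q_1$ and $R_1$ at the remaining faces are inherited from $\cG$, placing $Q_1$ in $\Psi^{-2,\tau}_{\ee}$ and $R_1$ in $\Psi^{-\infty,\tau}_{\ee}$ for some $\tau>0$ determined by the gaps $\inf\cG_{\lf}>0$, $\inf\cG_{\rf}-h>0$, $\inf\cG_{\ebf}-h>0$.

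The main obstacle is bookkeeping: the non-integer index sets produced by $G$ at $\bhs{\lf}$, $\bhs{\rf}$ and $\bhs{\ebf}$ mean the correction lives only in the calculus with bounds and not in the small polyhomogeneous calculus, and one must verify that the Borel summation at $\bhs{\ff}$ and the composition rules (Corollary~\ref{eo.1}, Theorem~\ref{composition}) combine to give a \emph{uniform} bound $\tau>0$ at the other faces rather than progressively degrading orders. Since each iterative step uses the same inverse $G$ applied to a residual with improved vanishing only at $\bhs{\ff}$, the behavior at the other faces stabilizes after the first step, which is what makes the Borel sum land in $\Psi^{-\infty,\tau}_{\ee}$ for a single $\tau$.
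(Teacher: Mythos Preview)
Your approach is essentially the same as the paper's, but you do more work than necessary. The paper's proof is a single step: choose $Q_1'(\lambda)\in\Psi^{-\infty,\tau}_{\ee}(X_s;E)$ with $N_{\ff}(Q_1'(\lambda))=N_{\ff}(\rho D_{\ew}^2\rho)^{-1}N_{\ff}(R_0(\lambda))$ (using Theorem~\ref{no.14} and Corollary~\ref{eo.1}), and set $Q_1(\lambda)=Q_0(\lambda)+Q_1'(\lambda)$; then Theorem~\ref{cwb.1} and Proposition~\ref{no.3} give the result directly.

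The point you are missing is that $N_{\ff}(R_1(\lambda))=0$ only asks that the \emph{leading term} of $R_1(\lambda)$ at $\bhs{\ff}$ vanish, not that $R_1(\lambda)$ vanish there to infinite order. By definition, an element of $\Psi^{-\infty,\tau}_{\ee}$ has a partial $\bbN_0$-expansion at $\bhs{\ff}$ only up to order $\tau$, with conormal remainder; killing the order-zero coefficient is exactly the condition $N_{\ff}(R_1)=0$, and is achieved after one correction. Your iteration and Borel sum are therefore unnecessary, and with them all of your ``main obstacle'' bookkeeping worries disappear: there is no asymptotic summation to control, and the single composition is handled by Theorem~\ref{cwb.1} (the calculus-with-bounds composition result), which you should cite rather than Theorem~\ref{composition}.
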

\begin{proof}
By Theorem~\ref{no.14} and Corollary~\ref{eo.1} applied for each $y_0\in Y$, we see that there exists $\tau>0$ and $Q_1'(\lambda)\in \Psi^{-\infty,\tau}_{\ee}(X_s;E)$ such that 
$$
N_{\ff}(Q_1'(\lambda))= N_{\ff}(\rho D_{\ew}^2\rho)^{-1}N_{\ff}(R_0(\lambda)).
$$
Hence, using  Theorem~\ref{cwb.1} and Proposition~\ref{no.3}, we see that it suffices to take
$$
Q_1(\lambda)=Q_0(\lambda)+Q_1'(\lambda).
$$ 
\end{proof}

\subsection*{Step 2: Removing the error term at $\bhs{\mf}$}

Restricting \eqref{ur.11} to $\bhs{\mf}$ gives us a right parametrix $N_{\mf}(Q_1(\lambda))$ for $r(D_{\w}^2-\lambda)r$,
\begin{equation}
    r(D_{\w}^2-\lambda)r N_{\mf}( Q_1(\lambda))= \Id - N_{\mf}(R_1(\lambda)).
\label{ur.12}\end{equation}
On the other hand, since the operator $r(D_{\w}^2-\lambda)r$ is formally self-adjoint with respect to the $L^2_b$-inner product, we obtain a left parametrix $N_{\mf}(Q_1^*(\lambda))$ by taking the adjoint of \eqref{ur.12}, 
\begin{equation}
  N_{\mf}(Q_1^*(\lambda))r (D_{\w}^2-\lambda)r= \Id - N_{\mf}(R^*_1(\lambda)).
  \label{ur.13}\end{equation}
On the other hand, we know by Lemma~\ref{lem:w.1} that the operator 
$$
r(D_{\w}^2-\lambda )r: r^{-\frac{h}2}H^2_{e}(\bhs{\sm};E)\to r^{-\frac{h}2}L^2_e(\bhs{\sm};E)= L^2_b(\bhs{\sm};E)
$$
is Fredholm.  For $\lambda\in V\setminus \{0\}$, it is in fact invertible.  Moreover, since $r^2D^2_{\w}$ has no indicial root in $[0,1]$ for all $y\in Y$ by Assumption~\ref{ur.1} and Proposition~\ref{specb.2}, we know by  Lemma~\ref{lem:w.1} and \cite[Theorem~6.1]{maz91} that $r^2D^2_{\w}$ and $r D^2_{\w}r$ have isomorphic $L^2_b$-kernel with isomorphism given by multiplication by $r^{-1}$.   Hence, we can define a projection on the $L^2_b$-kernel of $rD^2_{\w}r$ by 
$$
\Pi_{\ker rD^2_{\w}r}:=  r \circ \left(\Pi_{\ker D^2_{\w}}\right) \circ r^{-1}
$$
with $\Pi_{\ker D_{\w}^2}$ the orthogonal projection onto the $L_b^2$-kernel of $D^2_{\w}$.  Hence, for $\lambda\in V$, taking 
$$
G(\lambda):= (\Id-\Pi_{\ker rD^2_{\w}r})r^{-1}((D^2_{\w}-\lambda))^{-1}r^{-1} (\Id-\Pi_{\ker rD^2_{\w}r})
$$
gives us a holomorphic family of operators $G(\lambda): L^2_b(\bhs{\sm};E) \to r^{\frac{h}2}H^2_{e}(\bhs{\sm};E)$    such that   
\begin{equation}
 r(D_{\w}^2-\lambda)r G(\lambda)=  \Id-\Pi_{\ker rD^2_{\w}r} \quad \mbox{and} \quad G(\lambda)r(D^2_{\w}-\lambda)r= \Id+ \Pi_{\ker rD^2_{\w}r} ,
\label{ur.14}\end{equation}
Proceeding as in \cite[(4.24), (4,25)]{maz91}, we deduce from  \eqref{ur.12}, \eqref{ur.13} and \eqref{ur.14}, that 
\begin{multline}
  G(\lambda)= N_{\mf}(Q_1(\lambda))+ N_{\mf}(Q_1^*(\lambda)R_1(\lambda))+ N_{\mf}(R_1^{*}(\lambda))G(\lambda)N_{\mf}(R_1(\lambda)) \\
  -N_{\mf}(Q_1^*(\lambda))  \Pi_{\ker rD^2_{\w}r} N_{\mf}(R_1(\lambda))-\Pi_{\ker rD^2_{\w}r}N_{\mf}(Q_1(\lambda)).
  \end{multline}
  Since $N_{\mf}(R_1(\lambda))$ and $N_{\mf}(R_1^*(\lambda))$  are very residual in the sense \cite[p.20]{maz91}, we thus see that $G(\lambda)\in \Psi^{-2,\tau}_{e}(\bhs{\sm};E)$.   This leads to the following.
  
\begin{proposition}
There is a holomorphic family $V\ni \lambda \mapsto Q_2(\lambda)\in \Psi^{-2,\tau}_{\ee}(X_s;E)$ such that 
\begin{equation}
      \rho(D^2_{\ew}-\lambda)\rho Q_2(\lambda)= \Id-R_2(\lambda)
\label{ur.16}\end{equation}
with $V\ni\lambda\mapsto R_2(\lambda)\in \Psi^{-\infty,\tau}_{\ee}(X_s;E)$ a holomorphic family such that $N_{\mf}(R_2(\lambda))=\Pi_{r\ker D^2_{\w}r}$ and  $N_{\ff}(R_2)=0$.  
\label{ur.15}\end{proposition}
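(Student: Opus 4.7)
My plan is to correct $Q_1(\lambda)$ on $\bhs{\mf}$ so that its normal operator there equals the true (projected) parametrix $G(\lambda)$ for the edge model $r(D^2_{\w}-\lambda)r$, while preserving the vanishing at $\bhs{\ff}$ already achieved in Proposition~\ref{ur.10}. The starting point is the identity
\begin{equation*}
G(\lambda) - N_{\mf}(Q_1(\lambda)) = N_{\mf}(Q_1^*(\lambda)R_1(\lambda)) + N_{\mf}(R_1^*(\lambda))G(\lambda)N_{\mf}(R_1(\lambda)) - N_{\mf}(Q_1^*(\lambda))\Pi_{r\ker D^2_{\w}r}N_{\mf}(R_1(\lambda)) - \Pi_{r\ker D^2_{\w}r}N_{\mf}(Q_1(\lambda))
\end{equation*}
derived in the discussion preceding the statement. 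Since each term on the right involves a very residual factor (either $N_{\mf}(R_1)$ or $N_{\mf}(R_1^*)$) or the finite-rank $\Pi_{r\ker D^2_{\w}r}$, this difference lies in $\Psi^{-\infty,\tau}_{e}(\bhs{\sm};E)$. Moreover, Corollary~\ref{wss.6} guarantees that $D^2_{\w}$ is essentially self-adjoint with discrete spectrum, and since $\overline{V}$ meets $\spec(N_{\mf}(\eth_{\ew}))$ only at $\{0\}$, the removal of $\Pi_{r\ker D^2_{\w}r}$ in the definition of $G(\lambda)$ makes $G(\lambda)$ holomorphic on all of $V$.

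Next, I would extend the kernel of $G(\lambda)-N_{\mf}(Q_1(\lambda))$ off $\bhs{\mf}$ to an operator $Q_2'(\lambda)\in \Psi^{-\infty,\tau}_{\ee}(X_s;E)$, holomorphic in $\lambda\in V$, with $N_{\ff}(Q_2'(\lambda))=0$. Since $\bhs{\mf}$ is canonically identified with the overblown edge double space, this is accomplished by taking any smooth extension of the Schwartz kernel and multiplying by a cutoff supported away from $\bhs{\ff}$; the calculus-with-bounds index sets at the remaining faces are inherited directly from those of $G(\lambda)-N_{\mf}(Q_1(\lambda))$, up to possibly shrinking $\tau>0$. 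Then set $Q_2(\lambda) := Q_1(\lambda)+Q_2'(\lambda)\in \Psi^{-2,\tau}_{\ee}(X_s;E)$ and define $R_2(\lambda) := R_1(\lambda)-\rho(D^2_{\ew}-\lambda)\rho\,Q_2'(\lambda)$, which lies in $\Psi^{-\infty,\tau}_{\ee}(X_s;E)$ as required.

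Finally, for the two normal operator identities, Proposition~\ref{no.2} yields $N_{\mf}(\rho(D^2_{\ew}-\lambda)\rho\,Q_2'(\lambda)) = r(D^2_{\w}-\lambda)r\cdot(G(\lambda)-N_{\mf}(Q_1(\lambda)))$, and combining \eqref{ur.12} with the first identity of \eqref{ur.14} then gives $N_{\mf}(R_2(\lambda)) = \Pi_{r\ker D^2_{\w}r}$. Similarly, Proposition~\ref{no.3} together with $N_{\ff}(Q_2'(\lambda))=0$ yields $N_{\ff}(\rho(D^2_{\ew}-\lambda)\rho\,Q_2'(\lambda)) = 0$, so $N_{\ff}(R_2(\lambda)) = N_{\ff}(R_1(\lambda)) = 0$ by Proposition~\ref{ur.10}. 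The main technical obstacle is verifying the hypotheses needed to apply Propositions~\ref{no.2} and~\ref{no.3}, in particular the lower bounds $\inf(E_{\rf}+F_{\lf})>h$ for compositions at $\bhs{\mf}$ and $\inf(E_{\ebf}+F_{\ebf})>h$, $\inf(E_{\lf}+F_{\rf})>0$ at $\bhs{\ff}$; these should follow because $Q_2'(\lambda)$ is constructed to be residual in the calculus with bounds and the positivity $\tau>0$ already obtained in Step~1 provides the slack needed to ensure the compositions lie in the expected classes with the claimed index behavior.
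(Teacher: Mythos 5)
Your proof is correct and follows essentially the same route as the paper: both use the generalized inverse $G(\lambda)$ of the Fredholm edge model $r(D^2_{\w}-\lambda)r$, together with the displayed identity expressing $G(\lambda)-N_{\mf}(Q_1(\lambda))$ through very residual and finite-rank terms, to correct the parametrix at $\bhs{\mf}$; the only cosmetic difference is that you set $N_{\mf}(Q_2(\lambda))=G(\lambda)$ outright, whereas the paper adds to $N_{\mf}(Q_1(\lambda))$ the correction $G(\lambda)N_{\mf}(R_1(\lambda))$ plus a finite-rank term involving $\Pi_{\ker rD^2_{\w}r}$, and both choices yield $N_{\mf}(R_2(\lambda))=\Pi_{\ker rD^2_{\w}r}$. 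One small imprecision: a cutoff ``supported away from $\bhs{\ff}$'' would alter the restriction to $\bhs{\mf}$ near the corner $\bhs{\mf}\cap\bhs{\ff}$; what you actually need is that $G(\lambda)-N_{\mf}(Q_1(\lambda))$ vanishes at that corner (it does, since each term in the identity carries a factor of $N_{\mf}(R_1)$, $N_{\mf}(R_1^*)$ or $\Pi$ decaying there), after which an extension cut off in a boundary defining function of $\bhs{\mf}$ gives $N_{\ff}(Q_2'(\lambda))=0$ without changing $N_{\mf}(Q_2'(\lambda))$.
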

\begin{proof}
Since $\Pi_{rD^2_{\w}r}rD^2_{\w}r= r\Pi_{D^2_{\w}}D^2_{\w}r=0$, we see from \eqref{ur.11} restricted to $\bhs{\mf}$ that 
\begin{equation*}
\begin{aligned}
    \Id-N_{\mf}(R_1(\lambda))&= (\Id-\Pi_{\ker rD^2_{\w}r})(\Id-N_{\mf}(R_1(\lambda)))-\lambda \Pi_{\ker rD^2_{\w}r}N_{\mf}(\rho^2 Q_{1}(\lambda))\\
    &=(\Id-\Pi_{\ker rD^2_{\w}r}) -(\Id-\Pi_{\ker rD^2_{\w}r})N_{\mf}(R_1(\lambda))-\lambda \Pi_{\ker rD^2_{\w}r}N_{\mf}(\rho^2 Q_{1}(\lambda)).   
\end{aligned}
\end{equation*}
Thus, since 
$$
      (r(D^2_{\w}-\lambda)r)^{-1}= G(\lambda) -r^{-1}\circ\left(\frac{\Pi_{\ker D^2_{\w}}}{\lambda}\right)\circ r^{-1},
$$
it suffices to take $Q_2(\lambda)=Q_1(\lambda)+Q_2'(\lambda)$ with $V\ni\lambda\mapsto Q_2'(\lambda)\in \Psi^{-\infty,\tau}_{\ee}(X_s;E)$ a choice of holomorphic family such that 
\begin{equation}
\begin{aligned}
    N_{\mf}(Q_2'(\lambda)) &:= -G(\lambda)N_{\mf}(R_1(\lambda)) -((r(D_{\w}-\lambda)r)^{-1}\lambda\Pi_{\ker rD^2_{\w}r}N_{\mf}(\rho^2 Q_1(\lambda)), \\
     &=-G(\lambda)N_{\mf}(R_1(\lambda)) -r^{-1}\Pi_{\ker D^2_{\w}} r^{-1}\Pi_{\ker rD^2_{\w}r}N_{\mf}(\rho^2 Q_1(\lambda)),  \\
     &=-G(\lambda)N_{\mf}(R_1(\lambda)) -r^{-2}\Pi_{\ker rD^2_{\w}r}N_{\mf}(\rho^2 Q_1(\lambda)), 
\end{aligned}    
\end{equation}
\end{proof}

Now, let $\phi_1^0,\ldots,\phi_N^0$ be an orthonormal basis of the $L^2_b$-kernel of $D^2_{\w}$, so that 
$$
      \Pi_{\ker D^2_{\w}}= \sum_{j=1}^N \phi_j^0\cdot \overline{\phi_j^0} \left. \frac{d g_{\ew}}{\rho^{v+1}}\right|_{\bhs{\sm}}.
$$ 
By Assumption~\ref{ur.1} and Proposition~\ref{specb.2}, we know that $r^2 D^2_{\w}$ has no indicial root in $[0,2]$ for all 
$y\in Y$.  Since $r^2D_{\w}^2 \phi_j=0$, this means by Lemma~\ref{lem:w.1} \cite[Theorem~{6.1}]{maz91} that $\phi_j\in r^2\cA^{\tau}(\bhs{\sm};E)$ for some $\tau>0$.  
Thus, we can extend them to sections 
$$
\phi_1,\ldots,\phi_N \in  \sB^{\cE/\df w}_{\phg}\sA^{ 0}_-(X_s;E)$$
such that $\left.  \phi_j\right|_{\bhs{\sm}}=\phi_j^0$, where $\cE_{\sm}=\bbN_0$, $\cE_{\bs}=\emptyset$ and $\df w_{\sm}=\infty$, $\df w_{\bs}=2+\tau$ for some $\tau>0$.  In particular, $(D^2_{\ew})\phi_j\in \sA^{\tau}_{-}(X_s;E)$ for some $\tau>0$.  

For later purposes, we need also to require, after taking $\tau>0$ smaller if needed, that $\epsilon^{-\tau}D^2_{\ew}\phi_j$ are linearly independent for $\epsilon\ge 0$ small.  This can be arranged for instance by picking $\eta_1,\ldots \eta_N\in \CI_c(X_s\setminus \bhs{\bs};E)$ with disjoint supports such that $ D^2_{\ew} \eta_1,\ldots  D^2_{\ew} \eta_N$ are linearly independent and by replacing $\phi_1,\ldots, \phi_N$ by $\phi_1+\epsilon^{\tau}\eta_1,\ldots, \phi_N+\epsilon^{\tau}\eta_N$ for $\tau>0$ sufficiently small. 

With this choice of extensions, we can then extend the projection $\Pi_{\ker D^2_{\w}}$ by 
$$
         \Pi= \sum_{j=1}^N \phi_j\cdot \overline{\phi_j}  \frac{d g_{\ew}}{\rho^{v+1}}  \in \Psi^{-\infty,\tau}_{\ee}(X_s;E).
$$
Since   $\Pi_{r D^2_{\w}r}= r\left(\Pi_{D^2_{\w}}\right) r^{-1}$,  we can then rewrite equation \eqref{ur.16} as
\begin{equation}
      \rho(D^2_{\ew}-\lambda)\rho Q_2(\lambda)=\Id-\rho\Pi \rho^{-1}-S_2(\lambda) \quad \mbox{with} \; S_2(\lambda)= R_2(\lambda)-\rho\Pi\rho^{-1}\in \Psi^{-\infty,\tau,res}_{\ee}(X_s;E).
\label{ur.17}\end{equation}
\begin{proposition}
There exists a holomorphic family of bounded operators 
$$S_3(\lambda)\in \Psi^{-\infty,\tau,res}_{\ee}(X_s;E)  \quad \mbox{such that} \quad  
        (\Id-S_2(\lambda))^{-1}= \Id-S_{3}(\lambda).
$$
\end{proposition}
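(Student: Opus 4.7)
The plan is to invert $\Id-S_2(\lambda)$ by the Neumann series, but carried out inside the calculus with bounds so that the outcome belongs to the claimed space $\Psi^{-\infty,\tau,\mathrm{res}}_{\ee}(X_s;E)$. The key tool is Theorem~\ref{cwb.1}: since $S_2(\lambda)\in \Psi^{-\infty,\tau,\mathrm{res}}_{\ee}$, iterated compositions satisfy
\[
 S_2(\lambda)^k\in \Psi^{-\infty,k\tau,\mathrm{res}}_{\ee}(X_s;E)
\]
for every $k\geq 1$, so each additional factor gains $\tau$ orders of decay at all boundary hypersurfaces of $X^2_{e,s}$.

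The first step is to define $S_3(\lambda)$ as a Borel-type asymptotic sum $S_3(\lambda)\sim \sum_{k=1}^\infty S_2(\lambda)^k$ in $\Psi^{-\infty,\tau,\mathrm{res}}_{\ee}(X_s;E)$, constructed so that for every $K\in\bbN$,
\[
 S_3(\lambda)-\sum_{k=1}^K S_2(\lambda)^k\in \Psi^{-\infty,(K+1)\tau,\mathrm{res}}_{\ee}(X_s;E).
\]
Such an asymptotic summation is a routine procedure in this class of calculi, as the spaces $\Psi^{-\infty,k\tau,\mathrm{res}}_{\ee}$ are nested and the decay at each boundary hypersurface is improving linearly in $k$; holomorphy in $\lambda$ is preserved at each finite stage by Theorem~\ref{cwb.1} and passes to the Borel sum. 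The algebraic identity $(\Id-S_2)(\Id-\sum_{k=1}^KS_2^k)=\Id-S_2^{K+1}$ together with the defining property of $S_3$ then yields
\[
 (\Id-S_2(\lambda))(\Id-S_3(\lambda))=\Id-T_K(\lambda)\quad\text{with }T_K(\lambda)\in\Psi^{-\infty,(K+1)\tau,\mathrm{res}}_{\ee}(X_s;E)
\]
for every $K$, so the error vanishes to infinite order at every boundary hypersurface of $X^2_{e,s}$.

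The second step is to promote this asymptotic identity to an exact one. For each fixed $\epsilon>0$, $S_2(\lambda)$ is compact on $L^2_{b,s}(X_s;E)$, and the vanishing of its Schwartz kernel at $\bhs{\mf}$ to order $\tau$, combined with the weights $h+\tau$ at $\bhs{\ebf}$ and $\bhs{\rf}$ appearing in the definition of $\Psi^{-\infty,\tau,\mathrm{res}}_{\ee}$, yields via a Schur-type estimate on the lifted kernel on $X^2_{e,s}$ an operator-norm bound
\[
 \|S_2(\lambda)\|_{L^2_{b,s}\to L^2_{b,s}}\leq C(V)\,\epsilon^\tau,
\]
uniform in $\lambda\in\overline V$. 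For $\epsilon$ sufficiently small, the Neumann series $\sum_k S_2(\lambda)^k$ therefore converges in operator norm to a genuine inverse of $\Id-S_2(\lambda)$, and this inverse agrees modulo $\Psi^{-\infty,(K+1)\tau,\mathrm{res}}_{\ee}$ with the asymptotically summed $\Id-S_3(\lambda)$ for every $K$; hence the asymptotic construction is the true inverse. Holomorphy in $\lambda$ is automatic from that of $S_2$.

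The main obstacle is the Schur-type operator-norm estimate: one must convert the boundary weights at $\bhs{\ebf},\bhs{\rf}$ and the vanishing at $\bhs{\mf}$ into a uniform $L^2$-bound that is simultaneously sharp enough to force smallness as $\epsilon\to 0$ and robust with respect to the singular behavior of the lifted metric and density on $X^2_{e,s}$. This requires carefully exploiting the density factors computed in Proposition~\ref{prop:densityblowup} together with the specific weights built into the definition of $\Psi^{-\infty,\tau,\mathrm{res}}_{\ee}$; once this is in hand, all remaining manipulations are formal.
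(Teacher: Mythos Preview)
Your approach mirrors the paper's: build a Neumann/Borel sum inside the calculus using Theorem~\ref{cwb.1}, then pass to an exact inverse via a smallness argument. There are, however, two problems.

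The minor one is a sign slip: expanding gives $(\Id-S_2)(\Id-\sum_{k=1}^KS_2^k)=\Id-2S_2+S_2^{K+1}$, not $\Id-S_2^{K+1}$. The correct identity is $(\Id-S_2)(\Id+\sum_{k=1}^KS_2^k)=\Id-S_2^{K+1}$, so the Borel sum should produce $\Id+\sum S_2^k$; equivalently $S_3\sim -\sum_{k\ge 1} S_2^k$ if you insist on writing the inverse as $\Id-S_3$.

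The substantive gap is the inference ``this inverse agrees modulo $\Psi^{-\infty,(K+1)\tau,\mathrm{res}}_{\ee}$ with the asymptotically summed $\Id-S_3(\lambda)$ for every $K$; hence the asymptotic construction is the true inverse.'' Agreement modulo $\Psi^{-\infty,\infty,\mathrm{res}}_{\ee}$ does not imply equality: elements of that space vanish to infinite order at every boundary hypersurface of $X^2_{e,s}$ but need not be zero. After the asymptotic step you only have $(\Id-S_2)(\Id-S_3)=\Id+T$ with $T\in\Psi^{-\infty,\infty,\mathrm{res}}_{\ee}$, and you must still invert $\Id+T$ \emph{within the calculus}. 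Your Schur bound $\|S_2\|\le C\eps^\tau$ shows the true inverse exists as a bounded operator, but that alone does not place it in $\Psi^{-\infty,\tau,\mathrm{res}}_{\ee}$; equivalently, the operator-norm convergent tail $\sum_{k>K}S_2^k$ is not automatically conormal with the required weights. The paper closes precisely this gap by applying the smallness argument to $T$ rather than to $S_2$: since $T$ vanishes to infinite order, for $\eps$ below some $\eps_0$ one has $\|T\|<\tfrac12$, the Neumann series $\sum_j(-T)^j$ converges, and because each $T^j\in\Psi^{-\infty,\infty,\mathrm{res}}_{\ee}$ with summable conormal bounds the limit $T_1$ again lies in $\Psi^{-\infty,\infty,\mathrm{res}}_{\ee}$. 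Only after this does one redefine $S_3$ to absorb both the Borel sum and the $T_1$-correction, via $S_3=-\widetilde S_3-T_1-\widetilde S_3 T_1$. Contrary to your last paragraph, the Schur estimate is not the main obstacle; the missing step is this second inversion inside $\Psi^{-\infty,\infty,\mathrm{res}}_{\ee}$.
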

\begin{proof} This proof is very similar to \cite[Lemma 4.21]{ARS1}. First we take the formal sum of the Neumann series
\[\widetilde S_3(\lambda)=\sum_{j=1}^{\infty}(S_2(\lambda))^j.\]
By Theorem \ref{cwb.1}, $(S_2(\lambda))^j\in\Psi^{-\infty,j\tau,res}_{\ee}(X_s;E)$ for each $j\in\mathbb N$. 
The fact that the kernel of $S_2(\lambda)$ satisfies a universal $L^{\infty}$ bound of $C\eps^{\tau}$ means the series is summable. Since the b-derivatives satisfy bounds as well, they may be taken term by term, and we conclude that $\widetilde S_3(\lambda)\in\Psi^{-\infty,\tau,res}_{\ee}(X_s;E)$, with
\[(\Id-S_2(\lambda))^{-1}(\Id+\widetilde S_3(\lambda))=\Id+T(\lambda),\]
with $T(\lambda)\in\Psi^{-\infty,\infty,res}_{\ee}(X_s;E)$, which is just the space of conormal distributions vanishing to infinite order at all boundary hypersurfaces of $X_s$. 

Now proceed exactly as in \cite[Lemma~{4.21}]{ARS1}.  By compactness, there exists $\epsilon_0$ such that operator $T(\lambda)$ then has norm bounded by 1/2 for all $\lambda$ and all $\eps<\eps_0$. Thus $\Id+T(\lambda)$ is itself invertible and has inverse of the form $\Id+T_1(\lambda)$, with $T_1(\lambda)\in\Psi^{-\infty,\infty,res}_{\ee}(X_s;E)$ as well. Setting
\[S_3(\lambda)=-\widetilde S_3(\lambda)-T_1(\lambda)-\widetilde S_3(\lambda)T_1(\lambda),\]
and using Theorem \ref{cwb.1} again completes the proof.
\end{proof}

Now we set
$$
     Q_3(\lambda)=  Q_2(\lambda)(\Id-S_3(\lambda))-\frac{\rho^{-1}\circ \Pi\circ \rho^{-1}}{\lambda},
$$
and observe that by Theorem \ref{cwb.1}, $Q_3(\lambda)\in\Psi^{-2,\tau}_{\ee}(X_s;E)$. Note that $Q_3(\lambda)$ is a meromorphic family with possibly a simple pole at $\lambda=0$. Furthermore,
we obtain that
\begin{equation}
    \rho (D_{\ew}^2-\lambda)\rho Q_3(\lambda)= \Id- R_3(\lambda) \quad \mbox{with} \; R_3(\lambda)= -\rho\Pi \rho^{-1} S_3(\lambda)+ \frac{\rho D^2_{\ew} \Pi\rho^{-1}}{\lambda},
\label{ur.18}\end{equation}
where by definition, $R_3(\lambda)\in \Psi^{-\infty,\tau,res}_{\ee}(X_s;E)$ and is a meromorphic family with possibly a simple pole at $\lambda=0$.

\subsection*{Step 3: Analytic Fredholm theory}

In this last step, we remove completely the error term using analytic Fredholm theory.  First, notice that the sections 
$$
\phi_1, \quad
\epsilon^{-\tau}D^2_{\ew}\phi_1, \quad 
\ldots,\quad
\phi_N, \quad 
\epsilon^{-\tau}D^2_{\ew}\phi_N,
$$
 are linearly independent for $\epsilon> 0$ sufficiently small.  Let $\Pi_1\in \Psi^{-\infty,\tau}_{\ee}(X_s;E)$ be the projection onto the range of these sections.  In terms of the decomposition
$$
   L^2_b(X_s;E)= \ran (\Id-\rho\Pi_1\rho^{-1})\oplus \ran(\rho \Pi_1\rho^{-1}),
$$
we have that 
$$
\Id-R_3(\lambda)= \left( \begin{array}{cc} \Id & 0 \\ C(\lambda) & D(\lambda)  \end{array} \right)
$$
with $C(\lambda)= \rho\Pi_1\rho^{-1}(\Id-R_3(\lambda))(\Id-\rho\Pi_1\rho^{-1})$ and $D(\lambda)=\rho\Pi_1\rho^{-1}(\Id-R_3(\lambda))\rho\Pi_1\rho^{-1}$.  From equation \eqref{ur.18}, $C(\lambda)$ is holomorphic while $D(\lambda)$ is meromorphic with possibly only a simple pole at $\lambda=0$.  Since the Fredholm determinant of $\Id-R_3(\lambda)$ is clearly equal to the determinant of $D(\lambda)$, we see that $(\Id-R_3(\lambda))$ is invertible if and only if $D(\lambda)$ is, and in this case,
\begin{equation}
 (\Id-R_3(\lambda))^{-1}= \left( \begin{array}{cc} \Id & 0 \\ -D(\lambda)^{-1}C(\lambda) & D(\lambda)^{-1}  \end{array} \right).
 \label{ur.19}\end{equation}  
Now, since $R_3(\lambda)\in \Psi^{-\infty,\tau,res}_{\ee}(X_s;E)$, we know  that for a fixed $\lambda_0\in V\setminus \{0\}$, we can find $\epsilon_0>0$ such that $R_{3}(\lambda_0)$ has a norm smaller than $1/2$ as an operator acting on $L^2_b(X_s;E)$, so that  $\Id-R_{3}(\lambda_0)$ will be invertible for all $\eps\in [0,\epsilon_0]$.  By analytic Fredholm theory, this means that for each $\epsilon\in [0,\epsilon_0]$, $\Id-R_3(\lambda)$ and $D(\lambda)$ are invertible in $\lambda\in V$ except for a finite number of points depending on $\epsilon$.  Since in the decomposition 
$$
\ran(\rho\Pi_1\rho^{-1})= \ran(\rho\Pi\rho^{-1})\oplus \rho\spann\langle\epsilon^{-\tau}D_{\ew}\phi_1,\ldots,\epsilon^{-\tau}D_{\ew}\phi_1\rangle,
$$ 
$$
  D(\lambda)=  \left( \begin{array}{cc} E(\lambda) & F(\lambda) \\ \frac{G(\lambda)}{\lambda} & H(\lambda)  \end{array} \right)
$$
with $E,F,G,H$ holomorphic in $\lambda$ and $G\ne 0$ when $\lambda=0$, we see that 
$$
  \det D(\lambda)= \frac{f(\epsilon,\lambda)}{\lambda^N}  \; \Longrightarrow \det D(\lambda)^{-1}= \frac{\lambda^N}{f(\epsilon,\lambda)}
$$
where $f(\epsilon,\lambda)= \lambda^N\det D(\lambda)$ is holomorphic in $\lambda$ and $f(\epsilon,\lambda)\in \CI([0,\epsilon_0])+\epsilon^{\tau}\cA([0,\epsilon_0])$.  Thus, using the formula for $D(\lambda)$ in terms of the cofactor matrix, we see that 
$$
f(\epsilon,\lambda) D(\lambda)^{-1}\in \Psi^{-\infty,\tau}_{\ee}(X_s;E).
$$ 
From \eqref{ur.19}, $(\Id-R_3(\lambda))^{-1}= \Id-R_{4}(\lambda)$ with
$$
     f(\epsilon,\lambda)R_4(\lambda)\in \Psi^{-\infty,\tau,res}_{\ee}(X_s;E)
$$ 
a holomorphic family  in $\lambda$ for $\lambda\in V$.  Thus, composing on the right by $(\Id-R_3(\lambda))^{-1}$ in \eqref{ur.18}, we finally obtain that 
\begin{equation}
      \rho(D_{\ew}^2-\lambda)\rho Q_4(\lambda)= \Id \quad \mbox{with} \;Q_4(\lambda)= Q_{H}(\lambda)+ Q_{M}(\lambda),
\label{fe.1}\end{equation}
where $Q_H(\lambda)\in \Psi^{-2,\tau}_{\ee}(X_s;E)$ is holomorphic in $\lambda\in V$ and $f(\epsilon,\lambda)Q_{M}(\lambda)\in \Psi^{-\infty,\tau}_{\ee}(X_s;E)$ is holomorphic in $\lambda$ with 
$$
     N_{\mf}( Q_M(\lambda))= \rho^{-1}\frac{\Pi_{\ker D^2_{\w}}}{\lambda}\rho^{-1} \quad \mbox{and} \; N_{\ff}(Q_{M}(\lambda))=0.
$$
Conjugating equation \eqref{fe.1} by $\rho$ finally gives 
$$
(D_{\ew}^2-\lambda)\rho Q_4(\lambda)\rho= \Id, 
$$
giving a proof of Theorem~\ref{ur.8} for $\epsilon\in [0,\epsilon_0]$.  For $\epsilon\ge \epsilon_0$, we have a family of resolvent of elliptic operators on a compact manifold, so the result is standard.

\section{Wedge surgery heat calculus}

\subsection{Wedge surgery heat space}  \label{wshs.1}

Recall that $M$ is an oriented closed manifold with a co-oriented hypersurface $H \subseteq M.$ In this subsection, we will construct a space that carries the heat kernel of the wedge surgery Hodge Laplacian as a well-behaved distribution.\\

We begin with $M\times M\times[0,1)_{\eps}\times \bbR^+_{\tau}$, where $\tau=\sqrt t$. This space has two boundary hypersurfaces: $\{\eps=0\}$, which we denote $\bhs{hmf}$, and $\tau=0$, which we denote $\bhs{tb}$. Then we perform the following series of blow-ups.\\

\textbf{Step 1:} Blow up the interior lifts of first $H\times H\times\{\eps=0\}\times \bbR^+_{\tau}$ and then of \linebreak $H\times M\times\{\eps=0\}\times \bbR^+_{\tau}$ and $M\times H\times\{\eps=0\}\times \bbR^+_{\tau}$,  which yields a space canonically identified with $X^2_{b,s}\times \bbR^+_{\tau}$.   Call the faces created $\bhs{hbf}$, $\bhs{hlf}$, and $\bhs{hrf}$ respectively.\\

\textbf{Step 2:} The face $\bhs{hbf}$ is naturally identified with $\bhs{\bff}\times \bbR^+_{\tau}$,  where $\bhs{\bff}$ is the corresponding face in $X^2_{b,s}$.   Recall moreover that $\bhs{\bff}$ comes with a natural fibration $\phi_b:\bhs{\bff}\to Y\times Y$.  If $D_Y\subset Y\times Y$ denotes the diagonal, we blow up $\phi_b^{-1}(D_Y)\times \{0\}\subset \bhs{\bff}\times \bbR^+_{\tau}=\bhs{hbf}$  to obtain the space 
\begin{equation}
	[X_{b,s}^2\times \bbR^+_{\tau}; \phi_b^{-1}(D_Y) \times\{\tau=0\}],
\label{wsh.1}\end{equation}
which creates a new front face $\bhs{hff}$.  Convenient coordinates valid near $\bhs{hff}\cap\bhs{tb},$ away from other boundary hypersurfaces, include
\begin{equation}\label{coords:interiorhff}
	\lrpar{X:=\frac x\eps, \quad
	y, \quad
	z, \quad
	X':=\frac{x'}{\eps}, \quad
	u:=\frac{y'-y}{\eps}, \quad
	z', \quad
	\sigma:=\frac{\tau}{\eps}, \quad
	\eps},
\end{equation}
where $\eps$ is a boundary defining function for $\bhs{hff}.$
The most interesting portion of this space is a neighborhood of the triple junction $\bhs{hmf}\cap\bhs{hff}\cap\bhs{tb}$, in which, assuming $x'/x$ and $(y'-y)/x$ are bounded away from infinity, we have the coordinates
\begin{equation}\label{coords:step2}
	\lrpar{x, \quad
	\frac{x'}{x}, \quad
	\frac{\tau}{x}, \quad
	\frac{\eps}{x'}, \quad
	y, \quad
	\frac{y'-y}{x}, \quad
	z, \quad
	z'}.
\end{equation}
In these coordinates $x$ is a bdf for $\bhs{hff}$, $\tau/x$ is a bdf for $\bhs{tb}$, and $\eps/(x')$ is a bdf for $\bhs{hmf}$. Note that if $x'/x$ gets large we can make appropriate modifications to these coordinates. 

Now, the face $\bhs{hmf}$ is isomorphic to a version of the heat space in \cite{Mazzeo-Vertman}, where we have not yet blown up the diagonal at $\sqrt t=0$ but have additionally blown up $x=x'=0$ away from $y=y'$ and $\tau =0.$ This is an overblown version of the Mazzeo-Vertman heat space, analogous to the overblown 0-calculus of \cite{Mazzeo-Melrose:Zero} developed in \cite{Lauter}. The front face of $\bhs{hmf}$ is $\bhs{hmf}\cap\bhs{hff}$. \\

\textbf{Step 2':}  Relation between the $b$-heat space and the scattering heat space. \\

Using the coordinates 
\begin{equation}
  \widetilde{u}:= \frac{y-y'}{\sqrt{\rho^2+(\rho')^2}}= \frac{y-y'}{\sqrt{2+x^2+(x')^2}},  \quad \widetilde{\sigma}:= \frac{\tau}{\rho^2+(\rho')^2},
\label{co.1}\end{equation}
the face $\bhs{hff}$ fibers over $Y$ with typical fiber $Z^2\times [-\frac{\pi}2,\frac{\pi}2]_{ob}^2\times \overline{\bbR^h_{\widetilde{u}} \times \bbR^+_{\widetilde{\sigma}}}$, where $\overline{\bbR^h_{\widetilde{u}}  \times \bbR^+_{\widetilde{\sigma}}}$ denotes the radial compactification of $\bbR^h_{\widetilde{u}}  \times \bbR^+_{\widetilde{\sigma}}$.
We will see that in constructing the heat kernel each of these fibers will carry the heat kernel of a model problem, namely the product of the Euclidean heat kernel on $\bbR^h$ with the heat kernel corresponding to the metric \linebreak $ dX^2 + (1+X^2) g_Z$ on $\bbR_X \times Z.$ This is a scattering metric with two ends and the heat kernel construction has been carried out in \cite{Sher:ACHeat}. 

However, at the moment, the fibers of $\bhs{hff}$ looks more like a $b$-heat space rather than a scattering heat space.  In the remaining steps of the construction, we will make a series of blow-ups that will transform the fibers of $\bhs{hff}$ into a scattering heat space.  To see what blow-ups need to be performed, it is helpful to first see how, at the cost of changing the notion of time, the $b$-heat space can be seen as a blow-down of the scattering heat space.    

Thus, let $W$ be a compact manifold with boundary and $r$ a choice of boundary defining function.  In terms of the  $b$-double space $W^2_b$, the compactified $b$-heat space is given by 
$$
    HW_b:= [W^2_b\times \overline{\bbR^+_{\tau}}; D_b\times \{0\}],  \quad \tau= \sqrt{t},
$$
where $D_b\subset W^2_b$ is the interior lift of the diagonal.  Let us denote by $\zf$ and $\tb$ the faces corresponding to $t=\infty$ and $t=0$, by $\bff_0$ the face in $HW_b$ corresponding to the front face of the $b$-double space, by $\tf$ the face created by the blow-up of $D_b\times\{0\}$, by $\rb_0$ and $\lb_0$ the faces corresponding to the right and left faces of the $b$-double space.  A picture of the $b$-heat space is then given in 
  Figure~\ref{fig:scatteringheat}.  Now, at this stage, the fibers of $\bhs{hff}$ essentially correspond to $W^2_b\times \overline{\bbR^+_{\tau}}$, that is, to the $b$-heat space before we blow up the $b$-diagonal at $\tau=0$ to create the boundary hypersurface $\tf$.  For the convenience of the reader, the following table explains how the boundary faces of $\bhs{hff}$ correspond with those of $W^2_b\times \overline{\bbR^+_{\tau}}$:
\begin{equation}    
\begin{tabular}{|c||c|c|c|c|c|}
\hline
 & & & & & \\
$\bhs{hff}$ & $\bhs{tb}\cap\bhs{hff}$ & $\bhs{hrf}\cap\bhs{hff}$ & $\bhs{hlf}\cap\bhs{hff}$ & $\bhs{hmf}\cap\bhs{hff}$ & $\bhs{hbf}\cap\bhs{hff}$  \\
& & & & & \\
\hline
& & & & & \\
$W^2_b\times \overline{\bbR^+_{\tau}}$ & $\tb$ & $\lb_0$ & $\rb_0$ & $\bff_0$ & $\zf$ \\
& & & & & \\
\hline 
\end{tabular} \; 
\end{equation}

In terms of the $b$-heat space, the scattering heat space of  \cite{Sher:ACHeat} is then given by
\begin{equation}
  HW_{\sc}:= [HW_b; \tb\cap \rb_0, \tb\cap \lb_0, \tf\cap \bff_0,\tb\cap \bff_0],
\label{sch.1}\end{equation}
where we denote correspondingly by  $\rb$, $\lb$, $\sc$, and $\bff$ the new faces created by these blow-ups, \cf Figure~\ref{fig:scatteringheat} with \cite[Figure~1]{Sher:ACHeat}.

\begin{figure}
	\centering
	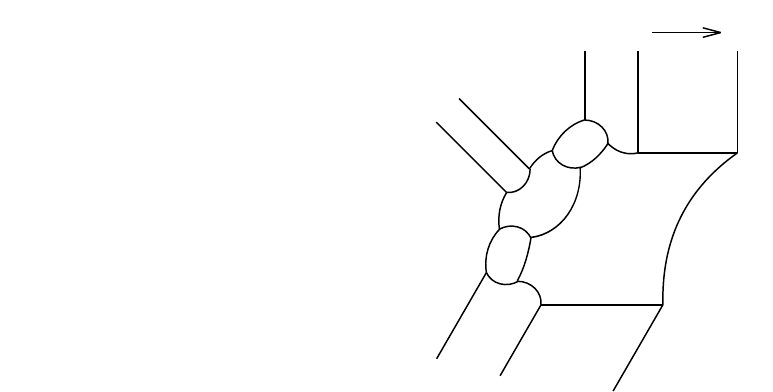
	\caption{The $b$-heat space, left, as a blow-down of the scattering heat space with a rescaled time, right.}
	\label{fig:scatteringheat}
\end{figure}

Notice that by the commutativity of nested blow-ups, many blow-ups commute.  For instance, we could have instead proceeded in the following order,
\begin{equation}
  HW_{\sc}=[W^2_b\times \overline{\bbR^+_{\tau}}; \tb\cap \rb_0, \tb\cap \lb_0, \tb\cap \bff_0, (D_b\times \overline{\bbR^+_{\tau}})\cap \bff, (D_b \times \overline{\bbR^+_{\tau}})\cap \tb].
\label{sch.2}\end{equation} 
From this latter point of view, the first three blow-ups are there to introduce a rescaled notion of time that fits with the original description given in  \cite{Sher:ACHeat}.  Indeed, after the first two blow-ups, $\tau$ needs to be replaced by
$$
     \frac{\tau (r^2+(r')^2)}{rr'}
$$
to have a boundary defining function for $\tb$.  After the third blow-up is performed, a boundary defining function for $\tb$ is given by 
\begin{equation}
  \frac{\tau \sqrt{r^2+(r')^2}}{rr'}.
\label{sch.3}\end{equation}
This may look at first like at strange rescaled time, but making the substitution $(\tau,r,r')\to (\widetilde{\sigma}, \ang{X}^{-1}, \ang{X'}^{-1})$ in \eqref{sch.3}  gives us $\tau/\eps$, which will be the natural rescaled time for the model problem at $\bhs{hff}$.  

To complete our construction of the wedge surgery heat space, the plan is then to perform blow-ups on the space \eqref{wsh.1} which on the face $\bhs{hff}$ will correspond to the blow-ups of \eqref{sch.2},  but with the following three differences:
\begin{enumerate}
\item On the fibers of $\bhs{hff}$, the scattering manifold we consider is $(Z\times \bbR^h_{\widetilde{u}})\times \overline{\bbR}_X$, where $\bbR^h_{\widetilde{u}}$ is seen as part of the cross-section though it is not compact;
\item We will not do the blow-ups corresponding to the first two blow-ups in \eqref{sch.2}, namely the blow-ups of $\tb\cap \rb_0$ and $\tb\cap \lb_0$;

\item We work with the overblown $b$-double space instead of the $b$-double space.
\end{enumerate}
For the first point, this ensures that the last two blow-ups in \eqref{sch.2} have the effect of replacing the variable $\widetilde{u}$ by the variable $u= \frac{y-y'}{\eps}$, which is better adapted to the model problem.  For the second point, we can do this since in \cite{Sher:ACHeat}, the scattering heat kernel is shown to vanish rapidly at the faces $\rb$ and $\lb$ created by these blow-ups, so they can be omitted in the description of the heat kernel.  The advantage is that there will be fewer blow-ups in the construction of the wedge surgery heat space.  The disadvantage is that the fibers of $\bhs{hff}$ will not quite correspond to a scattering heat space, not even a blow-down version since the blow-ups of $\tb\cap \rb_0$ and $\tb\cap \lb_0$ cannot be commuted at the end.  Nevertheless, it can be interpreted as a scattering heat space with fewer submanifolds blown up, that is, a somewhat blow-down version of the scattering heat space.  

We are now ready to complete the construction of the wedge surgery heat space.  \\

\textbf{Step 3:} Blow up $\bhs{hmf}\cap\bhs{tb}$ and call the new face $\bhs{stf}$.\\

This blow-up correspond to the blow-up of $\tb\cap\bff_0$ in \eqref{sch.2}.  Near $\bhs{tb}\cap\bhs{hff}\cap\bhs{stf}$, the coordinates \eqref{coords:step2} must be replaced by
\begin{equation}\label{coords:step3smallsigma}
	\lrpar{ x, \quad
	\frac{x'}{x}, \quad
	\sigma\frac{x'}{x}=\frac{\tau}{\eps}\frac{x'}{x}, \quad
	\frac{\eps}{x'}, \quad
	y, \quad
	\frac{y'-y}{x}, \quad
	z, \quad
	z'}.
\end{equation}
Here $\rho_{hff}=x$, $\rho_{tb}=\frac{\sigma x'}x$, and $\rho_{stf}=\eps/x'$. 
In the other regime, near $\bhs{hmf}\cap\bhs{hff}\cap\bhs{stf}$, we instead have
\begin{equation}\label{coords:step3largesigma}
	\lrpar{x, \quad
	\frac{x'}{x}, \quad
	\sigma^{-1}\frac{x}{x'}, \quad
	\frac{\tau}{x}, \quad
	y, \quad
	\frac{y'-y}{x}, \quad
	z, \quad
	z'},
\end{equation}
with $\rho_{hff}=x$, $\rho_{stf}=\tau/x$, and $\rho_{hmf}=\frac{x}{\sigma x'}$. Note also that near $\bhs{hmf}$ and $\bhs{stf}$ but away from $\bhs{hff}$, $\bhs{tb}$ and $\bhs{hbf}$, we have the simpler coordinates
\begin{equation}\label{coords:step3interior}
	\lrpar{ x, \quad
	x', \quad
	\tau, \quad
	\sigma^{-1}, \quad
	y, \quad 
	y', \quad
	z, \quad 
	z'}.
\end{equation}

In terms of \eqref{sch.2}, we have now created the face $\bff$ for the scattering heat space at $\bhs{hff}$ (note that at $\bhs{hff}\cap\bhs{stf}$ we have $\sigma$ nonzero, $\eps/x=0$, $\eps/x'=0$, and $(\eps/x)/(\eps/x')$ nonzero), and there has been no effect on the geometry of $\bhs{hmf}$; the zero-time boundary of $\bhs{hmf}$ is $\bhs{hmf}\cap\bhs{stf}$. We still need to deal with the diagonals.\\

\textbf{Step 4:} Blow up the intersection of the closure of the lift of $\diag_M\times[0,1)_{\eps}\times \bbR^+_{\tau}$ with $\bhs{stf}$. Call the new face $\bhs{sf}$. This blow-up has two purposes: it creates the scattering face $\sc$ in the notation of \eqref{sch.2} for the scattering heat space at $\bhs{hff}$ and creates the $t=0$ diagonal of the wedge heat space at $\bhs{hmf}$, completing the Mazzeo-Vertman over-blown heat space. 

Projective coordinates on the interior of $\bhs{sf}$, away from $\bhs{hff}$, can be obtained by modifying \eqref{coords:step3interior} for large $\sigma$,
\begin{equation}\label{coords:step4interior}
	\lrpar{\Theta_x:=\frac{x'-x}{\tau}, \quad
	x, \quad
	\tau, \quad
	\sigma^{-1}, \quad
	\Theta_y:=\frac{y'-y}{\tau}, \quad
	y, \quad
	\Theta_z:=\frac{z'-z}{\tau}, \quad
	z}.
\end{equation}
Similarly for small $\sigma$ we have
\begin{equation}\label{coords:step4smallsigmainterior}
	\lrpar{\Xi_x:=\frac{x'-x}{\eps}, \quad
	x, \quad
	\sigma, \quad 
	\eps, \quad
	\Xi_y:=\frac{y'-y}{\eps}, \quad
	y, \quad
	\Xi_z:=\frac{z'-z}{\eps}, \quad 
	z}.
\end{equation}
On the interior of $\bhs{sf}$ near $\bhs{hff}$, we may assume that $\frac12< \frac{x'}x<2$, so coordinates \eqref{coords:step3smallsigma} and \eqref{coords:step3largesigma} may be replaced by
\begin{equation}\label{coords:step4smallsigma}
	\lrpar{x, \quad
	\Xi_x, \quad
	\sigma, \quad
	\eta:=\frac{\eps}{\rho}, \quad
	y, \quad
	\Xi_y, \quad 
	\bar\Xi_z:=\frac{z'-z}{\eps/\rho}, \quad
	z}
\end{equation}
for small $\sigma$ and by

\begin{equation}\label{coords:step4largesigma}
	\lrpar{x, \quad
	\Theta_x, \quad
	\sigma^{-1}, \quad
	\frac{\tau}{\rho}, \quad
	y, \quad
	\Theta_y, \quad
	\bar\Theta_z:=\frac{z'-z}{\tau/\rho}, \quad 
	z}
\end{equation}
for large $\sigma$.
Note that the coordinates for large $\sigma$ restrict to $\bhs{hmf}$ (where $\sigma^{-1}=0$) to be good coordinates on the Mazzeo-Vertman heat space in the interior of the temporal front face.\\

\textbf{Step 5:} Blow up the intersection of the  interior lift of $\diag_M\times [0,1)_{\eps}\times \bbR^+_{\tau}$ with $\bhs{tb}$ to create the new boundary face $\bhs{tf}$. This creates the $t=0$ diagonal for positive $\eps$  and does not intersect $\bhs{hmf}$.  On $\bhs{hff}$, this blow-up has the effect of creating the boundary hypersurface corresponding to $\tf$ in the scattering heat space \eqref{sch.2}. 

The final space is the \textbf{wedge-surgery heat space}, denoted $HX_{\w,s}$. Specifically, we have
\[HX_{\w,s}=[X_{b,s}^2\times \bbR^+_{\tau}; \phi_b^{-1}(D_Y)\times\{0\};\bhs{hmf}\cap\bhs{tb};\bhs{stf}\cap \operatorname{DH}_M;\bhs{tb}\cap\operatorname{DH}_M],\]
where $\operatorname{DH}_M$ denotes the appropriate interior lift of $\diag_M\times [0,1)_{\eps}\times \bbR^+_{\tau}$.  
Coordinates valid in the interior of $\bhs{tf}$ away from other boundary hypersurfaces include
\begin{equation}\label{coords:step5part0}
	\lrpar{x, \quad 
	y, \quad
	z, \quad
	\Theta_x, \quad
	\Theta_y, \quad
	\Theta_z, \quad
	\eps, \quad
	\tau},
\end{equation}
in which $\tau = \sqrt t$ is a bdf for $\bhs{tf}$.

Near $\bhs{sf}\cap\bhs{tf}$ and before performing the last blow-up, we can use the coordinates \eqref{coords:step4smallsigmainterior}.  In these coordinates, the submanifold being blown up is $\{\sigma=\Xi_x=\Xi_y=\Xi_z=0\}$, which yields coordinates on the blow-up in the interior of $\bhs{tf}$ given by
\begin{equation}\label{coords:step5part1}
	\lrpar{x,\quad 
	\Theta_x, \quad
	\sigma, \quad
	\eps, \quad
	y, \quad
	\Theta_y, \quad
	z, \quad
	\Theta_z}.
\end{equation}
Similarly, the coordinates \eqref{coords:step4smallsigma} valid near $\bhs{hff}\cap\bhs{sf}$ must be replaced by
\begin{equation}\label{coords:step5part2}
	\lrpar{x, \quad
	\Theta_x, \quad
	\sigma, \quad
	\eta, \quad
	y, \quad
	\Theta_y, \quad
	\bar{\Theta}_z:=\frac{z'-z}{\tau/\rho}, \quad
	z}
\end{equation}
after the last blow-up.  
Finally, in the interior of $\bhs{tf}$, near $\bhs{hff}$, but away from $\bhs{sf}$, we must replace the coordinates \eqref{coords:interiorhff} by 
\begin{equation}\label{coords:step5part3}
	\lrpar{\sigma, \quad
	\Theta_x, \quad
	X, \quad
	\eps, \quad
	y, \quad
	\Theta_y, \quad
	z, \quad
	\widetilde{\Theta}_z:=\frac{z'-z}{\sigma}}.
\end{equation}

\textbf{Summary:} The heat space has a total of nine boundary hypersurfaces
\begin{equation*}
	\bhs{hmf}, \quad
	\bhs{tb}, \quad
	\bhs{hff}, \quad
	\bhs{hbf}, \quad
	\bhs{hlf}, \quad
	\bhs{hrf},\quad
	\bhs{stf}, \quad
	\bhs{sf}, \quad
	\bhs{tf}.
\end{equation*}

We informally summarize these faces in Figure \ref{fig:HeatTable} where we indicate, for instance, that $\bhs{hmf}$ sits over where $\eps$ vanishes but $x$ does not vanish and $x'$ does not vanish.

\begin{figure}%
  \caption{A schematic summary of the heat space boundary hypersurfaces.}%
  \label{fig:HeatTable}%
\begin{tabular}{|c|l|l|}
\hline Boundary & Vanishing & Blow-up codimension\\
\hline $
\bhs{hmf}
	$ & $
	\eps, !x, !x', !\tau
	$ &
	\\ \hline $
\bhs{tb}
	$&$
	\tau, !\eps, !(\zeta=\zeta')
	$ & 
	\\ \hline $
\bhs{hbf}
	$&$
	\eps, x, x', !(y=y',\tau)
	$ & 2
	\\ \hline $
\bhs{hlf}
	$&$
	\eps, x,  !x'
	$ & 1
	\\ \hline $
\bhs{hrf}
	$&$
	\eps, x', !x
	$ & 1
	\\ \hline $
\bhs{hff}
	$&$
	\eps, x, x', \tau, y=y'
	$ & h+3 
	\\ \hline $
\bhs{stf}
	$&$
	\eps, \tau, !(\zeta=\zeta'), !x, !x'
	$ & 1
	\\ \hline $
\bhs{sf}
	$&$
	\eps, \tau, \zeta=\zeta', !x, !x'
	$ & m+ 1
	\\ \hline $
\bhs{tf}
	$&$
	\zeta=\zeta', \tau, !\eps
	$ & m
	\\ \hline 
\end{tabular} \\
\end{figure}
\begin{figure}%
  \caption{A schematic summary of the reduced heat space boundary hypersurfaces.}%
  \label{fig:RedHeatTable}%
\begin{tabular}{|c|l|l|}
\hline Boundary & Vanishing & Blow-up codimension\\
\hline $
\bhs{hmf}
	$ & $
	\eps, !x, !x'
	$ &
	\\ \hline $
\bhs{tb}
	$&$
	\tau, !(\eps, x, x', y=y')
	$ & 
	\\ \hline $
\bhs{hbf}
	$&$
	\eps, x, x', !(y=y',\tau)
	$ & 2
	\\ \hline $
\bhs{hlf}
	$&$
	\eps, x,  !x'
	$ & 1
	\\ \hline $
\bhs{hrf}
	$&$
	\eps, x', !x
	$ & 1
	\\ \hline $
\bhs{hff}
	$&$
	\eps, x, x', \tau, y=y'
	$ & h+3 
	\\ \hline 
\end{tabular} \\
\end{figure}

The face $\bhs{hmf}$ is isomorphic to a blown-up version of the Mazzeo-Vertman heat space and the face $\bhs{hff}$ is isomorphic to a somewhat blown-down version of the scattering heat space of \cite{Albin:Heat, Sher:ACHeat, Guillarmou-Sher}. Specifically, the faces of the Mazzeo-Vertman heat space, denoted 
$\{\textrm{lf}, \textrm{rf}, \textrm{ff}, \textrm{tf}, \textrm{td} \}$ in \cite{Mazzeo-Vertman}, correspond to faces of $HX_{\w,s}$ by
\begin{equation*}
\begin{gathered}
	\textrm{lf}\sim\bhs{hmf}\cap\bhs{hlf}; \quad 
	\textrm{rf}\sim\bhs{hmf}\cap\bhs{hrf}; \quad
	\textrm{ff}\sim\bhs{hmf}\cap\bhs{hff}; \\
	\textrm{tf}\sim\bhs{hmf}\cap\bhs{stf}; \quad 
	\textrm{td}\sim\bhs{hmf}\cap\bhs{sf},
\end{gathered}
\end{equation*}
and the faces of the `ac heat space', denoted 
$\{\textrm{tf}, \textrm{tb}, \textrm{sc}, \textrm{bf}_0, \textrm{lb}_0, \textrm{lb}, \textrm{rb}_0, \textrm{rb}, \textrm{zf} \}$
in \eqref{sch.1}, correspond to faces of $HX_{\w,s}$ by
\begin{equation*}
\begin{gathered}
	\textrm{tf}\sim\bhs{hff}\cap\bhs{tf}, \quad 
	\textrm{tb}\sim \bhs{hff}\cap\bhs{tb}, \quad 
	\textrm{sc}\sim\bhs{hff}\cap\bhs{sf}, \quad
	\textrm{bf}\sim\bhs{hff}\cap\bhs{stf}, \\
	\textrm{bf}_0\sim\bhs{hff}\cap\bhs{hmf}, \quad 
	\textrm{lb}_0\sim\bhs{hff}\cap\bhs{hrf}, \quad 
	\textrm{rb}_0\sim\bhs{hff}\cap\bhs{hlf}, \quad
	\textrm{zf}\sim\bhs{hff}\cap\bhs{hbf},
\end{gathered}
\end{equation*}
except for $\textrm{lb}$ and $\textrm{rb}$ which do not have an analogue in $\bhs{hff}$ and are somehow absorbed in $\bhs{hff}\cap \bhs{tb}$.  
Note that lb$_0$ corresponds to $\bhs{hrf}$ and rb$_0$ to $\bhs{hlf}$, which seems strange at first. However, this is natural, because the blow-ups in $HX_{\w,s}$ are at $x=0$ and the blow-ups in the scattering heat space are at spatial infinity.

\subsection{Triple heat space} \label{sec:TripleHeat}

To simplify the constructions below, we will prove composition of two operators assuming that one of them lies over a `reduced heat space'. This corresponds to conormal sections on $HX_{\w,s}$ that vanish to infinite order at $\bhs{stf},$ $\bhs{sf},$ and $\bhs{tf}$ as well as $\bhs{tb}.$
Thus the reduced heat space is constructed following steps 1 and 2 of the construction of $HX_{\w,s},$ so is given by 
\begin{equation*}
	RHX_{\w,s} := [X^2_{b,s} \times \bbR^+_{\tau}; \phi_b^{-1}(D_Y)\times \{\tau=0\} ]
\end{equation*}
with blow-down map $\beta_{RH}: RHX_{\w,s}\to M^2\times [0,1]_{\eps}\times \bbR^+_{\tau}$ and boundary hypersurfaces
\begin{equation*}
	\bhs{hmf}, \quad
	\bhs{tb}, \quad
	\bhs{hbf}, \quad
	\bhs{hlf}, \quad
	\bhs{hrf}, \quad
	\bhs{hff}.
\end{equation*}
This is summarize in Figure~\ref{fig:RedHeatTable}.

Our aim in constructing the triple space is a geometric understanding of the composition of two heat operators, given by
\begin{equation}\label{eq:ConvolutionProduct}
	\cK_{C}(\eps, \zeta, \zeta'', t) = \int_0^t \int_M \cK_A(\eps, \zeta, \zeta',t-t')\cK_B(\eps, \zeta',\zeta'', t') \; \dvol_b'' dt'.
\end{equation}
For orientation, let us first focus on the time variables and consider
\begin{equation*}
	f(t) \; dt = \int_0^t g(t-t')h(t') \; dt' dt= \int_{t''+t' = t} (g(t'') \; dt'') (h(t') \; dt').
\end{equation*}
We prefer to work with $\tau = \sqrt t$ instead of $t,$ so this becomes
\begin{equation*}
	f(\tau) \; d\tau = \frac2\tau\int_{\sqrt{s^2+(s')^2} = \tau} (ss') 
	(g(s) \; ds) (h(s')\; ds').
\end{equation*}
Let $T^2 = \bbR^+_{s} \times \bbR^+_{s'},$ and consider the maps
\begin{equation*}
	\xymatrix{ & (s,s') \ar@{|->}[ld]^{\pi_L} \ar@{|->}[d]^{\pi_s} \ar@{|->}[rd]^{\pi_R} & \\
	s & \sqrt{s^2+(s')^2} & s' }
\end{equation*}
The map $\pi_s$ is not a b-fibration, but if we replace $T^2$ with $T^2_b = [T^2; \{ (0,0)\}]=\bbR^+_r\times[0,\pi/2]_{\theta}$ and these maps with their interior lifts, then we do get a diagram of $b$-fibrations
\begin{equation*}
	\xymatrix{ & (r,\theta) \ar@{|->}[ld]^{\pi_L} \ar@{|->}[d]^{\pi_s} \ar@{|->}[rd]^{\pi_R} & \\
	r\cos\theta & r & r\sin\theta. }
\end{equation*}
Indeed, since $\bbR^+$ is a manifold with boundary, this is the same as saying that they are b-maps and fibrations over the interior.
The composition formula is then
\begin{equation*}
	f(\tau) \; d\tau = \frac2\tau (\pi_s)_*
	\lrpar{ \pi_L^*(\tau g(\tau)\; d\tau) \pi_R^*(\tau h(\tau)\; d\tau)}.
\end{equation*}
$ $

Now, to construct the triple space, we 
begin with $M^3 \times [0,1]_{\eps} \times \bbR^+_{\tau} \times \bbR^+_{\tau'} $
and the three maps into $M^2  \times [0,1]_{\eps}\times \bbR^+,$
\begin{equation*}
	\xymatrix{
	& (\zeta, \zeta', \zeta'', \eps, \tau, \tau') \ar@{|->}[ld]_{\pi_{LM}} \ar@{|->}[d]^{\pi_{LR}} \ar@{|->}[dr]^{\pi_{MR}} &\\
	(\zeta, \zeta', \eps, \tau) & (\zeta, \zeta'', \eps, \sqrt{\tau^2+(\tau')^2}) & (\zeta', \zeta'', \eps, \tau'). }
\end{equation*}
Note that $\pi_{LM}$ and $\pi_{MR}$ are $b$-fibrations, but $\pi_{LR}$ is not.
We seek a space $R'HX_{\w,s}^3$ to which these maps lift to $b$-maps, with the lift of $\pi_{LM}$ mapping into $HX_{\w,s}$ and the lifts of $\pi_{LR}$ and $\pi_{MR}$ mapping into $RHX_{\w,s}.$ 

Denote the boundary hypersurfaces of $M^3 \times [0,1]_{\eps}\times \bbR^+_{\tau} \times \bbR^+_{\tau'} $ by
\begin{equation*}
	\bhs{\ell} = \{\tau =0 \}, \quad \bhs{r} = \{\tau'=0\}, \quad \bhs{hZ} = \{\eps =0\}.
\end{equation*}
We do not assign a name to $\{\eps =1\}$ as we will ignore this face.

We first blow-up the lift of $\{\tau=\tau'=0\}$ to obtain $M^3\times [0,1]_{\eps}\times T^2_b$ and denote the resulting boundary hypersurface by $\bhs{O}.$ 

We then pass to $X^3_{b,s} \times T^2_b$ by blowing up
\begin{equation*}
\begin{gathered}
	\{\eps=x=x'=x''=0\}, \quad
	\{\eps=x=x'=0\}, \quad
	\{\eps=x=x''=0\}, \quad
	\{\eps=x'=x''=0\}, \\
	\{\eps=x''=0\}, \quad
	\{\eps=x'=0\}, \quad
	\{\eps=x=0\}.
\end{gathered}
\end{equation*}
We label the resulting boundary hypersurfaces by 
\begin{equation*}
	\bhs{hT}, \quad
	\bhs{hF}, \quad
	\bhs{hC}, \quad
	\bhs{hS}, \quad
	\bhs{hN_1}, \quad
	\bhs{hN_2}, \quad
	\bhs{hN_3}
\end{equation*}
respectively.

For this new space, the three maps above lift to define b-fibrations
\begin{equation*}
	\xymatrix{
	& X^3_{b,s}\times T^2_b \ar[ld]^-{\pi_{LM,b}} \ar[d]^-{\pi_{LR,b}} \ar[rd]^-{\pi_{MR,b}} & \\
	X^2_{b,s}\times \bbR^+ &	X^2_{b,s}\times \bbR^+ & X^2_{b,s}\times \bbR^+.}
\end{equation*}

Next, in view of \cite[Lemma~2.5]{hmm}, we want to lift 
$\phi_b^{-1}(D_Y)\times \{\tau=0\}$ along these three maps, decompose it into p-submanifolds, and then blow these up.
Looking at $\pi_{LM,b}$ to start, note that the inverse image decomposes into four p-submanifolds corresponding to its intersections with 
$\bhs{hT}\cap \bhs{\ell},$ 
$\bhs{hT} \cap \bhs{O},$
$\bhs{hF}\cap \bhs{\ell},$ and
$\bhs{hF} \cap \bhs{O}.$
So we first blow-up
\begin{equation*}
	\bhs{hT} \cap \{y=y'=y''\} \cap \bhs{O}
\end{equation*}
and denote the resulting boundary hypersurface by $\bhs{hTT},$ and then we blow-up the interior lifts of 
\begin{equation*}
\begin{gathered}
	\bhs{hT}\cap \{y=y'\} \cap \bhs{O}, \quad
	\bhs{hT}\cap \{y=y''\} \cap \bhs{O}, \quad
	\bhs{hT}\cap \{y'=y''\} \cap \bhs{O}, \\
	\bhs{hT}\cap \{y=y'\} \cap \bhs{\ell}, \quad
	\bhs{hT}\cap \{y'=y''\} \cap \bhs{r},
\end{gathered}
\end{equation*}
and denote the resulting boundary hypersurfaces by $\bhs{hFT},$ $\bhs{hCT},$  $\bhs{hST},$ $\bhs{hT\ell},$ and $\bhs{hTr},$ respectively.
Next we blow-up the interior lifts of 
\begin{equation*}
\begin{gathered}
	\bhs{hF}\cap \{y=y'\} \cap \bhs{O}, \quad
	\bhs{hF}\cap \{y=y'\} \cap \bhs{\ell}, \\
	\bhs{hC}\cap \{y=y''\} \cap \bhs{O}, \\
	\bhs{hS}\cap \{y'=y''\} \cap \bhs{O}, \quad
	\bhs{hS}\cap \{y'=y''\} \cap \bhs{r},
\end{gathered}
\end{equation*}
and denote the resulting boundary hypersurfaces by
$\bhs{hFDO},$ $\bhs{hFD\ell},$ $\bhs{hCD},$ $\bhs{hSDO},$ and $\bhs{hSDr},$ respectively.

Denote the result of these blow-ups by $RHX^3_{\w,s}$ with blow-down map 
$$
\beta_{RH,3}: RHX^3_{\w,s}\to M^3\times [0,1]_{\eps}\times  \bbR^+_{s}\times \bbR^+_{s'}.
$$  
Thanks to \cite[Lemma~2.1 and 2.5]{hmm}, the maps above lift to $b$-submersions
\begin{equation}
	\xymatrix{
	& RHX_{\w,s}^3 \ar[ld]^{\beta^{R}_{LM}} \ar[d]^{\beta^{R}_{LR}} \ar[dr]^{\beta^{R}_{MR}} & \\
	RHX_{\w,s} & RHX_{\w,s} & RHX_{\w,s}.}
\label{bsub.1}\end{equation}
However, they are not b-fibrations since they are not $b$-normal. Indeed, in each case, some boundary hypersurface is mapped into a codimension 2 corner.  For instance, the map $\beta^R_{LR}$ maps $\bhs{hFT}$ into a codimension 2 corner. 

Finally, we want to blow-up the lifts along $\beta_{LM}$ of the submanifolds of $RHX_{\w,s}$ producing the faces $\bhs{stf},$ $\bhs{sf},$ and $\bhs{tf}.$
First, the inverse image of $\bhs{hmf}\cap \bhs{tb}$ decomposes into four p-submanifolds,
\begin{equation*}
	\bhs{hZ}\cap \bhs{\ell}, \quad \bhs{hZ}\cap \bhs{O}, \quad
	\bhs{hN_1} \cap \bhs{\ell}, \quad \bhs{hN_1} \cap \bhs{O},
\end{equation*}
whose blow-ups produce the boundary hypersurfaces denoted $\bhs{hZ\ell},$ $\bhs{hZO},$ $\bhs{hN_1\ell},$ and $\bhs{hN_1O},$ respectively.
The inverse image of $\operatorname{DH}_M\cap \bhs{stf}$ is the intersection of each of these four boundary hypersurfaces with the interior lift of 
\begin{equation*}
	\diag_{LM} = \diag_M \times M \times \bbR^+_{\tau}\times \bbR^+_{\tau'} \times [0,1]_{\eps}.
\end{equation*}
Blowing these up produces the boundary hypersurfaces $\bhs{hZ{\ell} sf},$ $\bhs{hZO sf},$ $\bhs{hN_1\ell sf},$ and $\bhs{hN_1 O sf}.$ Finally, the inverse image of $\operatorname{DH}_M\cap \bhs{tb}$ decomposes into the intersection of the interior lift of $\diag_{LM}$ with $\bhs{\ell}$ and with $\bhs{O};$ we blow both of these up and denote the resulting hypersurfaces by $\bhs{tf\ell}$ and $\bhs{tfO}.$  We denote the resulting space by $R'HX^3_{\w,s}.$  Its 32 boundary hypersurfaces are summarized in Figure \ref{fig:RedTripleHeat}.  The point of all these blow-ups is to obtain the following.

\begin{lemma}
The diagram \eqref{bsub.1} lifts to the diagram of $b$-submersions
\begin{equation*}
	\xymatrix{
	& R'HX_{\w,s}^3 \ar[ld]^{\beta_{LM}} \ar[d]^{\beta_{LR}} \ar[dr]^{\beta_{MR}} & \\
	HX_{\w,s} & RHX_{\w,s} & RHX_{\w,s}. }
\end{equation*}
\end{lemma}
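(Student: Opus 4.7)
The plan is to apply the standard blow-up lifting lemmas of Hassell--Mazzeo--Melrose (\cite[Lemmas~2.1, 2.5 and 2.7]{hmm}) to promote the $b$-submersions $\beta^R_{LM}, \beta^R_{LR}, \beta^R_{MR}$ to $b$-submersions on $R'HX^3_{\w,s}$. By construction, the additional blow-ups that take $RHX_{\w,s}^3$ to $R'HX_{\w,s}^3$ were engineered precisely so that the centers required by these lifting lemmas are present.

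First I would handle the map $\beta_{LM}$. Recall that $HX_{\w,s}$ is obtained from $RHX_{\w,s}$ by blowing up, in order, the submanifolds producing $\bhs{stf}$, then $\bhs{sf}$, and then $\bhs{tf}$, namely $\bhs{hmf}\cap \bhs{tb}$, $\operatorname{DH}_M\cap\bhs{stf}$, and $\operatorname{DH}_M\cap\bhs{tb}$. To apply \cite[Lemma~2.5]{hmm} to $\beta^R_{LM}$, I need the total preimages of these centers to be cleanly decomposable into $p$-submanifolds whose blow-ups are already present in $R'HX_{\w,s}^3$. The preimage of $\bhs{hmf}\cap\bhs{tb}$ under $\beta^R_{LM}$ decomposes into the four $p$-submanifolds $\bhs{hZ}\cap \bhs{\ell}$, $\bhs{hZ}\cap \bhs{O}$, $\bhs{hN_1}\cap \bhs{\ell}$, $\bhs{hN_1}\cap \bhs{O}$, which are exactly the centers we blew up to create the $\bhs{hZ\ell}$, $\bhs{hZO}$, $\bhs{hN_1\ell}$, $\bhs{hN_1O}$ faces. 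Iterating, the preimage of $\operatorname{DH}_M\cap\bhs{stf}$ decomposes as the four $p$-submanifolds (each already blown up) creating $\bhs{hZ\ell sf}$, $\bhs{hZO sf}$, $\bhs{hN_1\ell sf}$, $\bhs{hN_1 Osf}$, while the preimage of $\operatorname{DH}_M\cap\bhs{tb}$ yields the two centers producing $\bhs{tf\ell}$ and $\bhs{tfO}$. Hence \cite[Lemma~2.5]{hmm} gives a $b$-map $\beta_{LM}: R'HX_{\w,s}^3\to HX_{\w,s}$, and \cite[Lemma~2.7]{hmm} upgrades this to a $b$-submersion, since blow-ups of preimage submanifolds preserve the submersion property.

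For $\beta_{LR}$ and $\beta_{MR}$, the targets remain $RHX_{\w,s}$, so the extra blow-ups are \emph{not} lifts of centers in the target. However, I claim each of the additional centers blown up to pass from $RHX_{\w,s}^3$ to $R'HX_{\w,s}^3$ is contained in (and in fact fibers over) a boundary hypersurface of $RHX_{\w,s}$ under $\beta^R_{LR}$ and $\beta^R_{MR}$. For instance, along $\beta^R_{LR}$, the factor $\tau'$ survives but $\tau$ does not appear in the target combination $\sqrt{\tau^2+(\tau')^2}$ except through the common zero; thus $\bhs{hZ}\cap\bhs{\ell}$ and $\bhs{hN_1}\cap\bhs{\ell}$ (where $\tau=0$) map into $\bhs{hmf}\cup\bhs{hbf}$ etc., and the diagonal centers $\operatorname{DH}_M\cap \bhs{\ell}$ map into $\bhs{tb}$ since $\sqrt{\tau^2+(\tau')^2}$ need not vanish. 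By the commutativity of nested blow-ups \cite[Lemma~2.1]{hmm}, each such center can be commuted past the others that interfere, and blowing up a $p$-submanifold lying over a boundary hypersurface of the target preserves the $b$-submersion property. The same analysis applies symmetrically to $\beta^R_{MR}$ with the roles of $\tau$ and $\tau'$ exchanged.

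The main bookkeeping obstacle is verifying, center by center, the lift-over-boundary condition for $\beta_{LR}$ and $\beta_{MR}$, since several of the new faces (e.g.\ $\bhs{hTT}$, $\bhs{hFT}$, $\bhs{hFDO}$) involve simultaneous vanishing of $\eps$, a $y$-collision, and a time variable, and one must confirm that the relevant $y$-collisions and $\eps$-vanishing already sit over centers produced when constructing $RHX_{\w,s}$ (such as $\phi_b^{-1}(D_Y)\times\{0\}$). This is done in local coordinates analogous to those following \eqref{coords:step2}--\eqref{coords:step5part3}, exploiting that each diagonal collision in the triple space occurs inside the inverse image of the pairwise collision producing $\bhs{hff}$ downstairs. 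Once these routine checks are completed, \cite[Lemma~2.7]{hmm} again yields that $\beta_{LR}$ and $\beta_{MR}$ are $b$-submersions, completing the proof.
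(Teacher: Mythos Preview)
Your treatment of $\beta_{LR}$ and $\beta_{MR}$ is far more elaborate than necessary. Since the targets remain $RHX_{\w,s}$, no lifting against blow-ups in the target is required at all: one simply has $\beta_{LR}=\beta_{LR}^{R}\circ\beta'$ and $\beta_{MR}=\beta_{MR}^{R}\circ\beta'$, where $\beta':R'HX_{\w,s}^3\to RHX_{\w,s}^3$ is the blow-down map. A blow-down map is a $b$-fibration, and composing a $b$-submersion with a $b$-fibration yields a $b$-submersion, so these two maps come essentially for free. Your center-by-center bookkeeping (checking each new face lies over a boundary hypersurface downstairs, invoking commutativity of nested blow-ups, etc.) is not wrong in spirit but is unnecessary here.

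For $\beta_{LM}$ there is a genuine gap. You invoke \cite[Lemma~2.5]{hmm} directly, but that lemma as stated applies to $b$-\emph{fibrations}, and $\beta_{LM}^{R}$ is only a $b$-submersion (it fails to be $b$-normal, as noted just before the lemma). So the statement of \cite[Lemma~2.5]{hmm} does not apply and your appeal to it is not justified. The paper's proof acknowledges exactly this obstruction: one cannot cite the lemma, but must revisit its \emph{proof}. The point is that the $b$-submersion property of $\beta_{LM}^{R}$ is still enough to build the local coordinates of \cite[(2.17)]{hmm} at each stage of the blow-up sequence producing $HX_{\w,s}$ from $RHX_{\w,s}$, and with those coordinates in hand the argument of \cite[Lemma~2.5]{hmm} goes through to produce a lifted $b$-submersion $\beta_{LM}$. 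Your decomposition of the preimages into the correct $p$-submanifolds is right and is the substantive combinatorial input, but you need to explain why the lifting mechanism survives the failure of $b$-normality rather than simply citing the lemma.
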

\begin{proof}
Clearly, we have that $\beta_{LR}= \beta_{LR}^{R}\circ \beta'$ and $\beta_{MR}= \beta_{MR}^{R}\circ \beta'$ where  
$$
\beta': R'HX_{\w,s}^3\to RHX_{\w,s}^3
$$ 
is the blow-down map.  The lift of $\beta^R_{LM}$ is more delicate to construct and we need to use the proof of \cite[Lemma~2.5]{hmm}.  Indeed, since $\beta_{LM}^R$ is not a $b$-fibration, we cannot directly use the statement of \cite[Lemma~2.5]{hmm}.  However, for each of the blow-ups performed to obtain $HX_{\w,s}$ from $RHX_{\w,s}$, using among other things the fact that $\beta_{LM}^R$ is a $b$-submersion, we can nevertheless construct coordinates as in \cite[(2.17)]{hmm}, so that the proof of \cite[Lemma~2.5]{hmm} can be applied to define a lift $\beta_{LM}$ which is a $b$-submersion.  
\end{proof}

The $b$-submersions $\beta_{LM},$ $\beta_{LR},$ and $\beta_{MR}$ are not $b$-fibrations since they are not $b$-normal.  This can be seen directly from their exponent matrices:  
\begin{itemize}
\item [(a)] The $b$-maps $\beta_{LM}$ sends 
$\bhs{r}$ 
into the interior of $HX_{\w,s}$ and otherwise has exponent matrix with entries zero and one determined by
\begin{equation}
\begin{gathered}
	\beta_{LM}^*\bhs{hmf}= \{ \bhs{hZ}, \bhs{hN_1} \}, \quad
	\beta_{LM}^*\bhs{tb} = \{ \bhs{\ell}, \bhs{O}, \bhs{hCT}, \bhs{hST}, \bhs{hCD}, \bhs{hSDO} \}, \\
	\beta_{LM}^*\bhs{hbf} = \{ \bhs{hT}, \bhs{hF}, \bhs{hTr}, \bhs{hCT}, \bhs{hST} \}, \quad
	\beta_{LM}^*\bhs{hlf} = \{ \bhs{hC}, \bhs{hN_3}, \bhs{hCD} \}, \\
	\beta_{LM}^*\bhs{hrf} = \{  \bhs{hS}, \bhs{hN_2}, \bhs{hSDO}, \bhs{hSDr} \}, \\
	\beta_{LM}^*\bhs{hff} = \{ \bhs{hTT}, \bhs{hT\ell}, \bhs{hFT},  \bhs{hFD\ell}, \bhs{hFDO} \} \\
	\beta_{LM}^*\bhs{stf} = \{ \bhs{hZ\ell}, \bhs{hZO},  \bhs{hN_1\ell}, \bhs{hN_1O} \}, \\
	\beta_{LM}^*\bhs{sf} = \{ \bhs{hZ\ell sf}, \bhs{hZO sf},  \bhs{hN_1\ell sf}, \bhs{hN_1O sf} \}, \quad
	\beta_{LM}^*\bhs{tf} = \{ \bhs{tf\ell}, \bhs{tfO} \},
\end{gathered}
\label{exm.1}\end{equation}
with $\bhs{hCT},$ $\bhs{hST},$ $\bhs{hCD},$ and $\bhs{hSDO}$ repeated in the list;

\item [(b)] The $b$-map $\beta_{LR}$ maps $\bhs{\ell},$ $\bhs{r},$ and $\bhs{tf\ell}$ 
into the interior of $RHX_{\w,s}$ and otherwise has exponent matrix with entries zero and one determined by
\begin{equation}
\begin{gathered}
	\beta_{LR}^*\bhs{hmf}= \{ \bhs{hZ}, \bhs{hN_2}, \bhs{hZ\ell}, \bhs{hZO}, \bhs{hZ\ell sf}, \bhs{hZO sf}  \}, \\
	\beta_{LR}^*\bhs{tb} = \{ \bhs{O}, \bhs{hFT}, \bhs{hST}, \bhs{hFDO}, \bhs{hSDO}, 
		\bhs{hZO}, \bhs{hN_1O}, \bhs{hZO sf}, \bhs{hN_1O sf}, \bhs{tfO} \}, \\
	\beta_{LR}^*\bhs{hbf} = \{  \bhs{hT}, \bhs{hC}, \bhs{hT\ell}, \bhs{hTr}, \bhs{hFT}, \bhs{hST} \}, \\
	\beta_{LR}^*\bhs{hlf} = \{ \bhs{hF}, \bhs{hN_3}, \bhs{hFD\ell}, \bhs{hFDO} \}, \\
	\beta_{LR}^*\bhs{hrf} = \{  \bhs{hS}, \bhs{hN_1}, \bhs{hSDO}, \bhs{SDr}, \bhs{hN_1\ell}, \bhs{hN_1 O}, 
		\bhs{hN_1\ell sf}, \bhs{hN_1O sf} \}, \\
	\beta_{LR}^*\bhs{hff} = \{ \bhs{hTT}, \bhs{hCT}, \bhs{hCD}  \},
\end{gathered}
\label{exm.2}\end{equation}
with $\bhs{hFT},$ $\bhs{hST},$ $\bhs{FDO},$ $\bhs{hSDO},$ $\bhs{hZO},$ $\bhs{hN_1O},$ $\bhs{hZO sf},$ and $\bhs{hN_1O sf}$ repeated in this list;

\item [(c)] The $b$-map $\beta_{MR}$ maps $\bhs{\ell}$ and $\bhs{tf\ell}$ 
into the interior of $RHX_{\w,s}$ and otherwise has exponent matrix with entries zero and one determined by
\begin{equation}
\begin{gathered}
	\beta_{MR}^*\bhs{hmf}= \{ \bhs{hZ}, \bhs{hN_3}, \bhs{hZ\ell}, \bhs{hZO}, \bhs{hZ\ell sf}, \bhs{hZO sf}  \}, \\
	\beta_{MR}^*\bhs{tb} = \{ \bhs{r}, \bhs{O}, \bhs{hFT}, \bhs{hCT}, \bhs{hFDO}, \bhs{hCD}, 
		\bhs{hZO}, \bhs{hN_1O}, \bhs{hZOsf}, \bhs{hN_1Osf}, \bhs{tfO}  \}, \\
	\beta_{MR}^*\bhs{hbf} = \{ \bhs{hT}, \bhs{hS}, \bhs{hT\ell}, \bhs{hFT}, \bhs{hCT} \}, \\
	\beta_{MR}^*\bhs{hlf} = \{ \bhs{hF}, \bhs{hN_2}, \bhs{hFD\ell}, \bhs{hFDO}  \}, \\
	\beta_{MR}^*\bhs{hrf} = \{ \bhs{hC}, \bhs{hN_1}, \bhs{hCD}, \bhs{hN_1\ell}, \bhs{hN_1O}, 
		\bhs{hN_1\ell sf}, \bhs{hN_1O sf} \}, \\
	\beta_{MR}^*\bhs{hff} = \{ \bhs{hTT}, \bhs{hTr}, \bhs{hST}, \bhs{hSDO}, \bhs{hSDr} \},
\end{gathered}
\label{exm.3}\end{equation}
with $\bhs{hFT},$ $\bhs{hCT},$ $\bhs{hFDO},$ $\bhs{hCD},$ $\bhs{hZO},$ $\bhs{hN_1O},$ $\bhs{hZOsf},$ and $\bhs{hN_1Osf}$
  repeated in this list.
\end{itemize}

\begin{figure}%
  \caption{A schematic summary of the reduced heat triple space boundary hypersurfaces.}%
  \label{fig:RedTripleHeat}%
\begin{tabular}{|c|l|l|}
\hline Boundary & Vanishing & Blow-up codimension\\
\hline $
\bhs{\ell}
	$ & $
	\tau,  !\tau', !(\eps,!x,!x'), !(\zeta=\zeta')
	$ & \\ \hline $
\bhs{r}
	$ & $
	\tau', !\tau, !\eps
	$ & \\ \hline $
\bhs{hZ}
	$&$
	\eps, !x, !x', !x'', !\tau, !(\tau,\tau')
	$ & \\ \hline $
\bhs{O}
	$&$
	\tau, \tau', !(y=y'), !(y=y''), !(y'=y''), !(\zeta=\zeta')
	$ & 1 \\ \hline $
\bhs{hT}
	$&$
	\eps, x,x',x'', !(y=y'', \tau, \tau'),  !(y'=y'', \tau'), !(y=y', \tau)
	$ & 3 \\ \hline $
\bhs{hF}
	$&$
	\eps, x, x', !x'', !(y=y', \tau)
	$ & 2 \\ \hline $
\bhs{hC}
	$&$
	\eps, x,x'', !x', !(y=y'', \tau, \tau')
	$ & 2\\ \hline $
\bhs{hS}
	$&$
	\eps, x',x'', !x, !(y'=y'', \tau')
	$ & 2 \\ \hline $
\bhs{hN_1}
	$&$
	\eps, x'', !x, !x', !\tau
	$ & 1 \\ \hline $
\bhs{hN_2}
	$&$
	\eps, x', !x, !x''
	$ & 1 \\ \hline $
\bhs{hN_3}
	$&$
	\eps, x, !x', !x''
	$ & 1 \\ \hline $
\bhs{hTT}
	$&$
	\eps, x, x', x'', y=y'=y'', \tau, \tau'
	$ & 2h + 5 \\ \hline $
\bhs{hT\ell}
	$&$
	\eps, x, x', x'', y=y', \tau, !(\tau', y=y'=y'')
	$ & h + 4 \\ \hline $
\bhs{hTr}
	$&$
	\eps, x, x', x'', y'=y'', \tau', !(\tau, y=y'=y'')
	$ & h + 4 \\ \hline $
\bhs{hFT}
	$&$
	\eps, x, x', x'', y=y', \tau, \tau', !(y=y'=y'')
	$ & h + 5 \\ \hline $
\bhs{hCT}
	$&$
	\eps, x, x', x'',  y=y'', \tau, \tau', !(y=y'=y'')
	$ & h + 5 \\ \hline $
\bhs{hST}
	$&$
	\eps, x, x', x'', y'=y'', \tau, \tau', !(y=y'=y'')
	$ & h + 5 \\ \hline $
\bhs{hFD\ell}
	$&$
	\eps, x, x', !x'', y=y', \tau, !\tau'
	$ & h + 3 \\ \hline $
\bhs{hFDO}
	$&$
	\eps, x, x', !x'', y=y', \tau, \tau'
	$ & h + 4 \\ \hline $
\bhs{hCD}
	$&$
	\eps, x, x'', !x', y=y'', \tau, \tau'
	$ & h + 4 \\ \hline $
\bhs{hSDO}
	$&$
	\eps, x', x'', !x, y'=y'', \tau, \tau'
	$ & h + 4 \\ \hline $
\bhs{hSDr}
	$&$
	\eps, x', x'', !x, y'=y'', \tau', !\tau
	$ & h + 3 \\ \hline $
\bhs{hZ\ell}
	$&$
	\eps, \tau, !x, !x', !x'', !\tau', !(\zeta=\zeta')
	$ & 1 \\ \hline $
\bhs{hZO}
	$&$
	\eps, \tau, \tau', !x, !x', !x'', !(\zeta=\zeta')
	$ & 2 \\ \hline $
\bhs{hN_1\ell}
	$&$
	\eps, x'', \tau,  !x, !x', !\tau', !(\zeta=\zeta')
	$ & 2 \\ \hline $
\bhs{hN_1O}
	$&$
	\eps, x'', \tau, \tau', !x, !x', !(\zeta=\zeta')
	$ & 3 \\ \hline $
\bhs{hZ\ell sf}
	$&$
	\eps, \tau, \zeta=\zeta', !x, !x', !x'', !\tau'
	$ & m+1 \\ \hline $
\bhs{hZOsf}
	$&$
	\eps, \tau, \tau', \zeta=\zeta', !x, !x', !x''
	$ & m+2 \\ \hline $
\bhs{hN_1\ell sf}
	$&$
	\eps, x'', \tau, \zeta=\zeta', !x, !x', !\tau'
	$ & m+2 \\ \hline $
\bhs{hN_1Osf}
	$&$
	\eps, x'', \tau, \tau', \zeta=\zeta', !x, !x'
	$ & m+3 \\ \hline $
\bhs{tf\ell}
	$&$
	\tau, \zeta=\zeta', !\tau', !(\eps,!x,!x')
	$ & m \\ \hline $
\bhs{tfO}
	$&$
	\tau, \tau', \zeta=\zeta'. !(y=y'), !(y=y''), !(y'=y'')
	$ & m+1 \\ \hline 
\end{tabular} \\
\end{figure}
%

\subsection{Composition of wedge surgery heat operators} \label{sec:CompositionHeat}

Following the discussion around \eqref{eq:ConvolutionProduct}, we can understand the product of two wedge surgery heat operators using the triple space as
\begin{equation}\label{eq:HeatComposition}
	\cK_{A_1 \circ A_2} \; \beta_H^*(\tfrac12\tau\;d\tau)
	= (\beta_{LR})_*(\beta_{LM}^*(\cK_{A_1}\tau\;d\tau) \cdot \beta_{MR}^*(\cK_{A_2} \tau\;d\tau)).
\end{equation}

Let us fix a section $\mu$ of $\Omega(M)$ 
and rewrite this as
\begin{equation*}
	\kappa_{A_1 \circ A_2} \beta_{H,R}^*(\tfrac{\tau}{2\rho}\mu d\tau)
	= (\beta_{LR})_*\lrpar{ \beta_{LM}^*(\kappa_{A_1} \beta_{H,R}^*(\tfrac\tau\rho\mu d\tau))
	\cdot \beta_{MR}^*(\kappa_{A_2} \beta_{H,R}^*(\tfrac\tau\rho\mu d\tau)) }
\end{equation*}
where $\beta_{RH,L}, \beta_{RH,R}: RHX_{\w,s}\to X_s\times \bbR^+$ are the maps induced by projections on left and right.
Next, consider the corresponding section $\nu = d\eps \; \mu\;  \mu'\;  d\tau$ of $\Omega([0,1]_{\eps}\times M^2 \times \bbR^+_{\tau})$ and $\nu_3 = d\eps \; \mu\;  \mu'\;  \mu'' \; ds \; ds'$ of $\Omega([0,1]_{\eps}\times M^3 \times \bbR^+_{s}\times \bbR^+_{s'}).$
Multiplying  both sides by $\beta_{RH,L}^*(\mu \; d\eps)$ yields
\begin{equation}\label{eq:HeatCompositionDensities}
	\kappa_{A_1 \circ A_2} \beta_{RH,R}^*(\tfrac{\tau}{2\rho})\beta_{RH}^*\nu
	= (\beta_{LR})_*\lrpar{ \beta_{LM}^*(\kappa_{A_1} \beta_{H,R}^*(\tfrac\tau\rho)) 
	\cdot \beta_{MR}^*(\kappa_{A_2} \beta_{RH,R}^*(\tfrac\tau\rho)) \cdot \beta_{RH,3}^*\nu_3 }
\end{equation}
and it is this version of the formula that we will use to determine the asymptotics of $\kappa_{A_1 \circ A_2}.$\\

In our application we will be assuming that  $\kappa_{A_1}$  vanishes to infinite order at $\bhs{tb}$, $\bhs{stf}$ and that
	$\kappa_{A_2}$ vanishes to infinite order at $\bhs{tb}$,
and hence, from \eqref{exm.1} and \eqref{exm.2}, that
\begin{multline*}
	\beta_{LM}^*(\kappa_{A_1})\cdot \beta_{MR}^*(\kappa_{A_2}) \text{ vanishes to infinite order at } \\
	\cT\cB = \{ \bhs{\ell}, \bhs{r}, \bhs{O}, 
	\bhs{hFT}, \bhs{hCT}, \bhs{hST}, \bhs{hFDO}, \bhs{hCD}, \bhs{hSDO}, \\
	\bhs{hZ\ell}, \bhs{hZO},  \bhs{hN_1\ell}, \bhs{hN_1O},
	\bhs{hZOsf}, \bhs{hN_1Osf}, \bhs{tfO} \}.
\end{multline*}
Let $\rho_{\cT\cB}$ denote a product of boundary defining functions, one for each boundary hypersurface in $\cT\cB.$
We will simplify the analysis below by assuming infinite vanishing at $\cT\cB$ whenever convenient.
We need to determine the behavior of the densities under the blow-ups that produce the heat spaces. This is easy to determine as a blow-down map $\gamma:[N;F]\lra N$ satisfies $\gamma^*\Omega(N) = \rho_F^{\dim N-1-\dim F}\Omega([N;F]).$ We have included $\dim N-1-\dim F$ as `blow-up codimension' in the figures above.
Thus, for the reduced heat space we have
\begin{equation*}
\begin{multlined}
	\beta_{RH}^*\Omega([0,1]\times M^2 \times \bbR^+)
	= \rho_{hbf}^2 \rho_{hlf}\rho_{hrf}\rho_{hff}^{h+3} \; \Omega(RHX_{\w,s}) \\
	= \rho_{hmf}\rho_{tb} \rho_{hbf}^3 (\rho_{hlf}\rho_{hrf})^2\rho_{hff}^{h+4} \; \Omega_b(RHX_{\w,s})
	= J(RH)  \; \Omega_b(RHX_{\w,s})
\end{multlined}
\end{equation*}
%
and for the reduced heat triple space
\begin{multline*}
\begin{multlined}
	\rho_{\cT\cB}^{\infty}
	\beta_H^*\Omega([0,1]\times M^3 \times (\bbR^+)^2) 
	= 
	\rho_{\cT\cB}^{\infty}
	\rho_{hT}^3(\rho_{hC}\rho_{hF}\rho_{hS})^2
	\rho_{hN_1}\rho_{hN_2}\rho_{hN_3}
	\rho_{hTT}^{2h+5}(\rho_{hT\ell}\rho_{hTr})^{h+4} \\
	(\rho_{hFD\ell}\rho_{hSDr})^{h+3}
	\rho_{hZ\ell sf}^{m+1}
	\rho_{hN_1\ell sf}^{m+2}
	\rho_{tf\ell}^m \; 
	\Omega(RHX_{\w,s}^3) 
\end{multlined}\\
\begin{multlined}
	= 
	\rho_{\cT\cB}^{\infty}
	\rho_{hZ}
	\rho_{hT}^4(\rho_{hC}\rho_{hF}\rho_{hS})^3
	(\rho_{hN_1}\rho_{hN_2}\rho_{hN_3})^2
	\rho_{hTT}^{2h+6}(\rho_{hT\ell}\rho_{hTr})^{h+5} \\
	(\rho_{hFD\ell}\rho_{hSDr})^{h+4}
	\rho_{hZ\ell sf}^{m+2}
	\rho_{hN_1\ell sf}^{m+3}
	\rho_{tf\ell}^{m+1} \; 
	\Omega_b(RHX_{\w,s}^3)
\end{multlined}\\
=: J_3(RH) \Omega_b(RHX_{\w,s}^3).
\end{multline*}

Next we need to compute the lifts of $\tfrac\tau\rho$ along the lift of the projection onto the right factor of $M.$
For the reduced heat space, using $\sim$ to indicate equality up to a nowhere vanishing function, this is
\begin{equation*}
	\beta_{RH,R}^*(\tfrac\tau\rho) \sim \rho_{tb}\rho_{hbf}^{-1}\rho_{hrf}^{-1}
\end{equation*}
and for the full heat space,
\begin{equation*}
	\beta_{H,R}^*(\tfrac\tau\rho) \sim 
	\rho_{tb}\rho_{stf}\rho_{sf}\rho_{tf}
	\rho_{hbf}^{-1}\rho_{hrf}^{-1}.
\end{equation*}
Thus 
\begin{multline*}
	\rho_{\cT\cB}^{\infty} \cdot
	\beta_{LM}^*( \beta_{H,R}^*(\tfrac\tau\rho)) 
	\cdot \beta_{MR}^*(\beta_{RH,R}^*(\tfrac\tau\rho)) 
	\cdot \beta_{LR}^*(\beta_{RH,R}^*(\tfrac\tau\rho))^{-1}
	\sim 
	\\
	\rho_{\cT\cB}^{\infty}
	\rho_{hT}^{-1}\rho_{hF}^{-1}
	\rho_{hS}^{-1}\rho_{hN_2}^{-1}
	\rho_{hZ\ell sf}\rho_{hN_1 \ell sf} 
	\rho_{tf\ell}.
\end{multline*}

The lift of $J(RH)$ along $\beta_{LR}$ satisfies 
\begin{multline*}
	\rho_{\cT\cB}^{\infty}\cdot
	\beta_{LR}^*(J(RH))  \\=
	\rho_{\cT\cB}^{\infty}
	\rho_{hZ}\rho_{hN_2}\rho_{hZ\ell sf}
	(\rho_{hT}\rho_{hC}\rho_{hT\ell}\rho_{hTr})^3 
	(\rho_{hF}\rho_{hN_3}\rho_{hFD\ell})^2
	(\rho_{hS}\rho_{hN_1}\rho_{hSDr}\rho_{hN_1\ell sf})^2
	\rho_{hTT}^{h+4}.
\end{multline*}
and hence $\beta_{LR}^*(J(RH))^{-1}J_3(RH)$ is
\begin{equation*}
	\rho_{\cT\cB}^{\infty}
	\rho_{hT}
	\rho_{hF}\rho_{hS}
	\rho_{hN_2}
	(\rho_{TT}\rho_{hT\ell}\rho_{hTr}\rho_{hFD\ell}\rho_{hSDr})^{h+2}
	(\rho_{hZ\ell sf}\rho_{hN_1\ell sf}\rho_{tf\ell})^{m+1}
\end{equation*}

Thus all together there are $\nu_b \in \Omega_b(RHX_{\w,s})$ and $\nu_{3,b}\in\Omega_b(RHX_{\w,s}^3)$ such that
\begin{multline}\label{eq:PreFinalComp}
	\rho_{\cT\cB}^{\infty}\cdot
	\kappa_{A_1 \circ A_2} \nu_b  
	= (\beta_{LR})_*\Big(\beta_{LM}^*(\kappa_{A_1})
	\cdot \beta_{MR}^*(\kappa_{A_2}) \cdot \\
	\rho_{\cT\cB}^{\infty}
	(\rho_{TT}\rho_{hT\ell}\rho_{hTr}\rho_{hFD\ell}\rho_{hSDr})^{h+2}
	(\rho_{hZ\ell sf}\rho_{hN_1\ell sf}\rho_{tf\ell})^{m+2}
	\; \nu_{3,b}\Big).
\end{multline}

\begin{theorem}[Composition result] \label{thm:Composition}
Consider first the polyhomogeneous case.
Let
\begin{equation*}
\begin{gathered}
	\cK_{A_1} \in \cA^{\cE^1}_{\phg}(HX_{\w,s}; \Hom(E)\otimes \beta_{H,R}^*(\tfrac1\rho\Omega(M))) \\
	\cK_{A_2} \in \cA^{\cE^2}_{\phg}(RHX_{\w,s}; \Hom(E)\otimes \beta_{RH,R}^*(\tfrac1\rho\Omega(M))) 
\end{gathered}
\end{equation*}
where $E^1_{stf}=E^1_{tb} = E^2_{tb} = \emptyset$ and
$\Re E^1_{tf} + m + 2>0.$
Then 
\begin{equation*}
	\cK_{A_1} \circ \cK_{A_2}
	\in \cA^{\cE^3}_{\phg}(RHX_{\w,s}; \Hom(E)\otimes \beta_{RH,R}^*(\tfrac1\rho\Omega(M))) 
\end{equation*}
with index sets
\begin{equation*}
\begin{aligned}
	E^3_{tb} &= \emptyset, \\
	E^3_{hmf} & =
	(E^1_{hmf}+E^2_{hmf}) \bar\cup
	(E^1_{hrf} + E^2_{hlf}) \bar\cup
	(E^1_{sf} + E^2_{hmf} +m +2), \\
	E^3_{hbf} &=
	(E^1_{hbf} + E^2_{hbf}) \bar\cup
	(E^1_{hlf} + E^2_{hrf}) \bar\cup
	(E^1_{hff} + E^2_{hbf} + h+2) \bar\cup
	(E^1_{hbf} + E^2_{hff} + h+2), \\
	E^3_{hlf} &=
	(E^1_{hbf} + E^2_{hlf}) \bar\cup
	(E^1_{hlf} + E^2_{hmf}) \bar\cup
	(E^1_{hff} + E^2_{hlf} + h+2), \\
	E^3_{hrf} &=
	(E^1_{hrf} + E^2_{hbf}) \bar\cup
	(E^1_{hmf} + E^2_{hrf}) \bar\cup
	(E^1_{hrf} + E^2_{hff} + h+2) \bar\cup
	(E^1_{sf} + E^2_{hrf} + m+2), \\
	E^3_{hff} &=  E^1_{hff} + E^2_{hff} + h+2
\end{aligned}
\end{equation*}

More generally, this establishes the corresponding result for `hybrid' index sets (phg+bounded).
\end{theorem}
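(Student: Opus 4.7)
The plan is to reduce the composition of two heat operators to a pull-back/push-forward computation on the reduced triple space $R'HX_{\w,s}^3$ via formula \eqref{eq:PreFinalComp}, and then to apply Melrose's pull-back and push-forward theorems for polyhomogeneous conormal distributions, tracked hypersurface-by-hypersurface using the exponent matrices \eqref{exm.1}, \eqref{exm.2}, \eqref{exm.3} together with the density shift
\[
\rho_{\cT\cB}^{\infty}(\rho_{TT}\rho_{hT\ell}\rho_{hTr}\rho_{hFD\ell}\rho_{hSDr})^{h+2}(\rho_{hZ\ell sf}\rho_{hN_1\ell sf}\rho_{tf\ell})^{m+2}
\]
already isolated in \eqref{eq:PreFinalComp}. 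The hypotheses $E^1_{stf}=E^1_{tb}=E^2_{tb}=\emptyset$ and $\Re E^1_{tf}+m+2>0$ are precisely what is needed so that the pulled-back densities vanish to infinite order at every face in $\cT\cB$, in particular at $\bhs{\ell},\bhs{r},\bhs{O}$, and so that the push-forward integral converges absolutely on the interior.

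First I would apply Melrose's pull-back theorem to compute the index family of $\beta_{LM}^*\kappa_{A_1}\cdot\beta_{MR}^*\kappa_{A_2}$ on $R'HX_{\w,s}^3$. Using the exponent matrices \eqref{exm.1} and \eqref{exm.3}, the index set at each boundary hypersurface of $R'HX_{\w,s}^3$ is a sum of two terms, one coming from $\cE^1$ and one from $\cE^2$, determined by the unique hypersurface of $HX_{\w,s}$ (resp.\ $RHX_{\w,s}$) into which it is mapped. Then I would add the extra density shifts from \eqref{eq:PreFinalComp} at the six enumerated hypersurfaces, and impose $\rho_{\cT\cB}^{\infty}$ using the vanishing hypotheses.

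Next I would apply the push-forward theorem along $\beta_{LR}$. The main obstacle is that $\beta_{LR}$ is a $b$-submersion but not a $b$-fibration: some faces fail $b$-normality by being mapped into codimension-two corners of $RHX_{\w,s}$, and these are precisely the faces listed in \eqref{exm.2} which are repeated. However, all such faces lie in $\cT\cB$, at which our integrand vanishes to infinite order; so restricting to the complement of these faces, $\beta_{LR}$ becomes a genuine $b$-fibration and the standard push-forward theorem applies without modification. For each boundary hypersurface $G$ of $RHX_{\w,s}$, the extended union of the index sets at the hypersurfaces of $R'HX_{\w,s}^3$ appearing in $\beta_{LR}^* G$ (with multiplicity one entries from \eqref{exm.2}) gives the index set $E^3_G$. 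A direct computation using the lists in \eqref{exm.1}, \eqref{exm.2}, \eqref{exm.3} yields exactly the five formulas stated: for instance $\bhs{hmf}$ pulls back to $\{\bhs{hZ},\bhs{hN_2},\bhs{hZ\ell sf}\}$ (modulo $\cT\cB$), contributing $E^1_{hmf}+E^2_{hmf}$, $E^1_{hrf}+E^2_{hlf}$, and $E^1_{sf}+E^2_{hmf}+m+2$ respectively; the shift $m+2$ at the third term comes from the density factor $\rho_{hZ\ell sf}^{m+2}$. The other four index sets are verified analogously, with the shift $h+2$ at $\bhs{hff}$, $\bhs{hbf}$, $\bhs{hlf}$, $\bhs{hrf}$ arising from the $(\rho_{TT}\cdots)^{h+2}$ factor.

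Finally, the hybrid (partially polyhomogeneous, partially conormal) extension is obtained by the same pull-back/push-forward argument, replacing the polyhomogeneous pull-back theorem with its conormal counterpart (\emph{cf.} \cite[Section~4]{mame1}); the index set computations are unchanged, only the regularity of the resulting kernel at faces outside the polyhomogeneous range is weakened to conormality of the prescribed order. The main technical point to verify here, already covered by the same infinite-vanishing argument, is that the integrals defining the push-forward remain absolutely convergent, which is guaranteed by the condition $\Re E^1_{tf}+m+2>0$ and the vanishing at $\cT\cB$.
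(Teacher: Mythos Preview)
Your proof plan is correct and follows essentially the same approach as the paper: pull back along $\beta_{LM}$ and $\beta_{MR}$, multiply by the density shift in \eqref{eq:PreFinalComp}, observe that the faces where $\beta_{LR}$ fails $b$-normality all lie in $\cT\cB$ so that infinite-order vanishing there allows the push-forward theorem to be applied as if $\beta_{LR}$ were a $b$-fibration, and read off the index sets from the exponent matrices \eqref{exm.1}--\eqref{exm.3}. Your worked example for $\bhs{hmf}$ and the treatment of the integrability condition $\Re E^1_{tf}+m+2>0$ match the paper exactly.
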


\begin{proof}
Let us abbreviate \eqref{eq:PreFinalComp} as
\begin{equation*}
	\kappa_{A_1 \circ A_2} \nu_b  
	= (\beta_{LR})_*(
	\wt \kappa
	\; \nu_{3,b}).
\end{equation*}
The mapping properties established above show that $\wt\kappa$ vanishes to infinite order at $\cT\cB$ and 
has the following index sets/bounds at the remaining boundary hypersurfaces of $RHX_{\w,s},$
\begin{equation*}
\begin{aligned}
	\wt E_{hZ} &= E^1_{hmf}+E^2_{hmf}, \\
	\wt E_{hT} &= E^1_{hbf} + E^2_{hbf}, \\
	\wt E_{hF} &= E^1_{hbf} + E^2_{hlf}, \\
	\wt E_{hC} &= E^1_{hlf} + E^2_{hrf}, \\
	\wt E_{hS} &= E^1_{hrf} + E^2_{hbf},\\
	\wt E_{hN_1} &= E^1_{hmf} + E^2_{hrf}, \\
	\wt E_{hN_2} &= E^1_{hrf} + E^2_{hlf},\\
	\wt E_{hN_3} &= E^1_{hlf} + E^2_{hmf}, \\
	\wt E_{hTT} &= E^1_{hff} + E^2_{hff} + h+2, \\
	\wt E_{hT\ell} &= E^1_{hff} + E^2_{hbf} + h+2, \\
	\wt E_{hTr} &= E^1_{hbf} + E^2_{hff} + h+2, \\
	\wt E_{hFD\ell} &= E^1_{hff} + E^2_{hlf} + h+2, \\
	\wt E_{hSDr} &= E^1_{hrf} + E^2_{hff} + h+2, \\
	\wt E_{hZ\ell sf} &= E^1_{sf} + E^2_{hmf} +m +2, \\
	\wt E_{hN_1\ell sf} &= E^1_{sf} + E^2_{hrf} + m+2, \\
	\wt E_{tf\ell} &= E^1_{tf} + m+2.
\end{aligned}
\end{equation*}
Thus as long as $\wt E_{tf\ell} = E^1_{tf} + m + 2$ is positive we can push forward.  To obtain the result, the idea is then to apply the pushforward theorem \cite[Theorem~2.3]{hmm}.  However, since $\beta_{LR}$ fails to be a $b$-fibration, this requires further justification.  What saves the situation is that all faces where $\beta_{LR}$ fails to be a $b$-fibration, namely where it fails to be $b$-normal, are faces contained in $\cT\cB$.  Since at these faces, $\wt \kappa$ vanishes rapidly, these faces can be safely ignored while applying the pushforward theorem.  On the other hand, among all the faces created to obtain $R'HX^3_{\w,s}$ from $RHX_{\w,s}^3$, only $\bhs{tf\ell}$, $\bhs{hZ\ell sf}$ and $\bhs{h N_1\ell sf}$ do not belong to $\cT\cB$.  For these three faces, it suffices to apply \cite[Lemma~2.7]{hmm} to see that $\beta_{LR}$ is a $b$-fibration away from the boundary hypersurfaces of $\cT\cB$.

\end{proof}

\section{Uniform construction of the heat kernel under a wedge surgery} \label{hk.0}

We are now prepared to solve the heat equation, in the strong sense of constructing the Schwartz kernel of the fundamental solution. Our construction follows the geometric microlocal analysis approach of Melrose \cite{MelroseAPS} in that we describe the various boundary hypersurfaces of the heat space, solve model heat problems there (\S\ref{sec:ModelHeatProbs}), and then put their integral kernels together to solve the heat equation (Theorem \ref{uhk.1}). Part of the construction requires summing a Volterra series, for which we have established a composition result in \S\ref{sec:CompositionHeat}.

Our construction shows that as $\eps \to 0$ the heat kernel converges, away from $H,$ to the Hodge Laplacian heat kernel of a space with a wedge singularity whose indicial roots are not constant, generalizing that of \cite{Mazzeo-Vertman}.

We will construct the heat kernel as a section of the density bundle $\Omega_{b,heat}$ on $HX_{\w,s}$ which is spanned over $C^{\infty}(HX_{\w,s})$ by the density
\begin{equation}
\nu_{b,heat}:=\beta_{H,R}^*(\nu_{b,s}),
\end{equation}
where $\nu_{b,s}$ is a nonvanishing section of the $b$-surgery density bundle $\Omega_{b,s}(X_s)$, equal to $\rho^{-1}\,dx\,dy\,dz$ in local coordinates.

\subsection{Model heat problems} \label{sec:ModelHeatProbs} $ $\\

{\bf The model problem at $\bhs{tf}.$} Let us start by examining the geometry of $\bhs{tf}.$ To obtain this boundary hypersurface, we blow up the intersection of $\bhs{tb}$ with the interior lift of $(\diag_M\times[0,1]_{\eps}\times \bbR^+_{\tau})$. This intersection can be identified with the single surgery space - its intersection with $\bhs{sf}$ is $\bhs{\sm}$ and its intersection with $\bhs{hff}$ is $\bhs{\bs}$. The face $\bhs{tf}$ can thus be identified with the radial compactification of the normal bundle to the lifted diagonal in the double edge surgery space, i.e., the edge surgery tangent bundle,
\begin{equation*}
	\xymatrix{
	\bhs{tf} \ar[rr]^{\cong} \ar[rd] & & \overline{{}^{e,s}TX}_s \ar[ld] \\
	& X_s. &}
\end{equation*}
For example, in the coordinates \eqref{coords:step5part0}, the projection down to $X_s$ is given by $(x,y,z,\eps)$. Over each point we have a copy of $\bbR^+_{\tau}\times\bbR^{m}_{\Theta}$.

We will now construct the fundamental solution of the heat equation at $\bhs{tf}$.  Away from $\bhs{hff}$ and $\bhs{sf}$, this is as  in compact case, \cf \cite[Chapter 7]{MelroseAPS} and \cite[\S7]{ARS1}.   Thus, let us focus on the most problematic region, that is, near the triple intersection $\bhs{tf}\cap\bhs{hff}\cap\bhs{sf}$.  There, we can use the coordinates \eqref{coords:step5part2}.  Writing the heat kernel in these coordinates gives
$$
    K_A= \widetilde{K}_A  d\Theta_x d\Theta_y d\overline{\Theta}_z.
$$
Set also $\widetilde{K}_{tf}:= \left.  \widetilde{K}_A \right|_{\bhs{tf}}$ assuming this restriction makes sense.  For $f$ supported in the coordinate chart of $(\rho,\theta,y,z)$ in $X_s$, the action of $A$ on $f$ is then given by 
\begin{equation}
  (Af)(\rho,\theta,y,z,\sigma)= \int_{\bbR^m}  \widetilde{K}_A f(\rho',\theta', y+\rho\eta\sigma\Theta_y, z+\sigma\eta  \overline{\Theta}_z) d\Theta_x d\Theta_y d\overline{\Theta}_z
  \label{hk.1}\end{equation}    
where 
$$
\rho'= \sqrt{(x')^2+\eps^2}= \rho\sqrt{(\sin\theta+\eta\sigma\Theta_x)^2+\eta^2}
$$
and
$$ 
 \theta'=\arctan \left( \frac{x'}{\eps} \right)= \arctan\left( \frac{\sin\theta+\eta\sigma \Theta_x}{\eta} \right).
$$
Hence, the restriction of \eqref{hk.1} to $\bhs{tf}$, that is, the restriction to $\sigma=0$, is given by 
\begin{equation}
 f(\rho,\theta,y,z)\int_{\bbR^m} \widetilde{K}_{tf}(\rho,\theta,y,z,\Theta_x,\Theta_y, \overline{\Theta}_z) d\Theta_x d\Theta_{y} d\overline{\Theta}_z.
\label{hk.2}\end{equation}

We now want to compute the action of $t(\pa_t + D^2_{\w,\eps})$ when composed on the left of $A$.  Considering first the action of $t\pa_t= \frac12 \sigma\pa_{\sigma}$ on $Af$, we can integrate by parts as in the compact case to obtain that 
\begin{equation}
  \left.  K_{t\pa_t \circ A} \right|_{\bhs{tf}}= -\frac12 (m+\cR)\widetilde{K}_{tf} d\Theta_x d\Theta_y d\overline{\Theta}_z,
\label{hk.3}\end{equation}
where $\cR= \Theta_x\cdot \pa_{\Theta_x}+ \Theta_y\cdot \pa_{\Theta_y}+ \overline{\Theta}_z\cdot \pa_{\overline{\Theta}_z}$ is the radial vector field in the coordinates 
$\Theta_x, \Theta_y, \overline{\Theta}_z$.  To compute the action of $t D^2_{\w,\eps}$, consider first a $\w,\eps$ vector field
\begin{equation}\label{eq:WVf}
	V = a \pa_x + b^i\pa_{y_i} + c^j\rho^{-1}\pa_{z_j}
\end{equation}
with $a$, $b^i$ and  $c^j$ smooth sections of $\End(E)$ on $X_s$.  We want to determine how $\tau V= \rho\eta\sigma V$ acts on $A$ from the left.  Now, $\rho V$ is an $e,s$ vector field, so keeping in mind that $\eta$ and $\overline{\Theta}_z$ depend on $x$ through $\rho$, we compute integrating by parts that 
\begin{equation}
   \tau VAf(\rho,\theta,y,z,\tau)= \int_{\bbR^m} (V'\widetilde{K}_A) f(\rho',\theta', y+\rho\eta\sigma\Theta_y, z+\sigma\eta\overline{\Theta}_z)d\Theta_x d\Theta_y d\overline{\Theta}_z
\label{hk.4}\end{equation}
with
\begin{equation}
V'= a(\rho\eta\sigma\pa_x -\pa_{\Theta_x}+ \eta\sigma\sin\theta(\overline{\Theta}_z\cdot\pa_{\overline{\Theta}_z}+v)) - b^i \pa_{\Theta_{y}^i}- c^j \pa_{\overline{\Theta}_{z}^j}.
\label{hk.5}\end{equation}
Hence, restricting \eqref{hk.4} to $\bhs{tf}$ gives that 
\begin{equation}
  \left. \tau VAf(\rho,\theta,y,z,\tau)\right|_{\sigma=0}= f(\rho,\theta,y,z) \int_{\bbR^m} (\left.V'\right|_{\sigma=0}\widetilde{K}_{tf}) d\Theta_x d\Theta_y d\overline{\Theta}_z
\label{hk.6}\end{equation}
with 
\begin{equation}
\left.  V'\right|_{\sigma=0}=  -a\pa_{\Theta_x} - b^i \pa_{\Theta_{y}^i}-  c^j \pa_{\overline{\Theta}_{z}^j}.
\label{hk.7}\end{equation}
Using these results, we thus see that the restriction of $t(\pa_t + D^2_{\w,\eps})K_A$ to $\bhs{tf}$ is given by 
\begin{equation}
     \left[ \left( -\frac12(m+\cR) + \Delta_{{}^{e,s}TX_s}\right) \widetilde{K}_{tf}\right]  d\Theta_x d\Theta_y d\overline{\Theta}_z,
\label{hk.8}\end{equation}
where $\Delta_{{}^{e,s}TX_s}$ is the Euclidian Laplacian in the fibers of  ${}^{e,s}TX_s$ induced by the principal symbol ${}^{e,s}\sigma_2(\rho^2D^2_{\w,\eps})$.  To solve the heat equation to first order on $\bhs{tf}$, we need \eqref{hk.7} to be zero and \eqref{hk.2} to be equal to $f$. As in the compact case, this is readily solved using the Fourier transform and gives
\begin{equation}
  L_{tf}:= \widetilde{K}_{tf}d\Theta_x d\Theta_y d\overline{\Theta}_z  = \frac{1}{(4\pi)^{\frac{m}2}}\exp \left(  -\frac{|\cdot|^2_{e,s}}4\right)\mu,
\label{hk.9}\end{equation}
where   $|\cdot |_{e,s}$ and the vertical density $\mu$ on the fibers of ${}^{e,s}TX_s$  are specified by the principal symbol ${}^{e,s}\sigma_2(\rho^2D^2_{\w,\eps})$.  Notice that writing the solution \eqref{hk.9} in terms of the density $\frac{dx'dy'dz'}{\rho'}$, we see that the heat kernel most have top order term of order $\sigma^{-m}$ at $\bhs{tf}$, $\eta^{-m}$ at $\bhs{sf}$ and $\rho^{-h}$ at $\bhs{hff}$.


\medskip


\textbf{The model problem at $\bhs{sf}$.}
This is another Euclidean face, corresponding to the face sc in the scattering heat space and to the zero-time diagonal in the wedge heat space. Unsurprisingly, the model here will be another Euclidean heat kernel.  Consider the coordinates \eqref{coords:step4smallsigma}. In these coordinates,
\begin{equation*}
\nu_{b,heat}=\frac{\eta^m\rho^h}{\sqrt{(\sin\theta+\eta\Xi_x)^2+\eta^2}}\,d\Xi_x\,d\Xi_y\,d\bar\Xi_z.
\end{equation*}
Clearly, the lift of $t\pa_t$ is simply $\frac12 \sigma\pa_{\sigma}$.  On the other hand, using integration by parts, we compute that 
\begin{equation}
   \tau VAf(\rho,\theta,y,z,\tau)= \int_{\bbR^m} (V'\widetilde{K}_A) f(\rho',\theta', y+\rho\eta\Xi_y, z+\eta\overline{\Xi}_z)d\Xi_x d\Xi_y d\overline{\Xi}_z
\label{hk.4b}\end{equation}
with this time
\begin{equation}
V'= -\sigma(a\pa_{\Xi_x} + b^i \pa_{\Xi_{y}^i} +c^j \pa_{\overline{\Xi}_{z}^j}).
\label{hk.5}\end{equation}

Hence, since $\eta=0$ at $\bhs{sf}$, we compute that 
\begin{equation}
N_{sf}(t\pa_t+t D_{\dR}^2)=\tfrac 12\sigma\pa_{\sigma}+\sigma^2\Delta_{sf},
\end{equation}
where $\Delta_{sf}$ is the Euclidean Laplacian on the interior of the fibers of $\bhs{sf}\to \bhs{sm}$ specified by the principal symbol ${}^{e,s}\sigma_2(\rho^2 D^2_{\w,\eps})$ restricted to $\left.{}^{e,s}T^*X_s\right|_{\bhs{sm}}$.  Notice in particular that $\Delta_{sf}$ does not depend on $\sigma$, so in each fiber this just the heat operator on a Euclidean space. Its heat kernel  is standard and given by
\begin{multline}\label{leadingmodel:sf}
L_{sf}=\frac{1}{(4\pi)^{m/2}\sigma^m}\exp\big(-\frac{\Xi_x^2+\Xi_y^2+\bar\Xi_z^2}{4\sigma^2}\big) d\Xi_x\,d\Xi_y\,d\bar\Xi_z\\ =\frac{\sqrt{(1+\eta\Xi_x)^2+\eta^2}}{(4\pi)^{m/2}\sigma^m\eta^mx^h}\exp\big(-\frac{\Xi_x^2+\Xi_y^2+\bar\Xi_z^2}{4\sigma^2}\big) \nu_{b,heat}.
\end{multline}
In terms of the coordinates \eqref{coords:step5part2}, notice that this correspond exactly to \eqref{hk.9}.  In fact, in these coordinates, the model heat kernel on $\bhs{sf}$ does not depend $\sigma$.  Moreover, it clearly agrees with the model at $\bhs{tf}$.


\medskip


\medskip {\bf The model problem at $\bhs{hff}.$} Next we consider $\bhs{hff}$. In the coordinates \eqref{coords:interiorhff}, we have
\begin{equation}
\nu_{b,heat}=\eps^h\langle X'\rangle^{-1}\,dX'\,du\,dz'.
\end{equation}
The vector fields lift as follows,
\begin{equation}
	\beta_{H,L}^*(\tau\pa_x) = \sigma \pa_{X}, \quad
	\beta_{H,L}^*(\tau \pa_y) = -\sigma\pa_{u}+\sigma\eps\pa_y, \quad
	\beta_{H,L}^*(\tau\tfrac1\rho \pa_z) = \tfrac{\sigma}{\langle X\rangle}\pa_z.
\label{vf.1}\end{equation}
Hence
\begin{multline*}
	N_{hff}(t\pa_t + t D_{\dR}^2) =
	\beta_{H,L}^*(t\pa_t + tD_{\dR}^2)\rest{\eps=0} \\
	= \tfrac12\sigma\pa_{\sigma} + \sigma^2
	\lrpar{ \tfrac1{\ang X^2} \Delta^{H/Y} + \Delta^{\bbR^h_u} - \pa_{X}^2 + \tfrac X{\ang X^2} \pa_X}
	\begin{pmatrix} \Id & 0 \\ 0 & \Id \end{pmatrix} \\
	+
	\frac{\sigma^2}{\ang X^2}
	\begin{pmatrix}
	\tfrac {X^2}{\ang X^2}\lrpar{(\wt N+1)^2-(\tfrac32)^2}-(\wt N-\tfrac12)
	& 
	-2\tfrac X{\ang X} d^Z \\
	-2\tfrac X{\ang X} \delta^Z 
	&
	\tfrac {X^2}{\ang X^2}\lrpar{(\wt N-1)^2-(\tfrac32)^2}+(\wt N+\tfrac12) 
	\end{pmatrix}.
\end{multline*}

To interpret this heat operator let us denote, for each $y \in Y,$
\begin{equation}
	\Sigma(Z_y):=[-\infty,\infty]_{X}\times Z_y; \quad g_{\Sigma(Z_y)}=dX^2+(1+X^2)g_{Z_y}.
\end{equation}
Note that $\Sigma(Z_y)$ is a scattering manifold and that $N_{hff}(t\pa_t+tD_{\dR}^2)$ restricts to the fiber over $y \in Y$ to be the heat operator on the lift of $F\rest{Z_y}$ to $\Sigma(Z_y) \times \bbR^h_u$ with the product metric $g_{\Sigma(Z_y)} + du^2$ and the lift of the bundle metric. This suggests the following model for the heat kernel:
\begin{multline}\label{leadingmodel:hff}
	L_{hff}:=H^{\Sigma(Z_y)}(\sigma,X,X',z,z')
	\frac{1}{(4\pi)^{h/2}}\sigma^{-h} \exp\lrpar{-\frac{|u|_{T_yY}^2}{4\sigma^2} } \langle X'\rangle^{v}\, dX'\,du\, dz'\\ 
	=\eps^{-h}H^{\Sigma(Z_y)}(\sigma,X,X',z,z')
	\frac{1}{(4\pi)^{h/2}}\sigma^{-h}\langle X'\rangle^{v+1}\exp\lrpar{ -\frac{|u|_{T_yY}^2}{4\sigma^2}} \nu_{b,heat},
\end{multline}
where $H^{\Sigma(Z_y)}(\sigma,X,X',z,z')$ is the corresponding heat kernel on $\Sigma(Z_y).$ We next describe the asymptotics of $L_{hff}$ near $\bhs{hff}$ and see that it matches the models at $\bhs{tf}$ and $\bhs{sf}.$\\

\textbf{Scattering heat kernel.} Consider $H^{\Sigma(Z_y)}(\sigma,X,X',z,z'),$ the heat kernel on the scattering manifold $\Sigma(Z_y),$ which has been described in \cite{Albin:Heat, Sher:ACHeat, Guillarmou-Sher}. Note that this is sometimes styled `asymptotically conic' in reference to the `big end' of a cone. 
Recall that $v = \dim Z.$  As in \S~\ref{ur.0}, we will suppose that Assumption~\ref{ur.1} holds.  This assumption is not as important as for the uniform construction of the resolvent, but we will make it anyway since it will lead to many helpful simplifications.  

\begin{theorem}[\cite{GH1,Sher:ACHeat,Guillarmou-Sher}]
 If Assumption~\ref{ur.1} holds, then the heat kernel $H^{\Sigma(Z_y)}$ is polyhomogeneous conormal on the heat space $H\Sigma(Z_y)_{\sc}$ of \eqref{sch.1} with index family $\cR$ such that
\begin{equation}
\inf \cR_{\zf}\ge 2, \quad \inf \cR_{\bfo}\ge 0, \quad \inf \cR_{\lb_0}\ge\nu_0+1, \quad \inf \cR_{\rb_0}\ge\nu_0+1, \quad \inf \cR_{\sc}\ge -v-1
\label{hkp.1b}\end{equation}
when we use right $b$-densities.
\label{hkp.1}\end{theorem}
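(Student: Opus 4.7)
The plan is to invoke the general construction of heat kernels on scattering manifolds due to Albin, Sher, and Guillarmou--Sher, verify that our hypotheses place us in the setting where that construction applies, and then read off the stated index sets. The starting point is that $\Sigma(Z_y)$ is a compact perturbation of an exact scattering metric with two asymptotic ends, and its Hodge--Laplacian $\Delta_{\sc}$ (coupled to the flat bundle $F|_{Z_y}$) fits the framework of \cite{Sher:ACHeat,Guillarmou-Sher}. The main input required from our setting is that the model operator $\Delta_{v,y}$ at the end of $\Sigma(Z_y)$ is invertible as a $b$-operator acting on unweighted $b$-Sobolev spaces; by Proposition~\ref{ws.8} together with Lemma~\ref{ws.6}, Assumption~\ref{ur.1} guarantees exactly this, since condition \eqref{ws.8b} reduces to a lower bound on $\spec((\eth_y^{H/Y})^2)$ which is met by hypothesis, and there are no harmonic forms in the critical degrees $\tfrac{v\pm1}{2}$ by the same spectral gap.

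With this in hand, the construction proceeds by the usual geometric microlocal strategy on $H\Sigma(Z_y)_{\sc}$: solve Euclidean heat problems at $\tf$ and $\sc$, solve a mixed scattering/$b$ model at $\bff$ matching the structure of $\widetilde{\Delta}_{\sc}$, then iteratively improve the error by Volterra composition. The orders $\inf \cR_{\sc}\ge -v-1$, $\inf \cR_{\bfo}\ge 0$ at the faces where the heat kernel behaves in a Euclidean or scattering manner arise from standard symbol calculations once one writes the model heat kernels against right $b$-densities. The lifts are exactly those appearing in the single-variable piece of our analysis in \S\ref{wshs.1}, so no extra bookkeeping beyond what is recorded in \cite{Sher:ACHeat} is needed.

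The more delicate input is the long-time behavior, controlling the index sets at $\zf$, $\lb_0$, $\rb_0$. Here we use the formula
\[
e^{-t\Delta_{\sc}} = \frac{1}{2\pi i}\oint e^{-t\lambda}(\Delta_{\sc}-\lambda)^{-1}\, d\lambda
\]
combined with the resolvent description of Theorem~\ref{no.12}, which gives polyhomogeneous control on $(\widetilde{\Delta}_{\sc}+k^2)^{-1}$ with leading orders $\nu_0-1$ at $\lb_0,\rb_0$ and $0$ at $\zf$ (all relative to $b$-densities). Translating these orders through the Laplace transform, one gains one additional power of $t$ (equivalently of $\sigma^2$) from the $e^{-t\lambda}$ factor together with the fact that the $L^2$-kernel of $\Delta_{v,y}$ is trivial (again by Proposition~\ref{ws.8}), producing $\inf \cR_{\lb_0},\inf \cR_{\rb_0}\ge \nu_0+1$ and $\inf \cR_{\zf}\ge 2$.

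The hardest part is this last step, namely justifying that the absence of $L^2$-null space and the quantitative gap at $\lambda=0$ coming from Theorem~\ref{no.12} can be propagated through the contour integral to give the specific index sets stated; this is precisely the content of the main theorems of \cite{Guillarmou-Sher} adapted to our scattering geometry, and we would simply cite their polyhomogeneous expansion of the low-energy resolvent, check that $\nu_0>0$ is well-defined under Assumption~\ref{ur.1}, and invoke their heat kernel result directly.
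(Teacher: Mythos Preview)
Your proposal is correct and follows essentially the same approach as the paper: verify via Proposition~\ref{ws.8} that Assumption~\ref{ur.1} gives invertibility of $\Delta_{v,y}$, invoke the resolvent description of Theorem~\ref{no.12}, and then pass to the heat kernel via the results of \cite{Sher:ACHeat,Guillarmou-Sher}. The paper's proof is simply the one-line citation ``combine Theorem~\ref{no.12} with \cite[Theorem~8]{Sher:ACHeat}'', so your more detailed account of the mechanism (model problems at $\tf$, $\sc$, $\bff$; contour integral for the long-time faces) is just an unpacking of what that citation entails.
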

\begin{proof}
It suffices to combine Theorem~\ref{no.12} with \cite[Theorem~8]{Sher:ACHeat}.  
\end{proof}

An important consequence of this theorem is  that  at $\bfo$ and $\sc$, the heat kernel $H^{\Sigma(Z_y)}$ is modelled on the heat kernel $H^{C(Z_y)}$ of the Hodge Laplacian $\Delta_{C(Z_y)}$ on the metric cone $C(Z_y)$ with cross-section $Z_y$.  Here is a more precise statement.

\begin{corollary}\label{hkp.2} The heat kernels $H^{\Sigma(Z_y)}$ and $H^{C(Z_y)}$ have the same leading order terms at the faces 
$\sc$ and $\bfo$. 
\end{corollary}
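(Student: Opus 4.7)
The plan is to exploit the fact that, near each end $|X| \to \pm\infty$ of $\Sigma(Z_y)$, the metric $g_{\Sigma(Z_y)} = dX^2+(1+X^2)g_{Z_y}$ differs from the exact cone metric $g_{C(Z_y)} = dr^2+r^2g_{Z_y}$ (with $r=|X|$) only by a term of order $r^{-2}$ in the scattering sense, namely
\[
g_{\Sigma(Z_y)} - g_{C(Z_y)} = g_{Z_y}.
\]
Hence the corresponding Hodge Laplacians $\Delta_{\Sigma(Z_y)}$ and $\Delta_{C(Z_y)}$, and therefore also the parabolic operators $t(\partial_t+\Delta)$, have the same principal symbols and the same normal operators at the face at infinity in the scattering double space; they differ only by terms with extra decay at the boundary at infinity.

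At $\sc$: By Theorem~\ref{hkp.1}, $H^{\Sigma(Z_y)}$ is polyhomogeneous on $H\Sigma(Z_y)_{\sc}$ with leading order at $\sc$ equal to $-v-1$, and by the construction of the scattering heat calculus (\cite{Sher:ACHeat,Albin:Heat}), the restriction to $\sc$ is uniquely determined as the standard Euclidean heat kernel on the fibers of ${}^{\sc}T\Sigma(Z_y)$ with the Riemannian structure coming from the principal symbol of $\Delta_{\Sigma(Z_y)}$.  The same statement holds verbatim for $H^{C(Z_y)}$, with $\Delta_{C(Z_y)}$ replacing $\Delta_{\Sigma(Z_y)}$.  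Since the two principal symbols agree at the boundary at infinity, the two leading $\sc$-models coincide, proving equality of leading terms there.

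At $\bfo$: Using projective coordinates $(r,s=r'/r,\tau/r)$ (or their analogues near the corner of $\bff_0$), the normal operator of $t(\partial_t+\Delta_{\Sigma(Z_y)})$ at $\bfo$ is obtained by freezing coefficients at $r=\infty$. By the metric comparison above, this is exactly the $\bfo$-normal operator of $t(\partial_t+\Delta_{C(Z_y)})$, viewed as a $b$-heat operator on the cone near its large end; one can check this either by direct computation in these coordinates or by invoking the model-operator description used in the construction of $H^{\Sigma(Z_y)}$ in \cite{Sher:ACHeat,Guillarmou-Sher}.  Uniqueness of the solution to the model heat problem at $\bfo$ (with the diagonal initial condition inherited from $\sc\cap\bfo$, which we have just matched) then forces the leading terms at $\bfo$ to coincide.

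The main technical point is to match the projective coordinates and densities used on $H\Sigma(Z_y)_{\sc}$ with those appearing in the construction of the cone heat space $HC(Z_y)$, in order to identify the two $\bfo$-models as the same $b$-heat operator; once this identification is made the result is immediate from the uniqueness of the model heat kernels.
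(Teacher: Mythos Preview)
Your argument at $\sc$ is essentially the paper's: both reduce to the fiberwise Euclidean heat kernel determined by the principal symbol at the boundary at infinity, as in \cite{Albin:Heat}.

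At $\bfo$ you take a different route. You compare the model heat operators directly and appeal to uniqueness of the solution of the model heat problem at $\bfo$, matched at $\sc\cap\bfo$. The paper instead passes through the resolvent: from the proof of Theorem~\ref{no.12} (i.e., the Guillarmou--Hassell construction in \cite{GH1}) one knows that $(\Delta_{\Sigma(Z_y)}+k^2)^{-1}$ and $(\Delta_{C(Z_y)}+k^2)^{-1}$ have the same leading term at $\bfo$; then \cite[Theorem~8]{Sher:ACHeat} is applied to the cut-off difference of resolvents in a neighborhood of infinity to show that the corresponding difference of heat kernels has leading order strictly greater than $0$ at $\bfo$, which is the claim.

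Your approach is conceptually more direct, but the uniqueness step you invoke is not entirely for free. The face $\bfo$ is non-compact and meets several other faces ($\zf$, $\lb_0$, $\rb_0$, $\bff$), so uniquely determining the model requires more than the initial condition at $\sc\cap\bfo$: one must also know that both restrictions lie in the same space of conormal sections (i.e., satisfy the same decay conditions at the adjacent faces). This is true, and follows from the index sets in Theorem~\ref{hkp.1}, but it deserves a sentence. The paper's detour through the resolvent sidesteps this because the decay at the adjacent faces has already been established at the resolvent level in \cite{GH1}, and the transfer result in \cite{Sher:ACHeat} carries it over automatically.
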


\begin{proof} The statement for 
sc follows immediately from \cite{Albin:Heat}.  At $\bfo$, notice first that we know from the proof of Theorem~\ref{no.12} that the resolvents $(\Delta_{\Sigma(Z_y)}+k^2)^{-1}$ and 
$(\Delta_{C(Z_y)}+k^2)^{-1}$ have the same leading order at $\bfo$.  We can then use \cite[Theorem 8]{Sher:ACHeat} to transition from resolvent to heat kernel. Consider the tubular neighborhood of infinity $[0,\delta)\times Z_y$ in $\Sigma(Z_y)$ where the scattering metric is $g=s^{-4}ds^2+s^{-2}h(s)$, with $s=1/r$. In this region, both $(\Delta_{\Sigma(Z_y)}^q-k^2)^{-1}$ and $(\Delta_{C(Z_y)}^q-k^2)^{-1}$ are well-defined, so we can consider 
\[\chi((\Delta_{\Sigma(Z_y)}^q-k^2)^{-1}-(\Delta_{C(Z_y)}^q-k^2)^{-1})\chi.\]
Here $\chi$ is a cutoff function which is supported on the tubular neighborhood and identically equal to one in a neighborhood of infinity. We apply the proof of \cite[Theorem 8]{Sher:ACHeat} to this difference of resolvents to obtain the corresponding difference of heat kernels. In particular, we see that the corresponding difference of heat kernels has leading order strictly greater than $0$. Since the exact conic heat kernel has leading term of order $0$ at bf$_0$, the proof is complete. \end{proof}

Finally, let us point out that the construction of the heat kernel is robust in its dependence on $y \in Y$ in that varying $y$ will produce a smooth family of integral kernels as long Assumption~\ref{ur.1} is maintained. Note that while each $H^{\Sigma(Z_y)}$ is polyhomogeneous, the resulting family of kernels will define an element of a calculus with bounds instead.  
\\

\textbf{Asymptotics of the model at $\bhs{hff}$.} We consider each coordinate system in which the model at $\bhs{hff}$ has nontrivial behavior, that is,  fails to vanish to infinite order at the intersection of $\bhs{hff}$ with another boundary hypersurface.  As explained at the end of $\textbf{Step 2'}$ of the proof of Theorem~\ref{ur.8} in \S~\ref{wshs.1}, the face $\bhs{hff}$ can be thought as a somewhat blow-down version of the scattering heat space of \cite{Sher:ACHeat}, since the infinite-order vanishing of $H^{\Sigma(Z_y)}$ at tb, lf, rf and bf allows us to work in this simplified space.

First, we can check directly that \eqref{leadingmodel:hff} agrees with the models at $\bhs{tf}$ and $\bhs{sf}$ described in \eqref{hk.9} and \eqref{leadingmodel:sf}.  On the other hand, Proposition~\ref{hkp.2} shows that $L_{hff}$ has the following asymptotic behavior at the intersection $\bhs{hff}\cap\bhs{hmf}$,  
\begin{multline}
	L_{hff} \sim 
	\eps^{-h}H^{C(Z_y)}(\sigma,X,X',z,z')
	\frac{1}{(4\pi)^{h/2}}\sigma^{-h}\langle X'\rangle^{v+1}
	\exp\big(-\frac{|u|_{T_yY}^2}{4\sigma^2})\nu_{b,heat} \\
	=\eps^{-h-v-1}H^{C(Z_y)}(\sigma,X,X',z,z')\frac{1}{(4\pi)^{h/2}}\sigma^{-h}(\rho')^{v+1}
	\exp\big(-\frac{|u|_{T_yY}^2}{4\sigma^2})\nu_{b,heat}.
\end{multline}
Using the parabolic scaling of the heat kernel on a cone,
\begin{equation*}
	H^{C(Z_y)}(\sigma,X,X',z,z')=\eps^{-v-1}H^{C(Z_y)}(\tau,x,x',z,z'),
\end{equation*}
as well as the fact that $\rho'=x'$ on $\bhs{hmf}$ near $\bhs{hff}\cap\bhs{hmf}$, this can be rewritten
\begin{multline}
	H^{C(Z_y)}(\tau,x,x',z,z')\frac{1}{(4\pi)^{h/2}}\tau^{-h}
		\exp(-\frac{(y'-y)^2}{4\tau^2})(x')^v\ dx'\,dy'\,dz'\\
	= \eps^{-h}\sigma^{-h}H^{C(Z_y)}(\tau,x,x',z,z')\frac{1}{(4\pi)^{h/2}}
		\exp(-\frac{u^2}{4\sigma^2})(x')^{v+1}\nu_{b,heat}|_{\bhs{hmf}},
\end{multline}
which, as described in \cite{Mazzeo-Vertman} is exactly the model that needs to be put on $\bhs{hmf}\cap\bhs{hff}$ to start the construction of the wedge heat kernel on $\bhs{hmf}$.

The compatibility of $L_{tf},$ $L_{sf}$ and $L_{hff}$ at the intersections of the boundary hypersurfaces on which they are defined guarantees the existence of an integral kernel, $G_0,$ that coincides with each of them. The analysis above shows that we can take $G_0$ to be an element of
\begin{equation}
	G_0 \in \sB^{\cG/\df g}_{\phg}\sA^{-m-1}_-(HX_{\w,s};
		\Hom(E) \otimes \beta_{H,R}^*(\tfrac1\rho\Omega(M)))
\label{hkp.2b}\end{equation}
where
$\df g = \infty$ at the boundary hypersurfaces 
$\bhs{tb}, 	\bhs{hff}, \bhs{stf}, \bhs{sf}, \bhs{tf},$ where we have
\begin{equation*}
\begin{gathered}
	\cG(\bhs{tb}) = \emptyset, \quad
	\cG(\bhs{hff}) = -h, \quad
	\cG(\bhs{stf}) = \emptyset, \\
	\cG(\bhs{sf}) = -m, \quad 
	\cG(\bhs{tf}) = -m,
\end{gathered}
\end{equation*}
while at $\bhs{hmf}$  and $\bhs{hbf}$,
$$
    \cG(\bhs{hmf})=0, \quad \df g(\bhs{hmf})>0,  \quad \cG(\bhs{hbf})=2, \quad \df g(\bhs{hbf})>2,
$$
and  at the remaining boundary hypersurfaces $\bhs{hlf}, \bhs{hrf}$, we have that $\cG = \emptyset,$ and
\begin{equation*}
	\df g(\bhs{hlf})=
	\df g(\bhs{hrf})= \nu_{\min} :=\nu_0+1 >1.
	\end{equation*}

By construction, the operator $G_0$ is such that 
\begin{equation*}
	(t\pa_t + tD^2_{\w,\eps})G_0 \in 
	\rho_{hbf}^{-2}\rho_{hlf}^{-2}\rho_{hff}\rho_{sf}\rho_{tf}\sB^{\cH/\df h}_{\phg}\sA^{-m-1}_-
		(HX_{\w,s};\Hom(E) \otimes \beta_{H,R}^*(\tfrac1\rho\Omega(M))),
\end{equation*}
where $\cH,$ $\df h$ are an index family and a weight function for $HX_{\w,s}$ equal to $\cG,$ $\df g$ except for 
\begin{equation}
   \cH(\bhs{hbf})=\emptyset,  \quad \df h(\bhs{hbf})>2.
\label{hkp.3}\end{equation}
Indeed, from the singularities of the coefficients in the lift of $(t\pa_t + tD^2_{\w,\eps})$, one would expect $\cH,\df h$ to be the same as $\cG,\df g$, even at $\bhs{hbf}$.  However, we know that  the leading term at $\bhs{hff}\cap\bhs{hbf}$  is annihilated  by the leading term of the lifted operator at $\bhs{hbf}$, that is, by $\ang{X}^{-1} \Delta_v \ang{X}^{-1}$.  Since the leading order of the lifted operator at $\bhs{hbf}$ does not depend on time, we can just choose $G_0$ so that $\left. G_0\right|_{\bhs{hbf}}$ is annihilated by 
$\ang{X}^{-1} \Delta_v \ang{X}^{-1}$, yielding the extra decay specified by  \eqref{hkp.3}.  Notice that a similar argument would also yield extra decay at $\bhs{hlf}$ if we knew that there was at least a partial polyhomogeneous expansion there.

\subsection{Improved parametrix}

The model problems for the subsequent terms in the expansion at $\bhs{sf}$ and $\bhs{tf}$ are easily solved using Fourier transform as in \cite[(7.58)-(7.63)]{MelroseAPS}, producing an improved parametrix $G_1$ satisfying
\begin{equation*}
\begin{gathered}
	G_1 \in 
	\sB^{\cG/\df g}_{\phg}\sA^{-m-1}_-
		(HX_{\w,s};\Hom(E) \otimes \beta_{H,R}^*(\tfrac1\rho\Omega(M))) \\
	\beta_L^*(t\pa_t + tD^2_{\w,\eps})G_1 \in 
	\rho_{hbf}^{-2}\rho_{hlf}^{-2}\rho_{hff}\rho_{sf}^{\infty}\rho_{tf}^{\infty}
	\sB^{\cH/\df h}_{\phg}\sA^{-m-1}_-
		(HX_{\w,s};\Hom(E) \otimes \beta_{H,R}^*(\tfrac1\rho\Omega(M))).
\end{gathered}
\end{equation*}
To improve the parametrix at $\bhs{hff},$ we can instead use the following lemma.

\begin{lemma}\label{lem:ImproveTff}
For each integer $k\ge -1$ and 
\begin{equation*}
	f \in \rho_{hff}^k\rho_{hbf}^{-2}\rho_{hlf}^{-2}\rho_{sf}^{\infty}\rho_{tf}^{\infty}
	\sB^{(\cH/\df h)}_{\phg}\sA^{-m-1}_-(RHX_{\w,s}; 
		\Hom(E) \otimes \beta_{H,R}^*(\tfrac1\rho\Omega(M))),
\end{equation*}
there exists 
\begin{equation*}
	u \in \rho_{hff}^{k+2}\rho_{sf}^{\infty}\rho_{tf}^{\infty}
	\sB^{(\cH/\df h)}_{\phg}\sA^{-m-1}_-
		(RHX_{\w,s}; \Hom(E) \otimes \beta_{H,R}^*(\tfrac1\rho\Omega(M)) ).
\end{equation*}
such that 
\begin{equation*}
	\left[f-(\pa_t + D^2_{\w,\eps})u\right]  \in \rho_{hff}^{k+1}\rho_{hbf}^{-2}\rho_{hlf}^{-2}\rho_{sf}^{\infty}\rho_{tf}^{\infty}
	\sB^{(\cH/\df h)}_{\phg}\sA^{-m-1}_-(RHX_{\w,s}; 
		\Hom(E) \otimes \beta_{H,R}^*(\tfrac1\rho\Omega(M))).
\end{equation*}
\end{lemma}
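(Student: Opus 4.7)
The plan is to reduce the equation to a heat problem on the fibers of $\bhs{hff}\to Y$, solve it using the scattering heat kernel from Theorem~\ref{hkp.1}, and extend smoothly off $\bhs{hff}$. Using $\rho_{hff}=\eps$ as a local boundary defining function, expand
\begin{equation*}
f = \rho_{hff}^k f_0 + \rho_{hff}^{k+1}\tilde f,
\qquad
f_0 := (\rho_{hff}^{-k}f)\big|_{\bhs{hff}},
\end{equation*}
so that $f_0$ inherits the partial polyhomogeneity, weights and infinite vanishing at the various faces meeting $\bhs{hff}$. Recall from \S\ref{sec:ModelHeatProbs} that, in the coordinates $(X,X',y,u,z,z',\sigma,\eps)$, the indicial family is
\begin{equation*}
N_{hff}(t\partial_t + tD^2_{\w,\eps})
= \tfrac12\sigma\partial_\sigma + \sigma^2 L_y,
\end{equation*}
where $L_y$ is the Hodge Laplacian on the product $\Sigma(Z_y)\times\bbR^h_u$ with metric $g_{\Sigma(Z_y)}+du^2$. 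Consequently the lift of $\partial_t+D^2_{\w,\eps}$ has leading behavior $\rho_{hff}^{-2}(\partial_s + L_y)$ at $\bhs{hff}$, with $s=\sigma^2$, so gaining two orders in $\rho_{hff}$ in $u$ amounts to solving a heat equation with source $f_0$ on each fiber.

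Fiber-by-fiber over $Y$, apply Duhamel's formula
\begin{equation*}
u_0(y,\cdot,s) := \int_0^s H^{\Sigma(Z_y)\times\bbR^h_u}(s-s')\, f_0(y,\cdot,s')\, ds',
\end{equation*}
where $H^{\Sigma(Z_y)\times\bbR^h_u}$ is the product of the scattering heat kernel $H^{\Sigma(Z_y)}$ of Theorem~\ref{hkp.1} with the Euclidean heat kernel in $\bbR^h_u$. Then $(\partial_s+L_y)u_0=f_0$ with vanishing initial data at $s=0$. Assumption~\ref{ur.1} guarantees the uniformity in $y\in Y$ of the construction, producing a family of kernels with bounds rather than strict polyhomogeneity in $y$. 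Finally, set $u = \rho_{hff}^{k+2}\tilde u_0$, where $\tilde u_0$ is any smooth extension of $u_0$ off $\bhs{hff}$ preserving the relevant partial expansions and the infinite vanishing at $\bhs{sf},\bhs{tf}$. By construction the $\rho_{hff}^k$-coefficient of $(\partial_t+D^2_{\w,\eps})u$ coincides with $f_0$, so the difference $f-(\partial_t+D^2_{\w,\eps})u$ vanishes to one order better at $\bhs{hff}$.

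The main obstacle is verifying that $u_0$ inherits the prescribed index sets and bounds at the boundary hypersurfaces of $\bhs{hff}$ meeting $\bhs{hmf}$, $\bhs{hlf}$, $\bhs{hrf}$ and $\bhs{hbf}$. This is carried out by a pushforward argument on the scattering heat space $H\Sigma(Z_y)_{\sc}$ analogous to the composition analysis of Theorem~\ref{thm:Composition}: one lifts $H^{\Sigma(Z_y)}$ and $f_0$ to a suitable triple space, notes that under Assumption~\ref{ur.1} the index data \eqref{hkp.1b} hold (in particular $\inf\cR_{\zf}\ge 2$, which is what makes the $s$-integral converge), and then reads off the resulting index sets at each corner via Melrose's pushforward theorem. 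The decay at $\bhs{hff}\cap\bhs{hbf}$ uses, as in the construction of $G_0$, the fact that the leading coefficient is annihilated by the indicial action $\ang{X}^{-1}\Delta_v\ang{X}^{-1}$ of Proposition~\ref{ws.8}; since that indicial operator is invertible by Assumption~\ref{ur.1}, one can adjust $u_0$ at $\bhs{hbf}$ to absorb the leading singularity and gain the additional order needed to match $\df h(\bhs{hbf})$.
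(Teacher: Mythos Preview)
Your approach is conceptually sound but takes a longer route than the paper. The paper's proof is essentially one line: take $u = G_1 \circ f$, where $G_1$ is the parametrix already constructed (solving the model problems at $\bhs{tf}$ and $\bhs{sf}$ to infinite order), and invoke the composition result Theorem~\ref{thm:Composition} to read off the membership of $u$ in the stated space. Since $(\partial_t + D^2_{\w,\eps})G_1 = \Id - A_1$ with $A_1$ one order better at $\bhs{hff}$, the convolution identity $(\partial_t + D^2_{\w,\eps})(G_1\circ f) = f - A_1\circ f$ immediately gives the improved error.

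Your fiberwise Duhamel integral $u_0$ is exactly the restriction of $G_1\circ f$ to $\bhs{hff}$, so the two approaches agree at the level of the model face. The difference is that the paper defines $u$ globally and lets the composition theorem do all the index-set bookkeeping, whereas you define $u_0$ on the face, extend by hand, and then must separately verify the behavior at every corner of $\bhs{hff}$ via a pushforward argument you only sketch. That pushforward argument would essentially reprove (a restriction of) Theorem~\ref{thm:Composition}, so you are duplicating work already done.

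Your final paragraph about adjusting $u_0$ at $\bhs{hbf}$ using the invertibility of $\ang{X}^{-1}\Delta_v\ang{X}^{-1}$ is misplaced. That device was used in the construction of $G_0$ to improve the \emph{error term} $(t\partial_t + tD^2_{\w,\eps})G_0$ at $\bhs{hbf}$, by choosing the restriction of $G_0$ there; here $u$ is determined by $f$ and no such freedom is available or needed. The gain of two orders at $\bhs{hbf}$ for $u$ relative to $f$ comes automatically from the smoothing effect of convolution with $G_1$ (whose index set at $\bhs{hbf}$ starts at order $2$), exactly as encoded in the $E^3_{hbf}$ formula of Theorem~\ref{thm:Composition}.
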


\begin{proof}
It suffices to take $u= G_1\circ f$ and use  Theorem~\ref{thm:Composition} to see that $u$ is the space given above.  Technically, we might have to take $\cH(\bhs{hmf})$ slightly bigger, but by taking $\df h (\bhs{hmf})>0$ smaller if needed, we may assume that $\cH(\bhs{hmf})$ remains unchanged.

\end{proof}

\begin{remark}
In the proof of Lemma \ref{lem:ImproveTff} we have used the composition result Theorem \ref{thm:Composition} which improves those in \cite{Albin:Heat, Sher:ACHeat} in that it includes the behavior at temporal infinity.
\end{remark}

Using this lemma to solve away successive terms at $\bhs{hff}$ we can construct, for any $\ell\geq 2,$ an improved parametrix
\begin{equation*}
\begin{gathered}
	G_\ell \in 
	\sB^{\cG/\df g}_{\phg}\sA^{-m-1}_-
		(HX_{\w,s};\Hom(E) \otimes \beta_{H,R}^*(\tfrac1\rho\Omega(M))),\\
	\beta_L^*(t\pa_t + tD^2_{\w,\eps})G_\ell \in 
	\rho_{hbf}^{-2}\rho_{hlf}^{-2}\rho_{hff}^{\ell}\rho_{sf}^{\infty}\rho_{tf}^{\infty}
	\sB^{\cH/\df h}_{\phg}\sA^{-m-1}_-
		(HX_{\w,s};\Hom(E) \otimes \beta_{H,R}^*(\tfrac1\rho\Omega(M))).
\end{gathered}
\end{equation*}
Asymptotically summing successive differences we can remove the error at $\bhs{hff}$ altogether,
\begin{equation*}
\begin{gathered}
	G_\infty \in 
	\sB^{\cH/\df h}_{\phg}\sA^{-m-1}_-
		(HX_{\w,s};\Hom(E) \otimes \beta_{H,R}^*(\tfrac1\rho\Omega(M))), \\
	\beta_L^*(t\pa_t + tD^2_{\w,\eps})G_\infty \in 
	\rho_{hbf}^{-2}\rho_{hlf}^{-2}\rho_{hff}^{\infty}\rho_{sf}^{\infty}\rho_{tf}^{\infty}
	\sB^{\cH/\df h}_{\phg}\sA^{-m-1}_-
		(HX_{\w,s};\Hom(E) \otimes \beta_{H,R}^*(\tfrac1\rho\Omega(M))).
\end{gathered}
\end{equation*}

Note that the error now vanishes to infinite order at all boundary hypersurfaces lying over $\{t=0\},$ so we can just as well view it as a distribution on a simpler space, $X^2_{b,s}\times \bbR^+,$
\begin{equation*}
	\beta_L^*(t\pa_t + tD^2_{\w,\eps})G_\infty \in 
	\rho_{hbf}^{-2}\rho_{hlf}^{-2} t^{\infty}
	\sB^{\cH/\df h}_{\phg}\sA^{-m-1}_-
		(X_{b,s}^2 \times \bbR^+;\Hom(E) \otimes \beta_{H,R}^*(\tfrac1\rho\Omega(M))).
\end{equation*}
This defines by convolution an operator on sections of $E$ over $X^2_s.$
In terms of the convolution product, $G_{\infty}$ satisfies
\begin{multline*}
	\beta_L^*(\pa_t + D^2_{\w,\eps})G_\infty = \Id - A \\
	\Mwith
	A \in 
	\rho_{hbf}^{-2}\rho_{hlf}^{-2}t^{\infty}
	\sB^{\cH/\df h}_{\phg}\sA^{-m-1}_-
		(X_{b,s}^2 \times \bbR^+;\Hom(E) \otimes \beta_{H,R}^*(\tfrac1\rho\Omega(M))).
\end{multline*}

\begin{lemma}
If $A \in \rho_{hbf}^{-2}\rho_{hlf}^{-2}t^{\infty} \sB^{\cH/\df h}_{\phg}\sA^{-m-1}_-(X_{b,s}^2 \times \bbR^+;\Hom(E) \otimes \beta_{H,R}^*(\tfrac1\rho\Omega(M)))$ then $\Id-A$ is invertible as an operator on $t^{\infty}\CI(X_s; E)$ with inverse $\Id - S$ for some $S$ in the same space as $A.$
\end{lemma}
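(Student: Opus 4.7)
The plan is to define $S$ via the formally convergent Neumann series
\[
   S = -\sum_{k=1}^{\infty} A^k,
\]
where the composition is the $t$-convolution implicit in the heat calculus (as in \eqref{eq:HeatComposition}), and to establish (i) that the series converges in the stated space and (ii) that its sum yields a two-sided inverse of $\Id - A$. The Volterra structure of the convolution (integration over $0 \le t' \le t$) is the key feature that makes this work despite the negative spatial weights at $\bhs{hbf}$ and $\bhs{hlf}$.

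Convergence of the formal series follows from the infinite-order vanishing of $A$ at $\{t=0\}$.  Viewing $A$ as a family $\{A(t,\eps)\}$ of smoothing operators on $X_s$, parametrized by $t \in \bbR^+$ and $\eps \in [0,1]$, the weight $t^{\infty}$ combined with the conormality of the kernel gives, for every $N \in \bbN$, a uniform bound $\|A(t,\eps)\|_{L^2_b \to L^2_b} \le C_N t^N$.  Iterating the Volterra convolution then yields the standard factorial estimate
\[
   \|A^k(t,\eps)\|_{L^2_b \to L^2_b} \le \frac{(Ct)^k}{k!},
\]
so $\sum_{k\ge 1} A^k$ converges absolutely in operator norm, uniformly on compact subsets of $\bbR^+_t$ and uniformly in $\eps$.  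The same estimates applied to $b$-derivatives of the kernel (which satisfy analogous bounds because $A$ is conormal with $b$-regularity in all variables) show that the sum converges in the corresponding space of conormal distributions.

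To upgrade this to membership in the stated partially polyhomogeneous space, we invoke Theorem~\ref{thm:Composition} inductively. Since $A$ vanishes to infinite order at the extra faces of $HX_{\w,s}$ not present in $RHX_{\w,s}$, it lifts trivially to a reduced-heat-space element and Theorem~\ref{thm:Composition} applies to each composition $A^{k+1} = A \circ A^k$.  The index-set formulas show that $E^k_{tb} = \emptyset$ is preserved (indeed, the infinite-order vanishing at $\bhs{tb}$ is strengthened at each stage), that the polyhomogeneous index set at $\bhs{hmf}$ remains under control thanks to $\inf \cH_{hmf} = 0$, and that the polyhomogeneous content at $\bhs{hbf}$ and $\bhs{hlf}$ stays uniformly bounded once the singular weights $\rho_{hbf}^{-2}\rho_{hlf}^{-2}$ are factored out. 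The formal compounding of these weights under composition is absorbed by the uniform $L^\infty$ estimate above: one takes truncated partial sums $S_K = -\sum_{k=1}^{K} A^k$, which are honest elements of the same space with bounded polyhomogeneous structure, and invokes a Borel-type asymptotic summation (as in \cite[Lemma~4.21]{ARS1}) to produce the limiting $S$ in the same calculus class with only conormal remainder.

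Once $S$ is constructed in the stated space, the identities $(\Id - A)(\Id - S) = \Id = (\Id - S)(\Id - A)$ follow from the convergent telescoping $(\Id - A)\sum_{k=0}^{K} A^k = \Id - A^{K+1}$ together with $A^{K+1} \to 0$ in operator norm on $t^{\infty}\CI$.  The main obstacle is precisely the index-set bookkeeping at $\bhs{hbf}$ and $\bhs{hlf}$: the naive composition formula makes the negative weights at these faces worse with each iteration, and only by combining the uniform factorial decay in $t$ with the Borel summation procedure can one verify that $S$ ends up in the same weighted space as $A$ rather than in one with progressively worse boundary behaviour.
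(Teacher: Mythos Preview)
Your overall strategy (Neumann/Volterra series with factorial convergence coming from the $t^{\infty}$ vanishing and the simplex structure of iterated convolutions) is exactly the paper's approach. The gap is in the index-set bookkeeping, and it is a real one.

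You correctly identify that the only issue is whether the weights at $\bhs{hbf}$ and $\bhs{hlf}$ deteriorate under composition, but you then resolve it incorrectly. You assert that ``the naive composition formula makes the negative weights at these faces worse with each iteration'' and propose to repair this by a Borel summation absorbing the loss into the $t^{\infty}$ factor. This does not work: temporal decay cannot improve spatial decay. If the weight at $\bhs{hlf}$ truly dropped by a fixed amount at each step, the sum $S$ would not lie in the stated space regardless of how fast the series converges in $t$; a Borel procedure only controls asymptotic expansions, not a genuine loss of conormal order.

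What actually happens is that the weights do \emph{not} deteriorate. The relevant entries in the composition formula (Theorem~\ref{thm:Composition}) that could cause trouble are the cross-terms $E^1_{hrf}+E^2_{hlf}$ (feeding into $\bhs{hmf}$) and $E^1_{hlf}+E^2_{hrf}$ (feeding into $\bhs{hbf}$). After factoring out $\rho_{hlf}^{-2}$, the effective weights at $\bhs{hlf}$ and $\bhs{hrf}$ are $\nu_{\min}-2$ and $\nu_{\min}$ respectively, so these cross-terms contribute weight $2\nu_{\min}-2$. Since $\nu_{\min}=\nu_0+1>1$ (this is exactly where Assumption~\ref{ur.1} enters via Proposition~\ref{specb.2}), one has $2\nu_{\min}-2>0$, and after possibly shrinking $\df h(\bhs{hmf})$ and $\df h(\bhs{hbf})$ the space is \emph{closed} under composition. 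Once you know this, each $A^k$ lies in the same fixed space, the factorial bound in a weighted $\cC^0$-norm (and similarly after applying tangent $b$-vector fields and restricting to $\bhs{hmf}$) gives convergence there, and no Borel argument is needed.
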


\begin{proof}
Since $\df h(\bhs{hlf})-2+ \df h(\bhs{hrf})=2\nu_{\min}-2>0$, notice by Theorem~\ref{thm:Composition}  that taking the weight functions $\df h$ sufficiently small, we can assume that 
the space 
\begin{equation}
\rho_{hbf}^{-2}\rho_{hlf}^{-2}t^{\infty} \sB^{\cH/\df h}_{\phg}\sA^{-m-1}_-(X_{b,s}^2 \times \bbR^+;\Hom(E) \otimes \beta_{H,R}^*(\tfrac1\rho\Omega(M)))
\label{hkp.4}\end{equation}
is closed under composition.  It follows that the powers $A^k$ with respect to the convolution product are defined for all $k \in \bbN$.  Proceeding as in \cite[(7.66)]{MelroseAPS} and using the composition result for $b$-surgery operators of \cite[(105)]{mame1} , we thus see that for fixed $T>0$ and a choice of weighted $\cC^0$-norm of \eqref{hkp.4},  there exists positive constant $C$ and $K$ such that the weighted $\cC^0$ norm of $A^k$ is bounded by 
\begin{equation*}
	 K \frac{(Ct)^{k-1}}{(k-1)!} \quad \mbox{for} \; t\le T.
\end{equation*}
Thus the Volterra series inverting $\Id - A$ converges uniformly for $t \leq T$ and any $T \in \bbR.$   We can  apply a similar argument to the restriction of $A^k$ to $\bhs{hmf}$, and get at the same time control on $A^k-A^k\rest{\bhs{hmf}}$.  
We may also differentiate by a vector field tangent to the boundary hypersurfaces of $X^2_{b,s} \times \bbR^+$ and then apply the same argument, so that we can conclude that the Volterra series converges within the space \eqref{hkp.4}.
\end{proof}

We can now finally complete our uniform construction of the heat kernel.

\begin{theorem}\label{uhk.1}
Assume that Assumption~\ref{ur.1} holds. Let $\cG, \df g$ be the index set and weight function of \eqref{hkp.2b}.
The heat kernel of the twisted wedge surgery Hodge Laplacian is given by
\begin{equation*}
	e^{-tD_{\w,\eps}^2} = G_{\infty} (\Id - S) \in 
	\sB^{\cG/\df g}_{\phg}\sA^{-m-1}_-
		(HX_{\w,s};\Hom(E) \otimes \beta_{H,R}^*(\tfrac1\rho\Omega(M))).
\end{equation*}
In particular, the leading order terms at $\bhs{tf}$, $\bhs{sf}$ and $\bhs{hff}$ are given by \eqref{hk.9}, \eqref{leadingmodel:sf} and \eqref{leadingmodel:hff}, while the restriction at $\bhs{hmf}$ gives the wedge heat kernel of the Hodge Laplacian $D_{\w,0}^2$.
\end{theorem}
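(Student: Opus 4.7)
The plan is to verify that $G_\infty(\Id - S)$ is the heat kernel by checking (i) it solves the heat equation, (ii) it has the correct initial condition, (iii) it lies in the claimed space, and (iv) it has the stated leading behaviour at each face. Uniqueness then identifies it with $e^{-tD^2_{\w,\eps}}$.

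For (i), since $(\pa_t + D^2_{\w,\eps})G_\infty = \Id - A$ as a convolution identity and $(\Id - A)(\Id - S) = \Id$ by construction of $S$, composing yields $(\pa_t + D^2_{\w,\eps})G_\infty(\Id - S) = \Id$ on $t^\infty\CI(X_s;E)$. For (ii), the initial condition is encoded in the leading term at $\bhs{tf}$: the model $L_{tf}$ in \eqref{hk.9} is precisely the Euclidean heat kernel in the fibers of $\overline{{}^{e,s}TX_s}$, so $G_\infty u \to u$ as $t\searrow 0$ for $u\in \CI(X_s;E)$, and since $S$ vanishes to infinite order at $\bhs{tb}$ the same holds for $G_\infty(\Id - S)$.

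For (iii), I would apply Theorem~\ref{thm:Composition} to the convolution $G_\infty \circ S$. Since $S$ vanishes to infinite order at $\bhs{tb}$, $\bhs{stf}$, $\bhs{hff}$, $\bhs{sf}$, $\bhs{tf}$ (inherited from $A$, which vanished to infinite order at all faces over $\{t=0\}$), the composition formula shows that the index sets and weights of $G_\infty \circ S$ are dominated by those of $G_\infty$ at every face: at $\bhs{tf}, \bhs{sf}, \bhs{stf}$ and $\bhs{hff}$ the contribution from $S$ is of infinite order, while at $\bhs{hmf}, \bhs{hbf}, \bhs{hlf}, \bhs{hrf}$ the weights from $S$ are non-negative and provide only strict improvement. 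Hence $G_\infty - G_\infty \circ S \in \sB^{\cG/\df g}_{\phg}\sA^{-m-1}_-(HX_{\w,s};\Hom(E)\otimes \beta_{H,R}^*(\tfrac1\rho\Omega(M)))$.

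For (iv), since $S$ is of infinite order at $\bhs{tb}$ and hence at the faces $\bhs{tf}, \bhs{sf}, \bhs{hff}, \bhs{stf}$ lying in its closure, the leading-order terms of $G_\infty(\Id-S)$ at these faces coincide with those of $G_\infty$, which by construction are $L_{tf}, L_{sf}, L_{hff}$. At $\bhs{hmf}$, the restriction of $G_\infty$ was built from the wedge heat kernel model of \cite{Mazzeo-Vertman} (compatibly pasted with $L_{hff}$ at $\bhs{hmf}\cap\bhs{hff}$), and the contribution of $G_\infty \circ S$ at $\bhs{hmf}$ is likewise controlled by the composition formula: the infinite-order vanishing of $S$ at $\bhs{tb}$ means the restriction to $\bhs{hmf}$ is a convolution of wedge heat operators on $\bhs{\sm}$, whose sum gives a true solution to the wedge heat equation. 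Finally, uniqueness of the heat kernel on $X_s$ for $\eps>0$ (where $D^2_{\w,\eps}$ is an elliptic operator on a compact manifold) and the self-adjointness of the Friedrichs extension of $D^2_{\w,0}$ on each copy of $\bhs{\sm}$ identify $G_\infty(\Id - S)$ with $e^{-tD^2_{\w,\eps}}$ uniformly as $\eps\searrow 0$.

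The main obstacle is controlling the composition $G_\infty \circ S$ at the face $\bhs{hmf}$ and at the corner $\bhs{hmf}\cap\bhs{hff}$, because the weights there come from the eigenvalue $\nu_0$ of the vertical family and one must verify that the interplay between the Volterra iteration at $\bhs{hmf}$ and the infinite-order vanishing at $\bhs{hff}$ is compatible with Theorem~\ref{thm:Composition}. The key ingredient that makes it work is Assumption~\ref{ur.1}, which ensures $\nu_{\min}>1$, so that the weights $\df g(\bhs{hlf}) = \df g(\bhs{hrf}) = \nu_{\min}$ add to strictly greater than $2$ and the Volterra iteration converges within the class while preserving all the claimed index sets and leading models.
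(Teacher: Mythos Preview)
Your proposal is essentially correct and follows the same approach as the paper: verify $(\pa_t+D^2_{\w,\eps})G_\infty(\Id-S)=(\Id-A)(\Id-S)=\Id$, apply Theorem~\ref{thm:Composition} to control $G_\infty\circ S$, and read off the leading terms from the $t^\infty$ vanishing. Two minor imprecisions: $S$ lives on the simpler space $X^2_{b,s}\times\bbR^+$ (with $t^\infty$ decay), not on $HX_{\w,s}$, so one should not speak of it vanishing at $\bhs{hff},\bhs{sf},\bhs{tf}$ directly; and the $\nu_{\min}>1$ bound you invoke was already used to sum the Volterra series in the preceding lemma, not here. For the restriction to $\bhs{hmf}$, the paper's argument is simply that restricting the identity $(\pa_t+D^2_{\w,\eps})G_\infty(\Id-S)=\Id$ to $\bhs{hmf}$ gives the wedge heat equation, and essential self-adjointness of $D^2_{\w,0}$ (Corollary~\ref{wss.6}, not the Friedrichs extension) forces the restriction to be the unique wedge heat kernel.
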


\begin{proof}
The operator $G_{\infty}(\Id-S)$ satisfies the wedge surgery heat equation with initial condition given by the (lift of the) identity since
\begin{equation}
	\beta_{H,L}(\pa_t + D_{\w,\eps}^2)(G_{\infty}(\Id-S)) = (\Id-A)(\Id-S) = \Id.
\label{hkp.6}\end{equation}
An application of the composition result in Theorem \ref{thm:Composition}
shows that the composition $G_{\infty} S$ is an element of 
\begin{equation}
	t^{\infty}
	\sB^{\cG/\df g}_{\phg}\sA^{-m-1}_-
		(X_{b,s}^2 \times \bbR^+;\Hom(E) \otimes \beta_{H,R}^*(\tfrac1\rho\Omega(M))).
\label{hkp.7}\end{equation}
In particular, it follows from \eqref{hkp.7} that $e^{-tD_{\w,\eps}^2}$ and $G_{\infty}$ have the same asymptotics at $\bhs{tf}$, $\bhs{sf}$ and $\bhs{hff}$.  Since by construction,
$G_{\infty}$ has the same leading terms as $G_0$ at these faces, this gives the claimed leading terms of $e^{-tD_{\w,\eps}^2}$ at these faces.  Clearly, the restriction of \eqref{hkp.6} to $\bhs{hmf}$ shows the  restriction of $e^{-tD_{\w,\eps}^2}$ is indeed the heat kernel of $D^2_{\w,0}$ with respect to its unique self-adjoint extension.  Indeed, recall that because of Assumption~\ref{ur.1} and Corollary~\ref{wss.6}, the operator $D^2_{\w,0}$ is essentially self-adjoint.
\end{proof}

\section{Wedge surgery and the trace of the heat kernel}  \label{wt.0}

In view of Mercer's theorem (see, e.g., \cite{Brislawn} for a discussion), the heat kernel is trace-class as its restriction to the diagonal is integrable and we have
\begin{equation*}
	\Tr(e^{-tD_{\w,\eps}^2}) = \int_M \cK_{e^{-tD_{\w,\eps}^2}}(\zeta, \zeta, \eps) \; d\zeta.
\end{equation*}
This trace is a function of $t$ and $\eps$ and the advantage of the integral expression is that we can make use of our description of the heat kernel from Theorem \ref{uhk.1} to determine the asymptotic behavior in $t$ and $\eps.$ To carry this out, our first step is to understand the lift $\wt \Delta_{HX}$ of the diagonal to the heat space. 

\subsection{The lifted diagonal}

The wedge surgery heat space $HX_{\w,s}$ is constructed from $M \times M  \times [0,1)_\eps \times  \bbR^+_{\tau}$ by a sequence of blow-ups and so comes equipped with a blow-down map,
\begin{equation*}
	HX_{\w,s} \xlra{\beta_{H}} M \times M \times [0,1)_\eps \times \bbR^+_{\tau}.
\end{equation*}
The interior lift of the diagonal is the closure in $HX_{\w,s}$ of the preimage of \linebreak $\diag_{M}\times (0,1)_{\eps}\times (0,\infty)_{\tau},$ and can be identified with a space constructed from $M \times [0,1)_{\eps}\times \bbR^+_{\tau} ,$
\begin{equation*}
	\wt \Delta_{HX} 
	= [M \times [0,1)_{\eps} \times \bbR^+_{\tau} ; 
	H \times \{0\}\times \bbR^+_{\tau};
	H \times \{0\}\times \{0\};
	M \times \{0\} \times \{0\}].
\end{equation*}
%
%
%
We denote the blow-down map to $X_s \times \bbR^+_{\tau} = [M\times [0,1)_{\eps} \times \bbR^+_{\tau} ; H \times \{0\}\times \bbR^+_{\tau} ]$ by
\begin{equation*}
	\beta_{\wt \Delta, (1)}: \wt \Delta_{HX} \lra X_s \times \bbR^+_{\tau}
\end{equation*}
and the boundary hypersurfaces of $\wt\Delta_{HX}$ by
\begin{equation*}
\begin{gathered}
	\bhs{tf}(\wt\Delta_{HX}) = \beta_{\wt \Delta, (1)}^{\sharp}(\{\tau=0\}), \quad
	\bhs{hmf}(\wt\Delta_{HX}) = \beta_{\wt \Delta, (1)}^{\sharp}(\bhs{\sm}(X_s)), \\
	\bhs{hbf}(\wt\Delta_{HX}) = \beta_{\wt \Delta, (1)}^{\sharp}(\bhs{\bs}(X_s)), \quad
	\bhs{hff}(\wt\Delta_{HX}) = \beta_{\wt \Delta, (1)}^{-1}(\bhs{\bs}\times \{ \tau=0\}), \\
	\bhs{sf}(\wt\Delta_{HX}) = \beta_{\wt \Delta, (1)}^{\sharp}(\{\eps=\tau=0\}).
\end{gathered}
\end{equation*}

This is precisely the same space that came up in \cite[\S7.2]{ARS1} when studying the behavior of the trace of the heat kernel under formation of fibered cusps. (The notation $\beta_{\wt \Delta, (1)}$ is used to be consistent with \cite[\S7.2]{ARS1}.)
As noted there, if we set
\begin{equation*}
	\sE\sT = [\bbR^+_{\tau} \times [0,1)_{\eps}; \{\tau=\eps=0\}],
\end{equation*}
then there is a natural lift of the map $M \times \bbR^+_{\tau} \times [0,1)_{\eps} \lra \bbR^+_{\tau} \times [0,1)_{\eps}$ to a b-fibration
\begin{equation*}
	\pi_{\eps,\tau}: \wt\Delta_{HX} \lra \sE\sT.
\end{equation*}

Let $\beta_{\eps,\tau}:\sE\sT \lra \bbR^+_{\tau} \times [0,1)_{\eps}$ denote the blow-down map, and denote the boundary hypersurfaces of $\sE\sT$ by
\begin{equation*}
	\bhs{tf}(\sE\sT) = \beta_{\eps,\tau}^\sharp(\{\tau=0\}), \quad
	\bhs{hff}(\sE\sT) = \beta_{\eps,\tau}^{-1}(0,0), \quad
	\bhs{af}(\sE\sT) = \beta_{\eps,\tau}^\sharp(\{\eps=0\}).
\end{equation*}
%

\subsection{The trace of the heat kernel}

The trace of the heat kernel is given by
\begin{equation*}
	\Tr(e^{-tD_{\w,\eps}^2}) = 
	(\beta_{\eps,\tau}\circ\pi_{\eps,\tau})_*(\tr\cK_{e^{-tD_{\w,\eps}^2}}|_{\wt\Delta_{HX}}).
\end{equation*}
Note that
\begin{multline*}
	e^{-tD_{\w,\eps}^2} \in 
	\sB^{\cG/\df g}_{\phg}\sA^{-m-1}_-
		(HX_{\w,s};\Hom(E) \otimes \beta_{H,R}^*(\tfrac1\rho\Omega(M))) \\
	\implies
	\tr\cK_{e^{-tD_{\w,\eps}^2}}|_{\wt\Delta_{HX}} \in
	\sB^{\wt {\cG}/\wt{\df g}}_{\phg}\sA^{-m-1}_-
		(\wt\Delta_{HX};\beta_{\wt\Delta, (1)}^*(\tfrac1\rho\Omega(M))) 
\end{multline*}
where $\wt \cG = \cG\rest{\wt\Delta_{HX}},$ $\wt{\df g} = \df g\rest{\wt\Delta_{HX}}$ are given by
\begin{equation*}
\begin{gathered}
\begin{multlined}
	\wt \cG(\bhs{tf}(\wt\Delta_{HX})) = \wt \cG(\bhs{sf}(\wt\Delta_{HX})) = -m, \quad
	\wt \cG(\bhs{hff}(\wt\Delta_{HX})) = -h, \\
	\wt{\df g}(\bhs{tf}(\wt\Delta_{HX})) = \wt{\df g}(\bhs{sf}(\wt\Delta_{HX})) = \wt{\df g}(\bhs{hff}(\wt\Delta_{HX})) = \infty,
\end{multlined}\\
	\wt \cG(\bhs{hmf}(\wt\Delta_{HX})) = 0 , \quad
	\wt{\df g}(\bhs{hmf}(\wt\Delta_{HX})) >0, \\
	\wt \cG(\bhs{hbf}(\wt\Delta_{HX})) = 2, \quad
	\wt{\df g}(\bhs{hbf}(\wt\Delta_{HX})) >2.
\end{gathered}
\end{equation*}

\begin{proposition}\label{prop:TrsEsT}
The trace of the heat kernel is a conormal function on $\sE\sT$ with a partial asymptotic expansion,
\begin{equation*}
	\beta_{\eps,\tau}^*\Tr(e^{-tD_{\w,\eps}^2}) \in 
	\sB^{\cJ/\df j}_{\phg}\sA^{-m-1}_-(\sE\sT),
\end{equation*}
where
\begin{equation*}
\begin{gathered}
	\cJ(\bhs{tf}(\sE\sT)) = -m, \quad
	\df j(\bhs{tf}(\sE\sT)) = \infty, \\
	\cJ(\bhs{hff}(\sE\sT)) = -h\bar\cup -m, \quad
	\df j(\bhs{hff}(\sE\sT)) = \infty, \\
	\cJ(\bhs{af}(\sE\sT)) = 0 \bar\cup 2 , \quad
	\df j(\bhs{af}(\sE\sT)) >0.
\end{gathered}
\end{equation*}
\end{proposition}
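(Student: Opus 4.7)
The strategy is to apply Melrose's pushforward theorem to the $b$-fibration $\pi_{\eps,\tau}: \wt\Delta_{HX} \to \sE\sT$ applied to the restriction of the heat kernel to the lifted diagonal, whose polyhomogeneous structure on $\wt\Delta_{HX}$ is provided by Theorem~\ref{uhk.1}.

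First, I would determine how the boundary hypersurfaces of $\wt\Delta_{HX}$ project onto those of $\sE\sT$. Since $\pi_{\eps,\tau}$ retains only the parameters $\eps$ and $\tau$ (after blow-ups), the faces project as follows: $\bhs{tf}(\wt\Delta_{HX})$ projects into $\bhs{tf}(\sE\sT)$ (and partially into $\bhs{hff}(\sE\sT)$ at the corner); $\bhs{sf}(\wt\Delta_{HX})$ and $\bhs{hff}(\wt\Delta_{HX})$ project into $\bhs{hff}(\sE\sT)$; and $\bhs{hmf}(\wt\Delta_{HX})$, $\bhs{hbf}(\wt\Delta_{HX})$ project into $\bhs{af}(\sE\sT)$ (partially into $\bhs{hff}(\sE\sT)$). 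Each entry of the exponent matrix is $0$ or $1$, since all the blow-ups defining $\wt\Delta_{HX}$ and $\sE\sT$ are standard polar blow-ups of smooth $p$-submanifolds.

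Second, I would apply the pushforward theorem to compute the extended union $\bar\cup$ of the contributions at each face of $\sE\sT$. At $\bhs{tf}(\sE\sT)$, only $\bhs{tf}(\wt\Delta_{HX})$ contributes non-trivially in the interior, with index starting at $-m$ and weight $\infty$, yielding $\cJ(\bhs{tf}(\sE\sT)) = -m$ and $\df j(\bhs{tf}(\sE\sT)) = \infty$. At $\bhs{af}(\sE\sT)$, both $\bhs{hmf}$ (index $0$, weight $>0$) and $\bhs{hbf}$ (index $2$, weight $>2$) of $\wt\Delta_{HX}$ contribute, giving $\cJ = 0 \bar\cup 2$ and $\df j > 0$. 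At $\bhs{hff}(\sE\sT)$, all the faces meeting the corner $\{\tau = \eps = 0\}$ contribute, but the leading behaviors are dictated by the smallest indices: $-h$ from $\bhs{hff}(\wt\Delta_{HX})$ and $-m$ from both $\bhs{sf}$ and $\bhs{tf}$ of $\wt\Delta_{HX}$, yielding $\cJ = -h \bar\cup -m$; since $\wt{\df g} = \infty$ at these three faces, one verifies $\df j = \infty$.

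The integrability condition needed for the pushforward theorem is guaranteed by the infinite-order vanishing of $e^{-tD_{\w,\eps}^2}$ at the Euclidean-type faces $\bhs{tf}, \bhs{sf}, \bhs{hff}$ of $\wt\Delta_{HX}$, where the Gaussian decay provides rapid decay in the fiber-integral directions. The conormal bound with weight $-m-1$ at all faces of $\sE\sT$ is inherited from the corresponding bound on $\wt\Delta_{HX}$ together with the standard shift by fiber dimension under integration. The main technical obstacle will be the careful bookkeeping at $\bhs{hff}(\sE\sT)$, where several boundary hypersurfaces of $\wt\Delta_{HX}$ meet; one must ensure that the contributions from $\bhs{hmf}$ and $\bhs{hbf}$, which carry only finite weights $\wt{\df g}$, are dominated in the extended union by the infinite-order expansions coming from $\bhs{tf}, \bhs{sf}, \bhs{hff}$, so that the weight $\df j(\bhs{hff}(\sE\sT)) = \infty$ is indeed preserved.
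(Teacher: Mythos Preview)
Your approach via Melrose's pushforward theorem for the $b$-fibration $\pi_{\eps,\tau}$ is exactly what the paper does, but two points need correction.

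First, the exponent matrix is not what you describe. The faces $\bhs{hmf}(\wt\Delta_{HX})$ and $\bhs{hbf}(\wt\Delta_{HX})$ map \emph{only} to $\bhs{af}(\sE\sT)$, and $\bhs{tf}(\wt\Delta_{HX})$ maps only to $\bhs{tf}(\sE\sT)$; none of these has a nonzero exponent over $\bhs{hff}(\sE\sT)$. The only faces of $\wt\Delta_{HX}$ sitting over $\bhs{hff}(\sE\sT)$ are $\bhs{hff}(\wt\Delta_{HX})$ and $\bhs{sf}(\wt\Delta_{HX})$, both of which carry $\wt{\df g}=\infty$. Hence $\df j(\bhs{hff}(\sE\sT))=\infty$ is immediate and your ``main technical obstacle'' simply does not arise. (Faces meeting a corner do not all contribute to the index set at the face obtained by blowing up that corner; only those with nonzero entry in the exponent matrix do.)

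Second, you skip the density bookkeeping, which is the actual computational content. The pushforward theorem applies to densities, and the paper multiplies by $\beta_{\wt\Delta,(1)}^*(\mu_{\bbR^+_\tau\times[0,1)_\eps})$ so as to work with an element of $\Omega(\wt\Delta_{HX})$. The conversion of $\tfrac{1}{\rho}\mu_M\,d\tau\,d\eps$ into a smooth density on $\wt\Delta_{HX}$ introduces the factor $(\rho_{hbf}\rho_{hff})^{-1}\rho_{hbf}\rho_{sf}\rho_{hff}^2 = \rho_{sf}\rho_{hff}$: the $(\rho_{hbf}\rho_{hff})^{-1}$ comes from $\tfrac{1}{\rho}$, and the Jacobians of the three successive blow-ups contribute $\rho_{hbf}$, $\rho_{hff}^2$ (one power from the blow-up itself and one because the bdf $\rho_{\bs}$ from the first blow-up also vanishes at $\bhs{hff}$), and $\rho_{sf}$. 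After pushforward one lands in $\rho_{hff}\sB^{\cJ/\df j}_{\phg}\sA^{-m-1}_-(\sE\sT;\Omega(\sE\sT))$, and the identity $\rho_{hff}\Omega(\sE\sT)=\beta_{\eps,\tau}^*\Omega(\bbR^+_\tau\times[0,1)_\eps)$ lets one cancel the auxiliary density. Your ``standard shift by fiber dimension'' is too vague to substitute for this computation; the fact that the shifts ultimately cancel here is a feature of the particular density $\tfrac{1}{\rho}\mu_M$ and would not be obvious without carrying it out.
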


\begin{proof}
As in \cite[\S7.2]{ARS1}, let us write $\cK_{e^{-tD_{\w,\eps}^2}} = \wt \cK_{e^{-tD_{\w,\eps}^2}} \beta_{H,R}^*(\tfrac1\rho\mu_M)$ and multiply by $\beta_{\wt\Delta,(1)}^*(\mu_{\bbR^+_{\tau}\times [0,1)_{\eps}})$ to get
\begin{multline*}
	(\tr\cK_{e^{-tD_{\w,\eps}^2}}\rest{\wt\Delta_{HX}})\beta_{\wt\Delta,(1)}^*(\mu_{\bbR^+_{\tau}\times [0,1)_{\eps}}) \\
	= (\tr\wt \cK_{e^{-tD_{\w,\eps}^2}}\rest{\wt\Delta_{HX}})
	\beta_{\wt\Delta,(1)}^*(\tfrac1\rho\mu_{M \times \bbR^+_{\tau}\times [0,1)_{\eps}}) \\
	\in (\rho_{hbf}\rho_{hff})^{-1}(\rho_{hbf}\rho_{sf}\rho_{hff}^2)
	\sB^{\wt {\cH}/\wt{\df h}}_{\phg}\sA^{-m-1}_-
		(\wt\Delta_{HX};\Omega(\wt\Delta_{HX})).
\end{multline*}
If we push forward along $\pi_{\eps,\tau},$ then using \cite[Theorem 5]{me1} we get an element of
\begin{equation*}
	\rho_{hff}\sB^{\cJ/\df j}_{\phg}\sA^{-m-1}_-
		(\sE\sT;\Omega(\sE\sT)),
\end{equation*}
with $\cJ$ and $\df j$ as above. Finally we note that $\rho_{hff}\Omega(\sE\sT) = \beta_{\eps,\tau}^*\Omega(\bbR^+_{\tau}\times [0,1)_{\eps}),$ so we can cancel off the density we multiplied by, and obtain the result.
\end{proof}

\subsection{Symmetry for even metrics}

Recall that on a closed manifold, the small-time asymptotic expansion of the heat kernel
\begin{equation*}
	e^{-t\Delta} \sim t^{-m/2} \sum_{k\geq 0} a_{k} t^{k/2}
\end{equation*}
simplifies upon restricting to the diagonal,
\begin{equation*}
	e^{-t\Delta}\rest{\diag} \sim t^{-m/2} \sum_{k\geq 0} a_{2k} t^{k}.
\end{equation*}
This is shown in \cite[Chapter 7]{MelroseAPS} to follow from invariance of the model problems at $\bhs{tf}$ under reflection in the fibers. This argument applies to the wedge surgery heat kernel at both $\bhs{tf}$ and $\bhs{sf}$ so that we can replace the index sets at $\bhs{tf}(\sE\sT)$ and $\bhs{hff}(\sE\sT)$ from Proposition \ref{prop:TrsEsT} by
\begin{equation*}
	\cJ'(\bhs{tf}(\sE\sT)) = -m + 2\bbN_0, \quad
	\cJ'(\bhs{hff}(\sE\sT)) = -h\bar\cup (-m+2\bbN_0).
\end{equation*}
We now set out a more involved argument that allows us to further simplify the index set at $\bhs{hff}(\sE\sT).$

Analogously to \cite[\S7.3]{ARS1}, we consider a class of metrics for which the trace of the heat kernel simplifies. The class of metrics will depend on the choice of a tubular neighborhood $\sT = \mathrm{Tub}(H) = (-1,1)_x \times H,$ which we fix. Notice first that a product-type $\ew$-metric naturally induces on  $\beta^{-1}_s(\sT)\subset X_s$ a decomposition in terms of horizontal and vertical forms with respect to the fiber bundle $\beta_s^{-1}(\sT\times [0,1]_{\eps})\to Y$ induced $\phi: H\to Y$,
\begin{equation}
   \left. {}^{\ew} T^*X_s\right|_{\beta_s^{-1}(\sT\times [0,1]_{\eps})}= {}^{\ew}T^*_HX_s \oplus {}^{\ew}T^*_VX_s.            
\label{dc.1}\end{equation}

\begin{definition} An {\bf even $\ew$-metric} is a wedge surgery metric which in $\beta^{-1}_s(\sT\times[0,1]_{\eps})$ differs from a product-type $\ew$-metric by   elements of 
$$
\rho^2\CI(\beta^{-1}_s(\sT\times[0,1]_{\eps});{}^{\ew}T^*_HX_s\otimes {}^{\ew}T^*_HX_s)\quad \mbox{and} \quad \rho^2\CI(\beta^{-1}_s(\sT\times[0,1]_{\eps});{}^{\ew}T^*_VX_s\otimes {}^{\ew}T^*_VX_s)
$$  having only even powers of $\rho$ in their expansion at $\bhs{\bs}$.  
   If $g_{\ew}$ is a wedge surgery metric that differs from an even wedge surgery metric by $\rho^{\ell}$ times a smooth section of ${}^{\ew}T^*X_s \otimes {}^{\ew}T^*X_s,$ we say that $g_{\ew}$ is an {\bf even $\ew$ metric to order $\ell.$}
\label{em.1}\end{definition}

\begin{definition}
A bundle metric on a flat bundle $F$ is said to be even in $\rho$ if its Taylor expansion at $\bhs{\bs},$ in a collar neighborhood compatible with $\sT$ and a flat trivialization of $F,$ has only even powers of $\rho.$
\label{em.2}\end{definition}
Let 
\begin{equation}
\sB^{\cG/\df g}_{\phg,\even}\sA^{-m-1}_-
		(HX_{\w,s};\Hom(E) \otimes \beta_{H,R}^*(\tfrac1\rho\Omega(M)))
\label{em.3}\end{equation}
be the subspace of  $\sB^{\cG/\df g}_{\phg}\sA^{-m-1}_-
		(HX_{\w,s};\Hom(E) \otimes \beta_{H,R}^*(\tfrac1\rho\Omega(M)))$ consisting of elements $\kappa$ having an expansion at $\bhs{hff}$ of the form
\begin{equation}
   \rho_{hff}^h\kappa\sim \sum_{j=0}^{\infty}  \rho_{hff}^j\kappa_j
\label{em.4}\end{equation}
with $\kappa_j$ a conormal section on $\bhs{hff}$ such that in the  coordinates \eqref{coords:interiorhff}, we have that 
\begin{equation}
  \kappa_j(X, X',y, u, z,z')= (-1)^{j+N_{(H\times \bbR)/Y}}\kappa_j(X,X',y,-u,z,z'),
\label{em.5}\end{equation}
where $N_{(H\times \bbR)/Y}$ is the number operator giving the shift in vertical degree induced by $\kappa_j$  with respect to the fibered bundle 
$$
\phi\circ \pr_1:H\times \bbR\to Y,
$$ 
where $\pr_1:H\times \bbR\to H$ is the projection on the first factor. 
Similarly, we let 
\begin{equation}
\sB^{\cG/\df g}_{\phg,\odd}\sA^{-m-1}_-
		(HX_{\w,s};\Hom(E) \otimes \beta_{H,R}^*(\tfrac1\rho\Omega(M)))
\label{em.6}\end{equation}
be the subspace of $\sB^{\cG/\df g}_{\phg}\sA^{-m-1}_-
		(HX_{\w,s};\Hom(E) \otimes \beta_{H,R}^*(\tfrac1\rho\Omega(M)))$ consisting of elements $\kappa$ having an expansion at $\bhs{hff}$ of the form \eqref{em.4}
with 
\begin{equation}
  \kappa_j(X, X',y, u, z,z')= (-1)^{j+1+N_{(H\times \bbR)/Y}}\kappa_j(X,X',y,-u,z,z').
\label{em.7}\end{equation}

\begin{proposition}
If  $g_{\ew}$ and $g_F$ are even metrics, the heat kernel of $D^2_{\w,\eps}$ lies in \eqref{em.3}.  If $g_{\ew}$ and $g_F$ are only even to order $\ell$, then the heat kernel of 
$D^2_{\w,e}$ is in \eqref{em.3} up to a term in $\rho_{hff}^{\ell}\sB^{\cG/\df g}_{\phg}\sA^{-m-1}_-
		(HX_{\w,s};\Hom(E) \otimes \beta_{H,R}^*(\tfrac1\rho\Omega(M)))$.  		
\label{em.7}\end{proposition}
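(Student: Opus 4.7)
The plan is to exploit a graded reflection symmetry at $\bhs{hff}$, following the template used to prove the absence of half-integer powers in the diagonal small-time asymptotic expansion of the heat kernel on a closed manifold (see \cite[Chapter~7]{MelroseAPS}). In a neighborhood of $\bhs{hff}$, define the involution $\iota$ on $HX_{\w,s}$ which in the coordinates \eqref{coords:interiorhff} acts by $u\mapsto -u$ and fixes $(X, X', y, z, z', \sigma)$. Combined with the canonical sign action $du^i\mapsto -du^i$ on horizontal forms (using the horizontal/vertical decomposition of ${}^{\w,\eps}T^*X_s$ from Definition~\ref{em.1}), this gives a lift $\tilde\iota$ to sections of $\Hom(E)\otimes \beta_{H,R}^*(\rho^{-1}\Omega(M))$ near $\bhs{hff}$. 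The parity condition \eqref{em.5} can then be reformulated as $\tilde\iota^*\kappa_j = (-1)^j\kappa_j$, since the sign $(-1)^{N_{(H\times \bbR)/Y}}$ is precisely the effect of the horizontal-form action on a kernel component of the relevant vertical degree shift.

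The key assertion, to be verified by direct computation, is that for an even $\ew$-metric and an even bundle metric, the Taylor expansion of the operator $\rho^2(t\pa_t + tD^2_{\w,\eps})$ at $\bhs{hff}$ has coefficients satisfying $\tilde\iota\circ P_k = (-1)^k\, P_k\circ \tilde\iota$, where $P_k$ denotes the coefficient at order $\rho_{hff}^k$. For the leading term $N_{hff}(t\pa_t + tD^2_{\w,\eps})$ computed in \S\ref{sec:ModelHeatProbs} this is immediate: $\Delta^{\bbR^h_u}$ commutes with $u\mapsto -u$, and the terms involving $\bN_{H/Y}$ pair with horizontal-form conjugation correctly. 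At higher order, the even-in-$\rho$ Taylor expansion of $g_{\w,\eps}$ and $g_F$ implies that each coefficient has the correct $\bbZ_2$-graded behavior.

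Granting this, the leading model $L_{hff}$ from \eqref{leadingmodel:hff} is invariant under $\tilde\iota$ (the Gaussian $\exp(-|u|^2/4\sigma^2)$ is even in $u$ and $H^{\Sigma(Z_y)}$ is independent of $u$), so $\kappa_0$ has the required parity. The iterative construction at $\bhs{hff}$ using Lemma~\ref{lem:ImproveTff} then propagates the parity, since at each step the operator whose action produces the correction shifts the $\rho_{hff}$-order by $2$ and commutes with $\tilde\iota$; thus $\kappa_j$ acquires parity $(-1)^{j+N_{(H\times \bbR)/Y}}$. The Volterra-series correction vanishes to infinite order at $\bhs{hff}$ and is irrelevant for the expansion there. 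For metrics even only to order $\ell$, the graded symmetry of the Taylor coefficients of $\rho^2 D^2_{\w,\eps}$ fails starting at order $\rho_{hff}^\ell$, so the parity is established only for $j<\ell$ and the remainder lies in the space $\rho_{hff}^\ell\, \sB^{\cG/\df g}_{\phg}\sA^{-m-1}_-(HX_{\w,s};\Hom(E)\otimes \beta_{H,R}^*(\rho^{-1}\Omega(M)))$ as claimed.

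The hard part will be the detailed verification of the graded commutation statement for the Taylor coefficients of $\rho^2 D^2_{\w,\eps}$ at $\bhs{hff}$: one must track how the evenness of $g_{\w,\eps}$ and $g_F$ combines with the interplay of $\pa_y$, Christoffel symbols, and the shift operators $\bN_{H/Y}$ and $X/\rho$ appearing in \eqref{dR.1}, and confirm that the resulting $\bbZ_2$-grading matches the horizontal/vertical decomposition used to define $N_{(H\times \bbR)/Y}$.
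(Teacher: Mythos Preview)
Your approach is correct and is essentially the same symmetry argument the paper uses: define a $\bbZ_2$-graded involution near $\bhs{hff}$, check the model $L_{hff}$ is invariant, verify that the heat operator's Taylor expansion respects the grading, and conclude that the iterative construction of $G_\infty$ can be done in the even subspace \eqref{em.3}.

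The execution differs in one useful way. You propose to verify the graded commutation $\tilde\iota\circ P_k=(-1)^kP_k\circ\tilde\iota$ directly for the Taylor coefficients of the \emph{second-order} operator $\rho^2(t\pa_t+tD^2_{\w,\eps})$, and you correctly flag this as the hard part. The paper instead works with the \emph{first-order} operator: it observes that, by \cite[Proposition~10.1]{BGV} and \eqref{vf.1}, the left action of $d_F$ interchanges parity, and that evenness of $g_{\ew}$ and $g_F$ ensures $\sharp$ preserves parity while $*$ preserves or reverses it according to the parity of $v+1$. Hence $\tau D_{\w,\eps}$ reverses parity, so $\tau^2 D^2_{\w,\eps}$ preserves it, and since $t\pa_t$ obviously preserves parity, the full heat operator does too. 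This structural route bypasses the explicit bookkeeping of Christoffel symbols and shift operators that you anticipate, and it also makes the role of the evenness hypothesis on $g_{\ew}$ and $g_F$ transparent: it enters only through the $\sharp$ and $*$ maps in forming the adjoint $\delta_F$. Your approach would of course arrive at the same place, but with more computation.

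One small imprecision: you write that the iterative correction ``shifts the $\rho_{hff}$-order by $2$''. In Lemma~\ref{lem:ImproveTff} the error order increases by $1$ at each step, not $2$; what matters is that the full operator $t(\pa_t+D^2_{\w,\eps})$ preserves parity, so each successive $\kappa_j$ acquires the correct sign $(-1)^j$ regardless of the step size.
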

\begin{proof}
Since $G_{\infty}$ and $e^{-tD^2_{\w,\eps}}$ in Theorem~\ref{uhk.1} have the same expansion at $\bhs{hff}$, it suffices to show that $G_{\infty}$ can be constructed in \eqref{em.3}.  Clearly, the model $L_{hff}$ in \eqref{leadingmodel:hff}, which corresponds to $\kappa_0$ in the expansion \eqref{em.4}, satisfies \eqref{em.5} with $j=0$ and we can choose $G_0$ to be in \eqref{em.3}.   Now, it follows from \eqref{vf.1} and \cite[Proposition~10.1]{BGV} that the action of the exterior derivative $d_F$ from the left interchanges parity, that is, maps \eqref{em.3} into \eqref{em.6}.  For its adjoint, notice that since $g_{\ew}$ and $g_F$ are even, the operator $\sharp$ preserves parity, while the $*$-operator preserves or reverses parity depending on the parity of $v+1$.  Hence, we see that $\tau D_{\w,\eps}$ reverses parity, so that $\tau^2D^2_{\w,\eps}$ preserves parity.  Since the left action of $t\pa_t$ preserves parity, we see that the operator $t(\pa_t +D^2_{\w,\eps})$ preserves parity, which means that the construction of $G_{\infty}$ can be done in \eqref{em.3} as claimed.  Finally, if $g_{\w,\eps}$ and $g_F$ are only even to order the $\ell$, then clearly $G_{\infty}$ can be constructed in \eqref{em.3} up to a term in 
$\rho_{hff}^{\ell}\sB^{\cG/\df g}_{\phg}\sA^{-m-1}_-
		(HX_{\w,s};\Hom(E) \otimes \beta_{H,R}^*(\tfrac1\rho\Omega(M)))$.
\end{proof}
This has the following immediate consequence on the trace of the heat kernel.

\begin{corollary}\label{em.8}
If $m$ and $h$ are odd and $g_{\ew}, g_F$ are even metrics up to order $h+1$, then 
\begin{equation*}
	\beta_{\eps,\tau}^*\Tr(e^{-tD_{\w,\eps}^2}) 
\end{equation*}
does not have a term of order zero in its expansion at $\bhs{hff}$.
\end{corollary}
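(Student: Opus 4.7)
The plan is to combine three ingredients: the even-parity structure at $\bhs{hff}$ provided by Proposition~\ref{em.7}, Melrose's classical reflection argument at $\bhs{sf}$ (already exploited in the text to pass from $\cJ$ to $\cJ'$), and the pushforward structure of $\pi_{\eps,\tau}$ analysed in the proof of Proposition~\ref{prop:TrsEsT}.

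First I would invoke Proposition~\ref{em.7}: since $g_{\ew}$ and $g_F$ are even to order $h+1$, the heat kernel $\cK_{e^{-tD_{\w,\eps}^2}}$ sits in the even subspace \eqref{em.3} modulo a remainder in $\rho_{hff}^{h+1}\sB^{\cG/\df g}_{\phg}\sA^{-m-1}_-(HX_{\w,s};\Hom(E)\otimes\beta_{H,R}^*(\tfrac1\rho\Omega(M)))$. Writing the expansion at $\bhs{hff}$ as $\rho_{hff}^{-h}\sum_{j=0}^{h}\rho_{hff}^{j}\kappa_j + O(\rho_{hff})$, each $\kappa_j$ with $0\le j\le h$ satisfies the parity relation \eqref{em.5}, and the $O(\rho_{hff})$ remainder will contribute to the expansion at $\bhs{hff}(\sE\sT)$ only in absolute degrees $\ge 1$, hence is irrelevant to the order-zero coefficient we wish to kill.

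Next I would restrict to the diagonal $\wt\Delta_{HX}$ (so $X=X'$, $z=z'$, and $u=0$) and take the pointwise trace on $\End(E)=\End(\Lambda^*({}^{\ew}T^*X_s)\otimes F)$. Decomposing $\kappa_j=\sum_N\kappa_j^{(N)}$ by vertical degree shift $N$ relative to the fibration $\phi\circ\pr_1\colon H\times\bbR\to Y$, the parity relation at $u=0$ reads $\kappa_j^{(N)}|_{u=0}=(-1)^{j+N}\kappa_j^{(N)}|_{u=0}$. Since the pointwise trace annihilates any component that shifts exterior degree, only the $N=0$ pieces survive, and for these one gets $\tr\kappa_j|_{u=0,\diag}=(-1)^{j}\tr\kappa_j|_{u=0,\diag}$, which vanishes whenever $j$ is odd. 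Consequently, the expansion of $\tr\cK_{e^{-tD_{\w,\eps}^2}}|_{\wt\Delta_{HX}}$ at $\bhs{hff}(\wt\Delta_{HX})$ has surviving exponents in $\{-h,-h+2,\ldots,-1\}$ up to terms of absolute order $\ge 1$.

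Finally I would push forward to $\sE\sT$ along $\pi_{\eps,\tau}$. Among the boundary hypersurfaces of $\wt\Delta_{HX}$, only $\bhs{hff}(\wt\Delta_{HX})$ and $\bhs{sf}(\wt\Delta_{HX})$ map onto $\bhs{hff}(\sE\sT)$, and at each of these pairs of faces $\pi_{\eps,\tau}^*\rho_{hff}(\sE\sT)$ vanishes to first order (as $\rho_{hff}(\sE\sT)=\sqrt{\eps^2+\tau^2}$ lifts with exponent matrix $1$), so exponent increments transfer faithfully under the pushforward theorem. Combining the previous paragraph with Melrose's reflection argument at $\bhs{sf}(\wt\Delta_{HX})$---which yields only increments of two in the expansion there, as already used in the text to write $\cJ'(\bhs{tf}(\sE\sT))=-m+2\bbN_0$---the expansion of $\beta^*_{\eps,\tau}\Tr(e^{-tD_{\w,\eps}^2})$ at $\bhs{hff}(\sE\sT)$ in absolute degrees $\le h$ lives in $(-h+2\bbN_0)\bar\cup(-m+2\bbN_0)$. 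Since both $h$ and $m$ are odd, neither set contains $0$, so the order-zero coefficient vanishes. The main technical point is in step three: one must check carefully in projective coordinates that the exponent matrix of $\pi_{\eps,\tau}$ really is $1$ at the relevant pairs of faces and that no density shift disrupts the by-twos increment structure---the agreement of leading orders encoded in Proposition~\ref{prop:TrsEsT} is strong evidence but will need to be verified in detail.
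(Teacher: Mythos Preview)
Your proposal is correct and is precisely the argument the paper has in mind: the corollary is stated as an ``immediate consequence'' of Proposition~\ref{em.7} with no proof given, and what you have written is the natural unpacking of that statement (restrict to the diagonal so $u=0$, take the pointwise trace so only $N_{(H\times\bbR)/Y}=0$ survives, conclude the odd-$j$ coefficients vanish, then combine with the already-established $-m+2\bbN_0$ from $\bhs{sf}$ under the pushforward). Your caveat about verifying the exponent matrix of $\pi_{\eps,\tau}$ is appropriate caution but is routine and already implicit in the derivation of $\cJ$ in Proposition~\ref{prop:TrsEsT}.
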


\subsection{Asymptotics of the determinant}

Recall that the zeta function of a Laplace-type operator on a closed manifold, $\Delta,$ is defined to be
\begin{equation*}
	\zeta(s) = \frac1{\Gamma(s)}\int_0^{\infty} \; t^s \; \Tr(e^{-t\Delta}-\Pi_{\ker\Delta})\; \frac{dt}t
\end{equation*}
for $\Re(s)\gg 0.$ The short-time asymptotic expansion of the trace of the heat kernel can be used to meromorphically continue this function to the entire complex plane with at worst simple poles and the determinant of $\Delta$ is defined to be $\exp(-\zeta'(0)).$

As is well-known (see, e.g., \cite[(10.3)]{ARS1}), if the short-time expansion of the heat kernel has the form
\begin{equation*}
	\Tr(e^{-t\Delta}) \sim t^{-m/2} \sum_{k\geq 0} a_{k/2}t^{k/2}
\end{equation*}
then the derivative of the zeta function at the origin is given by
\begin{equation}\label{eq:ZetaDerZero}
	\zeta'(0) = \sideset{^R}{_0^{\infty}}\int \Tr(e^{-t\Delta}) \frac{dt}t
	+ \gamma(a_{m/2}-\dim\ker\Delta)
\end{equation}
where $\gamma$ is the Euler-Mascheroni constant. If $m$ is odd then $a_{m/2}=0.$

For a Hodge Laplacian on a space with wedge singularities, the short-time asymptotics of the trace of the heat kernel involves not just powers of $t^{1/2},$ but also powers of $t^{1/2}$ multiplied by $\log t.$ Consequently, the meromorphic continuation of the integral of the trace of the heat kernel will have poles of order two and the zeta function is not {\em a priori} regular at $s=0.$ Dar \cite{Dar} showed that for spaces with conic singularities the linear combination of zeta functions occurring in the definition of analytic torsion is regular at the origin. Mazzeo-Vertman \cite[Proposition 4.3]{Mazzeo-Vertman} showed that on an odd dimensional space with wedge singularities, the individual zeta functions are regular at the origin. In particular it follows that that the derivative of the zeta functions at zero are given by \eqref{eq:ZetaDerZero} in this setting.\\

We have shown above that the trace of the wedge surgery heat kernel is partially polyhomogeneous on the space $\sE\sT.$ In particular, it satisfies
\begin{equation*}
\begin{gathered}
	\Tr(e^{-tD^2_{\ew}}) \sim \Tr(e^{-tD^2_{\w}})
	+ o(1) \Mat \bhs{af},  \\
	\Tr(e^{-tD^2_{\ew}}) \sim 
	\rho_{hff}^{-m}\sum_{k=0}^m A_k^{hff} \rho_{hff}^k
	+ \rho_{hff}^{-h}\sum_{\ell=0}^h \wt A^{hff}_{\ell} \rho_{hff}^{\ell} \log\rho_{hff}
	+ o(1) \Mat \bhs{hff}.
\end{gathered}
\end{equation*}
Moreover, if $g_{\ew}$ and $g_F$ are even metrics and $M$ and $Y$ are odd dimensional, Corollary~\ref{em.8} implies that $A_m^{hff}=0.$ 
In \cite[\S11]{ARS1}, we analyzed the asymptotics of the trace of the heat kernel undergoing analytic surgery to a fibered cusp metric. This analysis applies here with almost no change (e.g., here we have $\rho_{hff}^{-h}$ whereas there we had $\rho_{hff}^{-h-1}$). Applying that analysis here we have the following theorem.

\begin{theorem}
Denote the logarithm of the product of the positive small eigenvalues by $\log\zeta_{\mathrm{small}}.$   If Assumption~\ref{ur.1} holds, then
as $\eps\to0,$ 
\begin{multline*}
	\FP_{\eps=0} \left( \sideset{^R}{_0^{\infty}}\int \Tr(e^{-tD^2_{\ew}}) \frac{dt}t +\log\zeta_{\mathrm{small}}  \right)
	= 
	\sideset{^R}{_0^{\infty}}\int \Tr(e^{-tD^2_{\w}}) \frac{dt}t
	+\sideset{^R}{_0^{\infty}}\int A^{hff}_m \frac{d\sigma}\sigma 
	\\
	-\gamma(\dim \ker D^2_{\w} - \dim \ker D^2_{\ew})
	.
\end{multline*}
Thus if $m$ is odd then 
\begin{equation}
	\FP_{\eps=0} (\zeta_{\ew}'(0) + \log\zeta_{\mathrm{small}})
	= \zeta_{\w}'(0) 
	+\sideset{^R}{_0^{\infty}}\int A^{hff}_m \frac{d\sigma}\sigma.
\label{em.11}\end{equation}
If furthermore $h$ is odd and $g_{\w,\eps}$ and $g_{F}$ are even to order $h+1$, then
\begin{equation}
	\FP_{\eps=0} (\zeta_{\ew}'(0) + \log\zeta_{\mathrm{small}})
	= \zeta_{\w}'(0).
\label{em.12}\end{equation}
\label{em.10}\end{theorem}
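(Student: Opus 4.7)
The plan is to adapt the asymptotic analysis of \cite[\S 11]{ARS1} to the wedge surgery setting using our uniform heat kernel construction of Theorem~\ref{uhk.1}.  The key object is the pushforward $\beta_{\eps,\tau}^*\Tr(e^{-tD_{\ew}^2})$, which by Proposition~\ref{prop:TrsEsT}, combined with the parity improvement at $\bhs{tf}(\sE\sT)$ coming from reflection invariance of the Euclidean models at $\bhs{tf}$ and $\bhs{sf}$, is partially polyhomogeneous on $\sE\sT$ with index set $-m+2\bbN_0$ at $\bhs{tf}(\sE\sT)$, the hybrid $\{-h\}\bar\cup(-m+2\bbN_0)$ (with at most one-log terms) at $\bhs{hff}(\sE\sT)$, and $\{0,2,\ldots\}$ at $\bhs{af}(\sE\sT)$, with the leading term at $\bhs{af}(\sE\sT)$ equal to $\Tr(e^{-tD_{\w}^2})$.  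Extracting $\FP_{\eps=0}$ of the renormalized $t$-integral then amounts to pushing forward from $\sE\sT$ to $[0,1)_{\eps}$ and reading the constant term at $\eps = 0$, which will be expressed in terms of renormalized integrals over $\bhs{af}(\sE\sT)$ and $\bhs{hff}(\sE\sT)$.

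The first technical step will be to absorb the small eigenvalues.  Using the finite-rank projection $\Pi_{\sma}$ of Corollary~\ref{small.2}, I will decompose
\begin{equation*}
\Tr(e^{-tD_{\ew}^2}) = \Tr\bigl((\Id-\Pi_{\sma})e^{-tD_{\ew}^2}\bigr) + \sum_{\lambda_j(\eps)\text{ small}} e^{-t\lambda_j(\eps)}.
\end{equation*}
For each positive small eigenvalue $\sideset{^R}{_0^\infty}\int e^{-t\lambda}\tfrac{dt}{t} = -\log\lambda$, so the sum of those contributions equals $-\log\zeta_{\sma}$, which cancels the added $\log\zeta_{\sma}$ on the left-hand side of the theorem.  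The remaining term $(\Id-\Pi_{\sma})e^{-tD_{\ew}^2}$ retains the polyhomogeneous structure on $\sE\sT$, and by Corollary~\ref{small.2} its $\bhs{af}(\sE\sT)$-leading term is $\Tr((\Id-\Pi_{\ker D_{\w}^2})e^{-tD_{\w}^2})$.  The constant $-\gamma(\dim\ker D_{\w}^2-\dim\ker D_{\ew}^2)$ appearing in the theorem arises from reconciling \eqref{eq:ZetaDerZero} applied at $\eps>0$ and at $\eps=0$: the positive small eigenvalues at $\eps>0$ become part of the kernel at $\eps=0$, so the Euler-Mascheroni terms multiply different integer ranks before and after the limit.

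Pushing forward on $\sE\sT$ following the analysis of \cite[\S 11]{ARS1}, the $\bhs{af}(\sE\sT)$-contribution to $\FP_{\eps=0}$ of the renormalized integral produces $\sideset{^R}{_0^\infty}\int\Tr(e^{-tD_{\w}^2})\tfrac{dt}{t}$, while the $\bhs{hff}(\sE\sT)$-contribution produces $\sideset{^R}{_0^\infty}\int A_m^{hff}\tfrac{d\sigma}{\sigma}$, read off from the coefficient of $\rho_{hff}^0$ in the expansion at $\bhs{hff}(\sE\sT)$.  When $m$ is odd the parity argument gives $a_{m/2}=0$, and applying \eqref{eq:ZetaDerZero} at $\eps>0$ and at $\eps=0$ then converts the asymptotic identity into \eqref{em.11}.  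When $h$ is also odd and $g_{\ew}, g_F$ are even to order $h+1$, Corollary~\ref{em.8} guarantees that no $\rho_{hff}^0$-term appears in the expansion at $\bhs{hff}(\sE\sT)$, so $A_m^{hff}\equiv 0$ and \eqref{em.12} follows.

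The main obstacle will be the careful bookkeeping of logarithmic terms in the double expansion at $\bhs{hff}(\sE\sT)$ while taking the pushforward; however, this bookkeeping is exactly what \cite[\S 11]{ARS1} carries out in the fibered cusp setting, and since our situation differs only by the innocuous replacement $\rho_{hff}^{-h-1}\to\rho_{hff}^{-h}$ (corresponding to the different codimension of the blown-up corner), that analysis transfers with essentially no modification.
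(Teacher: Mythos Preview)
Your proposal is correct and follows essentially the same approach as the paper: both defer the main computation to \cite[\S11]{ARS1}, noting that the only change is the replacement of $\rho_{hff}^{-h-1}$ by $\rho_{hff}^{-h}$, and both invoke Corollary~\ref{em.8} to kill $A_m^{hff}$ in the even, odd-$h$ case. Your outline in fact supplies more detail than the paper, which simply states that the analysis of \cite[\S11]{ARS1} applies with almost no change.
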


\section{A Cheeger-M\"uller theorem for wedge metrics} \label{cm.0}

Let $M$ be an oriented closed manifold, $H\subset M$ a co-oriented hypersurface and \linebreak $c: (-1,1)_x\to M$ a choice of tubular neighborhood.  Let $\phi: H\to Y$ be a smooth fiber bundle with total space $H$ and base $Y$ a closed oriented manifold.  Let $\alpha: \pi_1(M)\to \GL(k,\bbR)$ be a unimodular representation and let $F\to M$ be the corresponding flat vector bundle.  Let $g_{\ew}$ and $g_{F}$ be a wedge-surgery metric and bundle metric that are even to order $h+1$ and  suppose that Assumption~\ref{ur.1} holds.  By Corollary~\ref{wss.6} and Corollary~\ref{wss.7}, we then know that the wedge Hodge Laplacian $\Delta_{\w,0}= \eth_{\w,0}^2$ is essentially self-adjoint with minimal extension given by
$$
      \cD_{\min}(\Delta_{\w,0})= r^2H^2_{\w}(\bhs{\sm}; \Lambda^*({}^{\w}T^*\bhs{\sm})\otimes F).  
$$ 

On the other hand , let $\widehat{M\setminus H}$ denote the stratified space obtained from $M$ by collapsing the fibers of $\phi:H\to Y$ onto the base $Y$.  Since $\mathrm H^*(H/Y;F)=0$, we know from \cite[Proposition~1]{hhm} that there is a canonical isomorphism between the intersection cohomology with upper middle perversity and with values in $F$ of  the stratified space $\widehat{M\setminus H}$  and the relative cohomology of the manifold with boundary $\bhs{\sm}$,
$$
    \IH^*_{\bm}(\widehat{M\setminus H};F)\cong \mathrm H^*(\bhs{\sm}, \pa \bhs{\sm};F).
$$
Intersection cohomology groups can also be identified with $L^2$-harmonic forms.
\begin{proposition}[Hunsicker-Mazzeo \cite{Hunsicker-Mazzeo}]
There is a canonical isomorphism between intersection cohomology and the space $   \cH^*_{g_{\w,0}}(M\setminus H;F)$ of $L^2$-harmonic forms taking values in $F$ of the wedge metric $g_{\w,0}$, so that 
$$
   \cH^*_{g_{\w,0}}(M\setminus H;F)\cong   \IH^*_{\bm}(\widehat{M\setminus H};F) \cong \mathrm H^*(\bhs{\sm}, \pa \bhs{\sm};F).
$$
\label{ihh.1}\end{proposition}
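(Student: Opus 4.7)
The second isomorphism has already been recorded as \cite[Proposition~1]{hhm}, so the task is to establish the Hodge-theoretic identification
\[
\cH^*_{g_{\w,0}}(M\setminus H;F)\;\cong\;\IH^*_{\bm}(\widehat{M\setminus H};F).
\]
My plan is to invoke the results of Hunsicker--Mazzeo \cite{Hunsicker-Mazzeo} and verify that our hypotheses put us in their setting. First I would observe that by Assumption~\ref{ur.1}, Corollary~\ref{wss.6} and Corollary~\ref{wss.7}, the wedge Hodge Laplacian $\Delta_{\w,0}=\eth_{\w,0}^2$ is essentially self-adjoint with minimal domain $r^2H^2_{\w}(\bhs{\sm};E_{\sm})$; in particular $\eth_{\w,0}$ itself has a unique closed extension. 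This is the analytic input that lets one apply the standard Hodge--Kodaira argument: the range of $\Delta_{\w,0}$ is closed, $\ker\Delta_{\w,0}=\cH^*_{g_{\w,0}}(M\setminus H;F)$, and one obtains an orthogonal decomposition
\[
L^2_{g_{\w,0}}(M\setminus H;\Lambda^*(^{\w}T^*\bhs{\sm})\otimes F)
=\cH^*_{g_{\w,0}}\oplus\overline{\im\,d}\oplus\overline{\im\,d^*},
\]
identifying the $L^2$ harmonic forms with the reduced $L^2$ de Rham cohomology of $\eth_{\w,0}$ in each degree.

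Next I would match this reduced $L^2$ cohomology with $\IH^*_{\bm}$. Hunsicker--Mazzeo do this by analyzing the indicial roots of $d$ on the Kodaira components of the link Laplacian, exactly the kind of computation carried out in Proposition~\ref{specb.1} and Proposition~\ref{specb.2}. Our spectral gap hypothesis \eqref{cond.2} from Proposition~\ref{w.1a} together with the vanishing of $\mathrm H^{(v\pm j)/2}(\phi^{-1}(y);F)$ for $|j|\le 2$ ensures that no indicial root enters the critical strip that separates the lower and upper middle perversity weights; this is precisely the hypothesis under which \cite[Theorem~1]{Hunsicker-Mazzeo} identifies the reduced $L^2$ cohomology of $(M\setminus H,g_{\w,0};F)$ with $\IH^*_{\bm}(\widehat{M\setminus H};F)$. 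Composing with the already cited isomorphism $\IH^*_{\bm}(\widehat{M\setminus H};F)\cong \mathrm H^*(\bhs{\sm},\pa\bhs{\sm};F)$ yields the stated chain of canonical isomorphisms.

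The main obstacle is verifying, degree by degree, that the acyclicity of the link together with the spectral gap on the vertical de Rham operator prevents any borderline harmonic contribution on the link from producing additional $L^2$ solutions to $dr^a=O(r^a)$ with $a$ on the wrong side of $-1/2$ relative to the $L^2$ threshold. That is, one must check that the unique closed extension of $\eth_{\w,0}$ agrees with both its minimal and maximal extension not only as an unbounded operator on $L^2$, but at the cochain level, so that the $L^2$ de Rham complex is unambiguously defined and its cohomology matches the upper (equivalently lower) middle perversity intersection complex. The conditions \eqref{cond.1}--\eqref{cond.2} are tailored exactly to this purpose, and together with Lemma~\ref{lem:w.1}, Lemma~\ref{w.3} and Proposition~\ref{w.6} they supply the needed regularity of the maximal domain. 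Once this is in hand, the rest is a direct appeal to \cite{Hunsicker-Mazzeo} and \cite{hhm}.
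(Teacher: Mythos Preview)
Your overall strategy---invoke Hunsicker--Mazzeo for the cohomological identification and then pass to harmonic forms---is the same as the paper's, but you run the argument in the reverse order and this creates a gap. You first assert that essential self-adjointness of $\Delta_{\w,0}$ yields closed range and a Kodaira decomposition, and only afterwards match the resulting reduced $L^2$-cohomology with $\IH^*_{\bm}$. But essential self-adjointness by itself does \emph{not} imply closed range; you would need to import the Fredholm statement from Lemma~\ref{lem:w.1} (or the discrete spectrum) explicitly, and you do not do so.

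The paper avoids this by reversing the logic. It first observes that essential self-adjointness of $\eth_{\w,0}$ forces the minimal and maximal closed extensions of $d$ to coincide (citing \cite[Proposition~4.6]{Hunsicker-Mazzeo}), so that $\mathrm H^*_{\max}=\mathrm H^*_{\min}$. It then quotes \cite[Corollary~3.19]{Hunsicker-Mazzeo} directly for the identification of this $L^2$-cohomology with $\IH^*_{\bm}(\widehat{M\setminus H};F)$, noting that the proof there, though stated for the trivial bundle, goes through verbatim for a flat $F$. Only \emph{after} this topological identification---which in particular shows the $L^2$-cohomology is finite dimensional---does the paper invoke \cite[Theorem~2.4]{BL92} to conclude that the Hilbert complex is Fredholm and hence admits the Kodaira decomposition $\cH^*_{g_{\w,0}}\cong \mathrm H^*_{\min}$. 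This order is cleaner: finite dimensionality comes for free from the topology, and the Hodge theorem follows. Your longer discussion of indicial roots and borderline weights is unnecessary once one cites Corollary~3.19 rather than the more general Theorem~1 of \cite{Hunsicker-Mazzeo}.
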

\begin{proof}
Recall first from \cite{Hunsicker-Mazzeo} that in general, taking the maximal or minimal extension of the exterior derivative leads to two notions of $L^2$-cohomology, namely the maximal $L^2$-cohomology groups $\mathrm H^*_{\max}(M\setminus H; g_{\w,0},F)$ and the minimal $L^2$-cohomology groups \linebreak  $\mathrm H^*_{\min}(M\setminus H; g_{\w,0},F)$.  In our case, since $\eth_{\w,0}$
is essentially self-adjoint, we know a fortiori, see for instance \cite[Proposition~4.6]{Hunsicker-Mazzeo}, that the exterior derivative has only one closed extension, so that in fact
$$
  \mathrm H^*_{\max}(M\setminus H; g_{\w,0},F)= \mathrm H^*_{\min}(M\setminus H; g_{\w,0},F).
$$
By \cite[Corollary~3.19]{Hunsicker-Mazzeo}, we also have that 
\begin{equation}
        \mathrm H^*_{\max}(M\setminus H; g_{\w,0},F)= \mathrm H^*_{\min}(M\setminus H; g_{\w,0},F)\cong  \IH^*_{\bm}(\widehat{M\setminus H};F).
 \label{ihh.2}\end{equation} 
Technically speaking Corollary~3.19 in \cite{Hunsicker-Mazzeo} is only stated in the case where $F$ is the trivial flat line bundle.  However, the arguments leading to the proof of \cite[Corollary~3.19]{Hunsicker-Mazzeo} work as well for any flat vector bundle provided one makes the appropriate notational changes.  

In particular, we deduce from \eqref{ihh.2} that the $L^2$-cohomology is finite dimensional.  By \cite[Theorem~2.4]{BL92}, the corresponding Hilbert complex is Fredholm and we have a Kodaira decomposition, so that finally 
$$
\cH^*_{g_{\w,0}}(M\setminus H;F)\cong   \IH^*_{\bm}(\widehat{M\setminus H};F)
$$
as claimed.
\end{proof}

Another consequence of Assumption~\ref{ur.1}, which follows from  the Leray-Serre spectral sequence of the fiber bundle $\phi:H\to Y$, is that $\mathrm H^*(H;F)=0$.  Thus, we see from the long exact sequence in cohomology associated to the pair $(M,H)$ that there is also an isomorphism 
$$
          \mathrm H^*(\bhs{\sm},\pa\bhs{\sm};F)\cong \mathrm H^*(M;F),  
$$ 
so that finally
$$
   \cH^*_{g_{\w,0}}(M\setminus H;F)\cong \mathrm H^*(M;F)\cong  \cH^*_{g_{\ew}}(M;F) \quad \mbox{for} \; \epsilon>0.
$$
As a consequence, we see that the projection $\Pi_{\sma}$ of Corollary~\ref{small.2} is just the projection onto the $L^2$-harmonic forms.  In particular, there are no positive small eigenvalues and Theorem~\ref{ur.8} ensures that there is a uniform spectral gap, namely that there is  $\delta>0$ such that 
$$
    \spec(\eth_{\dR,\epsilon}^2)\cap(0,\delta)=\emptyset \quad \forall \ \epsilon\in [0,1].
$$
This leads to the following result.

\begin{theorem}
Let $M$ be a closed odd dimensional oriented manifold with co-oriented hypersurface $H\subset M$ equipped with a fibered bundle $\phi: H\to Y$ where $Y$ is a odd dimensional oriented compact manifold.  Let $g_{\ew}$ be a wedge surgery metric even to order $\ell$.  Let also $\alpha:\pi_1(M)\to \GL(k,\bbR)$ be a unimodular representation and denote by $F$ the corresponding flat vector bundle.  Let $g_F$ a bundle metric even to order $\dim Y+1$ and suppose that Assumption~\ref{ur.1} holds for the wedge surgery de Rham operators $\eth_{\ew}$ associated to $g_{\ew}$ and $g_F$.  Then we have the equality
$$
  \bar T(\bhs{\sm},g_{\w,0},F,g_F,\mu^*)=  \tau(\bhs{\sm},\pa\bhs{\sm},\alpha,\mu^*) \tau(H,\alpha)
$$
where $\mu^j$ is any choice of basis of $\mathrm H^{j}(\bhs{\sm},\bhs{\sm};F)$.  
\label{cm.4}\end{theorem}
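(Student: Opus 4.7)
The plan is to deform $g_{\w,0}$ through the family of smooth wedge surgery metrics $g_{\w,\eps}$ on the closed manifold $M$, apply M\"uller's Cheeger--M\"uller theorem for each $\eps>0$, and carefully pass to the limit $\eps\to0$ on both sides. For each $\eps>0$, $(M,g_{\w,\eps})$ is closed, smooth, oriented and odd-dimensional, and $\alpha$ is unimodular, so by \cite{Muller1993},
\[
\bar T(M,g_{\w,\eps},F,g_F,\nu^*) = \tau(M,\alpha,\nu^*),
\]
where $\nu^*$ is any basis of $\mathrm H^*(M;F)$ chosen compatibly with $\mu^*$ under the isomorphism $\mathrm H^*(M;F)\cong \mathrm H^*(\bhs{\sm},\pa\bhs{\sm};F)$. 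This identification is valid because Assumption~\ref{ur.1}, combined with the Leray--Serre spectral sequence of $\phi\colon H\to Y$, forces $\mathrm H^*(H;F)=0$, so that Mayer--Vietoris and excision collapse to an isomorphism.

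Next I analyze the limit $\eps\to0$ on the analytic side. Expanding $\log \bar T(M,g_{\w,\eps},F,g_F,\nu^*)$ as an alternating sum of the $\zeta'_{q,\eps}(0)$ plus a contribution from the basis ratios $[\nu^q|\omega_\eps^q]$, I invoke Theorem~\ref{em.10}. Since $\dim M$ and $\dim Y$ are both odd and $g_{\w,\eps}, g_F$ are even to order $h+1$, Corollary~\ref{em.8} forces the coefficient $A_m^{hff}$ to vanish, so that \eqref{em.12} yields
\[
\FP_{\eps=0}\bigl(\zeta'_{q,\eps}(0)+\log\zeta_{\sma,q}\bigr) = \zeta'_{q,\w}(0)
\]
for each $q$. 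By Proposition~\ref{ihh.1}, the space $\cH^*_{g_{\w,0}}(M\setminus H;F)$ of $L^2$-harmonic forms is canonically isomorphic to $\mathrm H^*(\bhs{\sm},\pa\bhs{\sm};F)$, which agrees with $\mathrm H^*(M;F)\cong \cH^*(M;g_{\w,\eps};F)$. Hence the kernel dimensions coincide for all $\eps\ge0$, which rules out positive small eigenvalues and forces $\log\zeta_{\sma,q}\equiv0$. Corollary~\ref{small.2} further supplies the uniform smoothness of $\Pi_{\sma}$ in $\eps$, so the orthonormal harmonic bases $\{\omega_\eps^q\}$ converge to $L^2$-orthonormal harmonic bases $\{\omega_0^q\}$ on $\bhs{\sm}$, and the ratios $[\nu^q|\omega_\eps^q]$ converge to $[\mu^q|\omega_0^q]$. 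Putting everything together,
\[
\lim_{\eps\to0}\bar T(M,g_{\w,\eps},F,g_F,\nu^*)=\bar T(\bhs{\sm},g_{\w,0},F,g_F,\mu^*).
\]

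To conclude, I apply the topological gluing formula for Reidemeister torsion \cite[Theorem~8.7]{ARS1} to the decomposition $M=\bhs{\sm}\cup_{\pa\bhs{\sm}}\bhs{\sm}$, viewing $\bhs{\sm}\cong [M;H]$ as a manifold whose boundary consists of two copies of $H$. Since $\mathrm H^*(H;F)=0$, the Mayer--Vietoris correction collapses and the formula reduces to
\[
\tau(M,\alpha,\nu^*)=\tau(\bhs{\sm},\pa\bhs{\sm},\alpha,\mu^*)\cdot\tau(H,\alpha).
\]
Chaining the three displays proves the theorem. The main obstacle is the middle step: one must simultaneously control the convergence of each $\zeta'_{q,\eps}(0)$ and of the basis ratios $[\nu^q|\omega_\eps^q]$ under wedge surgery. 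Both aspects rely on the uniform spectral gap furnished by Theorem~\ref{ur.8}, the even-order structure that kills $A_m^{hff}$ via Corollary~\ref{em.8}, and the uniform smoothness of the kernel projection $\Pi_{\sma}$ granted by Corollary~\ref{small.2}; none of these would be available without the full strength of Assumption~\ref{ur.1}.
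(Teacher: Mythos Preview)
Your proof is correct and follows essentially the same approach as the paper: invoke M\"uller's theorem for each $\eps>0$, use Theorem~\ref{em.10} together with the absence of positive small eigenvalues (via Proposition~\ref{ihh.1} and the cohomology identifications) to control the analytic side, use Corollary~\ref{small.2} for the basis ratios, and finish with the Reidemeister torsion gluing formula \cite[Theorem~8.7]{ARS1}. One small slip: the decomposition is not $M=\bhs{\sm}\cup_{\pa\bhs{\sm}}\bhs{\sm}$ but rather $M$ is recovered from the single manifold $\bhs{\sm}=[M;H]$ by regluing its two boundary copies of $H$; the formula you quote from \cite[Theorem~8.7]{ARS1} is nonetheless the right one for this cut.
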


\begin{proof}
By the discussion above, there is no positive small eigenvalues.  Since  $\dim Y$ is odd, we therefore know  by Theorem~\ref{em.10} that 
\begin{equation}
  \FP_{\epsilon=0} \log T(M,g_{\ew},F,g_F)= \log T(M\setminus H, g_{\w,0},F, g_F).
\label{cm.5}\end{equation}
On the other hand, since  the projection $\Pi_{\sma}$ of Corollary~\ref{small.2} is an element of  \linebreak $\rho(\Psi^{-\infty,\delta}_{\ee}(X_s; E))\rho$   for some $\delta>0$, where $E= \Lambda^*({}^{\ew}T^*X_s)\otimes F$, we know that if $\omega^q_{\epsilon}$ is any orthonormal basis of harmonic forms for $g_{\ew}$, then
\begin{equation}
  \lim_{\epsilon\to 0} \log\left( \prod_{q=0}^n [\mu^q | \omega^q_{\epsilon}]^{(-1)^q} \right)= \log \left(  \prod_{q=0}^n [\mu^q | \omega_{0}^q]^{(-1)^q}\right).
\label{cm.6}\end{equation} 
Combining \eqref{cm.5} with \eqref{cm.6}, we thus see that
$$
    \FP_{\eps=0}\log\bar T(M, g_{\ew},F, g_F, \mu^*)= \log\bar T(M\setminus H, g_{\w,0},F,g_F,\mu^*).
$$
Applying \cite[Theorem~8.7]{ARS1}, we have also that 
$$
   \tau(M,\alpha, \mu^*)= \tau(\bhs{\sm},\pa \bhs{\sm},\alpha,\mu^*)\tau(H,\alpha),
$$
from which the result follows.  

\end{proof}

As a direct consequence of this result,  we obtain our main result, Theorem~\ref{int.6}.  

\begin{proof}[Proof of Theorem~\ref{int.6}]
Let $M:= \bar N\cup_{\pa \bar N} \bar N$ be the double of $\bar N$.  Then $M$ has a natural co-oriented hypersurface $H$ which is identified with the boundary of each of the two copies of $\bar N$ in $M$.   Since the wedge metric $g_{\w}$ is even, it is the restriction on each copy of $\bar N$ in $M$ of $g_{\w,0}$ for $g_{\ew}$ some even wedge surgery metric on $M$ . Similarly, $g_F$ can be seen as the restriction of an Euclidean metric $g_{\widetilde{F}}$ for the double $\widetilde{F}$ of $F$ on $M$.  Clearly,
$$
     2 \log\bar T(N,g_{\w},F,\mu^*)= \log\bar T(M\setminus H, g_{\w,0},\widetilde{F}, g_{\widetilde{F}}, \mu^* \sqcup \mu^* )
$$
and 
so the result follows by applying Theorem~\ref{cm.4} on $M$.
\end{proof}

When $\dim Y$ is even, the above argument does not work due to the fact that there is another term coming from the face $\bhs{hff}$, namely the second term on the right hand side of \eqref{em.11}, that can possibly contribute to the right hand side of \eqref{cm.5}. However, this extra contribution can be  interpreted as the analytic torsion of a wedge metric on an appropriate space.
Indeed, consider in this  case the oriented closed odd dimensional manifold $M'= \bbS^1\times H$ with flat vector bundle $F'=\pi_2^*(\left.  F\right|_{H})$, where $\pi_2: \bbS^1\times H\to H$ is the projection on the second factor.  Since $\mathrm H^*(H;F)=0$, so that $\mathrm H^*(\bbS^1\times H;F')=0$, we know from \cite[Proposition~1.13]{Muller1993} and the Cheeger-M\"uller theorem \cite{Muller1993} that the analytic torsion and the Reidemeister torsion for $F'$ on $\bbS^1\times H$ are both equal to one.  Thus, applying the strategy of the proof of Theorem~\ref{cm.4} to any exact wedge surgery metric $g_{\ew}'$ on $\bbS^1\times H$  and a bundle metric $g_{F'}$ which are respectively  identified with $g_{\ew}$ and $g_F$ in a collar neighborhood of $\{1\}\times H$ in $\bbS^1\times H$, where $\bbS^1\subset \bbC$ is thought of as the unit circle, we obtain that the extra contribution coming from $\bhs{hff}$ in \eqref{cm.5} is precisely equal to minus  the logarithm of the analytic torsion of $g_{\w,0}'$,
$$
   -\log T((\bbS^1\setminus\{1\})\times H, g_{\w,0}', F',g_{F'}).
$$
This leads to the following relative Cheeger-M\"uller theorem, essentially a reformulation of  the gluing formula of Lesch \cite[Theorem~1.2]{Lesch2013} in this special setting. 
\begin{corollary}
Consider the same setting as the one of Theorem~\ref{int.6} with the difference that we assume instead that $Y$ is even dimensional.  Then 
$$
\frac{\bar T(N,g_{\w},F,\mu^*)}{ T((\bbS^1\setminus \{1\})\times H, g_{\w,0}', F',g_{F'})^{\frac12}} = \tau(\bar N, \pa \bar N,\alpha, \mu^*)\tau (\pa \bar N,\alpha)^{\frac12}
$$
for any choice of basis $\mu^q$ of $H^q(\bar N,\pa \bar N;F)$ for $q=0,\ldots, \dim \bar N$.  
\label{cm.9}\end{corollary}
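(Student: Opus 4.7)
The strategy follows the proof of Theorem~\ref{int.6} verbatim through the doubling construction and the application of Theorem~\ref{em.10}. Specifically, I would set $M = \bar N \cup_{\pa\bar N} \bar N$, extend $g_{\w}$ and $g_F$ to an even wedge surgery metric $g_{\ew}$ and a bundle metric $g_{\widetilde F}$ on $M$, doubling the flat bundle $F$ to $\widetilde F$. Applying formula \eqref{em.11} of Theorem~\ref{em.10} then yields
$$
    \FP_{\eps=0}\lrpar{\log \bar T(M, g_{\ew}, \widetilde F, g_{\widetilde F}, \mu^*\sqcup \mu^*)} = \log \bar T(M\setminus H, g_{\w,0}, \widetilde F, g_{\widetilde F}, \mu^*\sqcup \mu^*) + \cA_M,
$$
where $\cA_M$ denotes the alternating weighted sum of renormalized integrals $\sideset{^R}{_0^{\infty}}\int A^{hff}_{m,q}\,d\sigma/\sigma$ coming from the $\bhs{hff}$-expansion of the trace of the heat kernel, which in the odd $\dim Y$ case vanishes by Corollary~\ref{em.8} but here need not.

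Next I would identify $\cA_M$ by a second application of the same surgery analysis on a different manifold. Namely, on $\bbS^1 \times H$ I would choose an exact wedge surgery metric $g_{\ew}'$ (performing surgery at $\{1\}\times H$) and an even bundle metric $g_{F'}$ on $F' := \pi_2^*(F|_H)$ that agree with $g_{\ew}$ and $g_F$ in a collar neighborhood of $\{1\}\times H$.  Assumption~\ref{ur.1} then holds automatically as its conditions are local to the fibers $\phi^{-1}(y)$. The crucial observation is that $\cA_M$ is a local quantity: the density $A^{hff}_{m,q}$ is extracted from the model heat kernel \eqref{leadingmodel:hff} at $\bhs{hff}$ restricted to the lifted diagonal, hence depends only on the data $\phi:H\to Y$, the fiber metric $g_{H/Y}$, and $g_F|_H$. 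Consequently $\cA_M = \cA_{\bbS^1\times H}$, and the same analysis on $\bbS^1 \times H$ yields
$$
    \FP_{\eps=0}\lrpar{\log T(\bbS^1\times H, g_{\ew}', F', g_{F'})} = \log T((\bbS^1\setminus\{1\})\times H, g_{\w,0}', F', g_{F'}) + \cA_M.
$$

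The LHS of this second identity vanishes: since $\mathrm H^*(H;F)=0$, the bundle $F'$ is acyclic on $\bbS^1\times H$ by K\"unneth, so $\bar T = T$, and the tensor-product formula for Reidemeister torsion applied to $C_*(\bbS^1)\otimes C_*(H;F|_H)$ gives $\tau(\bbS^1\times H,\alpha')=\tau(H;F|_H)^{\chi(\bbS^1)}=1$. By the classical Cheeger-M\"uller-M\"uller theorem on the closed smooth manifold $\bbS^1\times H$ with the smooth metric $g_{\ew}'$ for each $\eps>0$, one concludes $T(\bbS^1\times H, g_{\ew}', F', g_{F'})=1$, hence $\cA_M = -\log T((\bbS^1\setminus\{1\})\times H, g_{\w,0}', F', g_{F'})$.

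Finally, I would compute the first finite part exactly as in the proof of Theorem~\ref{int.6}: for each $\eps>0$, M\"uller's theorem gives $\bar T(M, g_{\ew}, \widetilde F, g_{\widetilde F}, \mu^*\sqcup\mu^*) = \tau(M, \widetilde\alpha, \mu^*\sqcup\mu^*)$, independent of $\eps$, and by the Reidemeister-torsion gluing formula \cite[Theorem~8.7]{ARS1} this factors as $\tau(\bar N, \pa\bar N, \alpha, \mu^*)^2\,\tau(\pa\bar N,\alpha)$. The RHS of the first identity splits as $2\log\bar T(N, g_{\w}, F, \mu^*)$ since $M\setminus H$ is two disjoint copies of $N$, with the basis-comparison term converging as $\eps\to 0$ by Corollary~\ref{small.2}. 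Combining, dividing by two, and exponentiating gives the stated formula. The main obstacle is the locality of $\cA_M$; this must be verified by inspection of \eqref{leadingmodel:hff}, which shows that the restriction to the lifted diagonal depends only on the scattering metric on $\Sigma(Z_y)$ and the flat Euclidean factor $\bbR^h_u$, both of which are determined by data in a collar neighborhood of $H$ and therefore unchanged when $\bar N$ is replaced by a cylinder on $H$.
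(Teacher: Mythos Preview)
Your proposal is correct and follows essentially the same route as the paper. The paper's own argument, given in the paragraph immediately preceding Corollary~\ref{cm.9}, likewise applies \eqref{em.11} on the doubled manifold, identifies the extra $\bhs{hff}$-contribution by running the same surgery analysis on $\bbS^1\times H$ (exploiting that the metrics agree in a collar of $\{1\}\times H$, so the $\bhs{hff}$-models coincide), and uses acyclicity of $F'$ together with \cite[Proposition~1.13]{Muller1993} and the Cheeger--M\"uller theorem to conclude that the closed-manifold torsion on $\bbS^1\times H$ is one; your explicit locality discussion of $A^{hff}_m$ via \eqref{leadingmodel:hff} makes transparent what the paper leaves implicit.
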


\bibliographystyle{amsalpha}
\bibliography{wedgesurgery}

\end{document}